\newtheorem{theorem}{Theorem}[section]
\newtheorem{lemma}[theorem]{Lemma}
\newtheorem{corollary}[theorem]{Corollary}
\newtheorem{proposition}[theorem]{Proposition}
\theoremstyle{definition}
\newtheorem{remark}[theorem]{Remark}
\newtheorem{definition}[theorem]{Definition}
\theoremstyle{remark}
\newcommand{\N}{{\mathbb N}}
\newcommand{\R}{{\mathbb R}}
\newcommand{\C}{{\mathbb C}}
\newcommand{\Z}{{\mathbb Z}}
\newcommand{\uH}{{\mathbb H}}
\newcommand{\spt}{{\rm supp}\,}
\newcommand{\shizensu}{\mathbb{N}}
\newcommand{\jissu}{\mathbb{R}}
\newcommand{\fourier}{\mathcal{F}}
\newcommand{\inversefourier}{\mathcal{F}^{-1}}
\newcommand{\qu}{{\rm qu}}
\newcommand{\tr}{{\rm Tr}_{\mathbb{R}^n}} 
\newcommand{\utr}{{\rm Tr}_{\mathbb{R}^n_+}} 
\newcommand{\abs}[1]{\lvert#1\rvert}
\begin{document}

\title[]{Trace and extension operators for Besov spaces
and Triebel--Lizorkin spaces
with variable exponents.}

\author{Takahiro Noi}
\date{}

\maketitle

\begin{abstract}
This paper is concerned with a boundedness of 
trace and extension operators for Besov spaces and Triebel--Lizorkin spaces on upper half space with variable exponents. 
To define trace and extension operators, we introduce a quarkonial decomposition for  Besov spaces
and Triebel--Lizorkin spaces
with variable exponents on $\R^n$.  
Furthermore, we study trace and extension operators for 
 Besov spaces and Triebel--Lizorkin spaces
with variable exponents on upper half spaces $\R^n_+$. 
\end{abstract}

\vspace{1.5cm}
\noindent{\bf Keywords}\ \
Besov space, Triebel-Lizorkin space, variable exponents, quarkonial decomposition, trace operator, extension operator

\noindent{\bf $2010$ Mathematics Subject Classification}\ \
Primary 42B35\,; Secondary 41A17.

%%
%% Start line numbering here if you want
%%
% \linenumbers

%% main text
\section{Introduction}

The function spaces with variable exponent(s) have a long history since  the discovery by Orlicz\cite{Orlicz} and  
in recent years, these spaces received great attention in connection with  electrorheological fluids \cite{Rajagopal-Ruzicka}. 

Besov spaces with variable exponents $B_{p(\cdot),q(\cdot)}^{s(\cdot)}(\R^n)$  and  Triebel--Lizorkin spaces with variable exponents $F_{p(\cdot),q(\cdot)}^{s(\cdot)}(\R^n)$  were introduced by 
Almeida and H\"ast\"o \cite{Almeida} and Diening, H\"ast\"o and Roudenko \cite{Diening}, respectively.  
 Diening, H\"ast\"o and Roudenko \cite{Diening} proved the atomic decomposition for  $F_{p(\cdot),q(\cdot)}^{s(\cdot)}(\R^n)$ and applied the result to trace theorem. 
 Kempka \cite{Kempka} proved the atomic, molecular and wavelet expansion for $2$-microlocal Besov and Triebel-Lizorkin spaces 
with variable integrability. But, in the case of Besov space, the summability index $q$ is a constant. 
Recently, Moura, Neves and Schneider \cite{Moura} proved the boundedness of 
the trace operator for $2$-microlocal Besov spaces by using atomic decomposition, 
but summability index $q$ is a constant. 
Present author \cite{Noi} studied a Fourier multiplier 
 for  $F_{p(\cdot),q(\cdot)}^{s(\cdot)}(\R^n)$ and $B_{p(\cdot),q(\cdot)}^{s(\cdot)}(\R^n)$ and the author and Sawano \cite{Noi-Sawano} studied 
atomic decomposition and complex interpolation for  $F_{p(\cdot),q(\cdot)}^{s(\cdot)}(\R^n)$ and $B_{p(\cdot),q(\cdot)}^{s(\cdot)}(\R^n)$.  
Drihem \cite{Drihem} obtained a  detailed atomic decomposition for  $B_{p(\cdot),q(\cdot)}^{s(\cdot)}(\R^n)$. 
Recently, the author and Izuki \cite{Noi-Izuki} studied a duality of $F_{p(\cdot),q(\cdot)}^{s(\cdot)}(\R^n)$, $B_{p(\cdot),q(\cdot)}^{s(\cdot)}(\R^n)$
 and Herz spaces with variable exponents $K_{p(\cdot)}^{\alpha(\cdot),q(\cdot)}(\R^n)$. 
 
To prove a boundedness of 
the trace operator ,  we introduce  quarkonial decompositions. 

This paper concerns itself with quarkonial decompositions, trace operators and extension operators 
for Besov spaces and Triebel--Lizorkin spaces
with variable exponents. 
First, we state atomic and quarkonial decompositions of Besov spaces and Triebel--Lizorkin spaces
with variable exponents. 
Secondly, we extend trace operators to Besov spaces and Triebel--Lizorkin spaces
with variable exponents. 
Finally, we study trace and extension operators for 
 Besov spaces and Triebel--Lizorkin spaces
with variable exponents on upper half spaces $\R^n_+$. 

\section{Definition of Besov spaces and Triebel--Lizorkin spaces
with variable exponents}

Denote by $\mathcal{P}_0(\R^n)$ 
the set of all measurable functions $p(\cdot):\R^n \to (0,\infty)$ such that 
\begin{equation}\label{eq:bounded}
0<p^-=\mathop{{\rm{ess}}\,\,\rm{inf}}_{x\in\R^n}p(x), \ \ 
\mathop{{\rm{ess}}\,\,\rm{sup}}_{x\in\R^n}p(x)=p^+<\infty. 
\end{equation}
For $p \in \mathcal{P}_0(\R^n)$, 
let $L^{p(\cdot)}(\R^n)$ be
the set of measurable functions $f$ on $\R^n$ 
such that for some $\lambda>0$,
\[
\int_{\R^n}\left(\frac{|f(x)|}{\lambda}\right)^{p(x)}\,{\rm d}x\le 1.
\]
The infimum of such $\lambda$ will be denoted by $||f||_{L^{p(\cdot)}}$.
The set $L^{p(\cdot)}(\R^n)$
becomes a quasi Banach function space equipped 
with the Luxemburg--Nakano norm $||f||_{L^{p(\cdot)}}$.
More precisely,
\[
||f||_{L^{p(\cdot)}}
=
\inf\left\{\lambda>0\,:\,
\int_{\R^n}\left(\frac{|f(x)|}{\lambda}\right)^{p(x)}\,{\rm d}x\le 1
\right\}.
\]
If $\Omega\subset\R^n$ is a measurable set, then 
we define 
\[
||f||_{L^{p(\cdot)}(\Omega)}
=
\inf\left\{\lambda>0\,:\,
\int_{\Omega}\left(\frac{|f(x)|}{\lambda}\right)^{p(x)}\,{\rm d}x\le 1
\right\}.
\]
To define Besov and Triebel--Lizorkin spaces with variable exponents, 
we postulate the following conditions:
There exists a positive constant $C_{\log}(p)$ such that
\begin{equation}
\label{eq:log-Holder-continuity}
|p(x)-p(y)| \le \frac{C_{\log}(p)}{\log(e+|x-y|^{-1})}
\quad
(x,y \in \R^n, \, x \ne y)
\end{equation}
and there exist a positive constant $C_{\log}(p)$ and real number $p_\infty$ such that
\begin{equation}
\label{eq:log-decay}
|p(x)-p_\infty| \le \frac{C_{\log}(p)}{\log(e+|x|)}
\quad (x \in \R^n).
\end{equation}
The set of all real valued functions $p:{\mathbb R}^n \to {\mathbb R}$
satisfying {\rm(\ref{eq:log-Holder-continuity})}
and {\rm(\ref{eq:log-decay})} is written by $C^{\rm log}(\R^n)$.

To define Besov spaces with variable exponents, we use mixed Lebesgue sequence space $\ell^{q(\cdot)}(L^{p(\cdot)})$. 

Let $p(\cdot)$, $q(\cdot)\in\mathcal{P}_0(\jissu^n)$. The space $\ell^{q(\cdot)}(L^{p(\cdot)})$ is the collection of 
all sequences $\{f_j\}_{j=0}^{\infty}$ of measurable functions on $\jissu^n$ such that 
\[
||\{f_j\}_{j=0}^{\infty}||_{\ell^{q(\cdot)}(L^{p(\cdot)})}
= \inf\left\{
\mu>0\,:\, \varrho_{\ell^{q(\cdot)}(L^{p(\cdot)})}\left(
\left\{
\frac{f_j}{\mu}
\right\}_{j=0}^{\infty}
\right)\le 1
\right\}<\infty, 
\]
where 
\[
\varrho_{\ell^{q(\cdot)}(L^{p(\cdot)})}\left(
\left\{
f_j
\right\}_{j=0}^{\infty}
\right)
=\sum_{j=0}^{\infty}\inf
\left\{
\lambda_j\,:\,
\int_{\jissu^n}\left(
\frac{|f_j(x)|}{\lambda_j^{\frac{1}{q(x)}}}
\right)^{p(x)}\,{\rm d}x \le 1
\right\}. 
\]

Since we assume that $q^+<\infty$,  
\begin{equation}
\varrho_{\ell^{q(\cdot)}(L^{p(\cdot)})}\left(
\left\{
f_j
\right\}_{j=0}^{\infty}
\right) = \sum_{j=0}^{\infty}
\left|\left|
|f_j|^{q(\cdot)}
\right|\right|_{L^{\frac{p(\cdot)}{q(\cdot)}}}. \label{modular}
\end{equation}

Almeida and H\"ast\"o \cite{Almeida} proved that $||\cdot||_{\ell^{q(\cdot)}(L^{p(\cdot)})}$ 
is a quasi-norm for all $p(\cdot), q(\cdot)\in\mathcal{P}(\R^n)$ 
and that $||\cdot||_{\ell^{q(\cdot)}(L^{p(\cdot)})}$ is a norm when $\frac{1}{p(\cdot)}+\frac{1}{q(\cdot)}\le 1$. 
Kempka and Vyb{\'\i}ral \cite{Kempka2} proved that $||\cdot||_{\ell^{q(\cdot)}(L^{p(\cdot)})}$ is a norm 
if $p(\cdot),q(\cdot)\in\mathcal{P}(\R^n)$ satisfy either $1\le q(x)\le p(x)\le \infty$ alomost everywhere on $\R^n$
or $\frac{1}{p(x)}+\frac{1}{q(x)}\le 1$ for almost all $x\in\R^n$. 
Furthermore, they proved that 
there exist $p(\cdot),q(\cdot)\in\mathcal{P}(\R^n)$ satisfying $\inf_{x\in\R^n}(p(\cdot),q(\cdot))\ge 1$ such that 
a triangle inequality does not hold for $||\cdot||_{\ell^{q(\cdot)}(L^{p(\cdot)})}$. 
This means that $||\cdot||_{\ell^{q(\cdot)}(L^{p(\cdot)})}$ does not always become a norm even if $p(\cdot)$ and $q(\cdot)$ satisfy $p^-$, $q^-\ge 1$. 
However, we have following inequalities. 

\begin{lemma}
\label{min triangle}
\text{ \rm (i)} Let $p(\cdot)\in\mathcal{P}_0(\R^n)$. Then 
\[
\left\| f+g\right\|_{L^{p(\cdot)}}^{\min(p^-,1)} \le 
\left\| f\right\|_{L^{p(\cdot)}}^{\min(p^-,1)}+\left\| g\right\|_{L^{p(\cdot)}}^{\min(p^-,1)}. 
\]

\noindent  
\text{\rm (ii)} Let $p(\cdot), q(\cdot)\in\mathcal{P}_0(\R^n)$. Then 
\[
\left\| \{f_k+g_k\}_{k=0}^{\infty}\right\|_{L^{p(\cdot)}(\ell^{q(\cdot)})}^{\min(p^-,q^-,1)} 
\le 
\left\| \{f_k\}_{k=0}^{\infty}\right\|_{L^{p(\cdot)}(\ell^{q(\cdot)})}^{\min(p^-,q^-,1)} 
+ \left\|\{g_k\}_{k=0}^{\infty}\right\|_{L^{p(\cdot)}(\ell^{q(\cdot)})}^{\min(p^-,q^-,1)}. 
\]

\noindent
\text{\rm (iii)} 
 Let $p(\cdot), q(\cdot)\in\mathcal{P}_0(\R^n)$ and 
\[
\alpha=
\min
\left(
q^-,\,1
\right)
\min
\left(1,\,
\left(
\frac{p}{q}
\right)^-
\right). 
\]
Then 
\[
\left\| \{f_k+g_k\}_{k=0}^{\infty}\right\|_{\ell^{q(\cdot)}(L^{p(\cdot)})}^{\alpha} 
\le 
\left\| \{f_k\}_{k=0}^{\infty}\right\|_{\ell^{q(\cdot)}(L^{p(\cdot)})}^{\alpha} 
+ \left\|\{g_k\}_{k=0}^{\infty}\right\|_{\ell^{q(\cdot)}(L^{p(\cdot)})}^{\alpha}. 
\]
\end{lemma}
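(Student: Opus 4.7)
My plan is to prove (i) directly from the Luxemburg--Nakano definition and then to derive (ii) and (iii) from (i) by combining it with sharp pointwise estimates. For (i) I set $r:=\min(p^-,1)$, $A:=\|f\|_{L^{p(\cdot)}}$, $B:=\|g\|_{L^{p(\cdot)}}$ and $\lambda:=(A^{r}+B^{r})^{1/r}$, and write $u:=A/\lambda$, $v:=B/\lambda$, so that $u^{r}+v^{r}=1$ and (because $r\le 1$) $u+v\le 1$. The essential step is the pointwise bound
\[
\left(\frac{|f(x)|+|g(x)|}{\lambda}\right)^{\!p(x)}\le u^{r}\!\left(\frac{|f(x)|}{A}\right)^{\!p(x)}+v^{r}\!\left(\frac{|g(x)|}{B}\right)^{\!p(x)},
\]
which I would verify by splitting into $p(x)\ge 1$ (convexity of $t\mapsto t^{p(x)}$ with weights $u,v,1-u-v$, followed by $u\le u^{r}$) and $p(x)<1$ (subadditivity of $t\mapsto t^{p(x)}$, followed by $u^{p(x)}\le u^{r}$). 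Integrating, invoking the definitions of $A,B$, and using $|f+g|\le |f|+|g|$ gives $\int(|f+g|/\lambda)^{p(x)}\,dx\le u^{r}+v^{r}=1$, whence $\|f+g\|_{L^{p(\cdot)}}\le\lambda$, which is (i).

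For (ii), set $H:=(\sum_{k}|f_{k}+g_{k}|^{q(\cdot)})^{1/q(\cdot)}$ and define $F,G$ analogously. With $r:=\min(q^-,1)$ the same pair of cases---Minkowski in $\ell^{q(x)}$ when $q(x)\ge 1$, and subadditivity of $t\mapsto t^{q(x)}$ together with the fact $(a+b)^{r/q(x)}\le a^{r/q(x)}+b^{r/q(x)}$ (valid since $r/q(x)\le 1$) when $q(x)<1$---yields the pointwise inequality $H(x)^{r}\le F(x)^{r}+G(x)^{r}$. Using the identity $\|H\|_{L^{p(\cdot)}}^{r}=\|H^{r}\|_{L^{p(\cdot)/r}}$ and applying (i) to $L^{p(\cdot)/r}$, whose corresponding exponent in (i) is $\min(p^-/r,1)$, gives (ii) after the algebraic simplification $r\cdot\min(p^-/r,1)=\min(p^-,r)=\min(p^-,q^-,1)$.

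For (iii), write $\sigma:=\min(q^-,1)$, $s:=\min(1,(p/q)^-)$, so $\alpha=\sigma s$, and by positive homogeneity of $\|\cdot\|_{\ell^{q(\cdot)}(L^{p(\cdot)})}$ I may normalize so that $A^{\alpha}+B^{\alpha}=1$, where $A$ and $B$ denote the norms of $\{f_{k}\}$ and $\{g_{k}\}$; this forces $A+B\le 1$. Abbreviate $\psi(h):=\||h|^{q(\cdot)}\|_{L^{p(\cdot)/q(\cdot)}}$, so that by \eqref{modular} the modular equals $\sum_{k}\psi(f_{k})$. The same convex/subadditive case analysis as in (i), but now for the exponent $q(x)$, yields the pointwise bound
\[
|f_{k}(x)+g_{k}(x)|^{q(x)}\le A^{\sigma}\!\left(\frac{|f_{k}(x)|}{A}\right)^{\!q(x)}+B^{\sigma}\!\left(\frac{|g_{k}(x)|}{B}\right)^{\!q(x)}.
\]
Taking $L^{p(\cdot)/q(\cdot)}$-norms and applying (i) with exponent $s$ in that space produces
\[
\psi(f_{k}+g_{k})^{s}\le A^{\alpha}\psi(f_{k}/A)^{s}+B^{\alpha}\psi(g_{k}/B)^{s}.
\]
The decisive step, which I expect to be the main obstacle, is to remove this power $s$ with constant $1$: since $1/s\ge 1$ the map $t\mapsto t^{1/s}$ is convex, and $(A^{\alpha},B^{\alpha})$ are probability weights, so Jensen's inequality gives $\psi(f_{k}+g_{k})\le A^{\alpha}\psi(f_{k}/A)+B^{\alpha}\psi(g_{k}/B)$. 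Summing in $k$ and using $\sum_{k}\psi(f_{k}/A),\sum_{k}\psi(g_{k}/B)\le 1$ finally yields $\sum_{k}\psi(f_{k}+g_{k})\le A^{\alpha}+B^{\alpha}=1$, so $\|\{f_{k}+g_{k}\}\|_{\ell^{q(\cdot)}(L^{p(\cdot)})}\le 1$, which is (iii).
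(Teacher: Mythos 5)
Your proof is correct. For part~(i) you essentially reproduce the paper's argument: you set $r=\min(p^-,1)$, normalize so that $u^{r}+v^{r}=1$, and establish the modular bound $\le 1$ via subadditivity of $t\mapsto t^{r}$ and convexity of $t\mapsto t^{p(x)/r}$, which is just what the paper's factorization $|f+g|^{r}\le|f|^{r}+|g|^{r}$ followed by convexity gives (you organize it as a pointwise case split on $p(x)\gtrless 1$, the paper as a one-line chain). Where you genuinely diverge is in (ii) and (iii): the paper proves each directly from the modular, redoing the whole $\lambda_1$, $\lambda_2$ normalization and convexity chain inside the integral/sum each time, whereas you reduce both to part~(i) as a black box. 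For~(ii) your route is the pointwise estimate $H(x)^{r}\le F(x)^{r}+G(x)^{r}$, the rescaling identity $\|H\|_{L^{p(\cdot)}}^{r}=\|H^{r}\|_{L^{p(\cdot)/r}}$, and an application of~(i) in $L^{p(\cdot)/r}$, with the algebra $r\cdot\min(p^-/r,1)=\min(p^-,q^-,1)$ giving the stated exponent. For~(iii) you apply~(i) in $L^{p(\cdot)/q(\cdot)}$ after a pointwise estimate on the $q(x)$-powers and then remove the residual power $s$ via Jensen's inequality with probability weights $(A^{\alpha},B^{\alpha})$ --- the paper also implicitly uses a Jensen-type step (convexity of $t\mapsto t^{1/t}$ in the notation there) but keeps everything inside one modular computation. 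The payoff of your organization is that (ii) and (iii) become genuine corollaries of (i), which is cleaner and more reusable; the payoff of the paper's approach is that each part is self-contained and never relies on the scaling identity $\|h\|_{L^{p(\cdot)}}^{r}=\|\,|h|^{r}\,\|_{L^{p(\cdot)/r}}$. One small caution you should make explicit: in~(iii) the deduction $A+B\le 1$ from $A^{\alpha}+B^{\alpha}=1$ (needed to run the convexity case $q(x)\ge 1$ with the third weight $1-A-B\ge 0$) uses $\alpha\le 1$ and $A,B\le 1$, via $A=(A^{\alpha})^{1/\alpha}\le A^{\alpha}$; you invoke it but do not prove it.
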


\begin{proof}
Let $r=\min(p^-,1)$ and 
\[
\lambda_1 = \left\| f\right\|_{L^{p(\cdot)}}^{\min(p^-,1)}, \ \ 
\lambda_2 = \left\| g\right\|_{L^{p(\cdot)}}^{\min(p^-,1)}. 
\]
Then we see that 
\begin{align*}
\int_{\R^n}
&\left(
\frac{|f+g|}{(\lambda_1+\lambda_2){^{1/r}}}
\right)^{p(x)} {\rm d}x \notag \\ 
&= 
\int_{\R^n}
\left(
\frac{|f+g|^r}{\lambda_1+\lambda_2}
\right)^{p(x)/r} {\rm d}x \\ 
&\le 
\int_{\R^n}
\left(
\frac{\lambda_1}{\lambda_1+\lambda_2}
\frac{|f|^r}{\lambda_1}
+
\frac{\lambda_2}{\lambda_1+\lambda_2}
\frac{|g|^r}{\lambda_2}
\right)^{p(x)/r} {\rm d}x \\ 
&\le 
\frac{\lambda_1}{\lambda_1+\lambda_2}
\int_{\R^n}
\left(
\frac{|f|^r}{\lambda_1}
\right)^{p(x)/r} {\rm d}x 
+
\frac{\lambda_2}{\lambda_1+\lambda_2}
\int_{\R^n}
\left(
\frac{|g|^r}{\lambda_2}
\right)^{p(x)/r} {\rm d}x \\ 
&=
\frac{\lambda_1}{\lambda_1+\lambda_2}
\int_{\R^n}
\left(
\frac{|f|}{\lambda_1^{1/r}}
\right)^{p(x)} {\rm d}x 
+
\frac{\lambda_2}{\lambda_1+\lambda_2}
\int_{\R^n}
\left(
\frac{|g|}{\lambda_2^{1/r}}
\right)^{p(x)} {\rm d}x \\ 
&\le 1. 
\end{align*}  
This implies {\rm (i)}. 

Next we will prove {\rm (ii)}. 
Let $r=\min(p^-,q^-,1)$ and 
\[
\lambda_1 =\left\| \{f_k\}_{k=0}^{\infty}\right\|_{L^{p(\cdot)}(\ell^{q(\cdot)})}^{\min(p^-,q^-,1)} , \ \ 
\lambda_2 = \left\| \{g_k\}_{k=0}^{\infty}\right\|_{L^{p(\cdot)}(\ell^{q(\cdot)})}^{\min(p^-,q^-,1)} . 
\]
Then we see that 
\begin{align*}
\int_{\R^n}
&\left(
\frac{\{\sum_{k=0}^{\infty}|f_k+g_k|^{q(x)}\}^{1/{q(x)}}}{(\lambda_1+\lambda_2)^{1/r}}
\right)^{p(x)} {\rm d}x \\ 
&\le 
\int_{\R^n}
\left(
\frac{\{\sum_{k=0}^{\infty}(|f_k|^r+|g_k|^r)^{q(x)/r}\}^{r/{q(x)}}}{\lambda_1+\lambda_2}
\right)^{p(x)/r} {\rm d}x \\ 
&\le 
\int_{\R^n}
\left(
\frac{\{\sum_{k=0}^{\infty}(|f_k|^r)^{q(x)/r}\}^{r/{q(x)}}+\{\sum_{k=0}^{\infty}(|g_k|^r)^{q(x)/r}\}^{r/{q(x)}}}{\lambda_1+\lambda_2}
\right)^{p(x)/r} {\rm d}x \\ 
&\le
\frac{\lambda_1}{\lambda_1+\lambda_2}
\int_{\R^n}
\left(
\frac{\{\sum_{k=0}^{\infty}(|f_k|)^{q(x)}\}^{1/{q(x)}}}{\lambda_1^{1/r}}
\right)^{p(x)} {\rm d}x  \notag \\ 
&\qquad + \frac{\lambda_2}{\lambda_1+\lambda_2}\int_{\R^n}
\left(
\frac{\{\sum_{k=0}^{\infty}(|g_k|)^{q(x)}\}^{1/{q(x)}}}{\lambda_2^{1/r}}
\right)^{p(x)} {\rm d}x \\ 
&\le 1. 
\end{align*}
This implies {\rm (ii)}. 

Finally, we will prove {\rm (iii)}. 
Let 
\[
s=\min
\left(
q^-,\,1
\right), 
t= \min
\left(1,\,
\left(
\frac{p}{q}
\right)^-
\right), 
\alpha=st 
\]
and 
\[
\lambda_1=
\left\| \{f_k\}_{k=0}^{\infty}\right\|_{\ell^{q(\cdot)}(L^{p(\cdot)})}^{\alpha}, 
\lambda_2=\left\|\{g_k\}_{k=0}^{\infty}\right\|_{\ell^{q(\cdot)}(L^{p(\cdot)})}^{\alpha}.  
\]
Then we see that 
\begin{align*}
\sum_{k=0}^{\infty} 
&\left\|
\left(
\frac{|f_k+g_k|}
{(\lambda_1+\lambda_2)^{1/st}}
\right)^{q(\cdot)}
\right\|_{L^{\frac{p(\cdot)}{q(\cdot)}}} \\ 
&=\sum_{k=0}^{\infty} 
\left\|
\left(
\frac{|f_k+g_k|^{st}}
{\lambda_1+\lambda_2}
\right)^{q(\cdot)/s}
\right\|_{L^{\frac{p(\cdot)}{tq(\cdot)}}}^{1/t} \\ 
&=\sum_{k=0}^{\infty} 
\left\|
\frac{\lambda_1}{\lambda_1+\lambda_2}
\left(
\frac{|f_k|^{st}}
{\lambda_1}
\right)^{q(\cdot)/s}
+
\frac{\lambda_2}{\lambda_1+\lambda_2}
\left(
\frac{|g_k|^{st}}
{\lambda_2}
\right)^{q(\cdot)/s}
\right\|_{L^{\frac{p(\cdot)}{tq(\cdot)}}}^{1/t} \\ 
&\le 
\sum_{k=0}^{\infty} 
\left(
\frac{\lambda_1}{\lambda_1+\lambda_2}
\left\|
\left(
\frac{|f_k|^{st}}
{\lambda_1}
\right)^{q(\cdot)/s}
\right\|_{L^{\frac{p(\cdot)}{tq(\cdot)}}}
+
\frac{\lambda_2}{\lambda_1+\lambda_2}
\left\|
\left(
\frac{|g_k|^{st}}
{\lambda_2}
\right)^{q(\cdot)/s}
\right\|_{L^{\frac{p(\cdot)}{tq(\cdot)}}}
\right)^{1/t} \\ 
&\le 
\sum_{k=0}^{\infty} 
\frac{\lambda_1}{\lambda_1+\lambda_2}
\left\|
\left(
\frac{|f_k|^{st}}
{\lambda_1}
\right)^{q(\cdot)/s}
\right\|_{L^{\frac{p(\cdot)}{tq(\cdot)}}}^{1/t}
+
\sum_{k=0}^{\infty} 
\frac{\lambda_2}{\lambda_1+\lambda_2}
\left\|
\left(
\frac{|g_k|^{st}}
{\lambda_2}
\right)^{q(\cdot)/s}
\right\|_{L^{\frac{p(\cdot)}{tq(\cdot)}}}^{1/t} \\ 
&= 
\sum_{k=0}^{\infty} 
\frac{\lambda_1}{\lambda_1+\lambda_2}
\left\|
\left(
\frac{|f_k|}
{\lambda_1^{1/{\alpha}}}
\right)^{q(\cdot)}
\right\|_{L^{\frac{p(\cdot)}{q(\cdot)}}}
+
\sum_{k=0}^{\infty} 
\frac{\lambda_2}{\lambda_1+\lambda_2}
\left\|
\left(
\frac{|g_k|}
{\lambda_2^{1/{\alpha}}}
\right)^{q(\cdot)}
\right\|_{L^{\frac{p(\cdot)}{q(\cdot)}}} \\ 
&\le 1. 
\end{align*}
Hence we have {\rm (iii)}. 
\end{proof}

The set 
$\Phi(\jissu^n)$ is the collection of all systems $\theta=\{\theta_j\}_{j=0}^{\infty}\subset\mathcal{S}(\jissu^n)$ such that 
\begin{equation*}
\begin{cases}
\spt \theta_0\subset\{x\,:\,|x|\le 2\}, \\
\spt \theta_j\subset\{x\,:\,2^{j-1}\le |x|\le 2^{j+1}\}\,\,\text{for } j=1,2,\cdots, 
\end{cases}
\end{equation*}
for every multi-index $\alpha$, there exists a positive number $c_{\alpha}$ such that 
\[
2^{j|\alpha|}|D^{\alpha}\theta_j(x)|\le c_{\alpha}
\]
for $j=0,1,\cdots$ and $x\in\jissu^n$ and 
\[
\sum_{j=0}^{\infty}\theta_j(x)=1
\]
for $x\in\jissu^n$. 

Let $\theta$ be a continuous function on $\R^n$ or the sum of finitely many characteristic functions of cubes in $\R^n$. Then $\theta(D)$ is defined by 
$\theta(D)f=\inversefourier[\theta\cdot \fourier f]$. 
 
\begin{definition}
\label{Def:T} 
Let $p(\cdot),q(\cdot)\in C^{\log}(\R^n)\cap\mathcal{P}_0(\jissu^n)$ and $\alpha(\cdot)\in C^{\log}(\R^n)$. 
Let $\theta=\{\theta_j\}_{j=0}^{\infty}\in\Phi(\jissu^n)$.  
Besov space $B_{p(\cdot),q(\cdot)}^{\alpha(\cdot)}(\jissu^n)$ with variable exponents is the collection of $f\in\mathcal{S}'(\jissu^n)$ such that  
\[
||f||_{B_{p(\cdot),q(\cdot)}^{\alpha(\cdot)}} = \left|\left|\left\{2^{j\alpha(\cdot)}\theta_j(D)f  \right\}_{j=0}^{\infty}\right|\right|_{\ell^{q(\cdot)}(L^{p(\cdot)})}<\infty. 
\] 

Triebel--Lizorkin space $F_{p(\cdot),q(\cdot)}^{\alpha(\cdot)}(\jissu^n)$ with variable exponents is the collection of $f\in\mathcal{S}'(\jissu^n)$ such that 
\[
||f||_{F_{p(\cdot),q(\cdot)}^{\alpha(\cdot)}}= 
\left|\left|\left\{2^{j\alpha(\cdot)}\theta_j (D)f  \right\}_{j=0}^{\infty}\right|\right|_{L^{p(\cdot)}(\ell^{q(\cdot)})}<\infty. 
\]

Here $L^{p(\cdot)}(\ell^{q(\cdot)})$ is the space of all sequences $\{g_j\}_0^{\infty}$ of measurable functions on $\jissu^n$ 
such that quasi-norms
\[
||\{g_j\}_{j=0}^{\infty} ||_{L^{p(\cdot)}(\ell^{q(\cdot)})} 
= \left|\left|\left(\sum_{j=0}^{\infty}|g_j(\cdot)|^{q(\cdot)}\right)^{\frac{1}{q(\cdot)}}\right|\right|_{L^{p(\cdot)}}<\infty. 
\]
\end{definition} 

Let $A_{p(\cdot),q(\cdot)}^{\alpha(\cdot)}(\jissu^n)$ be either $B_{p(\cdot),q(\cdot)}^{\alpha(\cdot)}(\jissu^n)$ or $F_{p(\cdot),q(\cdot)}^{\alpha(\cdot)}(\jissu^n)$. 

\subsection{Fundamental results for variable exponents analysis} 

Let $A$ and $B$ be positive constants or positive valued functions and $c$ be a positive constant. 
In this paper, we use the following notations : 
\begin{itemize}
\item If $A \le c B$ hold, then we write $A\lesssim B$.  
\item $A \gtrsim B$ means $B \lesssim A$. 
\item If $A\lesssim B$ and $B \lesssim A$, then we write $A \sim B$. 
\item If there exists a constant $c$ such that $A=cB$, then we write $A\simeq B$. 
\end{itemize}
When we emphasize that the constant $c$ as above is depend on some parameters $\alpha$, $\beta$, $\gamma, \cdots$, 
then we use the following notations : 
\begin{itemize}
\item We write $A \lesssim_{ \alpha, \beta, \gamma,\cdots  } B$ instead of $A\lesssim B$. 
\item We write $A \gtrsim_{\alpha, \beta, \gamma,\cdots} B$ instead of $A \gtrsim B$.
\item We write $A \sim_{\alpha, \beta, \gamma,\cdots} B$ instead of $A \sim B$. 
\item We write $A\simeq_{\alpha, \beta, \gamma,\cdots} B$ instead of $A\simeq B$.  
\end{itemize}

Similarly to classical theory, the following H\"older type inequalities \cite[Theorem 2.3]{Huang-Xu} hold. 
\begin{theorem}[{\rm \cite[Theorem 2.3]{Huang-Xu}}]
\label{Holder type inequality}
Let $p_0(\cdot), p_1(\cdot), p_2(\cdot)\in\mathcal{P}_0(\R^n)$ with $\frac{1}{p_0(\cdot)}=\frac{1}{p_1(\cdot)} + \frac{1}{p_2(\cdot)}$. 
Then we have $\| fg \|_{L^{p_0(\cdot)}}\lesssim \|f\|_{L^{p_1(\cdot)}}\|g\|_{L^{p_2(\cdot)}}$ for any $f\in L^{p_1(\cdot)}(\R^n)$ and $g\in L^{p_2(\cdot)}(\R^n)$. 
\end{theorem}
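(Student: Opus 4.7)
The plan is to exploit a pointwise Young inequality with the variable conjugate exponents $p_1(\cdot)/p_0(\cdot)$ and $p_2(\cdot)/p_0(\cdot)$, and then translate the resulting modular bound into a norm bound using the unit-ball property of the Luxemburg--Nakano functional. By homogeneity of $\|\cdot\|_{L^{p_i(\cdot)}}$, I would first reduce to the normalized case $\|f\|_{L^{p_1(\cdot)}} = \|g\|_{L^{p_2(\cdot)}} = 1$, which by definition of the norm entails
\[
\int_{\R^n} |f(x)|^{p_1(x)}\,{\rm d}x \le 1, \qquad \int_{\R^n} |g(x)|^{p_2(x)}\,{\rm d}x \le 1.
\]

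Next, set $s(x) := p_1(x)/p_0(x)$ and $s'(x) := p_2(x)/p_0(x)$. The hypothesis $1/p_0(\cdot) = 1/p_1(\cdot) + 1/p_2(\cdot)$ is equivalent to $1/s(x) + 1/s'(x) = 1$ pointwise, and both $s(x), s'(x) \ge 1$ since $p_0 \le \min(p_1,p_2)$ everywhere. Applying the scalar Young inequality $AB \le A^{s(x)}/s(x) + B^{s'(x)}/s'(x)$ at each $x$ to $A = |f(x)|^{p_0(x)}$ and $B = |g(x)|^{p_0(x)}$ yields
\[
|f(x)g(x)|^{p_0(x)} \le \frac{p_0(x)}{p_1(x)} |f(x)|^{p_1(x)} + \frac{p_0(x)}{p_2(x)} |g(x)|^{p_2(x)} \le |f(x)|^{p_1(x)} + |g(x)|^{p_2(x)}.
\]
Integrating over $\R^n$ and using the normalized modular bounds gives $\int_{\R^n} |f(x)g(x)|^{p_0(x)}\,{\rm d}x \le 2$.

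Finally, I would convert this modular bound to a Luxemburg norm bound. Since $0 < p_0^- \le p_0^+ < \infty$, the standard modular-norm relation on $L^{p_0(\cdot)}(\R^n)$ asserts that $\varrho(h) \le C$ implies $\|h\|_{L^{p_0(\cdot)}} \le \max(1, C^{1/p_0^-})$; thus $\|fg\|_{L^{p_0(\cdot)}} \le 2^{1/p_0^-}$. Undoing the normalization then gives the claim with implicit constant $2^{1/p_0^-}$, depending only on $p_0^-$. The only delicate point is the quasi-Banach range $p_0^- < 1$, where the Young inequality cannot be applied directly to $|f|$ and $|g|$ against exponents below one; this is circumvented precisely by applying Young at the level of the $p_0$-th powers $|f|^{p_0(x)}, |g|^{p_0(x)}$, where the effective exponents $s(x), s'(x)$ are automatically at least one.
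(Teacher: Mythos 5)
Your proof is correct, and it is worth noting at the outset that the paper does not actually prove this statement: it is quoted verbatim as \cite[Theorem 2.3]{Huang-Xu} and used as an external input, so there is no in-paper argument to compare against. Your argument is the standard self-contained proof of the three-exponent H\"older inequality for variable Lebesgue spaces. Each step checks out: since $p_2\in\mathcal{P}_0(\R^n)$ forces $p_2^+<\infty$, the identity $1/p_0=1/p_1+1/p_2$ gives $p_0(x)<\min(p_1(x),p_2(x))$ almost everywhere, so the local conjugate exponents $s(x)=p_1(x)/p_0(x)$ and $s'(x)=p_2(x)/p_0(x)$ are strictly greater than $1$ and satisfy $1/s(x)+1/s'(x)=1$; applying scalar Young to $A=|f(x)|^{p_0(x)}$ and $B=|g(x)|^{p_0(x)}$ then produces exactly the pointwise bound you write, because $p_0(x)s(x)=p_1(x)$, $p_0(x)s'(x)=p_2(x)$, and $p_0/p_i\le 1$. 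The normalization step uses the unit-ball property ($\|h\|_{L^{p(\cdot)}}\le 1\iff\varrho(h)\le 1$ when $p^+<\infty$), and the conversion back from the modular bound $\varrho(fg)\le 2$ to $\|fg\|_{L^{p_0(\cdot)}}\le 2^{1/p_0^-}$ follows from $\varrho(h/\lambda)\le\lambda^{-p_0^-}\varrho(h)$ for $\lambda\ge 1$. You also correctly identify and resolve the one genuinely delicate point in the quasi-Banach range $p_0^-<1$: Young cannot be applied to $|f|,|g|$ with sub-unit exponents, but taking $p_0$-th powers first pushes the effective exponents above $1$. The resulting constant $2^{1/p_0^-}$ is admissible for the stated ``$\lesssim$''.
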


D.\,Cruz-Uribe et al.\,\cite{Cruz} proves the boundedness of classical operators, for example, singular integral operators and  
fractional integral operators on the space $L^{p(\cdot)}(\jissu^n)$.   
If $f(\cdot)$ is a complex-valued locally Lebesgue-integrable function on $\jissu^n$, then 
\[
(\mathcal{M}f)(x) = \sup\frac{1}{|B|}\int_B|f(y)|\,{\rm d}y
\]
is called Hardy--Littlewood maximal operator, where the supremum is taken over all balls $B$ centered at $x$. 
Furthermore, let $0<r\le 1$. If $f(\cdot)$ is a complex-valued locally Lebesgue-integrable function on $\jissu^n$, then 
\[
(\mathcal{M}_r f)(x) = \left( \sup\frac{1}{|B|}\int_B|f(y)|^r\,{\rm d}y\right)^{1/r}
\]
is also called Hardy--Littlewood maximal operator. 
The next theorem is corresponding to the well-known maximal vector-valued inequality in the classical theory. 

\begin{theorem}[{\rm \cite[Corollary 2.1]{Cruz}}] 
\label{thm:max}
If $p(\cdot)\in\mathcal{B}(\jissu^n)$, then, for all $q\in (1, \infty)$, there exists a constant $c$ such that 
\begin{equation}
||\{\mathcal{M}f_k\}_{k=0}^{\infty}||_{L^{p(\cdot)}(\ell^q)}\le c||\{f_k\}_{k=0}^{\infty}||_{L^{p(\cdot)}(\ell^q)} \label{max}
\end{equation}
for all sequences $\{f_k\}_{k=0}^{\infty}\subset L^{p(\cdot)}(\jissu^n)$. 
\end{theorem}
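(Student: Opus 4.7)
The plan is to reduce the vector-valued inequality to the scalar boundedness of $\mathcal{M}$ on $L^{p(\cdot)}(\R^n)$ via the Rubio de Francia extrapolation technique adapted to variable exponents. The hypothesis $p(\cdot)\in\mathcal{B}(\R^n)$ is, by definition, the statement that the Hardy--Littlewood maximal operator is bounded on $L^{p(\cdot)}(\R^n)$; the task is to upgrade this scalar bound to an $\ell^q$-valued bound, uniformly over $q\in(1,\infty)$.

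First I would record the classical weighted Fefferman--Stein inequality: for every fixed $p_0\in(1,\infty)$, every $q\in(1,\infty)$, and every weight $w\in A_{p_0}$, one has
\[
\Bigl\|\Bigl(\sum_{k=0}^{\infty}(\mathcal{M}f_k)^q\Bigr)^{1/q}\Bigr\|_{L^{p_0}(w)}
\le C(n,p_0,q,[w]_{A_{p_0}})\,
\Bigl\|\Bigl(\sum_{k=0}^{\infty}|f_k|^q\Bigr)^{1/q}\Bigr\|_{L^{p_0}(w)}.
\]
This is standard and proceeds either through the duality $\ell^q=(\ell^{q'})^{*}$ combined with the boundedness of $\mathcal{M}$ on $L^{p}(w)$ and $L^{p'}(w^{1-p'})$, or through the sharp maximal function technique of Coifman--Fefferman.

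Next I would invoke the variable exponent extrapolation principle: if a pair of nonnegative measurable functions $(F,G)$ satisfies $\|F\|_{L^{p_0}(w)}\le C\|G\|_{L^{p_0}(w)}$ for every $w\in A_{p_0}$ (with $C$ depending only on $[w]_{A_{p_0}}$), then for every $p(\cdot)\in\mathcal{B}(\R^n)$ one has $\|F\|_{L^{p(\cdot)}}\lesssim \|G\|_{L^{p(\cdot)}}$. Applying this with
\[
F=\Bigl(\sum_{k=0}^{\infty}(\mathcal{M}f_k)^q\Bigr)^{1/q},\qquad
G=\Bigl(\sum_{k=0}^{\infty}|f_k|^q\Bigr)^{1/q},
\]
and with the weighted estimate from the previous step as the input, yields exactly \eqref{max}.

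The only non-routine ingredient is the variable exponent extrapolation theorem itself, whose proof relies on the Rubio de Francia iteration algorithm built out of the scalar maximal operator on $L^{p(\cdot)}$ and on its associate space $L^{p'(\cdot)}$; verifying the associate-space boundedness (equivalently $p'(\cdot)\in\mathcal{B}(\R^n)$) is the main technical point, and is available precisely under the log-Hölder conditions \eqref{eq:log-Holder-continuity} and \eqref{eq:log-decay} built into the class $\mathcal{B}(\R^n)$. An alternative, more direct route would be to linearize the inner $\ell^q$-norm by duality against $\ell^{q'}$ with $L^{p'(\cdot)}$ on the outside, then apply the scalar maximal inequality twice; this bypasses extrapolation but gives a less transparent dependence on $q$.
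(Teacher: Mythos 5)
The paper does not prove this result; it imports it verbatim as \cite[Corollary 2.1]{Cruz}, and your extrapolation sketch is essentially the argument given there: a weighted Fefferman--Stein vector-valued maximal estimate uniform in $w\in A_{p_0}$, followed by Rubio de Francia extrapolation into $L^{p(\cdot)}$. That is the correct structure and matches the cited source.

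One imprecision is worth flagging. You describe the needed associate-space boundedness $p'(\cdot)\in\mathcal{B}(\R^n)$ as being ``available precisely under the log-H\"older conditions built into the class $\mathcal{B}(\R^n)$.'' The class $\mathcal{B}(\R^n)$ is not defined by log-H\"older continuity; it is defined directly as the set of exponents with $1<p^-\le p^+<\infty$ on which $\mathcal{M}$ is bounded, and log-H\"older regularity is merely a convenient sufficient condition, not part of the definition and not necessary. The ingredient you actually need to close the extrapolation loop --- that $p(\cdot)\in\mathcal{B}(\R^n)$ implies $p'(\cdot)\in\mathcal{B}(\R^n)$ --- is a genuinely nontrivial duality theorem of Diening \cite{Diening2}, and it is that theorem (not log-H\"older) that reconciles the hypothesis $p(\cdot)\in\mathcal{B}(\R^n)$ stated here with the form of the hypothesis used in \cite[Corollary 2.1]{Cruz}, which is phrased in terms of boundedness on the associate space. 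With that substitution your argument is complete and agrees with the reference.
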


It is well-known that $(\ref{max})$ does not always hold if $q(\cdot)\in\mathcal{B}(\jissu^n)$ is not 
a constant function. However, Diening et\,al.\,\cite{Diening} showed the following helpful theorem 
which takes the place of Theorem $\ref{thm:max}$. Let 
\[
\eta_m(x)=(1+|x|)^{-m} \ \ \text{and} \ \ \eta_{\nu,m}(x)=2^{\nu n}\eta_m(2^{\nu}x)
\]
for $\nu\in\shizensu_0$ and a positive real number $m$. 
\begin{theorem}[{\rm \cite[Theorem 3.2]{Diening}}]  
\label{thm:max2}
Let $p(\cdot),q(\cdot)\in C^{\log}(\R^n)$ with $1<p^-\le p^+<\infty$ and $1<q^-\le q^+<\infty$. 
Then the inequality 
\[
||\{\eta_{k,m}\ast f_k\}_{k=0}^{\infty}||_{L^{p(\cdot)}(\ell^{q(\cdot)})} \le c||\{f_k\}_{k=0}^{\infty}||_{L^{p(\cdot)}(\ell^{q(\cdot)})}
\]
holds for every sequence $\{f_k\}_{k=0}^{\infty}$ of $L_{\rm loc}^1$-functions and $m>n$. 
\end{theorem}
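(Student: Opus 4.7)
My plan is to deduce the estimate from the scalar Hardy--Littlewood maximal inequality in variable Lebesgue spaces (Theorem \ref{thm:max}) by exploiting the scale localization of $\eta_{k,m}$ together with the log-Hölder regularity of the exponents. The starting point is the elementary pointwise bound $|\eta_{k,m} * g(x)| \lesssim \mathcal{M} g(x)$, valid for $m>n$, obtained by decomposing $\eta_{k,m}$ as a telescoping sum of normalized characteristic functions of dyadic annuli centered at $x$. Taken alone this would reduce the theorem to a Fefferman--Stein-type inequality in $L^{p(\cdot)}(\ell^{q(\cdot)})$, which is known to fail for genuinely variable $q(\cdot)$; the extra information that the convolution is at scale $2^{-k}$ must therefore enter in an essential way.

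The crucial intermediate step is to establish a pointwise ``exponent transfer'' estimate of the shape
\begin{align*}
|\eta_{k,m} * f_k(x)|^{q(x)} \lesssim \eta_{k,m-n} * \bigl(|f_k|^{q(\cdot)}\bigr)(x) + h_k(x),
\end{align*}
where $h_k$ is a small, summable error term. The argument splits into two regimes. On the near zone $|x-y| \lesssim 2^{-k}$, the log-Hölder condition \eqref{eq:log-Holder-continuity} gives $2^{k|q(x)-q(y)|} \lesssim 1$, permitting the swap $|f_k(y)|^{q(x)} \sim |f_k(y)|^{q(y)}$ at bounded cost, after which Jensen's inequality moves the power inside the convolution. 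On the far zone $|x-y| \gg 2^{-k}$ the polynomial mismatch is absorbed by the extra factor $(1+2^k|x-y|)^{-n}$ extracted from $\eta_{k,m}$, at the cost of reducing the decay index from $m$ to $m-n$; condition \eqref{eq:log-decay} takes care of the behavior of $q(y)$ as $|y|\to\infty$.

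Once the transfer is in hand I would pass to the modular level: normalizing $\{f_k\}$ to have unit $\ell^{q(\cdot)}(L^{p(\cdot)})$-norm and using identity \eqref{modular}, the problem reduces to showing
\begin{align*}
\sum_{k=0}^{\infty} \bigl\| \eta_{k,m-n} * |f_k|^{q(\cdot)} \bigr\|_{L^{p(\cdot)/q(\cdot)}} \lesssim \sum_{k=0}^{\infty} \bigl\| |f_k|^{q(\cdot)} \bigr\|_{L^{p(\cdot)/q(\cdot)}}.
\end{align*}
Applying the pointwise bound $|\eta_{k,m-n} * g| \lesssim \mathcal{M} g$ once more and invoking the scalar maximal inequality termwise yields the desired conclusion, and then \eqref{modular} reinterprets the right-hand side as the modular of $\{f_k\}$, delivering the norm bound after chasing the normalization.

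The hardest part is the transfer estimate itself: one must partition $\R^n$ into the near and far zones relative to each dyadic scale, justify the exponent swap with constants uniform in $k$ and $x$, and check that the aggregated error $\sum_k h_k(x)$ remains dominated by the right-hand side. A secondary but real subtlety is that the scalar maximal inequality on $L^{p(\cdot)/q(\cdot)}$ would nominally require $(p/q)(\cdot) \in \mathcal{B}(\R^n)$, which the hypotheses $p^->1$ and $q^->1$ do not force; this obstruction is circumvented by carrying out the final estimate at the modular level, using the log-Hölder regularity of $p(\cdot)$ and $q(\cdot)$ separately rather than appealing to a black-box norm bound for the quotient exponent.
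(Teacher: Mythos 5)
The paper states Theorem~\ref{thm:max2} as a direct citation of \cite[Theorem 3.2]{Diening} and does not reproduce its proof, so there is no in-paper argument to compare against; the evaluation below addresses your plan on its own terms.

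Your central idea --- a pointwise exponent-transfer estimate obtained by splitting into a near zone (where log-H\"older continuity controls $2^{k|q(x)-q(y)|}$) and a far zone (where a power of $(1+2^{k}|x-y|)$ is spent), followed by passing to the modular level --- is indeed the mechanism driving the Diening--H\"ast\"o--Roudenko argument, and identifying it is the right instinct. However, there is a genuine gap in the reduction step. You invoke identity $(\ref{modular})$ and target the inequality
\[
\sum_{k=0}^{\infty} \bigl\| \eta_{k,m-n}\ast |f_k|^{q(\cdot)} \bigr\|_{L^{p(\cdot)/q(\cdot)}} \lesssim \sum_{k=0}^{\infty}\bigl\| |f_k|^{q(\cdot)}\bigr\|_{L^{p(\cdot)/q(\cdot)}},
\]
but both $(\ref{modular})$ and this reduced inequality live in the Besov-type mixed space $\ell^{q(\cdot)}(L^{p(\cdot)})$; in other words you are proving Theorem~\ref{thm:max3} (Almeida--H\"ast\"o, requiring $m>2n$), not Theorem~\ref{thm:max2}. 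The latter concerns the Triebel--Lizorkin-type space $L^{p(\cdot)}(\ell^{q(\cdot)})$, whose modular is $\int_{\R^n}\bigl(\sum_k |g_k(x)|^{q(x)}\bigr)^{p(x)/q(x)}\,dx$ --- a single integral of the $p/q$-power of a pointwise $\ell^{1}$-sum, not an outer sum of separate $L^{p(\cdot)/q(\cdot)}$-norms. After your exponent transfer one must control $\int_{\R^n}\bigl(\sum_k \eta_{k,m-n}\ast(|f_k|^{q(\cdot)})(x)\bigr)^{p(x)/q(x)}\,dx$, which is an $\ell^{1}$ Fefferman--Stein situation: it cannot be disposed of termwise by the scalar maximal inequality, and the original proof has to exploit the convolution structure of the $\eta_{k,m}$ more delicately to tame that inner $\ell^{1}$-sum. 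That step is entirely absent from your plan. Moreover, the drop in decay from $m$ to $m-n$ incurred by your near/far split would only yield a result under $m>2n$ (again matching Theorem~\ref{thm:max3}), while Theorem~\ref{thm:max2} is stated with the weaker hypothesis $m>n$, so something sharper is needed even at the pointwise stage.
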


Almeida et al.\,\cite{Almeida} showed the following helpful theorem for $\ell^{q(\cdot)}(L^{p(\cdot)})$ quasi norm.  

\begin{theorem}[{\rm \cite[Lemma 4.7]{Almeida}}]  
\label{thm:max3}
Let $p(\cdot),q(\cdot)\in C^{\log}(\R^n)$ with $1<p^-\le p^+<\infty$ and $1<q^-\le q^+<\infty$. 
Then the inequality 
\[
||\{\eta_{k,m}\ast f_k\}_{k=0}^{\infty}||_{\ell^{q(\cdot)}(L^{p(\cdot)})} \lesssim ||\{f_k\}_{k=0}^{\infty}||_{\ell^{q(\cdot)}(L^{p(\cdot)})}
\]
holds for every sequence $\{f_k\}_{k=0}^{\infty}$ of $L_{\rm loc}^1$-functions and $m>2n$. 
\end{theorem}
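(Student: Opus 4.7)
The plan is to establish the modular version of the inequality and then recover the quasi-norm bound by homogeneity. By equation~(\ref{modular}) and the Luxemburg--Nakano definition of $\|\cdot\|_{\ell^{q(\cdot)}(L^{p(\cdot)})}$, it suffices to produce a constant $C=C(n,m,p,q)>0$ with the property that
\[
\sum_{k=0}^{\infty}\bigl\|\,|f_k|^{q(\cdot)}\bigr\|_{L^{p(\cdot)/q(\cdot)}}\le 1
\qquad \text{implies} \qquad
\sum_{k=0}^{\infty}\bigl\|\,|\eta_{k,m}\ast f_k|^{q(\cdot)}\bigr\|_{L^{p(\cdot)/q(\cdot)}}\le C.
\]
The rescaling $f_k\mapsto \mu^{-1}f_k$ together with the positive homogeneity built into the modular (and the bounds $q^-\le q(x)\le q^+$) then converts this implication into the stated quasi-norm inequality, with an implicit constant of the form $C^{1/q^-}$.

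The first step of the modular estimate is the pointwise bound $|\eta_{k,m}\ast f_k(x)|\lesssim \mathcal{M} f_k(x)$, uniform in $k,x$ for $m>n$, which follows from the $L^1$-normalization and polynomial decay of $\eta_{k,m}$ combined with a standard dyadic decomposition. Raising to the $q(x)$th power (legitimate since $q^+<\infty$) reduces matters to the termwise variable-power maximal bound
\[
\bigl\|(\mathcal{M} f_k)^{q(\cdot)}\bigr\|_{L^{p(\cdot)/q(\cdot)}}\lesssim \bigl\|\,|f_k|^{q(\cdot)}\bigr\|_{L^{p(\cdot)/q(\cdot)}}
\]
uniformly in $k$. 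Substituting $g_k=|f_k|^{q(\cdot)}$, this is the scalar $L^{p(\cdot)}$-boundedness of $\mathcal{M}$ (available from Theorem~\ref{thm:max} applied to singleton sequences, since $1<p^-\le p^+<\infty$ and $p\in C^{\log}(\R^n)$) combined with a commutation argument that uses the log-H\"older continuity of $q$ to pass the variable power through $\mathcal{M}$ up to multiplicative constants controlled by $q^-$ and $q^+$.

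The main obstacle I expect is maintaining uniformity of the implicit constant across the sum in $k$: a crude termwise application of the maximal inequality would leave a constant per term that is not a priori absolutely summable. The strengthened hypothesis $m>2n$ --- strictly stronger than the threshold $m>n$ appearing in Theorem~\ref{thm:max2} for the companion $L^{p(\cdot)}(\ell^{q(\cdot)})$ estimate --- is precisely what supplies the needed slack: the extra $n$ units of decay reflect the cost of interchanging the integration in $x$ with the summation in $k$ enforced by the Luxemburg definition of $\ell^{q(\cdot)}(L^{p(\cdot)})$. To exploit this I would split $\eta_{k,m}$ into a factor of type $\eta_{k,n+\varepsilon}$ producing the pointwise maximal bound and a residual polynomial-decay factor whose contribution in the $L^{p(\cdot)/q(\cdot)}$-modular is geometrically summable in $k$, uniformly over the size of $f_k$. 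Once both ingredients are assembled, summing in $k$ yields the required modular estimate, and homogeneity then completes the proof.
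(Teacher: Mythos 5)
The paper does not actually prove Theorem~\ref{thm:max3}; it is cited verbatim from \cite[Lemma 4.7]{Almeida}, so there is no in-paper proof to compare with. That said, your plan breaks at its central reduction step, and the break is not fixable along the lines you sketch.

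Replacing $\eta_{k,m}\ast f_k$ by $\mathcal{M} f_k$ at the outset discards exactly the feature of the statement that makes it true: for the $k$-th term the kernel $\eta_{k,m}$ lives at scale $2^{-k}$, which matches the resolution at which log-H\"older continuity of $q(\cdot)$ gives useful control (compare Lemma~\ref{lemma:2}). The Hardy--Littlewood maximal operator has no intrinsic scale, and the ``termwise variable-power maximal bound'' you want, namely
$\bigl\|(\mathcal{M} f_k)^{q(\cdot)}\bigr\|_{L^{p(\cdot)/q(\cdot)}}\lesssim \bigl\||f_k|^{q(\cdot)}\bigr\|_{L^{p(\cdot)/q(\cdot)}}$
uniformly in $k$, is simply false for non-constant $q(\cdot)$. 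Indeed, it would assert that $\mathcal{M}$ is bounded on $\ell^{q(\cdot)}(L^{p(\cdot)})$, and this fails: take $p$ constant, $q(\cdot)\equiv q_0$ on a unit cube $Q$ and $q(\cdot)\equiv q_1$ on a unit cube $Q'$ at distance $\sim 1$ from $Q$, with $1<q_1<q_0$ and a smooth log-H\"older interpolation; set $f_k=c_k\chi_Q$ with $c_k=k^{-1/q_0}(\log(k+2))^{-2/q_0}$. Then $\sum_k\||f_k|^{q(\cdot)}\|_{L^{p/q(\cdot)}}\sim\sum_k c_k^{q_0}<\infty$, while $\mathcal{M}f_k\gtrsim c_k$ on $Q'$ gives $\sum_k\|(\mathcal{M}f_k)^{q(\cdot)}\|_{L^{p/q(\cdot)}}\gtrsim\sum_k c_k^{q_1}=\infty$ since $q_1/q_0<1$. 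No ``commutation argument'' can salvage this, because the target inequality itself is false. By contrast, $\eta_{k,m}\ast f_k$ is $\lesssim c_k 2^{-k(m-n)}$ on $Q'$, and it is precisely this $k$-dependent geometric decay --- coming from the match between the kernel scale $2^{-k}$ and the index $k$ of the sequence --- that rescues the sum. Any correct proof must therefore retain $\eta_{k,m}$ throughout, as the proof of the companion Theorem~\ref{thm:max2} in \cite{Diening} and the proof of \cite[Lemma 4.7]{Almeida} do.

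Two smaller inaccuracies: your explanation of the threshold $m>2n$ (``cost of interchanging the integration in $x$ with the summation in $k$'') is not right --- in the modular $\varrho_{\ell^{q(\cdot)}(L^{p(\cdot)})}$ the sum over $k$ sits entirely outside the $x$-integral, so no interchange occurs; the extra decay is consumed by Lemma~\ref{lemma:2}-type transfers $2^{ks(x)}\eta_{k,2m}\le c\,2^{ks(y)}\eta_{k,m}$ which halve the available decay exponent. Also, the proposed ``multiplicative splitting'' of $\eta_{k,m}$ into $\eta_{k,n+\varepsilon}$ and a residual factor does not produce a sum of two contributions (the convolution does not factor), and the residual factor's size does not by itself yield geometric summability in $k$; the summability comes from the geometry of the situation as in the example above. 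Your outer homogeneity argument (passing from the modular implication to the quasi-norm inequality with constant $C^{1/q^-}$) is fine, but everything inside it needs to be redone with $\eta_{k,m}$ in place of $\mathcal{M}$.
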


\begin{remark}
\label{remark maximal}
Let $p(\cdot)\in C^{\log}(\R^n)$ with $1<p^-\le p^+<\infty$. 
It is easy to see that 
the inequality 
\[
||\{\eta_{k,m}\ast f_k\}_{k=0}^{\infty}||_{\ell^{\infty}(L^{p(\cdot)})} \lesssim ||\{f_k\}_{k=0}^{\infty}||_{\ell^{\infty}(L^{p(\cdot)})}
\]
holds for every sequence $\{f_k\}_{k=0}^{\infty}$ of $L_{\rm loc}^1$-functions and $m>n$ by Theorem $\ref{thm:max2}$. 

By the proof of \cite[Lemma 5.4]{Diening}, we see that 
the inequality 
\[
||\{\eta_{k,m}\ast f_k\}_{k=0}^{\infty}||_{L^{p(\cdot)}(\ell^{\infty})} \lesssim ||\{f_k\}_{k=0}^{\infty}||_{L^{p(\cdot)}(\ell^{\infty})}
\]
holds for every sequence $\{f_k\}_{k=0}^{\infty}$ of $L_{\rm loc}^1$-functions and $m>2n$. 
\end{remark}

We often use the following relation between $s(x)$ and $s(y)$. 
\begin{lemma}[{\rm \cite[Lemma 6.1]{Diening}}] 
\label{lemma:2}
Let $s(\cdot)\in C^{\log}(\R^n)$. Then there exists a positive constant $c$ such that 
\[
2^{ks(x)}\eta_{k,2m}(x-y) \le c 2^{ks(y)}\eta_{k,m}(x-y)
\]
for all $x,y\in\jissu^n$ and $m>C_{\log}(s)$. 
\end{lemma}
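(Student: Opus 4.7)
The plan is to reduce to a pointwise scalar inequality. Substituting $\eta_{k,m}(x-y)=2^{kn}(1+2^k|x-y|)^{-m}$ on both sides of the claimed bound and cancelling the common factor $2^{ks(y)}\cdot 2^{kn}(1+2^k|x-y|)^{-2m}$, the lemma is equivalent to showing
\[
2^{k(s(x)-s(y))}\le c\,(1+2^k|x-y|)^{m}
\qquad(x\ne y,\ k\ge 0).
\]
I may assume $s(x)\ge s(y)$, since otherwise the left-hand side is at most $1$ while the right-hand side is at least $1$.

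The first input is the log-Hölder condition \eqref{eq:log-Holder-continuity}, which yields
\[
k(s(x)-s(y))\log 2 \le \frac{C_{\log}(s)\,k\log 2}{\log(e+|x-y|^{-1})}.
\]
What remains is to dominate the right-hand side by $C_{\log}(s)\bigl(1+\log(1+2^k|x-y|)\bigr)$. The key geometric observation, and really the only step that requires any ideas, is the elementary product inequality
\[
(e+|x-y|^{-1})(1+2^k|x-y|) = e + e\cdot 2^k|x-y| + |x-y|^{-1} + 2^k \ge 2^k,
\]
where the crucial cross-term $|x-y|^{-1}\cdot 2^k|x-y|=2^k$ is exactly what is needed. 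Taking logarithms and dividing through by $\log(e+|x-y|^{-1})\ge 1$ gives
\[
\frac{k\log 2}{\log(e+|x-y|^{-1})} \le 1 + \log(1+2^k|x-y|).
\]

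Combining this with the log-Hölder bound and exponentiating yields
\[
2^{k(s(x)-s(y))}\le e^{C_{\log}(s)}(1+2^k|x-y|)^{C_{\log}(s)}\le e^{C_{\log}(s)}(1+2^k|x-y|)^{m}
\]
for every $m\ge C_{\log}(s)$, which is compatible with the hypothesis $m>C_{\log}(s)$ (the monotonicity uses $1+2^k|x-y|\ge 1$). Multiplying back through by $2^{ks(y)}\cdot \eta_{k,m}(x-y)$ recovers the stated inequality with $c=e^{C_{\log}(s)}$. The main (and essentially the only) obstacle is spotting the product inequality above; every other step is routine manipulation of the log-Hölder estimate.
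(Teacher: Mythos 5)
Your proof is correct. The paper does not give its own proof of this lemma (it simply cites \cite[Lemma 6.1]{Diening}), but your argument is self-contained and sound: the reduction to $2^{k(s(x)-s(y))}\le c(1+2^k|x-y|)^m$ is valid, the WLOG reduction to $s(x)\ge s(y)$ is handled correctly (and the remaining case $x=y$ is trivial since $\eta_{k,2m}(0)=\eta_{k,m}(0)$), the product inequality $(e+|x-y|^{-1})(1+2^k|x-y|)\ge 2^k$ is exactly the needed estimate, and the final exponentiation and monotonicity step correctly produce $c=e^{C_{\log}(s)}$. This is also the standard route to such log-H\"older estimates in the variable-exponent literature, so there is no meaningful divergence from the cited source.
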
 

\begin{lemma}
\label{lemma:r}
Let $r>0$, $\nu\in\shizensu_0$ and $m\ge n+1$. Then there exists $c=c(r,m,n)>0$ such that 
\[
\frac{|f(x-z)|}{(1+|2^{\nu} z|)^{\frac{m}{r}}} \le c \left(\eta_{\nu,m}\ast |f|^r(x) \right)^{\frac{1}{r}} 
\]
for all $x\in\jissu^n$, $z\in\jissu^n$ and $f\in\mathcal{S}'(\jissu^n)$ with $\spt \fourier f \subset \{\xi$ $:$ $|\xi|\le 2^{\nu+1}\}$. 
\end{lemma}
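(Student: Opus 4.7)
The plan is to derive this Peetre-type pointwise estimate from the reproducing formula for band-limited distributions and to treat the cases $r\ge 1$ and $0<r<1$ separately. First I would pick $\Phi\in\mathcal{S}(\R^n)$ with $\fourier\Phi\equiv 1$ on $\{|\xi|\le 2\}$ and set $\Phi_\nu(x)=2^{\nu n}\Phi(2^\nu x)$; since $\spt\fourier f\subset\{|\xi|\le 2^{\nu+1}\}$ this gives $f=\Phi_\nu\ast f$ pointwise. The Schwartz decay of $\Phi$ yields $|\Phi_\nu(w)|\lesssim_M 2^{\nu n}(1+|2^\nu w|)^{-M}$ for every $M>0$, which combined with the elementary inequality $1+|2^\nu(x-y)|\le (1+|2^\nu(x-z-y)|)(1+|2^\nu z|)$ gives the key bound
\[
|\Phi_\nu(x-z-y)|\lesssim_M 2^{\nu n}(1+|2^\nu z|)^{M}(1+|2^\nu(x-y)|)^{-M}.
\]

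For the regime $r\ge 1$, I would apply H\"older's inequality (using $\|\Phi_\nu\|_{L^1}\lesssim 1$) to the reproducing formula and obtain $|f(x-z)|^r\lesssim \int |\Phi_\nu(x-z-y)||f(y)|^r\,dy$. Inserting the key bound with $M=m$ converts the right-hand side into $(1+|2^\nu z|)^m\,\eta_{\nu,m}\ast |f|^r(x)$, and taking $r$-th roots closes this case.

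For $0<r<1$ I would iterate via the Peetre maximal function $f^*_{\nu,m/r}(x):=\sup_{z\in\R^n}|f(x-z)|/(1+|2^\nu z|)^{m/r}$. Splitting $|f(y)|=|f(y)|^{1-r}|f(y)|^r$, estimating the first factor by $f^*_{\nu,m/r}(x)\cdot (1+|2^\nu(x-y)|)^{m(1-r)/r}$, and using the key bound with $M=m/r$, the exponents balance to $-m$ and produce
\[
|f(x-z)|\lesssim (f^*_{\nu,m/r}(x))^{1-r}(1+|2^\nu z|)^{m/r}\,\eta_{\nu,m}\ast|f|^r(x).
\]
Dividing by $(1+|2^\nu z|)^{m/r}$ and taking the supremum over $z$ gives $f^*_{\nu,m/r}(x)\lesssim (f^*_{\nu,m/r}(x))^{1-r}\eta_{\nu,m}\ast|f|^r(x)$, from which the conclusion follows by dividing through by $(f^*_{\nu,m/r}(x))^{1-r}$.

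The main obstacle is precisely this last division: it requires a priori finiteness of $f^*_{\nu,m/r}(x)$, which need not hold just from $f\in\mathcal{S}'(\R^n)$ when $r$ is small. I would handle it by the standard truncation device --- run the argument with the supremum restricted to $\{|z|\le N\}$ with constants independent of $N$, exploiting that band-limited tempered distributions are of at most polynomial growth, and then let $N\to\infty$. The hypothesis $m\ge n+1$ enters to ensure $\eta_{\nu,m}\in L^1(\R^n)$ so that $\eta_{\nu,m}\ast |f|^r$ is well-defined as a finite pointwise quantity.
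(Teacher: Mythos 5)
The paper states Lemma~\ref{lemma:r} without proof (it is a classical Peetre-type estimate, cf.\ Triebel's \emph{Theory of Function Spaces}, Section~1.3, or the appendix of \cite{Diening}), so there is no in-paper argument for you to compare against; I will therefore assess your proposal on its own. The overall architecture is the standard one and the two main blocks are correct: the reproducing formula $f=\Phi_\nu\ast f$, the key bound $|\Phi_\nu(x-z-y)|\lesssim_M 2^{\nu n}(1+|2^\nu z|)^M(1+|2^\nu(x-y)|)^{-M}$, the direct H\"older argument for $r\ge1$, and the $|f|^{1-r}|f|^r$ split with cancellation of exponents to $-m$ for $0<r<1$ are all set up correctly.

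The gap is precisely at the step you flagged as "the main obstacle," and the fix you propose does not close it. If one runs the bootstrap with the hard-truncated maximal function $g_N(x)=\sup_{|z|\le N}|f(x-z)|/(1+|2^\nu z|)^{m/r}$, then in the integral $\int|\Phi_\nu(x-z-y)||f(y)|^{1-r}|f(y)|^r\,dy$ one can only insert the bound $|f(y)|^{1-r}\le g_N(x)^{1-r}(1+|2^\nu(x-y)|)^{m(1-r)/r}$ on the region $|x-y|\le N$; on $|x-y|>N$ one must fall back to the polynomial-growth bound $|f(y)|\lesssim(1+|y|)^{N_0}$. After dividing by $(1+|2^\nu z|)^{m/r}$ and taking the sup over $|z|\le N$, the tail produces an error of size $\sim(1+2^\nu N)^{M-m/r}\cdot I_N$ with $I_N\sim N^{-(M-n-N_0)}$, i.e.\ $\sim N^{-(m/r-n-N_0)}$ uniformly in the choice of $M$. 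This tends to zero only if $m/r>n+N_0$, which is not guaranteed by $m\ge n+1$ (for instance when $r$ is close to $1$ and the growth order $N_0$ of $f$ is large). So "constants independent of $N$, then let $N\to\infty$" does not go through as written.

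The standard remedy is a \emph{soft} truncation rather than a hard one: introduce, for $N\ge1$, the modified Peetre maximal function
\[
M_{\nu,m/r,N}f(x):=\sup_{z\in\R^n}\frac{|f(x-z)|}{(1+|2^\nu z|)^{m/r}\,(1+|2^\nu z|/N)^{N_0}},
\]
where $N_0$ is the polynomial growth order of $f$. This is finite for every fixed $N$ because the total decay exponent $m/r+N_0$ exceeds $N_0$. Running exactly your argument with this maximal function (bounding $|f(y)|$ by $M_{\nu,m/r,N}f(x)(1+|2^\nu(x-y)|)^{m/r}(1+|2^\nu(x-y)|/N)^{N_0}$ for all $y$, using submultiplicativity $(1+|2^\nu(x-y)|/N)\le(1+|2^\nu z|/N)(1+|2^\nu(x-z-y)|)$ and absorbing the $(1+|2^\nu(x-z-y)|)^{N_0(1-r)}$ into the rapid decay of $\Phi_\nu$), one finds that after division the extra factor appears as $(1+|2^\nu z|/N)^{-N_0 r}\le1$ and the $z$-powers cancel exactly, giving $M_{\nu,m/r,N}f(x)\lesssim(M_{\nu,m/r,N}f(x))^{1-r}\,\eta_{\nu,m}\ast|f|^r(x)$ with a constant independent of $N$. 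Now the division by $(M_{\nu,m/r,N}f(x))^{1-r}$ is legitimate, and letting $N\to\infty$ by monotone convergence yields the stated inequality. Also note that $m\ge n+1$ is not what makes $\eta_{\nu,m}\ast|f|^r$ finite (that convolution may well be $+\infty$, in which case the lemma is vacuously true); its role is to make $\eta_{\nu,m}\in L^1$ so the statement is non-trivial on the spaces considered.
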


\begin{definition}
\label{def:1}
\text{\rm (i)} Let $\Omega$ be a compact subset of $\jissu^n$. 
Then $\mathcal{S}^{\Omega}(\jissu^n)$ denotes the space of all elements $\varphi\in\mathcal{S}(\jissu^n)$ satisfying  
$\spt\fourier\varphi\subset\Omega$.  \\ 
\text{\rm (ii)} Let $p(\cdot), q(\cdot) \in C^{\log}(\R^n)\cap \mathcal{P}_0(\jissu^n)$. 
For a sequence  $\Omega=\{\Omega_k\}_{k=0}^{\infty}$ of compact subsets of $\jissu^n$, 
$L_{p(\cdot)}^{\Omega}$ is the space of all sequences $\{f_k\}_{k=0}^{\infty}$ of $\mathcal{S}'(\jissu^n)$ such that 
\begin{equation}
\spt \fourier f_k\subset \Omega_k 
\end{equation}
and $||f_k||_{L^{p(\cdot)}}<\infty$ for $k=0,1,2,\cdots$. 
\end{definition}
The author \cite{Noi} proved the following Theorem. 
\begin{theorem}
\label{thm:5}
Let $p(\cdot),q(\cdot)\in C^{\log}(\R^n)\cap\mathcal{P}_0(\jissu^n)$ or $q(\cdot)\equiv \infty$ and 
$s(\cdot)\in C^{\log}(\R^n)$.
Let $\Omega=\{\Omega_k\}_{k=0}^{\infty}$ be a sequence of compact subsets of $\jissu^n$ such that 
$\Omega_k\subset\{\xi\in\jissu^n$ $:$ $|\xi|\le2^{k+1}\}$. \\ 
{\rm (i)} If $v>\frac{n}{2}+\frac{4\max\{n, C_{\log}(s)\}}{\min\{p^-,q^-\}}$, then there exists a number $c$ such that 
\begin{align*}
||\{2^{k s(\cdot)} M_k(D) f_k\}_{k=0}^{\infty}&||_{L^{p(\cdot)}(\ell^{q(\cdot)})} 
\le c\sup_{l}||M_{l}(2^{l}\cdot)||_{H_2^v}||\{2^{ks(\cdot)}f_k\}_{0}^{\infty}||_{L^{p(\cdot)}(\ell^{q(\cdot)})}
\end{align*}
for $\{f_k(x)\}_{k=0}^{\infty}\in L_{p(\cdot)}^{\Omega}$ and $\{ M_k(x)\}_{k=0}^{\infty}\in H_2^v(\jissu^n)$. \\ 
{\rm (ii)} If $v>\frac{n}{2}+\frac{4\max\{2n, C_{\log}(s)\}}{\min\{p^-,q^-\}}$, then there exists a number $c$ such that 
\begin{align*}
||\{2^{ks(\cdot)} M_k(D) f_k\}_{k=0}^{\infty}&||_{\ell^{q(\cdot)}(L^{p(\cdot)})} 
\le c\sup_{l}||M_{l}(2^{l}\cdot)||_{H_2^v}||\{2^{ks(\cdot)}f_k\}_{0}^{\infty}||_{\ell^{q(\cdot)}(L^{p(\cdot)})}
\end{align*}
for $\{f_k(x)\}_{k=0}^{\infty}\in L_{p(\cdot)}^{\Omega}$ and $\{ M_k(x)\}_{k=0}^{\infty}\in H_2^v(\jissu^n)$. 
\end{theorem}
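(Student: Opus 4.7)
The plan is to reduce the multiplier estimate to the vector-valued convolution/maximal inequalities already recorded in Theorems \ref{thm:max2} and \ref{thm:max3}, by first establishing a pointwise bound, then transferring the weight $2^{ks(\cdot)}$ through the convolution kernel by the log-H\"older regularity, and finally sub-linearising via an $r$-th power trick to reach admissible exponents for the maximal inequalities.

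\medskip

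\textbf{Step 1 (Pointwise estimate).} Choose a smooth bump $\phi\in\mathcal{S}(\R^n)$ equal to $1$ on $\{|\xi|\le 2\}$ and supported in $\{|\xi|\le 4\}$, and set $\phi_k(\xi)=\phi(2^{-k}\xi)$. Since $\spt\fourier f_k\subset\{|\xi|\le 2^{k+1}\}$, we have $M_k(D)f_k=\inversefourier[M_k\phi_k]\ast f_k$. Using Cauchy--Schwarz to split off the weight $(1+|2^k z|)^{m/r}$ and Plancherel to convert $L^2$-decay of $\inversefourier[M_k\phi_k]$ into an $H_2^v$-norm of $M_k(2^k\cdot)$, one obtains, for any $0<r<1$ and $m\ge n+1$ with $v>n/2+m/r$,
\[
\int |\inversefourier[M_k\phi_k](z)|(1+|2^k z|)^{m/r}\,{\rm d}z \lesssim \|M_k(2^k\cdot)\|_{H_2^v}.
\]
Combining this with Lemma \ref{lemma:r} gives the pointwise bound
\[
|M_k(D)f_k(x)|\lesssim \|M_k(2^k\cdot)\|_{H_2^v}\,\bigl(\eta_{k,m}\ast |f_k|^r\bigr)^{1/r}(x).
\]

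\medskip

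\textbf{Step 2 (Weight transfer).} Multiply the $r$-th power of the previous inequality by $2^{krs(x)}$ and invoke Lemma \ref{lemma:2} applied to $rs\in C^{\log}(\R^n)$ (with $C_{\log}(rs)=rC_{\log}(s)$): for $m>2rC_{\log}(s)$,
\[
2^{krs(x)}\eta_{k,m}(x-y)\lesssim 2^{krs(y)}\eta_{k,m/2}(x-y).
\]
This yields
\[
2^{ks(x)}|M_k(D)f_k(x)|\lesssim \|M_k(2^k\cdot)\|_{H_2^v}\bigl(\eta_{k,m/2}\ast 2^{krs(\cdot)}|f_k|^r\bigr)^{1/r}(x).
\]

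\medskip

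\textbf{Step 3 (Maximal inequality).} Fix $r$ with $0<r<\min(p^-,q^-)$, for instance $r=\min(p^-,q^-)/2$, so that $(p/r)^-,(q/r)^-\ge 2$ and both $p(\cdot)/r$ and $q(\cdot)/r$ lie in $C^{\log}$ with indices bounded below by $1$. Using the scaling identity
\[
\bigl\|\{g_k\}\bigr\|_{L^{p(\cdot)}(\ell^{q(\cdot)})}=\bigl\|\{|g_k|^r\}\bigr\|_{L^{p(\cdot)/r}(\ell^{q(\cdot)/r})}^{1/r},
\]
and the analogous identity for $\ell^{q(\cdot)}(L^{p(\cdot)})$, we reduce the assertion to estimating $\|\{\eta_{k,m/2}\ast(2^{krs(\cdot)}|f_k|^r)\}\|$ in the respective scaled mixed space; Theorem \ref{thm:max2} applies to part~(i) once $m/2>n$, and Theorem \ref{thm:max3} applies to part~(ii) once $m/2>2n$. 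Taking $r$-th roots delivers the claim, with the constant depending only on $\sup_l\|M_l(2^l\cdot)\|_{H_2^v}$.

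\medskip

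\textbf{Bookkeeping on $v$.} Combining the constraints $v>n/2+m/r$ from Step~1, $m>2rC_{\log}(s)$ from Step~2, and $m/2>n$ (resp.\ $m/2>2n$) from Step~3 with the choice $r=\min(p^-,q^-)/2$, one needs $v>n/2+\max(4n/\min(p^-,q^-),\,2C_{\log}(s))$ in case (i) and $v>n/2+\max(8n/\min(p^-,q^-),\,2C_{\log}(s))$ in case (ii); both are implied by the hypothesised lower bound on $v$ since $\min(p^-,q^-)\le \min(p^-,q^-)\cdot 1$. The main technical obstacle is the bookkeeping in Step~1: obtaining the pointwise bound with the sharp dependence of $v$ on $r$ via the $H_2^v$-norm of $M_k(2^k\cdot)$ requires a careful Cauchy--Schwarz/Plancherel argument, and the $r$-th power trick is indispensable to compensate for the fact that $p^-$ or $q^-$ may be smaller than one (so the maximal operators of Theorems \ref{thm:max2} and \ref{thm:max3} are not directly available at the original exponents).
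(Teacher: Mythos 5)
The paper does not include a proof of Theorem~\ref{thm:5}; it is quoted from the author's earlier work \cite{Noi}, so there is no in-paper proof to compare against. Your overall strategy---write $M_k(D)f_k$ as convolution with $\inversefourier[M_k\phi_k]$, bound a weighted $L^1$-norm of that kernel by $\|M_k(2^k\cdot)\|_{H_2^v}$ via Cauchy--Schwarz, invoke Lemma~\ref{lemma:r} to produce an $\eta_{k,m}$-majorant, transfer the weight $2^{ks(\cdot)}$ through the kernel by Lemma~\ref{lemma:2}, and conclude with the vector-valued $\eta$-convolution estimates of Theorems~\ref{thm:max2}, \ref{thm:max3} after the $r$-th power trick---is the standard one for such multiplier theorems and is structurally correct step by step.

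The bookkeeping, however, does not deliver the stated hypothesis on $v$. From Step~2 you impose $m>2rC_{\log}(s)$, hence $m/r>2C_{\log}(s)$; together with $v>n/2+m/r$ from Step~1 this forces $v>n/2+2C_{\log}(s)$ \emph{irrespective} of $\min\{p^-,q^-\}$. That term is intrinsic to your weight-transfer step: applying Lemma~\ref{lemma:2} to $rs$ always costs a power $>rC_{\log}(s)$ in the $\eta$-kernel, and taking the $r$-th root undoes the factor of $r$. Since $4\max\{n,C_{\log}(s)\}/\min\{p^-,q^-\}\to0$ as $\min\{p^-,q^-\}\to\infty$, the theorem's hypothesis cannot imply $v>n/2+2C_{\log}(s)$ for all admissible exponents. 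Concretely, with $n=1$, $C_{\log}(s)=2$ and $\min\{p^-,q^-\}=4$, the stated hypothesis only demands $v>5/2$, while your argument requires $v>1/2+\max(1,4)=9/2$. The justification you give (``since $\min(p^-,q^-)\le\min(p^-,q^-)\cdot1$'') is a tautology and establishes nothing. Either the lower bound on $v$ is meant to involve $\min\{p^-,q^-,1\}$---in which case your argument goes through with $r=\min\{p^-,q^-,1\}/2$---or the weight transfer must be organised so that the $C_{\log}(s)$ contribution is also damped by $1/\min\{p^-,q^-\}$, which your Steps~1--3 do not accomplish. You also do not address the alternative $q(\cdot)\equiv\infty$ in the hypothesis, which requires substituting Remark~\ref{remark maximal} for Theorems~\ref{thm:max2} and \ref{thm:max3} in Step~3.
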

Therefore, we obtain the lifting properties as a corollary of Theorem $\ref{thm:5}$. 
\begin{corollary}[Lifting properties]
\label{cor:lifting} 
Let $p(\cdot),q(\cdot)\in C^{\log}(\R^n)\cap\mathcal{P}_0(\jissu^n)$ and 
$s(\cdot)\in C^{\log}(\R^n)$. 
Let $\sigma\in\R$, $k=1,2,\cdots,m$ and $m\in\N$. 
Then 
\[
\partial_k:A_{p(\cdot),q(\cdot)}^{s(\cdot)} \longrightarrow A_{p(\cdot),q(\cdot)}^{s(\cdot)-1}
\]
is a continuous map. 
Furthermore, we have following properties: \\ 
{\rm (1) } The linear mapping $(1-\Delta)^{\sigma}$ is an isomorphism 
between 
$A_{p(\cdot),q(\cdot)}^{s(\cdot)}$ 
and 
$A_{p(\cdot),q(\cdot)}^{s(\cdot)-2\sigma}$. \\ 
{\rm (2) } The linear mapping $\left(1+(-\Delta)^m\right)$ is an isomorphism 
between 
$A_{p(\cdot),q(\cdot)}^{s(\cdot)}$ 
and 
$A_{p(\cdot),q(\cdot)}^{s(\cdot)-2m}$. \\ 
{\rm (3) } The linear mapping $(1+\partial_1^{4m}+\cdots +\partial_n^{4m})$ is an isomorphism 
between 
$A_{p(\cdot),q(\cdot)}^{s(\cdot)}$ 
and 
$A_{p(\cdot),q(\cdot)}^{s(\cdot)-4m}$.
\end{corollary}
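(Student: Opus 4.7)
The plan is to realize each of the four operators as a Fourier multiplier and invoke Theorem~\ref{thm:5}. For a Fourier multiplier $T$ with symbol $m(\xi)$ that should map $A_{p(\cdot),q(\cdot)}^{s(\cdot)}$ into $A_{p(\cdot),q(\cdot)}^{s(\cdot)-\delta}$, the strategy is to write $m(\xi) = 2^{j\delta} M_j(\xi)$ on the Fourier support of $\theta_j$, choosing $M_j$ so that the rescaled symbols $M_j(2^j\,\cdot)$ form a uniformly bounded family in $H_2^v$. Since $T$ commutes with $\theta_j(D)$, we have $\theta_j(D)\, T f = 2^{j\delta} M_j(D)\,\theta_j(D) f$, and Theorem~\ref{thm:5} applied with $f_j = \theta_j(D) f$ exchanges the weight $2^{j(s(\cdot)-\delta)}$ on the output for $2^{js(\cdot)}$ on the input, which is exactly the target inequality.

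Concretely, fix $\tilde\psi \in C_c^\infty(\R^n)$ equal to $1$ on $\{1/2 \le |\xi| \le 2\}$ and supported in $\{1/4 \le |\xi| \le 4\}$; the level $j=0$ is handled by a separate smooth cutoff on $\{|\xi|\le 4\}$, which contributes a single smooth, compactly supported symbol, automatically in $H_2^v$. For the continuity of $\partial_k$ set $M_j(\xi) = 2^{-j}(i\xi_k)\tilde\psi(2^{-j}\xi)$; the rescaling $M_j(2^j\xi) = (i\xi_k)\tilde\psi(\xi)$ is independent of $j$. For part (1) set $N_j^\sigma(\xi) = 2^{-2j\sigma}(1+|\xi|^2)^\sigma \tilde\psi(2^{-j}\xi)$; since the annular cutoff excludes the origin, the rescaling $N_j^\sigma(2^j\xi) = (2^{-2j}+|\xi|^2)^\sigma \tilde\psi(\xi)$ is smooth and converges in $C_c^\infty$ to $|\xi|^{2\sigma}\tilde\psi(\xi)$, hence is uniformly bounded in $H_2^v$. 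Applying Theorem~\ref{thm:5} to both $N_j^\sigma$ and $N_j^{-\sigma}$ and composing yields the isomorphism of part (1). Parts (2) and (3) follow the same pattern once one observes that the symbols $1+|\xi|^{2m}$ and $1+\sum_k \xi_k^{4m}$ are strictly positive and, since $\sum_k \xi_k^{4m} \sim |\xi|^{4m}$, satisfy $1+|\xi|^{2m} \sim (1+|\xi|^2)^m$ and $1+\sum_k \xi_k^{4m} \sim (1+|\xi|^2)^{2m}$, so the reciprocals give smooth multipliers with exactly the same dyadic structure.

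The main technical point is the uniform $H_2^v$ bound $\sup_j \|M_j(2^j\,\cdot)\|_{H_2^v} < \infty$ for the rescaled symbols, particularly in the inverse directions where the formal symbols $|\xi|^{-2\sigma}$, $1/(1+|\xi|^{2m})$ and $1/(1+\sum_k \xi_k^{4m})$ could in principle be singular. The annular cutoff $\tilde\psi$ resolves this by restricting evaluation to a fixed compact annulus in $\xi$ on which every one of these symbols is smooth with derivatives bounded uniformly in $j$; the strict positivity of the symbols in parts (2) and (3) is essential here. Once this uniform bound is in hand, the only remaining check is that $v$ exceeds the threshold of Theorem~\ref{thm:5}, which is immediate because the rescaled symbols lie in $C_c^\infty$ and hence in $H_2^v$ for every $v$.
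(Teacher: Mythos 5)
Your proposal is correct and takes essentially the same approach the paper intends: the paper states the corollary follows immediately from Theorem~\ref{thm:5}, and your argument fills in that deduction by writing each operator, localized to the dyadic annulus $\spt\theta_j$, as $2^{j\delta}M_j(D)$ with rescaled symbols $M_j(2^j\cdot)$ uniformly bounded in $H_2^v$, exactly the form Theorem~\ref{thm:5} requires. The treatment of the inverse symbols via strict positivity on the fixed annulus and the separate handling of the $j=0$ block are the right details to check, and they are handled correctly.
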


\section{Embeddings for $A_{p(\cdot), q(\cdot)}^{s(\cdot)}(\R^n)$}
\label{section Embeddings}

In this section, we deal with embeddings for $A_{p(\cdot), q(\cdot)}^{s(\cdot)}(\R^n)$. 

\begin{definition} We define three linear spaces consist of bounded functions. 
\begin{enumerate}
\item Denote by ${\rm BC}$ the linear space of all bounded continuous functions. 
Let $f\in {\rm BC}$. Then we define $||f||_{{\rm BC}}$ such that $||f||_{{\rm BC}}:=||f||_{L^{\infty}}$. 
\item Let $m\in\N_0$. Then, denote by $\mathcal{B}^m$ the linear space of functions $f\,:\,\R^n\longrightarrow \C$ such that 
$f\in C^m$ and $\partial^{\alpha}f\in{\rm BC}$ for any multi-index $\alpha$ with $|\alpha|\le m$. 
We define the norm such that 
\[
\| f\|_{\mathcal{B}^m} = \sum_{|\alpha|\le m}\left\| \partial^{\alpha}f\right\|_{ {\rm BC} }. 
\]
\item Denote by ${\rm BUC}$ the linear space consist of bounded uniformly continuous functions. 
 Then we define $||f||_{{\rm BUC}}$ such that $||f||_{{\rm BUC}}:=||f||_{L^{\infty}}$.
\end{enumerate}
\end{definition}
Then the following embeddings are well-known. 
\begin{lemma}
\label{1.1.20} 
For each $m\in\N_0$. $B_{\infty,1}^m \hookrightarrow \mathcal{B}^m$ and $B_{\infty,1}^0 \hookrightarrow {\rm BUC}$ holds. 
\end{lemma}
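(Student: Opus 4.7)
The plan is to exploit the Littlewood--Paley representation $f=\sum_{j=0}^{\infty}\theta_j(D)f$ together with Bernstein's inequality for bandlimited functions. Fix $f\in B_{\infty,1}^m$; since $\theta_j\in\mathcal{S}(\R^n)$ has compact Fourier support, each block $\theta_j(D)f$ is a $C^\infty$ function with $\spt \mathcal{F}(\theta_j(D)f)\subset\{\xi:|\xi|\le 2^{j+1}\}$ and finite $L^\infty$ norm, and $\sum_{j=0}^{\infty}2^{jm}\|\theta_j(D)f\|_{L^\infty}=\|f\|_{B_{\infty,1}^m}<\infty$.

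For any multi-index $\alpha$, Bernstein's inequality (a Paley--Wiener consequence of compact Fourier support, also recoverable from Theorem \ref{thm:5} applied with constant exponents) yields $\|\partial^\alpha\theta_j(D)f\|_{L^\infty}\lesssim 2^{j|\alpha|}\|\theta_j(D)f\|_{L^\infty}$. Hence for every $\alpha$ with $|\alpha|\le m$,
\[
\sum_{j=0}^{\infty}\|\partial^\alpha\theta_j(D)f\|_{L^\infty}
\lesssim
\sum_{j=0}^{\infty}2^{jm}\|\theta_j(D)f\|_{L^\infty}
=\|f\|_{B_{\infty,1}^m},
\]
so $\sum_j\partial^\alpha\theta_j(D)f$ converges absolutely and uniformly on $\R^n$. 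Standard real-variable calculus (uniform convergence of a series of $C^m$ functions together with uniform convergence of its derivative series up to order $m$) shows that the partial sums $S_N=\sum_{j=0}^{N}\theta_j(D)f$ converge in $\mathcal{B}^m$ to some $g$ with $\|g\|_{\mathcal{B}^m}\lesssim\|f\|_{B_{\infty,1}^m}$. Since simultaneously $S_N\to f$ in $\mathcal{S}'(\R^n)$, $f=g$ as tempered distributions, and identifying $f$ with this $C^m$ representative gives $f\in\mathcal{B}^m$.

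For the second embedding $B_{\infty,1}^0\hookrightarrow{\rm BUC}$, I would use the $|\alpha|=1$ case of Bernstein on each block: $\|\nabla\theta_j(D)f\|_{L^\infty}\lesssim 2^j\|\theta_j(D)f\|_{L^\infty}<\infty$, so each $\theta_j(D)f$ is Lipschitz and hence uniformly continuous. The $|\alpha|=0$ estimate above (with $m=0$) then shows that $\sum_j\theta_j(D)f$ converges uniformly, so its limit (the continuous representative of $f$) is a uniform limit of uniformly continuous functions and therefore lies in ${\rm BUC}$. The main care point is twofold: invoking Bernstein's inequality, which is not explicitly proved in the excerpt but is a classical consequence of Paley--Wiener, and upgrading the a priori distributional identity $f=\sum_j\theta_j(D)f$ to a pointwise equality with a $C^m$ function using the uniform convergence of derivatives just established. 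Neither step involves any real difficulty; the proof reduces to the summation bookkeeping above once Bernstein is in hand.
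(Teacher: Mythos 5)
The paper offers no proof of this lemma — it is simply cited as ``well-known'' immediately before the statement — so there is nothing internal to compare against. Your proof is a correct and complete rendition of the standard textbook argument: Littlewood--Paley decomposition plus Bernstein's inequality on each dyadic block, absolute convergence of the derivative series up to order $m$, and the elementary fact that uniform convergence of $\partial^\alpha S_N$ for all $|\alpha|\le m$ together with distributional convergence $S_N\to f$ forces the $C^m$ (resp.\ ${\rm BUC}$) regularity of $f$. One small bookkeeping point you implicitly use and could make explicit: for $|\alpha|\le m$ and $j\ge 0$ one has $2^{j|\alpha|}\le 2^{jm}$, which is what lets the single $\ell^1$ Besov sum dominate all the derivative series simultaneously; with that observation spelled out the argument is airtight.
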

Furthermore, we can prove the following embedding. 
\begin{proposition}
\label{proposition 3.1.33} 
Let $p(\cdot),q(\cdot)\in C^{\log}(\R^n)\cap\mathcal{P}_0(\R^n)$ and $s(\cdot)\in C^{\log}(\R^n)$ satisfy $s(\cdot)>\frac{n}{p(\cdot)}$ and $0>\left(\frac{n}{p(\cdot)}-s(\cdot) \right)^+$. 
Then $A_{p(\cdot),q(\cdot)}^{s(\cdot)}(\R^n)\hookrightarrow {\rm BUC}$.  
\end{proposition}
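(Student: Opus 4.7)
The plan is to establish the stronger embedding $A_{p(\cdot),q(\cdot)}^{s(\cdot)}(\R^n) \hookrightarrow B_{\infty,1}^{0}(\R^n)$ and then invoke Lemma \ref{1.1.20}. Interpreting $(\,\cdot\,)^+$ as the essential supremum (consistent with (\ref{eq:bounded})), the hypothesis $0 > \left(\frac{n}{p(\cdot)}-s(\cdot)\right)^+$ supplies $\epsilon_0 > 0$ with $s(x) - \frac{n}{p(x)} \geq \epsilon_0$ for almost every $x$. Set $f_j := \theta_j(D) f$, so that $\spt \fourier f_j \subset \{|\xi|\le 2^{j+1}\}$.

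The centerpiece is a pointwise variable-exponent Nikolskii-type inequality
\[
|f_j(x)| \lesssim 2^{j\left(\frac{n}{p(x)}-s(x)\right)} \left\|2^{js(\cdot)}f_j\right\|_{L^{p(\cdot)}} \qquad (x\in\R^n).
\]
Fixing $r\in(0,p^-)$ and $m$ sufficiently large, Lemma \ref{lemma:r} gives $|f_j(x)|^r \lesssim \eta_{j,2m}\ast|f_j|^r(x)$; multiplying by $2^{jrs(x)}$ and invoking Lemma \ref{lemma:2} (applied to the log-Hölder continuous function $rs$) moves the weight inside the convolution,
\[
2^{jrs(x)}|f_j(x)|^r \lesssim \eta_{j,m}\ast\bigl(2^{jrs(\cdot)}|f_j|^r\bigr)(x).
\]
Applying the variable-exponent Hölder inequality (Theorem \ref{Holder type inequality}) with the conjugate pair $L^{p(\cdot)/r}$ and $L^{(p(\cdot)/r)'}$, together with the standard bound $\|\eta_{j,m}(x-\cdot)\|_{L^{(p(\cdot)/r)'}} \lesssim 2^{jnr/p(x)}$ (which follows from the log-Hölder continuity of $p(\cdot)$ for $m$ large) and the identity $\|2^{jrs(\cdot)}|f_j|^r\|_{L^{p(\cdot)/r}} = \|2^{js(\cdot)}f_j\|_{L^{p(\cdot)}}^{r}$, finishes the Nikolskii estimate after taking $r$-th roots. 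The gap $s(\cdot) - n/p(\cdot) \geq \epsilon_0$ then yields
\[
\|f_j\|_{L^\infty} \lesssim 2^{-j\epsilon_0}\left\|2^{js(\cdot)}f_j\right\|_{L^{p(\cdot)}}.
\]

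It remains to bound $\sup_j\|2^{js(\cdot)}f_j\|_{L^{p(\cdot)}}$ by $\|f\|_{A_{p(\cdot),q(\cdot)}^{s(\cdot)}}$. For the Triebel--Lizorkin case this is immediate from the pointwise bound $2^{js(x)}|f_j(x)| \leq \bigl(\sum_{k=0}^\infty 2^{kq(x)s(x)}|f_k(x)|^{q(x)}\bigr)^{1/q(x)}$. For the Besov case, the modular identity (\ref{modular}) gives the embedding $\ell^{q(\cdot)}(L^{p(\cdot)}) \hookrightarrow \ell^\infty(L^{p(\cdot)})$: namely $\|\{g_j\}\|_{\ell^{q(\cdot)}(L^{p(\cdot)})} \leq 1$ forces $\||g_j|^{q(\cdot)}\|_{L^{p(\cdot)/q(\cdot)}} \leq 1$ for each $j$, which in turn gives $\|g_j\|_{L^{p(\cdot)}}\leq 1$. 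Summing the convergent geometric series $\sum_j 2^{-j\epsilon_0}$ then produces $\|f\|_{B_{\infty,1}^0} = \sum_{j=0}^\infty\|f_j\|_{L^\infty} \lesssim \|f\|_{A_{p(\cdot),q(\cdot)}^{s(\cdot)}}$, and Lemma \ref{1.1.20} completes the embedding into ${\rm BUC}$.

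The main obstacle I anticipate is the Hölder estimate $\|\eta_{j,m}(x-\cdot)\|_{L^{(p(\cdot)/r)'}} \lesssim 2^{jnr/p(x)}$, uniform in both $x\in\R^n$ and $j\in\N_0$, where log-Hölder continuity of $p(\cdot)$ is indispensable. One splits the integral into the ball $|x-y|\leq 2^{-j}$ (on which $p(y)$ is exponentially close to $p(x)$ at the scale $2^{-j}$) and its complement (on which the polynomial decay of $\eta_{j,m}$ controls the integrand, provided $m$ is chosen large enough in terms of $n$ and $C_{\log}(p)$); this reduces the desired bound to the constant-exponent asymptotic $\|\eta_{j,m}\|_{L^\beta}\sim 2^{jn/\beta'}$ with $\beta = (p(x)/r)'$.
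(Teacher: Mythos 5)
Your proof is correct and follows essentially the same route as the paper's: establish the Nikolskii-type sup estimate $\|\theta_j(D)f\|_{L^\infty}\lesssim 2^{-j\epsilon_0}\|2^{js(\cdot)}\theta_j(D)f\|_{L^{p(\cdot)}}$, bound the right-hand side by $\|f\|_{A_{p(\cdot),q(\cdot)}^{s(\cdot)}}$ via the trivial embedding into $\ell^\infty(L^{p(\cdot)})$ (resp.\ $L^{p(\cdot)}(\ell^\infty)$), sum the geometric series, and invoke $B_{\infty,1}^0\hookrightarrow{\rm BUC}$. The only difference is presentational: the paper treats the Nikolskii inequality as a black box (Theorem~\ref{thm:3}, proof omitted by reference to \cite{Noi}), whereas you re-derive it directly from Lemmas~\ref{lemma:r} and~\ref{lemma:2} and the variable-exponent H\"older inequality, making the argument self-contained.
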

To prove Proposition $\ref{proposition 3.1.33}$, we need Theorem $\ref{thm:3}$. 

\begin{theorem}
\label{thm:3} 
 Let $s(\cdot)\in C^{\log}(\R^n)$ and $p(\cdot)$, $q(\cdot)\in C^{\log}(\R^n)\cap\mathcal{P}_0(\jissu^n)$ satisfy $0< p(\cdot)\le q(\cdot) < \infty$. 
Let $k>0$ be a fixed number and $B_{2^{j+k}}=\{x\in\jissu^n$ $:$ $\abs{x}\le 2^{j+k}\}$.  
If $\varphi_j \in\mathcal{S}^{B_{2^{j+k}}}(\jissu^n)$, then there exists a positive number $c$ such that  
\[
|| 2^{js(\cdot)} \varphi_j ||_{L^{ q(\cdot) } } 
\le c  \left\|  2^{ js(\cdot) +\frac{nj}{p(\cdot)}-\frac{nj}{q(\cdot)}} \varphi_j  \right\|_{L^{ p(\cdot) }} 
\]
and 
\[  
|| 2^{js(\cdot)}\varphi_j ||_{L^{\infty}}\le c \left\|  2^{ js(\cdot)+\frac{nj}{p(\cdot)}}  \varphi_j \right\|_{L^{p(\cdot)} }, 
\]
where  $c$ is independent of $j$. 
\end{theorem}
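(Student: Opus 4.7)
The result is a variable-exponent Plancherel--Polya--Nikolskii-type estimate for band-limited functions, and the plan is to first establish a pointwise maximal-type bound at the frozen exponent $p(x)$ and then integrate using the log-Hölder regularity of $p(\cdot)$, $q(\cdot)$ and $s(\cdot)$. Applying Lemma \ref{lemma:r} with $\nu=j+k-1$ and any constant $r\in(0,p^-]$ (absorbing the factor $2^k$ into constants) gives $|\varphi_j(x)|^r \le c\,\eta_{j,m}\ast|\varphi_j|^r(x)$. Hölder's inequality with the constant exponent $p(x)/r\ge 1$ inside the convolution, together with $\|\eta_{j,m}\|_{L^1}\le c$, then upgrades this to
\[
|\varphi_j(x)|^{p(x)} \le c\int \eta_{j,m}(x-y)\,|\varphi_j(y)|^{p(x)}\,dy,
\]
with the outer $p(x)$ frozen.

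\textbf{The $L^\infty$ bound.} By homogeneity I may assume $\|G_j\|_{L^{p(\cdot)}}=1$ with $G_j:=2^{js(\cdot)+jn/p(\cdot)}\varphi_j$, so $\int G_j(y)^{p(y)}\,dy\le 1$. Substituting $|\varphi_j(y)| = 2^{-js(y)-jn/p(y)}G_j(y)$ into the pointwise bound and invoking Lemma \ref{lemma:2} replaces $2^{-p(x)(js(y)+jn/p(y))}$ by $c\,2^{-p(x)js(x)-jn}$ on the region $|x-y|\le 2^{-j}$; for $|x-y|>2^{-j}$ the region is handled by choosing $m$ large so that the polynomial decay of $\eta_{j,m}$ absorbs the log-Hölder correction. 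A standard shift lemma for $L^{p(\cdot)}$ (valid because $\rho_{p(\cdot)}(G_j)\le 1$ and $p\in C^{\log}$) then replaces $G_j(y)^{p(x)}$ by $G_j(y)^{p(y)}$ up to a rapidly decaying tail. Bounding the remaining convolution by $\rho_{p(\cdot)}(G_j)\le 1$ yields $2^{js(x)}|\varphi_j(x)|\le c$ pointwise, hence the $L^\infty$ inequality.

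\textbf{The $L^{q(\cdot)}$ bound.} Now set $G_j:=2^{js(\cdot)+jn/p(\cdot)-jn/q(\cdot)}\varphi_j$ and normalize $\|G_j\|_{L^{p(\cdot)}}=1$. The analogous substitution, with log-Hölder manipulations applied to the composite exponent $s+n/p-n/q$, produces
\[
\bigl(2^{js(x)}|\varphi_j(x)|\bigr)^{q(x)} \le c\,2^{jn}\, I(x)^{q(x)/p(x)} + (\text{tail}),
\]
where $I(x):=\int(1+|2^j(x-y)|)^{-m}G_j(y)^{p(y)}\,dy$. The crucial observation is that $I(x)\le \rho_{p(\cdot)}(G_j)\le 1$ pointwise and $q(x)/p(x)\ge 1$, so $I(x)^{q(x)/p(x)}\le I(x)$. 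Integrating in $x$ and applying Fubini, the factor $2^{jn}$ exactly cancels the mass $\int (1+|2^j(x-y)|)^{-m}\,dx = c\,2^{-jn}$, giving $\rho_{q(\cdot)}(2^{js(\cdot)}\varphi_j/c)\le c$, and hence the claimed norm inequality after a final rescaling.

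\textbf{Main obstacle.} The principal technical difficulty is the exponent shift: rigorously replacing $G_j(y)^{p(x)}$ by $G_j(y)^{p(y)}$ (and similarly for $s$ and $q$) under the convolution. This rests on Diening's shift lemma together with the unit-modular normalization $\rho_{p(\cdot)}(G_j)\le 1$, and careful bookkeeping of the tail terms produced by the shift. The parameter $m$ in $\eta_{j,m}$ has to be chosen as a function of $C_{\log}(p)$, $C_{\log}(q)$, $C_{\log}(s)$, $p^+$ and $q^+$ so that polynomial decay always dominates the worst-case log-Hölder distortion on the annular region $|x-y|>2^{-j}$.
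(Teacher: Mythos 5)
The paper omits its own proof of Theorem \ref{thm:3}, deferring to Theorem~4.7 of the author's earlier Fourier-multiplier paper \cite{Noi}, so I can only assess your argument on its own merits; the broad strategy you propose (Lemma~\ref{lemma:r} for the maximal bound, log-H\"older freezing via Lemma~\ref{lemma:2}, Fubini with the cancellation $2^{jn}\cdot 2^{-jn}$, and the observation that $I(x)\le 1$ with $q/p\ge 1$ so $I^{q/p}\le I$) is the standard one and almost certainly matches the deferred proof in outline.

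There is, however, a real gap in your sequencing. In Step~2 you apply Jensen to convert $|\varphi_j(x)|^r\le c\,\eta_{j,m}\ast|\varphi_j|^r(x)$ into
\[
|\varphi_j(x)|^{p(x)}\le c\int\eta_{j,m}(x-y)\,|\varphi_j(y)|^{p(x)}\,dy,
\]
and in Steps~3--4 you must then convert the \emph{integrand} power $G_j(y)^{p(x)}$ into $G_j(y)^{p(y)}$ plus a decaying tail. The lemma you invoke -- Diening's shift lemma (\cite[Lemma~3.3]{Diening}, also behind \cite[Lemma~5.4]{Diening}) -- is stated in the \emph{outer-exponent} form: for $\alpha\in C^{\log}$, $\alpha^-\ge1$, $m>n$ and $\|f\|_{L^{\alpha(\cdot)}+L^\infty}\le1$, there is $\beta\in(0,1)$ with $\beta^{\alpha(x)}\bigl(\eta_{\nu,2m}\ast|f|(x)\bigr)^{\alpha(x)}\le\eta_{\nu,m}\ast|f|^{\alpha(\cdot)}(x)+\min(2^{-\nu m},(e+|x|)^{-m})$. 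That lemma does not immediately yield the integrand replacement you want, and a direct pointwise bound $G_j(y)^{p(x)}\lesssim G_j(y)^{p(y)}+h(y)$ breaks down at points where $G_j(y)$ is small and $p(x)<p(y)$, because $G_j(y)^{p(x)-p(y)}$ can be arbitrarily large there and the naive fallback bound $G_j(y)^{p(x)}\le 1$ produces a tail that is bounded but not $x$-integrable, so Fubini fails.

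The repair is simply to reorder: do not Jensen first. Substitute $\varphi_j=2^{-js(\cdot)-jn/p(\cdot)+jn/q(\cdot)}G_j$ directly into $|\varphi_j(x)|^r\le c\,\eta_{j,m}\ast|\varphi_j|^r(x)$ and use Lemma~\ref{lemma:2} (applied to $s$, $n/p$, $n/q$; $m$ large enough to absorb all the log-H\"older corrections, as you already indicate) to move the $2^{j(\cdot)}$ factors to the outside point $x$. This gives the clean self-improvement $G_j(x)^r\lesssim\eta_{j,m'}\ast G_j^r(x)$ with no exponent mismatch left. Now Diening's lemma applies \emph{in its stated form} with $\alpha=p(\cdot)/r\ge1$ and $f=G_j^r$, since $\|G_j^r\|_{L^{p(\cdot)/r}}=\|G_j\|_{L^{p(\cdot)}}^r=1$, giving $G_j(x)^{p(x)}\lesssim 2^{jn}I(x)+T(x)$ with $T$ integrable. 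From here your computations are correct: $(2^{js(x)}|\varphi_j(x)|)^{q(x)}=2^{jn(1-q(x)/p(x))}G_j(x)^{q(x)}\lesssim 2^{jn}I(x)+T(x)$ after using $I\le1$, $T\le1$ and $q/p\ge1$; integrating and Fubini finish the $L^{q(\cdot)}$ estimate, and the $L^\infty$ estimate drops out of $G_j(x)^{p(x)}\lesssim 2^{jn}+1$ with $G_j=2^{js(\cdot)+jn/p(\cdot)}\varphi_j$.
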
 

We can prove Theorem \ref{thm:3} by an argument similar to proof of \cite[Theorem 4.7]{Noi}. 
So we omit the proof.  

Now we prove Proposition $\ref{proposition 3.1.33}$. 
\begin{proof}[Proof of Proposition $\ref{proposition 3.1.33}$] 
Let $f\in A_{p(\cdot),q(\cdot)}^{s(\cdot)}(\R^n)$. 
By Theorem $\ref{thm:3}$, we have 
\[
\left\| \varphi_j(D)f \right\|_{L^{\infty} }\lesssim \left\| 2^{\frac{jn}{p(\cdot)}}\varphi_j(D)f\right\|_{L^{p(\cdot)}}. 
\]
Therefore, we see that 
\begin{align*}
\sum_{j=0}^{\infty} \left\| \varphi_j(D)f \right\|_{L^{\infty}} \lesssim \sum_{j=0}^{\infty}  \left\| 2^{\frac{jn}{p(\cdot)}}\varphi_j(D)f\right\|_{L^{p(\cdot)}}  
 \lesssim \sum_{j=0}^{\infty}  2^{j\left( \frac{n}{p(\cdot)}  -s(\cdot) \right)^+ } \left\| f \right\|_{A_{p(\cdot),q(\cdot)}^{s(\cdot)}}.  
\end{align*} 
This implies that $A_{p(\cdot),q(\cdot)}^{s(\cdot)}(\R^n) \hookrightarrow B_{\infty,1}^0(\R^n) \hookrightarrow {\rm BUC}$ by Lemma $\ref{1.1.20}$. 
\end{proof}

Almeida and H\"ast\"o \cite[Theorem 6.4]{Almeida} proved the Sobolev embedding for $B_{p(\cdot),q(\cdot)}^{s(\cdot)}$. 
We need a spacial case of Sobolev embeddings for $F_{p(\cdot),q(\cdot)}^{s(\cdot)}$ and $B_{p(\cdot),q(\cdot)}^{s(\cdot)}$. 
\begin{proposition}
\label{Sobolev}
Let $p(\cdot), q(\cdot)\in\mathcal{P}(\jissu^n)$ and $s(\cdot)\in C^{\log}(\R^n)$. 
Then we have 
\[
A_{p(\cdot),q(\cdot)}^{s(\cdot)}(\R^n) \hookrightarrow B_{\infty,\infty}^{s(\cdot)-\frac{n}{p(\cdot)}}(\R^n). 
\]
\end{proposition}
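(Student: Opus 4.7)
The plan is to combine the Nikol'ski\u{\i}-type $L^{\infty}$--$L^{p(\cdot)}$ estimate in Theorem~\ref{thm:3} with the elementary embeddings $\ell^{q(\cdot)}(L^{p(\cdot)}),\,L^{p(\cdot)}(\ell^{q(\cdot)})\hookrightarrow \ell^{\infty}(L^{p(\cdot)})$, which lets us control a single dyadic block at a time and avoids any summation in the $L^{\infty}$ direction. Throughout, let $\{\theta_j\}_{j\ge 0}\in\Phi(\R^n)$ be the system fixed in Definition~\ref{Def:T}; note that $p\in C^{\log}\cap\mathcal{P}_0$ forces $n/p(\cdot)\in C^{\log}$, so that the shifted smoothness $s(\cdot)-n/p(\cdot)$ is again log-H\"older.

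First I would apply the second inequality of Theorem~\ref{thm:3} with $s(\cdot)$ replaced by $s(\cdot)-n/p(\cdot)$, and with $\varphi_j:=\theta_j(D)f$, whose Fourier support lies in $\{|\xi|\le 2^{j+1}\}$. This produces, with a constant independent of $j\ge 0$,
\[
\bigl\|2^{j(s(\cdot)-n/p(\cdot))}\theta_j(D)f\bigr\|_{L^{\infty}}
\lesssim \bigl\|2^{js(\cdot)}\theta_j(D)f\bigr\|_{L^{p(\cdot)}}.
\]

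Next I would prove the uniform estimate $\sup_{j\ge 0}\|2^{js(\cdot)}\theta_j(D)f\|_{L^{p(\cdot)}}\lesssim \|f\|_{A^{s(\cdot)}_{p(\cdot),q(\cdot)}}$. In the Triebel--Lizorkin case this is immediate from the pointwise bound
\[
|2^{js(x)}\theta_j(D)f(x)|
\le \Bigl(\sum_{k=0}^{\infty}|2^{ks(x)}\theta_k(D)f(x)|^{q(x)}\Bigr)^{1/q(x)}
\]
after taking the $L^{p(\cdot)}$-norm. In the Besov case I would normalize by $\mu:=\|f\|_{B^{s(\cdot)}_{p(\cdot),q(\cdot)}}$ and inspect the modular \eqref{modular}: the defining sum $\sum_{j}\lambda_j^{\ast}\le 1$ forces $\lambda_j^{\ast}\le 1$ for every $j$, where $\lambda_j^{\ast}=\inf\{\lambda>0:\int (2^{js(x)}|\theta_j(D)f(x)|/(\mu\lambda^{1/q(x)}))^{p(x)}dx\le 1\}$; since $\lambda\le 1$ yields $\lambda^{1/q(x)}\le \lambda^{1/q^+}\le 1$, we get $\int(2^{js(x)}|\theta_j(D)f(x)|/\mu)^{p(x)}dx\le 1$, hence $\|2^{js(\cdot)}\theta_j(D)f\|_{L^{p(\cdot)}}\le \mu$ uniformly in $j$.

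Combining the two steps gives
\[
\|f\|_{B^{s(\cdot)-n/p(\cdot)}_{\infty,\infty}}
=\sup_{j\ge 0}\bigl\|2^{j(s(\cdot)-n/p(\cdot))}\theta_j(D)f\bigr\|_{L^{\infty}}
\lesssim \|f\|_{A^{s(\cdot)}_{p(\cdot),q(\cdot)}},
\]
which is the asserted embedding. The main subtlety is the Besov part of the second step: because the exponent $q(\cdot)$ in the denominator $\lambda^{1/q(x)}$ is not constant, a single summand of the modular cannot be identified outright with a Luxemburg norm, and one must exploit $\lambda_j^{\ast}\le 1$ together with $\lambda^{1/q(x)}\le \lambda^{1/q^{+}}$ to close the estimate. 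Everything else (Step~1 and the Triebel--Lizorkin half of Step~2) is essentially a direct application of tools already established earlier in the paper.
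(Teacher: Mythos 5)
Your argument is correct and, for the Triebel--Lizorkin half, essentially identical to the paper's: apply the Nikol'ski\u{\i} estimate of Theorem~\ref{thm:3} with the shifted exponent $s(\cdot)-n/p(\cdot)$ and then dominate a single dyadic block by the full $\ell^{q(\cdot)}$-sum pointwise before taking the $L^{p(\cdot)}$-norm. Where you genuinely diverge is the Besov half: the paper does not prove it at all, instead citing Almeida--H\"ast\"o \cite[Theorem 6.4]{Almeida}, whereas you supply a self-contained argument for the block embedding $\ell^{q(\cdot)}(L^{p(\cdot)})\hookrightarrow \ell^{\infty}(L^{p(\cdot)})$ by unwinding the iterated modular: from $\sum_j\lambda_j^{\ast}\le 1$ you extract $\lambda_j^{\ast}\le 1$ for each $j$, and the observation that for $0<\lambda\le 1$ one has $\lambda^{1/q(x)}\le 1$ (so dividing by $\lambda^{1/q(x)}$ only enlarges the integrand) closes the estimate. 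This is exactly the point where the variability of $q(\cdot)$ would block a naive identification of one summand of the modular with a Luxemburg norm, and your workaround is correct. In short, both routes share Step~1 and the F-case of Step~2, but your modular computation replaces the external citation for the B-case and yields a unified, self-contained proof; the paper's route is shorter on the page but outsources the Besov case. One small technical remark you may want to make explicit: at $\lambda_j^{\ast}=1$ you need the standard lower semicontinuity/Fatou argument to pass from $\lambda>\lambda_j^{\ast}$ to $\lambda=\lambda_j^{\ast}$ in the modular inequality, and the hypotheses of Theorem~\ref{thm:3} require $p(\cdot)\in C^{\log}\cap\mathcal{P}_0$, which you correctly assume even though Proposition~\ref{Sobolev} as stated only writes $p(\cdot)\in\mathcal{P}(\R^n)$.
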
 

\begin{proof}
As we mentioned above, Sobolev embedding for $B_{p(\cdot),q(\cdot)}^{s(\cdot)}$ has been proved by Almeida and H\"ast\"o \cite[Theorem 6.4]{Almeida}. 
Hence, we prove the $F_{p(\cdot),q(\cdot)}^{s(\cdot)}$ case. 

Let $f\in F_{p(\cdot),q(\cdot)}^{s(\cdot) }(\R^n)$. Without loss of generality, we can assume that $\| f\|_{F_{p(\cdot),q(\cdot)}^{s(\cdot) }}=1$. 
By Theorem $\ref{thm:3}$, we have 
\[
\left\| 2^{j\left( s(\cdot)-\frac{n}{p(\cdot)}\right) }\varphi_j(D)f \right\|_{L^{\infty} }\lesssim \left\| 2^{js(\cdot) }\varphi_j(D)f\right\|_{L^{p(\cdot)}} \lesssim 1. 
\]
By taking the $\ell^{\infty}$ norm, we have $ \| f\|_{F_{p(\cdot),q(\cdot)}^{s(\cdot) }}\lesssim \| f\|_{ B_{\infty,\infty}^{s(\cdot)-\frac{n}{p(\cdot)}} }$. 
\end{proof}

Almeida and H\"ast\"o \cite{Almeida} proved the following inclusion for $A_{p(\cdot),q(\cdot)}^{s(\cdot)}(\jissu^n)$. 
\begin{proposition}[{ \cite[Theorem 6.1]{Almeida} }]
\label{dual-thm} 
Let $p(\cdot)\in\mathcal{P}(\jissu^n)$ and $s(\cdot)\in C^{\log}(\R^n)$.

\text{\rm (i)} 
Let $q_1(\cdot), q_2(\cdot)\in\mathcal{P}(\jissu^n)$ satisfy  $q_1(\cdot)\le q_2(\cdot)$. Then 
\[
B_{p(\cdot),q_1(\cdot)}^{s(\cdot)}(\jissu^n)\subset B_{p(\cdot),q_2(\cdot)}^{s(\cdot)}(\jissu^n). 
\]

\text{\rm (ii)} 
Let $q_1(\cdot), q_2(\cdot)\in\mathcal{P}(\jissu^n)$ and $s_1(\cdot), s_2(\cdot)\in C^{\log}(\R^n)$ such that $\inf_{x\in\R^n}(s_1(x)-s_2(x))>0$. Then 
\[
B_{p(\cdot),q_1(\cdot)}^{s_1(\cdot)}(\jissu^n)\subset B_{p(\cdot),q_2(\cdot)}^{s_2(\cdot)}(\jissu^n). 
\]

\text{\rm (iii)} 
If $q(\cdot)\in\mathcal{P}(\R^n)$, then 
\[
B_{p(\cdot),\min\{p(\cdot),q(\cdot)\}}^{s(\cdot)}(\jissu^n)\subset F_{p(\cdot),q(\cdot)}^{s(\cdot)}(\jissu^n)\subset 
B_{p(\cdot),\max\{p(\cdot),q(\cdot)\}}^{s(\cdot)}(\jissu^n). 
\]
\end{proposition}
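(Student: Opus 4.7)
The plan is to treat the three statements separately, each by combining a pointwise inequality on sequences with a modular-level comparison based on (\ref{modular}).

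For (i), I would normalise so that $\|\{f_j\}\|_{\ell^{q_1(\cdot)}(L^{p(\cdot)})}\le 1$. Then the modular satisfies $\sum_j \lambda_j \le 1$ with $\lambda_j := \||f_j|^{q_1(\cdot)}\|_{L^{p(\cdot)/q_1(\cdot)}}$, and in particular $\lambda_j\le 1$ for every $j$. Because $q_1(x)\le q_2(x)$ and $\lambda_j\le 1$, we have $\lambda_j^{1/q_2(x)}\ge \lambda_j^{1/q_1(x)}$ pointwise, whence
\[
\int_{\R^n}\left(\frac{|f_j(x)|}{\lambda_j^{1/q_2(x)}}\right)^{p(x)}dx \le \int_{\R^n}\left(\frac{|f_j(x)|}{\lambda_j^{1/q_1(x)}}\right)^{p(x)}dx \le 1.
\]
Thus the same $\lambda_j$'s are admissible in the $q_2$-modular, giving $\|\{f_j\}\|_{\ell^{q_2(\cdot)}(L^{p(\cdot)})}\le 1$.

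For (ii), I would exploit the strictly positive gap $\epsilon := \inf_x(s_1(x)-s_2(x))>0$. Setting $g_j := 2^{js_1(\cdot)}\theta_j(D)f$ and $h_j := 2^{js_2(\cdot)}\theta_j(D)f$, the pointwise bound $|h_j(x)|\le 2^{-j\epsilon}|g_j(x)|$ gives $\|h_j\|_{L^{p(\cdot)}}\le 2^{-j\epsilon}\|g_j\|_{L^{p(\cdot)}}$. As already observed in (i), $\|f\|_{B_{p(\cdot),q_1(\cdot)}^{s_1(\cdot)}}\le 1$ forces $\|g_j\|_{L^{p(\cdot)}}\le 1$ for every $j$ (the conditions $\lambda_j\le 1$ and $\|g_j\|_{L^{p(\cdot)}}\le 1$ coincide). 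A short modular computation then shows that $\|u\|_{L^{p(\cdot)}}\le a\le 1$ implies $\||u|^{q_2(\cdot)}\|_{L^{p(\cdot)/q_2(\cdot)}}\le a^{q_2^-}$; applied to $u=h_j/\lambda$ with $\lambda\ge 1$ it yields
\[
\varrho_{\ell^{q_2(\cdot)}(L^{p(\cdot)})}\bigl(\{h_j/\lambda\}\bigr) \le \sum_{j\ge 0}(2^{-j\epsilon}/\lambda)^{q_2^-} = \frac{\lambda^{-q_2^-}}{1-2^{-\epsilon q_2^-}},
\]
which is $\le 1$ for $\lambda$ sufficiently large, proving $\|f\|_{B_{p(\cdot),q_2(\cdot)}^{s_2(\cdot)}}\lesssim \|f\|_{B_{p(\cdot),q_1(\cdot)}^{s_1(\cdot)}}$.

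For (iii), the argument rests on the pointwise embedding $(\sum_j a_j^\beta)^{1/\beta}\le (\sum_j a_j^\alpha)^{1/\alpha}$ (valid whenever $0<\alpha\le\beta$, $a_j\ge 0$) together with two variable-exponent Minkowski-type comparisons: $\|L^{p(\cdot)}(\ell^{r(\cdot)})\|\le \|\ell^{r(\cdot)}(L^{p(\cdot)})\|$ when $r(\cdot)\le p(\cdot)$ (from the triangle inequality in $L^{p(\cdot)/r(\cdot)}$, legitimate because $p/r\ge 1$ pointwise) and $\|\ell^{r(\cdot)}(L^{p(\cdot)})\|\le \|L^{p(\cdot)}(\ell^{r(\cdot)})\|$ when $r(\cdot)\ge p(\cdot)$ (from the \emph{reverse} triangle inequality for $L^{\sigma(\cdot)}$ with $\sigma(\cdot)\le 1$). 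Both comparisons are handled via the elementary identity $\|h^{1/r(\cdot)}\|_{L^{p(\cdot)}}\le 1 \Leftrightarrow \|h\|_{L^{p(\cdot)/r(\cdot)}}\le 1$ for $h\ge 0$. Writing $g_j = 2^{js(\cdot)}\theta_j(D)f$, the left inclusion then follows by combining the pointwise embedding (with $\alpha=\min(p,q)(x)$, $\beta=q(x)$) with the first comparison at $r=\min(p,q)\le p$; the right inclusion is symmetric, using $\alpha=q(x)$, $\beta=\max(p,q)(x)$ and the second comparison at $r=\max(p,q)\ge p$. The main technical step is verifying the reverse triangle inequality in the variable-exponent setting, which reduces at the modular level to the concavity estimate $(\alpha_1 a+\alpha_2 b)^{\sigma(x)}\ge \alpha_1 a^{\sigma(x)}+\alpha_2 b^{\sigma(x)}$ valid for $\sigma(x)\le 1$ and $\alpha_1+\alpha_2=1$.
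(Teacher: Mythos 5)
The paper does not prove this proposition itself; it is quoted verbatim from Almeida and H\"ast\"o \cite[Theorem 6.1]{Almeida}, and the Triebel--Lizorkin counterpart, Proposition~\ref{dual-thm1}, is attributed to the same argument. There is therefore no in-text proof to compare against, only the citation. Your reconstruction is correct and follows the same route one finds in \cite{Almeida}. Parts (i) and (ii) proceed by the expected modular computations: for (i) you exploit $\lambda_j^{1/q_2(x)}\ge\lambda_j^{1/q_1(x)}$ when $\lambda_j\le1$ and $q_1\le q_2$, and for (ii) the geometric gain $2^{-j\epsilon}$ makes the $q_2$-modular summable regardless of $q_1$. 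Your part (iii) correctly assembles the pointwise $\ell^{\alpha}\hookrightarrow\ell^{\beta}$ inclusion with the two iterated-norm Minkowski comparisons, reducing each via the identity $\|h^{1/r(\cdot)}\|_{L^{p(\cdot)}}\le1\Leftrightarrow\|h\|_{L^{p(\cdot)/r(\cdot)}}\le1$ to a statement about $L^{p(\cdot)/r(\cdot)}$: the ordinary triangle inequality when $p/r\ge1$ (that is, $r=\min(p,q)\le p$), and the reverse triangle inequality, coming from the pointwise concavity of $t\mapsto t^{\sigma(x)}$ for $\sigma(x)\le1$, when $p/r\le1$ (that is, $r=\max(p,q)\ge p$). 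Two routine points are left implicit but are not substantive gaps: the repeated passage between ``norm $\le1$'' and ``modular $\le1$'', which needs left-continuity of the modulars and holds here because $p^{+}$, $q^{+}$, and $(p/q)^{+}$ are all finite; and the extension of the two-term reverse triangle inequality to countably many nonnegative summands, obtained by iterating the concavity estimate and passing to the limit.
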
 

We have the counterpart of {\rm (i)} and {\rm (ii)} of Proposition $\ref{dual-thm}$ for $F_{p(\cdot),q(\cdot)}^{s(\cdot)}(\R^n)$ by using similar argument of \cite[Theorem 6.1]{Almeida}.  

\begin{proposition}
\label{dual-thm1} 
Let $p(\cdot)\in\mathcal{P}(\jissu^n)$ and $s(\cdot)\in C^{\log}(\R^n)$.

\text{\rm (i)} 
Let $q_1(\cdot), q_2(\cdot)\in\mathcal{P}(\jissu^n)$ satisfy  $q_1(\cdot)\le q_2(\cdot)$. Then 
\[
F_{p(\cdot),q_1(\cdot)}^{s(\cdot)}(\jissu^n)\subset F_{p(\cdot),q_2(\cdot)}^{s(\cdot)}(\jissu^n). 
\]

\text{\rm (ii)} 
Let $q_1(\cdot), q_2(\cdot)\in\mathcal{P}(\jissu^n)$ and $s_1(\cdot), s_2(\cdot)\in C^{\log}(\R^n)$ such that $\inf_{x\in\R^n}(s_1(x)-s_2(x))>0$. Then 
\[
F_{p(\cdot),q_1(\cdot)}^{s_1(\cdot)}(\jissu^n)\subset F_{p(\cdot),q_2(\cdot)}^{s_2(\cdot)}(\jissu^n). 
\]
\end{proposition}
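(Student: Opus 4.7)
My plan is to mirror the Besov-case argument of Almeida--H\"ast\"o used for Proposition \ref{dual-thm}, exploiting the fact that in the Triebel--Lizorkin setting everything reduces cleanly to pointwise inequalities for the $\ell^{q(x)}$-quasi-norm followed by taking the $L^{p(\cdot)}$-norm; no interchange of sum and integral is required, which is precisely what makes the $F$-case easier than the $B$-case. The two elementary pointwise facts I would invoke throughout are the monotonicity
\[
\left(\sum_{j=0}^{\infty}|a_j|^{q_2(x)}\right)^{1/q_2(x)}\le \left(\sum_{j=0}^{\infty}|a_j|^{q_1(x)}\right)^{1/q_1(x)}\qquad (0<q_1(x)\le q_2(x)<\infty),
\]
proved by normalising the right-hand side to $1$ (which forces $|a_j|\le 1$ and hence $|a_j|^{q_2(x)}\le |a_j|^{q_1(x)}$), and the trivial embedding $\sup_j|a_j|\le (\sum_j|a_j|^{q(x)})^{1/q(x)}$.

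For part (i), I would simply apply the monotonicity above with $a_j=2^{js(x)}\theta_j(D)f(x)$ and then take the $L^{p(\cdot)}$-norm, which instantly gives $\|f\|_{F_{p(\cdot),q_2(\cdot)}^{s(\cdot)}}\le \|f\|_{F_{p(\cdot),q_1(\cdot)}^{s(\cdot)}}$.

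For part (ii), I would set $\varepsilon:=\inf_{x\in\R^n}(s_1(x)-s_2(x))>0$, write $h_j(x):=2^{js_1(x)}|\theta_j(D)f(x)|$, and exploit
\[
2^{js_2(x)}|\theta_j(D)f(x)|\le 2^{-j\varepsilon}\,h_j(x).
\]
Pointwise in $x$ this yields
\[
\left(\sum_{j=0}^{\infty}(2^{js_2(x)}|\theta_j(D)f(x)|)^{q_2(x)}\right)^{1/q_2(x)}
\le \left(\sum_{j=0}^{\infty}2^{-j\varepsilon q_2(x)}\right)^{1/q_2(x)}\sup_{k\ge 0}h_k(x).
\]
The geometric prefactor is dominated by $(1-2^{-\varepsilon q_2^-})^{-1/q_2^-}$, a constant independent of $x$. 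Taking the $L^{p(\cdot)}$-norm and then invoking $\sup_k h_k(x)\le \bigl(\sum_k h_k(x)^{q_1(x)}\bigr)^{1/q_1(x)}$ concludes the argument.

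The only delicate point is verifying that $\bigl(\sum_j 2^{-j\varepsilon q_2(x)}\bigr)^{1/q_2(x)}$ is uniformly bounded in $x$: the positivity of $q_2^-$ forces the geometric series to converge with a uniform bound, while the finiteness of $q_2^+$ keeps $1/q_2(x)$ away from $0$ so that exponentiating a constant $\ge 1$ does not inflate it beyond $(1-2^{-\varepsilon q_2^-})^{-1/q_2^-}$; both properties come for free from $q_2\in\mathcal{P}(\R^n)$. Aside from this book-keeping the proof is a routine transcription of the constant-exponent argument, and in particular no maximal-function or convolution estimates are needed.
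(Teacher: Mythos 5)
Your argument is correct, and it is essentially the intended one: the paper itself supplies no proof, merely remarking that the result ``follows by a similar argument'' to \cite[Theorem 6.1]{Almeida}, and the pointwise $\ell^{q(x)}$-monotonicity for part (i) together with the geometric-weight splitting for part (ii) is exactly what that reduction amounts to in the Triebel--Lizorkin setting, where the $\ell^{q(\cdot)}$-quasinorm sits inside the $L^{p(\cdot)}$-norm and no interchange is needed. One small correction to the bookkeeping in part (ii): the uniform bound $(1-2^{-\varepsilon q_2^-})^{-1/q_2^-}$ on the prefactor uses only $q_2^->0$. Since the base $(1-2^{-\varepsilon q_2^-})^{-1}\ge 1$, raising it to the power $1/q_2(x)$ is \emph{increasing} in $1/q_2(x)$, so what you need is $1/q_2(x)\le 1/q_2^-$, i.e.\ an upper bound on $1/q_2(x)$, not a lower one; the finiteness of $q_2^+$ plays no role here. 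This is a cosmetic slip and does not affect the validity of the proof.
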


\section{Decompositions of Besov and Triebel--Lizorkin spaces with variable exponents} 

In order to introduce a quarkonial decomposition for $A_{p(\cdot),q(\cdot)}^{s(\cdot)}(\R^n)$, 
we use an atomic decomposition for  $A_{p(\cdot),q(\cdot)}^{s(\cdot)}(\R^n)$. 
Kempka \cite{Kempka} proved the atomic decomposition for $2$-microlocal Triebel--Lizorkin with variable exponents and $2$-microlocal Besov spaces with variable exponents, but 
summability index $q$ was constant in the Besov case. 
In a case that all exponents are variable exponents in Besov spaces,  Drihem \cite{Drihem} proved the atomic decomposition for $B_{p(\cdot),q(\cdot)}^{s(\cdot)}(\R^n)$. 
As we mentioned in Introduction, Moura, Neves and Schneider \cite{Moura} proved the boundedness of 
the trace operator for $2$-microlocal Besov spaces by using atomic decomposition, but summability index was constant. 
By using the results, we obtain the boundedness of the Trace operator for $B_{p(\cdot), q}^{s(\cdot)}(\R^n)$ under the condition  
\begin{equation}
s^->\frac{1}{p^-}+(n-1)\left( \frac{1}{\min(1, p^-) }-1\right). \label{13-12-26-1}
\end{equation} 
In this condition, it was considered essential infimum and essential supremum to each variable exponent $p(\cdot)$ and $s(\cdot)$. 
However, as a condition of \cite[Theorem 3.13]{Diening} (although the theorem is Triebel--Lizorkin spaces with variable exponents case), 
we would like to consider a condition 
where $s^-$ and $p^-$ are replaced by $s(\cdot)$ and $p(\cdot)$ respectively in $(\ref{13-12-26-1})$.  
That is, we consider the boundedness of Trace operator for $B_{p(\cdot),q(\cdot)}^{s(\cdot)}(\R^n)$ with the condition 
\begin{equation}
\mathop{{\rm{ess}}\,\,\rm{inf}}_{x\in\R^n}\left[ s(\cdot)-\left\{ \frac{1}{p(\cdot)} +(n-1)\left( \frac{1}{\min(1, p(\cdot), q(\cdot))} -1 \right) \right\}  \right] >0,  \label{13-12-28-1}
\end{equation}
which takes the place of the condition $(\ref{13-12-26-1})$. 

In the space $F_{p(\cdot),q(\cdot)}^{s(\cdot)}(\R^n)$ case, as we mentioned in Introduction, Diening, H\"ast\"o and Roudenko \cite{Diening} 
proved the boundedness of Trace operator ( Theorem \ref{theorem 5.4.4} (i) ). 
In this paper, we consider the further results of boundedness of Trace operator ( Theorem \ref{theorem 5.4.4} (ii) ). 

To this end, 
we slightly change the definition of smooth atom which was introduced in Diening, H\"ast\"o and Roudenko \cite{Diening} and 
a part of results on atomic decompositions by Kempka\cite{Kempka} and Drihem \cite{Drihem}. 
Therefore, the next subsection \ref{sub-sec} essentially overlap with the works of Kempka \cite{Kempka} and Drihem \cite{Drihem}.  

\subsection{Atomic decomposition for Besov and Triebel--Lizorkin spaces with variable exponents}
\label{sub-sec}

We define $\sigma_{p(\cdot),q(\cdot)}$ and $\sigma_{p(\cdot)}$ such that 
\begin{align*}
\sigma_{p(\cdot),q(\cdot)} &= n\left(\frac{1}{\min(1, p(\cdot), q(\cdot))}-1 \right) \\ 
\sigma_{p(\cdot)} &= n\left(\frac{1}{\min(1, p(\cdot))}-1 \right), 
\end{align*}
where $n$ is the spacial dimension. 

Let $\nu\in\N_0$ and $m\in\Z^n$. Then $Q_{\nu,m}=\prod_{i=1}^{n}[2^{-\nu}m_i, 2^{-\nu}(m_i+1))$ and 
$\chi_{\nu,m}$ is a characteristic function on $Q_{\nu,m}$. 

\begin{definition}
\label{definition 5.1.1} 
Let $p(\cdot),q(\cdot)\in\mathcal{P}_0(\R^n)\cap C^{\log}(\R^n)$ and $s(\cdot)\in C^{\log}(\R^n)$. 
Let $\nu\in\N_0$, $m\in\Z^n$. 
For double-index complex valued sequence $\lambda=\{\lambda_{\nu,m}\}_{\nu,m}$, 
we define 
\begin{align}
\left\| 
\lambda
\right\|_{b_{p(\cdot),q(\cdot)}^{s(\cdot)}} 
&:= 
\left\| 
\left\{
2^{\nu s(\cdot)} \sum_{m\in\Z^n} \lambda_{\nu,m}\chi_{\nu,m}
\right\}_{\nu=0}^{\infty} 
\right\|_{\ell^{q(\cdot)}(L^{p(\cdot)})} \label{besov atom suuretsu norm} \\ 
\left\| 
\lambda
\right\|_{f_{p(\cdot),q(\cdot)}^{s(\cdot)}} 
&:= 
\left\| 
\left\{
2^{\nu s(\cdot)} \sum_{m\in\Z^n} \lambda_{\nu,m}\chi_{\nu,m}
\right\}_{\nu=0}^{\infty}
\right\|_{L^{p(\cdot)}(\ell^{q(\cdot)})}. \label{triebel atom suuretsu norm}
\end{align}
We say that $\lambda\in b_{p(\cdot),q(\cdot)}^{s(\cdot)}$ if $(\ref{besov atom suuretsu norm})$ is finite and 
say that $\lambda\in f_{p(\cdot),q(\cdot)}^{s(\cdot)}$ if $(\ref{triebel atom suuretsu norm})$ is finite. 
\end{definition}

Let $a_{p(\cdot),q(\cdot)}^{s(\cdot)}$ be either $b_{p(\cdot),q(\cdot)}^{s(\cdot)}$ or $f_{p(\cdot),q(\cdot)}^{s(\cdot)}$. 

\begin{definition}[Atom]
\label{definition 5.1.2} 
Let $p(\cdot),q(\cdot)\in\mathcal{P}_0(\R^n)\cap C^{\log}(\R^n)$ and $s(\cdot)\in C^{\log}(\R^n)$. 
Let $K\in \N_0$, $L\in\Z$, $\nu\in\N_0$, $m\in\Z^n$ and let $\gamma>1$. 

{\rm (1)} A $K$-times continuous differentiable function $a\in C^{K}(\R^n)$ is called $[K,L]$-atom centered at $Q_{0,m}$, if 
$\spt a\subset \gamma Q_{0,m}$ and 
\begin{equation}
\left\| \partial^{\alpha} a \right\|_{\infty} \le 1, \ \ |\alpha|\le K. \label{5.2}
\end{equation}

{\rm (2)} A $K$-times continuous differentiable function $a\in C^{K}(\R^n)$ is called $[K,L]$-atom centered at $Q_{\nu,m}$, if 
$\spt a\subset \gamma Q_{\nu,m}$, 
\begin{equation}
\left\| \partial^{\alpha} a \right\|_{\infty} \le 2^{\nu|\alpha|}, \ \ |\alpha|\le K \label{5.3}
\end{equation}
and 
\begin{equation}
\int_{\R^n}x^{\beta}a(x){\rm d}x=0, \ \ |\beta|\le L. \label{5.4}
\end{equation}
The condition $(\ref{5.4})$ is called moment condition. 
If $L\le -1$, then no moment condition $(\ref{5.4})$ required. 
\end{definition}

To prove the trace theorem, we need Theorem $\ref{theorem 5.1.6-Besov}$. 
We define the family of $[K, L]$ smooth atoms. 

\begin{definition}
Let $K\in\N_0$ and $L\,:\,\R^n\to \R$. The family $\{a_{\nu,m}\}_{\nu\in\N_0, m\in\Z^n}$ is said to be a family of $[K, L]$ smooth atoms 
 if $a_{\nu,m}$ is a  $[K, \lfloor L_{Q_{\nu,m}}^-\rfloor]$ atom centered at $Q_{\nu,m}$ for any $\nu\in\N_0$ and $m\in\Z^n$. 
 Here $\lfloor A \rfloor =\max\{ n\in\Z\,:\, n\le A\}$ and $L_{Q_{\nu,m}}^- = {\rm ess}\inf_{x\in Q_{\nu, m}}L(x) $. 
\end{definition}

\begin{definition}
\label{smooth atom} 
$(1)$ We say that $\{ a_{\nu, m} \}_{\nu\in\N_0, m\in\Z^n}$ is a family of smooth atoms for $F_{p(\cdot),q(\cdot)}^{s(\cdot)}(\R^n)$ if it is a 
family of $[K, L+\epsilon]$ smooth atoms, where $K>s^+$ and 
\begin{equation}
L(\cdot)= \sigma_{p(\cdot),q(\cdot)} -s(\cdot)  \label{13-12-27-1}
\end{equation}
for some constant $\epsilon>0$. 

\noindent
$(2)$ We say that $\{ a_{\nu, m} \}_{\nu\in\N_0, m\in\Z^n}$ is a family of smooth atoms for $B_{p(\cdot),q(\cdot)}^{s(\cdot)}(\R^n)$ if it is a 
family of $[K, L+\epsilon]$ smooth atoms, where $K>s^+$ and 
\[
L(\cdot)= \sigma_{p(\cdot)} -s(\cdot)
\]
for some constant $\epsilon>0$.  
\end{definition}

\begin{remark}
\label{remark8-10}

$(\rm i)$ 
Drihem \cite{Drihem} proved the atomic decomposition for $B_{p(\cdot),q(\cdot)}^{s(\cdot)}(\R^n)$ 
under the moment condition $\displaystyle L=\lfloor \sigma_{p^-}-s^-\rfloor$ and  
Kempka \cite{Kempka} proved the atomic decomposition for $F_{p(\cdot),q(\cdot)}^{s(\cdot)}(\R^n)$ 
under the moment condition $\displaystyle L=\lfloor \sigma_{p^-,q^-}-s^-\rfloor$.

\noindent 
$({\rm ii})$  A family of smooth atoms for  $F_{p(\cdot),q(\cdot)}^{s(\cdot)}(\R^n)$ was introduced by Diening, H\"ast\"o and Roudenko \cite{Diening}. 
 Diening, H\"ast\"o and Roudenko \cite{Diening} defined the smooth atoms with 
$L(\cdot) = \sigma_{p(\cdot),q(\cdot)}-s(\cdot)$. 
If $\inf_{x\in\R^n} [s(x)-\sigma_{p(x),q(x)}]>0$, then we do not need the moment condition 
for the family of smooth atoms. 
For this reason,  Diening, H\"ast\"o and Roudenko \cite{Diening} proved the boundedness of Trace operator under the condition 
\[
\inf\left[ s(\cdot)-\left\{ \frac{1}{p(\cdot)} +(n-1)\left( \frac{1}{\min(1, p(\cdot))} -1 \right) \right\}  \right] >0. 
\]

\noindent
$({\rm iii})$ Let $\{ a_{\nu, m} \}_{\nu\in\N_0, m\in\Z^n}$ be a family of smooth atoms for $A_{p(\cdot),q(\cdot)}^{s(\cdot)}(\R^n)$. 
Then there exists a $\epsilon>0$ such that the atoms $a_{\nu, m}$ are $[K, L+4\epsilon]$ smooth atoms, where $L$ as in Definition $\ref{smooth atom}$.  
By the uniform continuity of $p(\cdot)$, $q(\cdot)$ and $s(\cdot)$, there exists a non negative integer $\nu_0$ 
such that  $L_{Q_{\nu_0, m}}^- > ( \sigma_{p(\cdot)} )_{Q_{\nu_0, m}}^+ -s_{Q_{\nu_0, m}}^- -\epsilon $ 
and $s_{Q_{\nu_0, m}}^->s_{Q_{\nu_0, m}}^+-\epsilon$ for any $m\in \Z^n$. 
Since  $p(\cdot)$, $q(\cdot)$ and $s(\cdot)$ also have a limit at infinity, there exists  compact sets $K\subset \R^n$ such that 
 $L_{\R^n\setminus K }^- > ( \sigma_{p(\cdot)} )_{ \R^n\setminus K }^+ -s_{  \R^n\setminus K }^- -\epsilon $ 
and $s_{\R^n\setminus K}^->s_{ \R^n\setminus K}^+-\epsilon$. 
Then, since $K\subset \R^n$ is a compact set, we can choose dyadic cubes $\Omega_i$, $i=1,2,\cdots, R$, of level $\nu_0$ such that $K\subset \cup_{i=1}^M \Omega_i $. 
Furthermore we define $\Omega_0 = \R^n\setminus \cup_{i=1}^R\Omega_i$. 
These implies that $L_{\Omega_i}^- >  ( \sigma_{p(\cdot)} )_{ \Omega_i }^+ -s_{  \Omega_i}^- -\epsilon$ holds for any $i=0,1,\cdots ,R$. 
Note that if $Q_{\nu, m} \subset \Omega_i$, then 
\[
L_{Q_{\nu, m}}^-\ge L_{\Omega_i}^- \ge (\sigma_{p(\cdot)})_{\Omega_i}^+ -s_{\Omega_i}^--\epsilon \ge (\sigma_{p(\cdot)})_{Q_{\nu, m} }^+ -s_{Q_{\nu, m} }^--\epsilon
\]
for $B_{p(\cdot),q(\cdot)}^{s(\cdot)}(\R^n)$ and 
\[
L_{Q_{\nu, m}}^-\ge L_{\Omega_i}^- \ge (\sigma_{p(\cdot),q(\cdot)})_{\Omega_i}^+ -s_{\Omega_i}^--\epsilon \ge (\sigma_{p(\cdot),q(\cdot)})_{Q_{\nu, m} }^+ -s_{Q_{\nu, m} }^--\epsilon
\]
for $F_{p(\cdot),q(\cdot)}^{s(\cdot)}(\R^n)$. 
Hence, if $Q_{\nu, m}\subset\Omega_i$, then $a_{\nu, m}$ is a $[K,  (\sigma_{p(\cdot)})_{\Omega_i}^+ -s_{\Omega_i}^-+3\epsilon]$ smooth atom for $B_{p(\cdot),q(\cdot)}^{s(\cdot)}(\R^n)$ 
and is also a  $[K,  (\sigma_{p(\cdot),q(\cdot)})_{\Omega_i}^+ -s_{\Omega_i}^-+3\epsilon]$ smooth atom for $F_{p(\cdot),q(\cdot)}^{s(\cdot)}(\R^n)$. 
\end{remark}

Let $\{a_{\nu,m}\}_{\nu\in\N_0, m\in\Z^n}$ is a family of $[K, L]$ smooth atoms. 
Then we need to check that $f=\sum_{\nu=0}^{\infty}\sum_{m\in\Z^n}\lambda_{\nu, m}a_{\nu, m}$ converges in $\mathcal{S}'(\R^n)$. 

\begin{proposition} 
\label{convergence} 
Let $p(\cdot),q(\cdot)\in\mathcal{P}_0(\R^n)\cap C^{\log}(\R^n)$ and $s(\cdot)\in C^{\log}(\R^n)$. 
If $\{a_{\nu,m}\}_{\nu\in\N_0, m\in\Z^n}$ is a family of smooth atoms for $A_{p(\cdot),q(\cdot)}^{s(\cdot)}(\R^n)$ and 
$\lambda=\{\lambda_{\nu,m} \}_{\nu\in\N_0, m\in\Z^n} \in a_{p(\cdot),q(\cdot)}^{s(\cdot)}$, 
then 
the sum 
\[
f=\sum_{\nu=0}^{\infty}\sum_{m\in\Z^n}\lambda_{\nu, m}a_{\nu, m}
\]
converges in $\mathcal{S}'(\R^n)$. 
\end{proposition}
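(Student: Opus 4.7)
The plan is to show absolute convergence of the numerical series $\sum_{\nu,m}|\lambda_{\nu,m}|\,|\langle a_{\nu,m},\varphi\rangle|$ for every $\varphi\in\mathcal{S}(\R^n)$, from which $\mathcal{S}'$-convergence of $\sum_{\nu,m}\lambda_{\nu,m}a_{\nu,m}$ follows by completeness. First I would establish the standard atom-versus-Schwartz decay estimate: for $\nu=0$, the support condition and $\|a_{0,m}\|_\infty\le 1$ give $|\langle a_{0,m},\varphi\rangle|\lesssim_N (1+|m|)^{-N}$ for arbitrary $N$; for $\nu\ge 1$, the vanishing moments of order $\lfloor L_{Q_{\nu,m}}^-\rfloor$ combined with a Taylor expansion of $\varphi$ at the centre of $Q_{\nu,m}$ yield
\[
|\langle a_{\nu,m},\varphi\rangle|\lesssim_N 2^{-\nu(n+L_{Q_{\nu,m}}^-+1)}(1+|2^{-\nu}m|)^{-N}.
\]

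Next I would invoke the finite dyadic partition $\R^n=\bigcup_{i=0}^{R}\Omega_i$ from Remark \ref{remark8-10}(iii), on which every $a_{\nu,m}$ with $Q_{\nu,m}\subset\Omega_i$ is a $[K,(\sigma_{p(\cdot)})_{\Omega_i}^+-s_{\Omega_i}^-+3\epsilon]$ smooth atom (and analogously with $\sigma_{p(\cdot),q(\cdot)}$ in the Triebel--Lizorkin case). Setting $f_\nu(x):=\sum_m\lambda_{\nu,m}\chi_{\nu,m}(x)$, the sum of $|\lambda_{\nu,m}|(1+|2^{-\nu}m|)^{-N}$ over cubes contained in $\Omega_i$ equals, up to an absolute constant, $2^{\nu n}\int_{\Omega_i}|f_\nu(x)|(1+|x|)^{-N}\,{\rm d}x$; combining with the decay estimate produces on $\Omega_i$ a contribution bounded by $2^{-\nu(L+1)}\|f_\nu(1+|\cdot|)^{-N}\|_{L^1(\Omega_i)}$ with $L+1>\sigma_{p(\cdot)}-s(\cdot)+1+3\epsilon$ on $\Omega_i$.

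When $p^-\ge 1$, the variable H\"older inequality (Theorem \ref{Holder type inequality}) bounds the $L^1$-norm by $\|f_\nu\|_{L^{p(\cdot)}}\cdot\|(1+|\cdot|)^{-N}\|_{L^{p'(\cdot)}}$, the latter factor finite for $N$ large; pairing this with the definition of $\|\lambda\|_{a_{p(\cdot),q(\cdot)}^{s(\cdot)}}<\infty$ to absorb $2^{-\nu s(\cdot)}$ into $2^{\nu s(\cdot)}f_\nu$ leaves a residue $2^{-\nu\epsilon}$ that is summable in $\nu$. The main technical obstacle I expect is the sub-Lebesgue case $\min(p^-,q^-)<1$, where H\"older's inequality is not directly applicable and $f_\nu$ is not \emph{a priori} locally integrable; there I would first pass to a H\"older exponent via the Sobolev-type embedding $A_{p(\cdot),q(\cdot)}^{s(\cdot)}\hookrightarrow B_{\infty,\infty}^{s(\cdot)-n/p(\cdot)}$ of Proposition \ref{Sobolev} (or, at the cube level, Theorem \ref{thm:3}), paying a factor $2^{\nu n/p(\cdot)}$ that is exactly absorbed by the strict inequality $L(\cdot)+\epsilon>\sigma_{p(\cdot)}-s(\cdot)$ built into Definition \ref{smooth atom}, so that the same geometric-series argument closes the estimate.
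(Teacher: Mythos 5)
Your high-level plan---pairing with a Schwartz test function, using the dyadic partition $\R^n=\bigcup_{i=0}^{R}\Omega_i$ from Remark~\ref{remark8-10} so that on each $\Omega_i$ the moment order and the smoothness/integrability exponents are effectively constant, and then showing geometric decay in $\nu$ of $\sum_m|\lambda_{\nu,m}||\langle a_{\nu,m},\varphi\rangle|$---is sound and in fact more self-contained than the paper's outline, which after the same localization simply cites \cite[Lemma~6]{Kempka} and \cite[Theorem~3]{Drihem}. Your treatment of the case $p^-\ge 1$ via Theorem~\ref{Holder type inequality} is also correct.

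The sub-Lebesgue case, however, contains a bookkeeping error of size $2^{\nu n}$ that makes the argument fail for $n\ge 2$. After converting the sum over $m$ into $2^{\nu n}\int_{\Omega_i}|f_\nu|(1+|x|)^{-N}\,{\rm d}x$ (correct), you propose to pay an additional factor $2^{\nu n/p(\cdot)}$ from the cube-level Sobolev-type estimate $|\lambda_{\nu,m}|\lesssim 2^{\nu(n/p-s)(x_{\nu,m})}\|\lambda\|_{a_{p(\cdot),q(\cdot)}^{s(\cdot)}}$ and claim it is ``exactly absorbed'' by $L(\cdot)+\epsilon>\sigma_{p(\cdot)}-s(\cdot)$. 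But $\sigma_{p(\cdot)}=n/p(\cdot)-n$ when $p<1$: the moment condition only buys you a decay of order $2^{-\nu(n+\sigma_p-s+\epsilon)}=2^{-\nu(n/p-s+\epsilon)}$, whereas the two factors you pay total $2^{\nu n}\cdot 2^{\nu(n/p-s)}$, leaving a residue $2^{\nu(n-\epsilon)}$ that is not summable. You have counted the cube volume $2^{\nu n}$ twice: once in the sum-to-integral conversion and again inside $n/p=n+\sigma_p$. The correct cost of passing from $L^1(\Omega_i)$ to $L^{p(\cdot)}(\Omega_i)$ when $p<1$ is $2^{\nu\sigma_{p(\cdot)}}$, not $2^{\nu n/p(\cdot)}$, and the clean way to realize it is to raise the \emph{sum} over $m$ (not each term individually) to the power $r:=\min(1,p_{\Omega_i}^-)$: by $(\sum a_m)^r\le\sum a_m^r$ and the equivalence $\sum_m|\lambda_{\nu,m}|^r(1+|2^{-\nu}m|)^{-Nr}\sim 2^{\nu n}\int_{\Omega_i}|f_\nu|^r(1+|x|)^{-Nr}\,{\rm d}x$, you get after H\"older with $p(\cdot)/r\ge1$ and taking the $r$-th root that $\sum_m|\lambda_{\nu,m}|(1+|2^{-\nu}m|)^{-N}\lesssim 2^{\nu n/r}\|f_\nu\|_{L^{p(\cdot)}(\Omega_i)}$, and $n/r-n=(\sigma_{p(\cdot)})_{\Omega_i}^+$ is then precisely what the strict inequality in Remark~\ref{remark8-10} absorbs with room $\epsilon$ to spare. (A parallel $r$-trick appears in the paper's Lemma~\ref{New-lemma} and in the Drihem/Kempka arguments you would otherwise cite.)
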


\begin{proof}[Outline of the proof]
Let $\varphi\in\mathcal{S}(\R^n)$ arbitrary, $\nu_0$ be as in Remark $\ref{remark8-10}$  and natural number $k>\nu_0$. 
Then we have 
\begin{align*}
\left\langle
\sum_{\nu=0}^{k} \sum_{m\in\Z^n} \lambda_{\nu, m}a_{\nu, m}, \varphi
\right\rangle 
&= 
\left\langle
\sum_{\nu=0}^{\nu_0-1} \sum_{m\in\Z^n} \lambda_{\nu, m}a_{\nu, m}, \varphi
\right\rangle   + 
\sum_{i=0}^R 
\left\langle
\sum_{\nu=\nu_0}^{k} \sum_{ \substack{ m\in\Z^n : \\ Q_{\nu, m}\subset \Omega_i } } \lambda_{\nu, m}a_{\nu, m}, \varphi
\right\rangle, 
\end{align*}
where $\Omega_i$ and $R$ are as in Remark $\ref{remark8-10}$ 
and the summation $ \sum_{ \substack{ m\in\Z^n : \\ Q_{\nu, m}\subset \Omega_i } } $ is taken over all $m\in\Z^n$ such that 
$Q_{\nu, m}\subset \Omega_i$. 
Let fix non negative integer  $0\le i \le R$. 
We define 
\[
\lambda_{\nu,m}' = \begin{cases} \lambda_{\nu,m}, \quad &\text{if } Q_{\nu, m}\subset \Omega_i \\
                                          0 \quad &\text{otherwise}
                                          \end{cases}
\]
and 
\[
a_{\nu, m}' = \begin{cases} a_{\nu, m}, \quad &\text{if } Q_{\nu, m}\subset \Omega_i \\
                                          0 \quad &\text{otherwise}. 
                                          \end{cases}
\]
Then 
$a_{\nu,m}'$ is $[K,  ( \sigma_{p(\cdot)} )_{ \Omega_i }^+ -s_{  \Omega_i}^- +3\epsilon  ]$ atom centered at $Q_{\nu, m}$ by Remark $\ref{remark8-10}$ 
and $L_{\Omega_i}^-+\epsilon >  ( \sigma_{p(\cdot)} )_{ \Omega_i }^+ -s_{  \Omega_i}^-$ hold. 
Therefore, by using same argument of \cite[Lemma 6]{Kempka} and \cite[Theorem 3]{Drihem}, we can prove 
\[
\sum_{\nu=\nu_0}^{k}  \sum_{ \substack{ m\in\Z^n : \\ Q_{\nu, m}\subset \Omega_i } } \lambda_{\nu, m}a_{\nu, m} 
=\sum_{\nu=\nu_0}^{k}\sum_{ m\in\Z^n} \lambda_{\nu, m}' a_{\nu, m}' 
\]
converges in $\mathcal{S}'(\R^n)$ as $k\to \infty$ for $i = 0,1, \cdots, R$. 
This implies that the sum 
\[
\sum_{\nu= 0 }^{k}  \sum_{ m\in\Z^n } \lambda_{\nu, m}a_{\nu, m} 
= \sum_{\nu= 0 }^{\nu_0-1}  \sum_{ m\in\Z^n } \lambda_{\nu, m}a_{\nu, m} 
+ \sum_{i=0}^{R}\sum_{\nu=\nu_0}^{k}  \sum_{ \substack{ m\in\Z^n : \\ Q_{\nu, m}\subset \Omega_i } } \lambda_{\nu, m}a_{\nu, m} 
\]
convergence in $\mathcal{S}'(\R^n)$ as $k\to \infty$. 
\end{proof}
 
\begin{theorem}
\label{theorem 5.1.6-Besov} 
Let $p(\cdot),q(\cdot)\in\mathcal{P}_0(\R^n)\cap C^{\log}(\R^n)$ and $s(\cdot)\in C^{\log}(\R^n)$. 
Let $\{ a_{\nu,m} \}_{\nu\in\N_0, m\in\Z^n}$ is a family of smooth atoms for $B_{p(\cdot),q(\cdot)}^{s(\cdot)}(\R^n)$. 
If $\lambda=\{\lambda_{\nu,m}\}_{(\nu,m)\in\N_0\times\Z^n}\in b_{p(\cdot),q(\cdot)}^{s(\cdot)}$, 
then 
\begin{equation}
\left\|
\sum_{\nu\in\N_0}\sum_{m\in\Z^n}\lambda_{\nu,m}a_{\nu,m}
\right\|_{B_{p(\cdot),q(\cdot)}^{s(\cdot)}}  
\lesssim 
||\lambda||_{b_{p(\cdot),q(\cdot)}^{s(\cdot)}}. \label{5.24-1}
\end{equation}
\end{theorem}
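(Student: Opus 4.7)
The plan is to test $f = \sum_{\nu,m}\lambda_{\nu,m}a_{\nu,m}$ against a Littlewood--Paley system $\theta=\{\theta_j\}_{j=0}^{\infty}\in\Phi(\R^n)$. By Proposition \ref{convergence} the sum converges in $\mathcal{S}'(\R^n)$, so one may interchange $\theta_j(D)$ with the series and write
\[
\theta_j(D)f = \sum_{\nu=0}^{\infty}\sum_{m\in\Z^n}\lambda_{\nu,m}\,\theta_j(D)a_{\nu,m}.
\]
The target $\|f\|_{B_{p(\cdot),q(\cdot)}^{s(\cdot)}} = \|\{2^{js(\cdot)}\theta_j(D)f\}_j\|_{\ell^{q(\cdot)}(L^{p(\cdot)})}$ will then be estimated via a pointwise bound on each $\theta_j(D)a_{\nu,m}$.

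The core pointwise estimate is the two-regime bound: for some $\delta>0$ and any large $M$,
\[
|\theta_j(D)a_{\nu,m}(x)| \lesssim 2^{-|j-\nu|\delta}\,\eta_{\min(j,\nu),M}(x-x_{\nu,m}),
\]
where $x_{\nu,m}$ denotes the center of $Q_{\nu,m}$. When $j\ge \nu$, we write $\theta_j(D)a_{\nu,m}=\theta_j^{\vee}\ast a_{\nu,m}$, subtract the degree-$(K-1)$ Taylor polynomial of $a_{\nu,m}$ at $x$, and use $K>s^+$ with $\|\partial^{\alpha}a_{\nu,m}\|_{\infty}\le 2^{\nu|\alpha|}$ together with the rapid decay of $\theta_j^{\vee}$. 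When $j<\nu$ we instead use the moment condition of order $\lfloor L_{Q_{\nu,m}}^-\rfloor$ on $a_{\nu,m}$, subtracting the Taylor polynomial of $\theta_j^{\vee}$ at $x_{\nu,m}$. Summing over $m\in\Z^n$ then converts these bounds into convolutions with $\eta_{\min(j,\nu),M}$ applied to the sequence $\sum_m|\lambda_{\nu,m}|\chi_{\nu,m}$.

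Multiplying by $2^{js(\cdot)}$ and using Lemma \ref{lemma:2} to exchange the weight $2^{js(x)}$ with $2^{\nu s(y)}$ (at the cost of slightly enlarging $M$), the estimation of $\|f\|_{B_{p(\cdot),q(\cdot)}^{s(\cdot)}}$ reduces to controlling
\[
\Bigl\| \Bigl\{ \sum_{\nu=0}^{\infty} 2^{-|j-\nu|\delta}\,\eta_{\min(j,\nu),M}\ast\Bigl(2^{\nu s(\cdot)}\sum_m|\lambda_{\nu,m}|\chi_{\nu,m}\Bigr) \Bigr\}_j \Bigr\|_{\ell^{q(\cdot)}(L^{p(\cdot)})}.
\]
The geometric decay $2^{-|j-\nu|\delta}$ combined with Lemma \ref{min triangle} (iii) allows the $\nu$-sum to be pulled outside the $\ell^{q(\cdot)}(L^{p(\cdot)})$ quasi-norm. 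Finally, Theorem \ref{thm:max3} handles the remaining convolution with $\eta_{\nu,M}$; when $\min(p^-,q^-)\le 1$, one first applies Lemma \ref{lemma:r} with $r<\min(1,p^-,q^-)$ to reduce to an application of Theorem \ref{thm:max3} in $L^{p(\cdot)/r}(\ell^{q(\cdot)/r})$, producing the desired bound by $\|\lambda\|_{b_{p(\cdot),q(\cdot)}^{s(\cdot)}}$.

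The main obstacle is the variable moment order $L(\cdot)=\sigma_{p(\cdot)}-s(\cdot)$. Producing a uniform decay factor $2^{-(\nu-j)\delta}$ in the regime $\nu>j$ requires $\lfloor L_{Q_{\nu,m}}^-\rfloor+1>(\sigma_{p(\cdot)})_{Q_{\nu,m}}^+-s_{Q_{\nu,m}}^-$ uniformly in $(\nu,m)$, which does not follow from the pointwise inequality $L(\cdot)+\epsilon>\sigma_{p(\cdot)}-s(\cdot)$. To handle this we invoke the partition $\R^n=\Omega_0\cup\Omega_1\cup\cdots\cup\Omega_R$ from Remark \ref{remark8-10}, on each piece of which $p(\cdot)$, $q(\cdot)$, $s(\cdot)$ oscillate by less than $\epsilon$ so that the required inequality collapses to its constant-exponent form. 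Splitting the double sum into the $R+1$ corresponding subsums, carrying out the analysis above on each with the constants $(\sigma_{p(\cdot)})_{\Omega_i}^+$ and $s_{\Omega_i}^-$ in place of their pointwise values, and summing the finitely many contributions completes the proof.
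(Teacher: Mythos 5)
Your proposal is correct and follows essentially the same route as the paper: Littlewood--Paley testing, the Frazier--Jawerth-type two-regime bound on $\theta_j(D)a_{\nu,m}$ (Lemma \ref{lemma-8-7-1}), the weight exchange via Lemma \ref{lemma:2}, the convolution/maximal estimate from Theorem \ref{thm:max3}, and --- the key point --- the partition $\R^n = \Omega_0\cup\cdots\cup\Omega_R$ from Remark \ref{remark8-10} to tame the variable moment order, which you correctly identify as the main obstacle.

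One imprecision worth flagging: you say to split the double sum ``into the $R+1$ corresponding subsums,'' but this decomposition by containment $Q_{\nu,m}\subset\Omega_i$ is only exhaustive for dyadic levels $\nu\ge\nu_0$, since $\Omega_1,\dots,\Omega_R$ are dyadic cubes of level $\nu_0$ and a coarser cube $Q_{\nu,m}$ with $\nu<\nu_0$ cannot lie inside any single $\Omega_i$. The paper therefore peels off the finitely many levels $\nu<\nu_0$ first and estimates them directly (its term $I$), and only applies the $\Omega_i$-split to the tail $\nu\ge\nu_0$ (its terms $I_i$). You should add that extra split; the $\nu<\nu_0$ part is a finite sum handled by the same pointwise bounds plus the normalization $\|\lambda\|_{b_{p(\cdot),q(\cdot)}^{s(\cdot)}}=1$, so this is a patch rather than a change of strategy.
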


\begin{remark}
\label{remark-13-12-27-1} 
Theorem $\ref{theorem 5.1.6-Besov}$ is obtained by slightly changing a part of \cite[Theorem 3]{Drihem}. 
\end{remark}

To denote the outline of the proof of Theorem \ref{theorem 5.1.6-Besov}, we need following two Lemma. 

\begin{lemma}[{\cite[Lemma 3]{Drihem}}]
\label{lemma3}
Let $0<a<1$, $0<q\le \infty$ and $\delta>0$ and let $\{\epsilon_k\}_{k\in\N_0}$ be sequence of positive real numbers, such that 
\[
\left\| \{ \epsilon_k\}_{k\in\N_0} \right\|_{\ell^q} = I  <\infty. 
\]
The sequence $\{ \delta_k\,:\,\delta_k=\sum_{j=0}^{\infty}a^{|k-j|\delta}\epsilon_j \}_{k\in\N_0}$ is in $\ell^q$ with 
\[
\left\| \{ \delta_k\}_{k\in\N_0} \right\|_{\ell^q} \le  cI,  
\]
where $c$ depends only on $a$ and $q$. 
\end{lemma}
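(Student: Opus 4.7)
The plan is to recognise $\{\delta_k\}$ as the convolution of $\{\epsilon_k\}$ (extended by zero to negative indices) with the symmetric summable kernel $b_m := a^{|m|\delta}$, $m \in \Z$, and to split into three cases according to the size of $q$. The starting observation is that since $0<a<1$ and $\delta>0$,
\[
\|b\|_{\ell^1(\Z)} \;=\; \sum_{m\in\Z} a^{|m|\delta} \;=\; 1 + \frac{2a^{\delta}}{1-a^{\delta}} \;<\;\infty,
\]
and, more generally, $\sum_{m\in\Z} a^{r|m|\delta} <\infty$ for every $r>0$. All constants produced below depend only on $a$, $\delta$ and $q$.

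For $q=\infty$ I would simply estimate pointwise: $\delta_k \le \|b\|_{\ell^1}\,\|\epsilon\|_{\ell^{\infty}}$, giving the claim with $c=\|b\|_{\ell^1}$. For $1\le q<\infty$ the natural tool is Young's convolution inequality on $\Z$: extending $\epsilon_j=0$ for $j<0$ one has $\delta_k = (b*\epsilon)_k$, and therefore
\[
\|\delta\|_{\ell^q(\N_0)} \;\le\; \|b*\epsilon\|_{\ell^q(\Z)} \;\le\; \|b\|_{\ell^1(\Z)}\,\|\epsilon\|_{\ell^q(\Z)} \;=\; \|b\|_{\ell^1}\,I .
\]

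The remaining case $0<q<1$ is the one that requires a separate argument, and I expect it to be the only mildly delicate step: Young's inequality fails in that range, so I would instead exploit the $q$-subadditivity $(x+y)^q\le x^q+y^q$ valid for $0<q\le 1$ and non-negative $x,y$. Applying it termwise to the finite partial sums of $\delta_k$ and passing to the limit gives
\[
\delta_k^{\,q} \;\le\; \sum_{j=0}^{\infty} a^{q|k-j|\delta}\,\epsilon_j^{\,q} .
\]
Summing over $k\in\N_0$ and swapping the order of summation (legal since all terms are non-negative) yields
\[
\sum_{k=0}^{\infty} \delta_k^{\,q} \;\le\; \sum_{j=0}^{\infty} \epsilon_j^{\,q}\sum_{k=0}^{\infty} a^{q|k-j|\delta} \;\le\; \Bigl(\sum_{m\in\Z} a^{q|m|\delta}\Bigr)\,\sum_{j=0}^{\infty}\epsilon_j^{\,q},
\]
and taking $q$-th roots gives $\|\delta\|_{\ell^q}\le c\,I$ with $c = \bigl(\sum_{m\in\Z} a^{q|m|\delta}\bigr)^{1/q}$, which is finite since $0<a^{q\delta}<1$. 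Combining the three cases establishes the lemma, with the constant $c$ depending only on $a$, $\delta$ and $q$ as required.
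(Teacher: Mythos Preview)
Your proof is correct. The paper does not supply its own proof of this lemma; it is quoted verbatim from \cite[Lemma~3]{Drihem} and used as a black box, so there is no in-paper argument to compare against. The approach you give---recognising $\delta_k$ as a discrete convolution with the summable kernel $a^{|m|\delta}$, applying Young's inequality for $q\ge 1$, and using $q$-subadditivity $(x+y)^q\le x^q+y^q$ for $0<q<1$---is the standard one and is essentially how such lemmas are proved in the literature (see e.g.\ the classical Triebel references). One cosmetic remark: the lemma as stated says $c$ depends only on $a$ and $q$, whereas your constant visibly depends on $\delta$ as well; your version is the accurate one, and the omission of $\delta$ in the statement is presumably a minor slip inherited from the source.
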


\begin{lemma}[ { \cite[Lemma 3.3]{Fraizer1} } ]
\label{lemma-8-7-1} 
Let $\{  \varphi_j\}$, $j\in\N_0$ be a resolution of unity and let $a_{\nu, m}$ be an $[K, L]$-atom. Then 
\[
\left| \inversefourier\varphi_j\ast a_{\nu, m}(x) \right| \lesssim  \begin{cases} 2^{(\nu-j)K} \left(1+2^{\nu}\left| x-2^{-\nu}m\right|\right)^{-M} \ \ &\text{if }\nu \le j \\ 
                                                                     2^{(j-\nu)(L+n+1)} \left(1+2^{j}\left| x-2^{-\nu}m\right|\right)^{-M} \ \ &\text{if } j \le \nu, 
\end{cases} 
\]
where $M$ is sufficiently large. 
\end{lemma}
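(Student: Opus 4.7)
The plan is to transfer $K$ derivatives onto the atom by exploiting the cancellation of $\mathcal{F}^{-1}\varphi_j$. Since for $j\ge 1$ the function $\varphi_j$ is supported in an annulus separated from the origin, $\mathcal{F}^{-1}\varphi_j=2^{jn}\psi(2^j\,\cdot)$ with $\psi$ Schwartz and having vanishing moments of every order, and I would factor it as
\[
\mathcal{F}^{-1}\varphi_j=\sum_{|\alpha|=K}\partial^\alpha H_{j,\alpha},\qquad |H_{j,\alpha}(z)|\lesssim 2^{j(n-K)}(1+2^j|z|)^{-M},
\]
by inverting $(i\xi)^\alpha \widehat H_{j,\alpha}(\xi)=c_\alpha\overline{(i\xi)^\alpha}|\xi|^{-2K}\varphi_j(\xi)$ on the Fourier side (the division by $|\xi|^{2K}$ is harmless because $\varphi_j$ vanishes to infinite order at $0$). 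Integration by parts then gives $\mathcal{F}^{-1}\varphi_j\ast a_{\nu,m}=\sum_{|\alpha|=K}H_{j,\alpha}\ast\partial^\alpha a_{\nu,m}$, and combining $\|\partial^\alpha a_{\nu,m}\|_\infty\le 2^{\nu K}$ with $\mathrm{supp}\,a_{\nu,m}\subset\gamma Q_{\nu,m}$ reduces matters to bounding $2^{(\nu-j)K}\cdot 2^{jn}\int_{\gamma Q_{\nu,m}}(1+2^j|x-y|)^{-M}\,dy$. The degenerate subcase $\nu=j=0$ is handled directly from $\|a_{0,m}\|_\infty\le 1$ and the Schwartz decay of $\mathcal{F}^{-1}\varphi_0$.

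\textbf{Plan for $j\le\nu$.} In this branch I would instead transfer $L+1$ derivatives onto $\mathcal{F}^{-1}\varphi_j$ via the atom's moment conditions. Letting $T_L(y)$ be the order-$L$ Taylor polynomial of $y\mapsto\mathcal{F}^{-1}\varphi_j(x-y)$ centered at $y=2^{-\nu}m$, the moment vanishing of $a_{\nu,m}$ annihilates $T_L$ against $a_{\nu,m}$ (after expanding $(y-2^{-\nu}m)^\beta$ in powers of $y$), so
\[
\mathcal{F}^{-1}\varphi_j\ast a_{\nu,m}(x)=\int\bigl[\mathcal{F}^{-1}\varphi_j(x-y)-T_L(y)\bigr]a_{\nu,m}(y)\,dy.
\]
Taylor's integral remainder combined with $|\partial^\beta\mathcal{F}^{-1}\varphi_j(z)|\lesssim 2^{j(n+|\beta|)}(1+2^j|z|)^{-M}$ bounds the bracket on $\mathrm{supp}\,a_{\nu,m}$ by $C\,2^{-\nu(L+1)}2^{j(n+L+1)}(1+2^j|x-2^{-\nu}m|)^{-M}$, where the decay evaluated at $x-2^{-\nu}m$ rather than at the intermediate point arising in Taylor's formula is justified by $2^j|y-2^{-\nu}m|\le C\,2^{j-\nu}\le C$. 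Multiplying by $\|a_{\nu,m}\|_1\lesssim 2^{-\nu n}$ then yields the advertised $2^{(j-\nu)(L+n+1)}(1+2^j|x-2^{-\nu}m|)^{-M}$.

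\textbf{Main obstacle.} The delicate step will be the assembly of the $y$-integral in the case $\nu\le j$ and, above all, the conversion of the decay from its natural scale $2^j$ to the scale $2^\nu$ requested by the lemma. I would split the integral into a near region $\{|x-2^{-\nu}m|\lesssim 2^{-\nu}\}$, where $\int(1+2^j|x-y|)^{-M}\,dy\lesssim 2^{-jn}$ by rescaling, and a far region, where $|x-y|\sim|x-2^{-\nu}m|$ on the support so that the integral is bounded by the volume $2^{-\nu n}$ times the maximum of the decay factor. The far case produces a parasitic $2^{(j-\nu)n}$ that I would absorb by choosing $M$ large and invoking the elementary inequality $(1+2^j r)^{-M}\lesssim 2^{-(j-\nu)(M-n)}(1+2^\nu r)^{-M}$ (valid for $2^\nu r\ge 1$ and $j\ge\nu$), after which the decay power on the right-hand side is $M-n$ and still arbitrary. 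The $j\le\nu$ branch is geometrically easier because $2^j|x-y|$ and $2^j|x-2^{-\nu}m|$ differ by at most $O(2^{j-\nu})\le 1$ on the atom's support, so no scale conversion is needed.
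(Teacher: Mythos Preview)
The paper does not supply its own proof of this lemma; it is quoted verbatim from Frazier--Jawerth \cite[Lemma~3.3]{Fraizer1}, and your outline is exactly the classical Frazier--Jawerth argument (exploit the $C^K$-smoothness of the atom when $\nu\le j$, and the moment conditions via Taylor subtraction when $j\le\nu$). One cosmetic remark: the elementary inequality you state in the ``main obstacle'' paragraph is written in a slightly garbled form---what you actually need (and what your verbal description matches) is $(1+2^j r)^{-M}\lesssim 2^{-(j-\nu)n}(1+2^\nu r)^{-(M-n)}$ for $2^\nu r\ge 1$, which absorbs the parasitic $2^{(j-\nu)n}$ and leaves decay of order $M-n$; this is harmless since $M$ is free.
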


\begin{proof}[Outline of the proof of Theorem \ref{theorem 5.1.6-Besov} ]
Let $f= \sum_{\nu=0}^{\infty}\sum_{m\in\Z^n}\lambda_{\nu, m}a_{\nu, m}$. Without loss of generally, we assume that $\| \lambda \|_{b_{p(\cdot),q(\cdot)}^{s(\cdot)}}=1$. 
By using the similar argument of the proof of Theorem 3 of \cite{Drihem}, it suffices to show that 
\[
\sum_{j=0}^{\infty}\left\| \left|c2^{j s(\cdot)} \inversefourier\varphi_j\ast f \right|^{q(\cdot)} \right\|_{L^{\frac{p(\cdot)}{q(\cdot)}}} \le C \quad \text{whenever} \quad 
\sum_{j=0}^{\infty}\left\| \left| 2^{j s(\cdot)} \sum_{m\in\Z^n}\lambda_{\nu, m}a_{\nu, m} \right|^{q(\cdot)} \right\|_{L^{\frac{p(\cdot)}{q(\cdot)}}} =1, 
\] 
where $\{ \varphi_j\}_{j\in\N_0}$ is the resolution of unity as in Definition $\ref{Def:T}$. 
Let $0<r<\max(1/{q^+}, {p^-}/{q^+})$ and $\nu_0$ as in Remark $\ref{remark8-10}$. 
Then we have 
\begin{align*}
&\sum_{j=0}^{\infty}\left\| \left|c2^{j s(\cdot)} \inversefourier\varphi_j\ast f \right|^{q(\cdot)} \right\|_{L^{\frac{p(\cdot)}{q(\cdot)}}} \notag \\ 
&\le \sum_{j=0}^{\infty} \left( \sum_{\nu=0}^{\nu_0-1}  
\left\| \left| c\sum_{m\in\Z^n} 2^{js(\cdot)} (\inversefourier\varphi_j\ast \lambda_{\nu, m} a_{\nu, m})(\cdot) \right|^{rq(\cdot)}  \right\|_{L^{\frac{p(\cdot)}{rq(\cdot)}}} \right)^{\frac{1}{r}} \\ 
&+   \sum_{i=0}^R \sum_{j=0}^{\infty} \left( \sum_{\nu=\nu_0}^{\infty}  
\left\| \left| c\sum _{\substack{m\in \Z^n : \\ Q_{\nu, m} \subset \Omega_i }}2^{js(\cdot)} (\inversefourier\varphi_j\ast \lambda_{\nu, m} a_{\nu, m})(\cdot) \right|^{rq(\cdot)}  \right\|_{L^{\frac{p(\cdot)}{rq(\cdot)}}(\Omega_i)} \right)^{\frac{1}{r}} \\ 
&\le I + \sum_{i=0}^R I_i,  
\end{align*}
where 
\[
I_i = \sum_{j=0}^{\infty} \left( \sum_{\nu=\nu_0}^{\infty}  
\left\| \left| c\sum _{\substack{m\in \Z^n : \\ Q_{\nu, m} \subset \Omega_i }}2^{js(\cdot)} (\inversefourier\varphi_j\ast \lambda_{\nu, m} a_{\nu, m})(\cdot) \right|^{rq(\cdot)}  \right\|_{L^{\frac{p(\cdot)}{rq(\cdot)}}(\Omega_i)} \right)^{\frac{1}{r}}. 
\]

Firstly, we denote the outline of the proof of $I_i\lesssim  1$ for any $i=0,1,\cdots, R$. 
Let fix non negative integer  $0\le i \le R$. 
We define 
\[
\lambda_{\nu,m}' = \begin{cases} \lambda_{\nu,m}, \quad &\text{if } Q_{\nu, m}\subset \Omega_i \\
                                          0 \quad &\text{otherwise}
                                          \end{cases}
\]
and 
\[
a_{\nu, m}' = \begin{cases} a_{\nu, m}, \quad &\text{if } Q_{\nu, m}\subset \Omega_i \\
                                          0 \quad &\text{otherwise}. 
                                          \end{cases}
\]
Then we have 
\[
I_i= \sum_{j=0}^{\infty} \left( \sum_{\nu=\nu_0}^{\infty}  
\left\| \left| c \sum _{m\in \Z^n } 2^{js(\cdot)} (\inversefourier\varphi_j\ast \lambda_{\nu, m}' a_{\nu, m}')(\cdot) \right|^{rq(\cdot)}  \right\|_{L^{\frac{p(\cdot)}{q(\cdot)}}(\Omega_i)} \right)^{\frac{1}{r}} 
\]
and $a_{\nu,m}'$ is $[K,  ( \sigma_{p(\cdot)} )_{ \Omega_i }^+ -s_{  \Omega_i}^- +3\epsilon  ]$ atom centered at $Q_{\nu, m}$ by Remark $\ref{remark8-10}$.  
By 
using same argument of the proof of \cite[Theorem 3]{Drihem} with replacing $L^{\frac{p(\cdot)}{q(\cdot)}}$ by  
$L^{\frac{p(\cdot)}{q(\cdot)}}(\Omega_i)$, we can prove $I_i\lesssim 1$ for any $i=0,1,\cdots, R$.

Finally, we denote the outline of the proof of $I\lesssim 1$. 
For any $\nu\in\N_0$ and $m\in\Z^n$, it is easy to see that 
$ \displaystyle 
\lfloor L_{Q_{\nu, m}}^- \rfloor \ge \lfloor L_{\R^n}^- \rfloor. 
$
This implies that we can choose $L$ as in Lemma $\ref{lemma-8-7-1}$ such that $ \lfloor L_{\R^n}^- \rfloor $. 
By Lemma $\ref{lemma-8-7-1}$, 
we obtain 
\begin{equation}
|2^{js(x)}\inversefourier\varphi_j\ast a_{\nu,m}(x)| \le \begin{cases} c2^{-|j - \nu| (\lfloor L_{\R^n}^- \rfloor +1+n)} 2^{js(x)} \left(1+2^{j}\left| x-2^{-\nu}m\right|\right)^{-M}  \quad \text{if } j\le \nu \label{13-7-24-1} \\ 
                                                  c2^{-|j - \nu| (K-s^+)} 2^{\nu s(x)} \left(1+2^{\nu}\left| x-2^{-\nu}m\right|\right)^{-M} \quad \text{if } j\ge \nu. 
\end{cases}
\end{equation}
Therefore, we have 
\begin{align}
&\left\| \left| \sum_{m\in\Z^n} 2^{js(\cdot)}\inversefourier\varphi_j\ast \lambda_{\nu, m} a_{\nu,m}(\cdot)\right|^{rq(\cdot)} \right\|_{L^{\frac{p(\cdot)}{rq(\cdot)} }}  \notag \\ 
&\quad \le  
\left\| 
\left| 
\sum_{m\in\Z^n}
c2^{-|j - \nu| (\lfloor L_{\R^n}^- \rfloor +1+n) } 2^{js(\cdot)} \lambda_{\nu, m}\langle 2^{j}\cdot-2^{j-\nu}m\rangle^{-M}
\right|^{rq(\cdot)} 
\right\|_{L^{\frac{p(\cdot)}{rq(\cdot)} }}  
\end{align} 
for $j \le \nu$ and 
\begin{align}
&\left\| \left| \sum_{m\in\Z^n} 2^{js(\cdot)}\inversefourier\varphi_j\ast \lambda_{\nu, m} a_{\nu,m}(\cdot)\right|^{rq(\cdot)} \right\|_{L^{\frac{p(\cdot)}{rq(\cdot)} }}  \notag \\ 
&\le  
\left\| 
\left| 
\sum_{m\in\Z^n}
c2^{(\nu-j) (K-s^+ )} 2^{\nu s(\cdot)} \lambda_{\nu, m}\langle 2^{\nu}\cdot-m\rangle^{-M}
\right|^{rq(\cdot)} 
\right\|_{L^{\frac{p(\cdot)}{rq(\cdot)} }}
\end{align} 
for $j\ge \nu$. 
Let $0<t<\min(1, p^-)$. 
If there exists $c>0$ such that 
\begin{align}
\left\| 
 \left| c
\sum_{m\in\Z^n} 2^{\nu s(\cdot)} \lambda_{\nu, m}\langle 2^{\nu}\cdot-m\rangle^{-M}
\right|^{rq(\cdot)} 
\right\|_{L^{\frac{p(\cdot)}{rq(\cdot)} }}  \le 
\left\| 
\left| 
\sum_{m\in\Z^n}
 2^{\nu s(\cdot)} \lambda_{\nu, m}
\chi_{\nu, m}
\right|^{rq(\cdot)} 
\right\|_{L^{\frac{p(\cdot)}{rq(\cdot)} }} +2^{-\nu}   \label{13-7-24-2}
\end{align}
for $j\ge\nu$ and 
\begin{align}
&\left\| 
 \left| c2^{(j-\nu)(n/t-s^-)}
\sum_{m\in\Z^n} 2^{j s(\cdot)} \lambda_{\nu, m}\langle 2^{\nu}\cdot-m\rangle^{-M}
\right|^{rq(\cdot)} 
\right\|_{L^{\frac{p(\cdot)}{rq(\cdot)} }}  \notag \\ 
&\le
\left\| 
\left| 
\sum_{m\in\Z^n}
 2^{\nu s(\cdot)} \lambda_{\nu, m}
\chi_{\nu, m}
\right|^{rq(\cdot)} 
\right\|_{L^{\frac{p(\cdot)}{rq(\cdot)} }} +2^{-j}   \label{13-8-8-1}
\end{align}
for $j\le \nu$, then 
\begin{align}
&\left\| \left| \sum_{m\in\Z^n} 2^{js(\cdot)}\inversefourier\varphi_j\ast \lambda_{\nu, m} a_{\nu,m}(\cdot)\right|^{rq(\cdot)} \right\|_{L^{\frac{p(\cdot)}{rq(\cdot)} }}  \notag \\ 
&\quad \le  
2^{(\nu-j)(K-s^+)rq^-}
\left(
\left\| 
\left| 
\sum_{m\in\Z^n}
 2^{\nu s(\cdot)} \lambda_{\nu, m}
\chi_{\nu, m}
\right|^{rq(\cdot)} 
\right\|_{L^{\frac{p(\cdot)}{rq(\cdot)} }} +2^{-\nu} \label{13-8-10-5}
\right)
\end{align} 
for $j\ge \nu$ and 
\begin{align} 
&\left\| 
 \left| c 2^{-|j - \nu| (\lfloor L_{\R^n}^- \rfloor +1+n)  } 
\sum_{m\in\Z^n} 2^{j s(\cdot)} \lambda_{\nu, m}\langle 2^{\nu}\cdot-m\rangle^{-M}
\right|^{rq(\cdot)} 
\right\|_{L^{\frac{p(\cdot)}{rq(\cdot)} }} \notag \\ 
&\le 2^{(j-\nu)(\lfloor L^-\rfloor+n+1  -n/t+s^-)rq^-} \left(
\left\| 
\left| 
\sum_{m\in\Z^n}
 2^{\nu s(\cdot)} \lambda_{\nu, m}
\chi_{\nu, m}
\right|^{rq(\cdot)} 
\right\|_{L^{\frac{p(\cdot)}{rq(\cdot)} }} +2^{-j} \right) \notag \\ 
&\le C \left(
\left\| 
\left| 
\sum_{m\in\Z^n}
 2^{\nu s(\cdot)} \lambda_{\nu, m}
\chi_{\nu, m}
\right|^{rq(\cdot)} 
\right\|_{L^{\frac{p(\cdot)}{rq(\cdot)} }}+2^{-j} \right) 
 \label{13-8-8-2}
\end{align}
for $j \le \nu$, where $C= \max(1, 2^{(1-\nu_0)(\lfloor L^-\rfloor+n+1  -n/t+s^-)q^-   })$.  
It is easy to see that 
\begin{align}
c\sum_{j=0}^{\infty} 
&\left\|
 \left|
\sum_{\nu=0}^{\nu_0-1} \sum_{m\in\Z^n} 2^{js(\cdot)} (\inversefourier\varphi_j\ast \lambda_{\nu, m} a_{\nu, m})(\cdot)  
\right|^{rq(\cdot)}
\right\|_{L^{\frac{p(\cdot)}{rq(\cdot)}}}^{\frac{1}{r} } \notag \\ 
&\le c\sum_{j=0}^{\infty} 
\left( \sum_{\nu=0}^{\nu_0-1}
\left\|
 \left|
\sum_{m\in\Z^n} 2^{js(\cdot)} (\inversefourier\varphi_j\ast \lambda_{\nu, m} a_{\nu, m})(\cdot)  
\right|^{rq(\cdot)}
\right\|_{L^{\frac{p(\cdot)}{rq(\cdot)}}}
\right)^{\frac{1}{r} } \notag \\ 
&\lesssim c\sum_{j=0}^{\infty} 
 \sum_{\nu=0}^{\nu_0-1}
\left\|
 \left|
\sum_{m\in\Z^n} 2^{js(\cdot)} (\inversefourier\varphi_j\ast \lambda_{\nu, m} a_{\nu, m})(\cdot)  
\right|^{rq(\cdot)}
\right\|_{L^{\frac{p(\cdot)}{rq(\cdot)}}}^{\frac{1}{r} }. \label{13-8-10-11}
\end{align}
Let $\nu= w$. 
We estimate the right hand side of $(\ref{13-8-10-11})$. 
We have 
\begin{align*}
\sum_{j=0}^{\infty} 
&\left\|
 \left|
\sum_{m\in\Z^n} 2^{js(\cdot)} (\inversefourier\varphi_j\ast \lambda_{w, m} a_{w, m})(\cdot)  
\right|^{rq(\cdot)}
\right\|_{L^{\frac{p(\cdot)}{rq(\cdot)}}}^{\frac{1}{r} }  \notag \\ 
&= 
\sum_{j=0}^{w} 
\left\|
 \left|
\sum_{m\in\Z^n} 2^{js(\cdot)} (\inversefourier\varphi_j\ast \lambda_{w, m} a_{w, m})(\cdot)  
\right|^{rq(\cdot)}
\right\|_{L^{\frac{p(\cdot)}{rq(\cdot)}}}^{\frac{1}{r} }  \notag \\ 
&\qquad + \sum_{j=w+1}^{\infty} 
\left\|
 \left|
\sum_{m\in\Z^n} 2^{js(\cdot)} (\inversefourier\varphi_j\ast \lambda_{w, m} a_{w, m})(\cdot)  
\right|^{rq(\cdot)}
\right\|_{L^{\frac{p(\cdot)}{rq(\cdot)}}}^{\frac{1}{r} }. 
\end{align*}
Then, by $(\ref{13-8-10-5})$ and $(\ref{13-8-8-2})$, we obtain 
\begin{align*}
\sum_{j=0}^{\infty} 
&\left\|
 \left|
\sum_{m\in\Z^n} 2^{js(\cdot)} (\inversefourier\varphi_j\ast \lambda_{w, m} a_{w, m})(\cdot)  
\right|^{rq(\cdot)}
\right\|_{L^{\frac{p(\cdot)}{rq(\cdot)}}}^{\frac{1}{r} }  \notag \\ 
&\le \sum_{j=0}^{w} 
C^{1/r} \left(
\left\| 
\left| 
\sum_{m\in\Z^n}
 2^{w s(\cdot)} \lambda_{w, m}
\chi_{w, m}
\right|^{rq(\cdot)} 
\right\|_{L^{\frac{p(\cdot)}{rq(\cdot)} }}^{r\cdot 1/r} +2^{-j} 
\right)^{\frac{1}{r} } \notag \\ 
&+ \sum_{j=w+1}^{\infty} 
2^{(w-j)(K-s^+)q^-}
\left(
\left\| 
\left| 
\sum_{m\in\Z^n}
 2^{w s(\cdot)} \lambda_{w, m}
\chi_{w, m}
\right|^{rq(\cdot)} 
\right\|_{L^{\frac{p(\cdot)}{rq(\cdot)} }} +2^{-w}   \right)^{\frac{1}{r} }. 
\end{align*}
By 
\[
\left\| 
\left| 
\sum_{m\in\Z^n}
 2^{w s(\cdot)} \lambda_{w, m}
\chi_{w, m}
\right|^{rq(\cdot)} 
\right\|_{L^{\frac{p(\cdot)}{rq(\cdot)} }}^{1/r} 
= 
\left\| 
\left| 
\sum_{m\in\Z^n}
 2^{w s(\cdot)} \lambda_{w, m}
\chi_{w, m}
\right|^{q(\cdot)} 
\right\|_{L^{\frac{p(\cdot)}{q(\cdot)} }} \le 1, 
\]
we see that 
\begin{align*}
&\sum_{j=0}^{\infty} 
\left\|
 \left|
\sum_{m\in\Z^n} 2^{js(\cdot)} (\inversefourier\varphi_j\ast \lambda_{w, m} a_{w, m})(\cdot)  
\right|^{rq(\cdot)}
\right\|_{L^{\frac{p(\cdot)}{rq(\cdot)}}}^{\frac{1}{r} }  \notag \\ 
&\le  
(w+1)C^{1/r}2^{1/r} + \sum_{j=w+1}^{\infty} 
2^{|w-j|(s^+-K)q^-}
\left(
\left\| 
\left| 
\sum_{m\in\Z^n}
 2^{w s(\cdot)} \lambda_{w, m}
\chi_{w, m}
\right|^{rq(\cdot)} 
\right\|_{L^{\frac{p(\cdot)}{rq(\cdot)} }} +2^{-w}   \right)^{\frac{1}{r} } \notag \\ 
&\le  
(w+1)C^{\frac{1}{r}}2^{\frac{1}{r}} + \sum_{j=w+1}^{\infty} 
\left( \sum_{\nu=0}^{w}
2^{|\nu-j|(s^+-K)rq^-}
\left(
\left\| 
\left| 
\sum_{m\in\Z^n}
 2^{\nu s(\cdot)} \lambda_{\nu, m}
\chi_{\nu, m}
\right|^{rq(\cdot)} 
\right\|_{L^{\frac{p(\cdot)}{rq(\cdot)} }} +2^{-\nu}  \right) \right)^{\frac{1}{r} }  \notag \\ 
&\le 
(w+1)C^{\frac{1}{r}}2^{\frac{1}{r}} + \sum_{j=w+1}^{\infty} 
\left( \sum_{\nu=0}^{w}
2^{-|\nu-j|(K-s^+)rq^-}
\left(
\left\| 
\left| 
\sum_{m\in\Z^n}
 2^{\nu s(\cdot)} \lambda_{\nu, m}
\chi_{\nu, m}
\right|^{rq(\cdot)} 
\right\|_{L^{\frac{p(\cdot)}{rq(\cdot)} }} +2^{-\nu}  \right) \right)^{\frac{1}{r} } \notag 
\end{align*}
Then, by Lemma $\ref{lemma3}$, we obtain 
\begin{align*}
&\sum_{j=0}^{\infty} 
\left\|
 \left|
\sum_{m\in\Z^n} 2^{js(\cdot)} (\inversefourier\varphi_j\ast \lambda_{w, m} a_{w, m})(\cdot)  
\right|^{rq(\cdot)}
\right\|_{L^{\frac{p(\cdot)}{rq(\cdot)}}}^{\frac{1}{r} }  \notag \\ 
&\le  
(w+1)C^{\frac{1}{r}}2^{\frac{1}{r}} + \sum_{j=w+1}^{\infty} 
\left( \sum_{\nu=0}^{w}
2^{-|\nu-j|(K-s^+)rq^-}
\left(
\left\| 
\left| 
\sum_{m\in\Z^n}
 2^{\nu s(\cdot)} \lambda_{\nu, m}
\chi_{\nu, m}
\right|^{rq(\cdot)} 
\right\|_{L^{\frac{p(\cdot)}{rq(\cdot)} }} +2^{-\nu}  \right) \right)^{\frac{1}{r} } \notag \\ 
&\le  
(w+1)C^{1/r}2^{1/r} + \sum_{j=0}^{\infty} 
\left(
\left\| 
\left| 
\sum_{m\in\Z^n}
 2^{j s(\cdot)} \lambda_{j, m}
\chi_{j, m}
\right|^{rq(\cdot)} 
\right\|_{L^{\frac{p(\cdot)}{rq(\cdot)} }} +2^{-j}  \right)^{\frac{1}{r} }  \notag \\ 
&\le (w+1)C^{1/r}2^{1/r} + D<\infty. 
\end{align*}

Therefore, we have 
\begin{align*}
c\sum_{j=0}^{\infty} 
&\left\|
 \left|
\sum_{\nu=0}^{\nu_0-1} \sum_{m\in\Z^n} 2^{js(\cdot)} (\inversefourier\varphi_j\ast \lambda_{\nu, m} a_{\nu, m})(\cdot)  
\right|^{rq(\cdot)}
\right\|_{L^{\frac{p(\cdot)}{rq(\cdot)}}}^{\frac{1}{r} } \notag \\ 
&\lesssim \left( \frac{\nu_0(\nu_0-1)}{2} +1\right) C^{1/r}2^{1/r} + \nu_0 D <\infty
\end{align*}
by $(\ref{13-8-10-11})$. 

Therefore, we consider that $(\ref{13-7-24-2})$ and $(\ref{13-8-8-1})$. 
We can use similar argument of the proof of \cite[Theorem 3]{Drihem}, 
we have $(\ref{13-7-24-2})$ and $(\ref{13-8-8-1})$ because $\nu < \nu_0-1$.

\end{proof}

By using similar arguments of the outline of the proof of Theorem $\ref{theorem 5.1.6-Besov}$ and 
similar arguments about atomic decompositions \cite{Kempka}, 
we have the case of $F_{p(\cdot),q(\cdot)}^{s(\cdot)}$. 

\begin{theorem}
\label{theorem 5.1.6-Triebel} 
Let $p(\cdot),q(\cdot)\in\mathcal{P}_0(\R^n)\cap C^{\log}(\R^n)$ and $s(\cdot)\in C^{\log}(\R^n)$. 
Let $\{ a_{\nu,m} \}_{\nu\in\N_0, m\in\Z^n}$ is a family of smooth atoms for $F_{p(\cdot),q(\cdot)}^{s(\cdot)}(\R^n)$. 
If $\lambda=\{\lambda_{\nu, m}\}_{(\nu,m)\in\N_0\times\Z^n}\in f_{p(\cdot),q(\cdot)}^{s(\cdot)}$, 
then 
\begin{equation}
\left\|
\sum_{\nu\in\N_0}\sum_{m\in\Z^n}\lambda_{\nu,m}a_{\nu,m}
\right\|_{F_{p(\cdot),q(\cdot)}^{s(\cdot)}}  
\lesssim 
||\lambda||_{f_{p(\cdot),q(\cdot)}^{s(\cdot)}}. \label{5.24-2}
\end{equation}
\end{theorem}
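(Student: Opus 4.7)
The plan is to mirror the outline of the proof of Theorem \ref{theorem 5.1.6-Besov}, but with the Besov norm $\ell^{q(\cdot)}(L^{p(\cdot)})$ replaced throughout by the Triebel--Lizorkin norm $L^{p(\cdot)}(\ell^{q(\cdot)})$. First, Proposition \ref{convergence} gives that $f:=\sum_{\nu,m}\lambda_{\nu,m}a_{\nu,m}$ converges in $\mathcal S'(\R^n)$, so it suffices to estimate $\|\{2^{js(\cdot)}\mathcal F^{-1}\varphi_j*f\}_{j}\|_{L^{p(\cdot)}(\ell^{q(\cdot)})}$. Normalize so that $\|\lambda\|_{f_{p(\cdot),q(\cdot)}^{s(\cdot)}}=1$ and use the partition $\{\Omega_i\}_{i=0}^{R}$ together with the threshold level $\nu_0$ from Remark \ref{remark8-10}. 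Split the sum defining $f$ into the low-frequency part $\nu<\nu_0$ (finitely many terms, which are harmless) and, for each $i$, the part $f_i:=\sum_{\nu\ge \nu_0}\sum_{Q_{\nu,m}\subset\Omega_i}\lambda_{\nu,m}a_{\nu,m}$. By Remark \ref{remark8-10}, on $\Omega_i$ each $a_{\nu,m}$ appearing in $f_i$ is a $[K,(\sigma_{p(\cdot),q(\cdot)})_{\Omega_i}^+-s_{\Omega_i}^-+3\epsilon]$ atom, so the moment condition becomes uniform within $\Omega_i$.

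Next, I would apply the pointwise estimate in Lemma \ref{lemma-8-7-1} to obtain, for each fixed $\nu,j$,
\[
|2^{js(x)}\mathcal F^{-1}\varphi_j*a_{\nu,m}(x)|
\lesssim 2^{-|j-\nu|\delta}\,2^{\min(j,\nu)s(x)}\,\bigl(1+2^{\min(j,\nu)}|x-2^{-\nu}m|\bigr)^{-M},
\]
for some $\delta>0$ (which is positive because of $K>s^+$ and, on each $\Omega_i$, the moment order exceeds $(\sigma_{p(\cdot),q(\cdot)})_{\Omega_i}^{+}-s_{\Omega_i}^{-}$). Using Lemma \ref{lemma:2} to commute $2^{\nu s(\cdot)}$ past the kernel of $\eta_{\nu,M}$, and then Lemma \ref{lemma:r} to dominate
\[
\Bigl|\sum_{m}\lambda_{\nu,m}\bigl(1+2^{\nu}|x-2^{-\nu}m|\bigr)^{-M}\Bigr|
\lesssim\bigl(\eta_{\nu,M}*|\textstyle\sum_{m}\lambda_{\nu,m}\chi_{\nu,m}|^{t}\bigr)^{1/t}(x)
\]
for $0<t<\min(1,p^-,q^-)$, reduces the problem to a vector-valued maximal inequality on the sequence $\{2^{\nu s(\cdot)}\sum_{m}\lambda_{\nu,m}\chi_{\nu,m}\}_{\nu}$.

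At this point, the crucial difference from the Besov case appears: instead of Theorem \ref{thm:max3} I invoke Theorem \ref{thm:max2} (Diening--H\"ast\"o--Roudenko's vector-valued maximal inequality in $L^{p(\cdot)/t}(\ell^{q(\cdot)/t})$), which is tailored to the $L^{p(\cdot)}(\ell^{q(\cdot)})$ norm. Combined with Lemma \ref{lemma3} to sum the geometric factors $2^{-|j-\nu|\delta}$ in $\ell^{q(\cdot)}$ (after absorbing the $q(\cdot)$-power using the embedding $\ell^{q^-}\hookrightarrow\ell^{q(\cdot)}$ at the level of pointwise $x$), this yields
\[
\|f_i\|_{F_{p(\cdot),q(\cdot)}^{s(\cdot)}}
\lesssim
\Bigl\|\Bigl\{2^{\nu s(\cdot)}\sum_{m:Q_{\nu,m}\subset\Omega_i}\lambda_{\nu,m}\chi_{\nu,m}\Bigr\}_{\nu}\Bigr\|_{L^{p(\cdot)}(\ell^{q(\cdot)})}
\le\|\lambda\|_{f_{p(\cdot),q(\cdot)}^{s(\cdot)}}.
\]
Summing over the finitely many $i=0,1,\dots,R$ and adding the contribution of the low-frequency part $\nu<\nu_0$ (which is bounded by finitely many terms controlled by $\|\lambda\|_{f_{p(\cdot),q(\cdot)}^{s(\cdot)}}$ by exactly the same argument used on $\Omega_0$) gives \eqref{5.24-2}.

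The main obstacle is the same as in the Besov case: the moment-cancellation exponent $L(\cdot)=\sigma_{p(\cdot),q(\cdot)}-s(\cdot)$ is genuinely variable, so one cannot just quote the constant-exponent atomic decomposition. The device that saves the day is the partition $\{\Omega_i\}$ from Remark \ref{remark8-10}, which makes the relevant moment condition uniform on each piece, at the cost of having to carry out the above geometric-series/maximal-function estimate separately on each $\Omega_i$ (in the localized norm $L^{p(\cdot)/q(\cdot)}(\Omega_i)$), and then sum the finitely many pieces at the end.
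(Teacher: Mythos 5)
Your overall strategy --- localize via the partition $\{\Omega_i\}$ from Remark \ref{remark8-10} so that the moment order becomes uniform on each piece, then run a pointwise estimate through Lemma \ref{lemma-8-7-1} and a vector-valued maximal inequality --- is the same as the paper's, with the cosmetic difference that the paper treats the pieces $I_i$ by simply invoking Theorem \ref{theorem 3/18-1} (Kempka's atomic decomposition with a constant moment order $L=\lfloor\sigma_{p^-,q^-}-s^-\rfloor$) as a black box, while you propose to re-derive the estimate from scratch. Two of your lemma citations, however, do not do the job you assign them. First, the pointwise bound you display, dominating $\sum_m\lambda_{\nu,m}(1+2^\nu|x-2^{-\nu}m|)^{-M}$ by $\bigl(\eta_{\nu,M}\ast|\sum_m\lambda_{\nu,m}\chi_{\nu,m}|^t\bigr)^{1/t}$, is the content of Lemma \ref{New-lemma}, not of Lemma \ref{lemma:r}; the latter applies only to distributions with compact Fourier support and says nothing about the step function $\sum_m\lambda_{\nu,m}\chi_{\nu,m}$.

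The more substantive slip is the summation-in-$j$ step. Lemma \ref{lemma3} is a statement about the fixed-exponent sequence space $\ell^q$, and the trick you propose, absorbing the $q(\cdot)$-power via the embedding $\ell^{q^-}\hookrightarrow\ell^{q(\cdot)}$, runs in the unhelpful direction: the inclusion gives $\|\{G_j\}\|_{\ell^{q(x)}}\le\|\{G_j\}\|_{\ell^{q^-}}$, and Lemma \ref{lemma3} with $q=q^-$ then lands you at $\|\{g_j\}\|_{\ell^{q^-}}$, which cannot be controlled by $\|\{g_j\}\|_{\ell^{q(x)}}$ unless $q$ is constant. The tool you actually need is Lemma \ref{Lemma4.2-Kempka}, which performs the $2^{-|j-k|\delta}$-convolution estimate directly in the norm $L^{p(\cdot)}(\ell^{q(\cdot)})$; the paper explicitly lists it among the lemmas required for exactly this theorem. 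With that substitution (and the minor typographical correction that the localized norm on each $\Omega_i$ should be $L^{p(\cdot)/r}\bigl(\ell^{q(\cdot)/r}\bigr)(\Omega_i)$, not $L^{p(\cdot)/q(\cdot)}(\Omega_i)$, which is a leftover from the Besov outline), your argument agrees with the paper's.
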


To denote the outline of the proof of Theorem \ref{theorem 5.1.6-Triebel}, 
we need the following Theorem and Lemmas. 

\begin{theorem}[{\cite[Corollary 5.6]{Kempka}}]
\label{theorem 3/18-1}
Let $p(\cdot),q(\cdot)\in\mathcal{P}_0(\R^n)\cap C^{\log}(\R^n)$ and $s(\cdot)\in C^{\log}(\R^n)$. 
Furthermore let $\{ a_{\nu,m} \}_{\nu\in\N_0, m\in\Z^n}$ are  $[K,L]$ atoms for $F_{p(\cdot),q(\cdot)}^{s(\cdot)}(\R^n)$, 
where $K>s^+$ and $L=\lfloor \sigma_{p^-, q^-}-s^-\rfloor$. 
If $\lambda=\{\lambda_{\nu, m}\}_{(\nu,m)\in\N_0\times\Z^n}\in f_{p(\cdot),q(\cdot)}^{s(\cdot)}$, 
then 
\begin{equation}
\left\|
\sum_{\nu\in\N_0}\sum_{m\in\Z^n}\lambda_{\nu,m}a_{\nu,m}
\right\|_{F_{p(\cdot),q(\cdot)}^{s(\cdot)}}  
\lesssim 
||\lambda||_{f_{p(\cdot),q(\cdot)}^{s(\cdot)}}. \label{3/18-1}
\end{equation}
\end{theorem}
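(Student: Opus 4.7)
The plan is to adapt the proof of Theorem~\ref{theorem 5.1.6-Besov} to the Triebel--Lizorkin setting by replacing the outer sequence norm $\ell^{q(\cdot)}$ and inner function norm $L^{p(\cdot)}$ with the reverse ordering, and invoking the $L^{p(\cdot)}(\ell^{q(\cdot)})$ mixed maximal inequality (Theorem~\ref{thm:max2}) in place of Theorem~\ref{thm:max3}. Since the atoms here have the \emph{constant} moment order $L = \lfloor \sigma_{p^-,q^-} - s^- \rfloor$, the subdivision into dyadic pieces $\Omega_i$ used in Remark~\ref{remark8-10} becomes unnecessary, and the argument can proceed globally on $\R^n$.

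First I would write $f = \sum_\nu \sum_m \lambda_{\nu,m} a_{\nu,m}$, apply the resolution of unity $\{\varphi_j\}_{j \in \N_0}$ from Definition~\ref{Def:T}, and estimate each term $\inversefourier\varphi_j \ast a_{\nu,m}$ via Lemma~\ref{lemma-8-7-1}, obtaining
\[
\bigl| \inversefourier\varphi_j \ast a_{\nu,m}(x) \bigr|
\lesssim 2^{-|j-\nu|\kappa}\, \bigl\langle 2^{\min(j,\nu)}(x - 2^{-\nu}m)\bigr\rangle^{-M},
\]
with $\kappa = K$ on the regime $\nu \le j$ and $\kappa = L + n + 1$ on the regime $j \le \nu$. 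Multiplying by $2^{js(x)}$ and rewriting $2^{js(x)} = 2^{\nu s(x)} \cdot 2^{(j-\nu)s(x)}$, the excess $2^{|j-\nu|s^\pm}$ is absorbed by $2^{-|j-\nu|\kappa}$ thanks to $K > s^+$ and $L \ge \lfloor \sigma_{p^-,q^-} - s^- \rfloor$, leaving a pure geometric factor $2^{-|j-\nu|\delta}$ with some $\delta > 0$.

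Next I would fix $t$ with $0 < t < \min(1, p^-, q^-)$ and use Lemma~\ref{lemma:r} to dominate the sum over $m$ at each level $\nu$ by $\bigl(\eta_{\nu,M} \ast \bigl|\sum_m \lambda_{\nu,m} \chi_{\nu,m}\bigr|^t\bigr)^{1/t}$. Raising the entire pointwise estimate to the $t$-th power so that the exponents $p(\cdot)/t$ and $q(\cdot)/t$ both exceed one, Theorem~\ref{thm:max2} removes the $\eta_{\nu,M}$ convolution in $L^{p(\cdot)/t}(\ell^{q(\cdot)/t})$ provided $M/t > 2n$; undoing the power returns the estimate in $L^{p(\cdot)}(\ell^{q(\cdot)})$. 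Lemma~\ref{lemma3} then collapses the geometric factor $\{2^{-|j-\nu|\delta}\}$ when summing over $\nu$, yielding the required bound by $\|\lambda\|_{f_{p(\cdot),q(\cdot)}^{s(\cdot)}}$.

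The main obstacle is the precise compatibility between the integer moment order $L$ and the auxiliary exponent $t$: one must verify that $L + n + 1 - n/t + s^- > 0$ holds for some admissible $t < \min(1, p^-, q^-)$, which is exactly secured by $L \ge \lfloor \sigma_{p^-, q^-} - s^- \rfloor$, and simultaneously that $M/t$ can be taken above $2n$ without violating the decay estimate of Lemma~\ref{lemma-8-7-1}. Once these inequalities are in place, Lemma~\ref{min triangle}(ii) permits splitting the $\nu$-sum into the two regimes $\nu \le j$ and $\nu > j$ and recombining them without loss, completing the proof.
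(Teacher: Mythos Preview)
The paper does not supply its own proof of this statement: Theorem~\ref{theorem 3/18-1} is quoted verbatim from \cite[Corollary~5.6]{Kempka} and used as a black box inside the proof of Theorem~\ref{theorem 5.1.6-Triebel}. So there is nothing in the paper to compare your argument against directly; what you have sketched is essentially the Kempka proof itself (Frazier--Jawerth convolution estimate, $r$-trick, vector-valued maximal inequality, geometric summation), and the overall strategy is sound.

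Two of your lemma citations, however, point to the wrong tools. First, Lemma~\ref{lemma:r} applies only to distributions with band-limited Fourier transform, not to atoms; the pointwise bound you want, namely
\[
\sum_{m\in\Z^n} 2^{\nu s(x)}|\lambda_{\nu,m}|\,\langle 2^{\min(j,\nu)}(x-2^{-\nu}m)\rangle^{-M}
\lesssim \max\bigl(1,2^{(\nu-j)n/t}\bigr)\Bigl(\eta_{\nu,M t}\ast\Bigl(\sum_{m}2^{\nu s(\cdot)t}|\lambda_{\nu,m}|^t\chi_{\nu,m}\Bigr)\Bigr)^{1/t}(x),
\]
is exactly Lemma~\ref{New-lemma} (or the direct $\Omega_k$-shell argument from \cite{Drihem}), not Lemma~\ref{lemma:r}. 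Second, Lemma~\ref{lemma3} is a scalar $\ell^q$ statement and does not collapse the geometric factor inside the mixed norm $L^{p(\cdot)}(\ell^{q(\cdot)})$; for that you need Lemma~\ref{Lemma4.2-Kempka}. With these two substitutions your outline goes through, and the compatibility check $L+n+1-n/t+s^->0$ for some $t<\min(1,p^-,q^-)$ that you flagged is indeed precisely what $L=\lfloor\sigma_{p^-,q^-}-s^-\rfloor$ guarantees.
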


\begin{lemma}
\label{New-lemma}
Let $0<t<1$, $j,\nu\in\N_0$ and $\displaystyle \{ \lambda_{\nu,m}\}_{\nu\in\N_0, m\in\Z^n}$ be positive. 
Furthermore let $M>0$ be sufficiency large. 

\noindent
$(i)$ Then 
\begin{align*}
&\sum_{m\in\Z^n}2^{\nu s(x)}\lambda_{\nu,m}(1+2^j|x-2^{-\nu}m|)^{-M}  \notag \\ 
&\le c\max(1, 2^{(\nu-j)n/t}) 
\mathcal{M}_t\left( \sum_{m\in\Z^n}2^{\nu  s(\cdot)}\lambda_{\nu,m}\chi_{\nu,m}(\cdot) \right)  (x) 
\end{align*}
holds for any $x\in\R^n$. 

\noindent
$(ii)$ Then 
\begin{align*}
&\sum_{m\in\Z^n}2^{\nu s(x)}\lambda_{\nu,m}(1+2^j|x-2^{-\nu}m|)^{-M}  \notag \\ 
&\le c2^{h_n\alpha}\max(1, 2^{(\nu-j)n/t}) \left( \left[ \eta_{\nu,\alpha t} \ast \left( \sum_{m\in\Z^n}2^{\nu t s(\cdot)}\lambda_{\nu,m}\chi_{\nu,m}^t(\cdot) \right) \right](x) \right)^{1/t} 
\end{align*}
holds for any $x\in\R^n$ and for any positive real number $\alpha>0$, 
where $h_n$ is a positive number depend only on $n$. 
\end{lemma}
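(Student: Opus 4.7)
My plan is to prove (i) and (ii) through a common chain of reductions. First, I would reduce to the case $j\le\nu$: when $j>\nu$, the pointwise bound $(1+2^{j}|z|)^{-M}\le(1+2^{\nu}|z|)^{-M}$ immediately collapses the estimate to the $j=\nu$ sub-case, in which the factor $\max(1,2^{(\nu-j)n/t})$ equals one. Assuming $j\le\nu$, I would apply the $t$-quasi-triangle inequality $(\sum a_{m})^{t}\le\sum a_{m}^{t}$ (valid for $0<t\le 1$ and $a_{m}\ge 0$) to the left-hand side raised to the $t$-th power, then use the identity $\lambda_{\nu,m}^{t}=2^{\nu n}\int_{Q_{\nu,m}}\lambda_{\nu,m}^{t}\chi_{\nu,m}(y)\,dy$ and interchange the sum and the integral. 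Since the $\chi_{\nu,m}$ have disjoint supports, the inner sum collapses to $\bigl(\sum_{m}\lambda_{\nu,m}\chi_{\nu,m}(y)\bigr)^{t}$, and for $y\in Q_{\nu,m}$ the elementary bound $|y-2^{-\nu}m|\le\sqrt{n}\,2^{-\nu}$ together with $j\le\nu$ makes $(1+2^{j}|x-2^{-\nu}m|)$ and $(1+2^{j}|x-y|)$ comparable up to a dimensional constant.

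Denoting the left-hand side by $S$, I would next invoke Lemma \ref{lemma:2} with exponent $ts$ to trade $2^{\nu ts(x)}$ for $2^{\nu ts(y)}$, paying a factor $(1+2^{\nu}|x-y|)^{c_{0}}$ with $c_{0}>C_{\log}(ts)$ fixed. The inequality $(1+2^{\nu}|x-y|)\le 2^{\nu-j}(1+2^{j}|x-y|)$ (available since $j\le\nu$) then absorbs this loss into a prefactor $2^{(\nu-j)c_{0}}$ together with an extra $(1+2^{j}|x-y|)^{c_{0}}$ in the integrand. Choosing $M$ large enough that $Mt-c_{0}$ exceeds the thresholds demanded below, one arrives at
\[
S^{t}\lesssim 2^{(\nu-j)c_{0}}\cdot 2^{\nu n}\int_{\R^{n}}\Lambda^{t}(y)(1+2^{j}|x-y|)^{-(Mt-c_{0})}\,dy,
\]
where $\Lambda(y):=2^{\nu s(y)}\sum_{m}\lambda_{\nu,m}\chi_{\nu,m}(y)$. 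For (i), I would bound the integral by the standard radial-decreasing-kernel convolution estimate, which combined with the identity $\mathcal{M}\Lambda^{t}(x)=(\mathcal{M}_{t}\Lambda(x))^{t}$ produces the Hardy--Littlewood maximal operator $\mathcal{M}_{t}$. For (ii), a second kernel comparison $(1+2^{j}|x-y|)^{-(Mt-c_{0})}\le 2^{(\nu-j)\alpha t}(1+2^{\nu}|x-y|)^{-\alpha t}$ (available once $Mt-c_{0}\ge\alpha t$) converts the integrand directly into the convolution $\eta_{\nu,\alpha t}\ast\Lambda^{t}(x)$, with the $\alpha$-dependence of the overall prefactor absorbed into $2^{h_{n}\alpha}$. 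Taking $t$-th roots then yields (i) and (ii).

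The hard part will be the careful bookkeeping that matches the log-H\"older loss at scale $2^{-\nu}$ (of order $C_{\log}(ts)$) against the coarse-scale decay $2^{-j}$ of the original kernel: concretely, one must verify that the composite prefactor $2^{(\nu-j)(c_{0}+\alpha t)}$ in (ii), respectively $2^{(\nu-j)(c_{0}+n)}$ in (i), is dominated by $2^{h_{n}\alpha}\max(1,2^{(\nu-j)n/t})^{t}$ once the $t$-th root is extracted, and the exponent $n/t$ in the claimed factor arises precisely from the interaction of the $n$-dimensional integration with the $t$-th power step. A secondary technicality is that the piecewise-constant function $\sum_{m}\lambda_{\nu,m}\chi_{\nu,m}$ is not band-limited, so the Peetre-type inequality of Lemma \ref{lemma:r} does not apply; the $t$-subadditivity $(\sum a_{m})^{t}\le\sum a_{m}^{t}$ takes its place.
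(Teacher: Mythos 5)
Your approach is a genuinely different route from the paper's. The paper (following Drihem and Kempka) uses a Stein-style dyadic annulus decomposition $\Omega_k=\{m:\,2^{k-1}\le 2^{\min(\nu,j)}|x-2^{-\nu}m|\le 2^k\}$, a $\sup_k$-over-annuli reduction exploiting $M=R+T$ with $T>n/t$, and a log-H\"older trade that is indexed by the annulus radius $k$. You bypass the annulus machinery entirely: $t$-subadditivity, the identity $\lambda_{\nu,m}^t=2^{\nu n}\int_{Q_{\nu,m}}\lambda_{\nu,m}^t$, a direct kernel comparison between $(1+2^j|x-2^{-\nu}m|)$ and $(1+2^j|x-y|)$, and Lemma \ref{lemma:2}. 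Conceptually this is cleaner and shorter. Your observation that Lemma \ref{lemma:r} does not apply to the non-band-limited step functions $\sum_m\lambda_{\nu,m}\chi_{\nu,m}$ is correct, and your reduction of $j>\nu$ to $j=\nu$ is also fine.

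However, there is a genuine gap, and you flag it yourself but then assert it can be closed when in fact it cannot, at least not as you describe. After $(1+2^\nu|x-y|)^{c_0}\le 2^{(\nu-j)c_0}(1+2^j|x-y|)^{c_0}$ (for $j\le\nu$), the radial-majorant estimate gives
$S^t\lesssim 2^{(\nu-j)(c_0+n)}\bigl(\mathcal{M}_t\Lambda(x)\bigr)^t$
for part (i), and the secondary comparison $(1+2^j|x-y|)^{-(Mt-c_0)}\le 2^{(\nu-j)\alpha t}(1+2^\nu|x-y|)^{-\alpha t}$ (which is correct when $Mt-c_0\ge\alpha t$) gives
$S^t\lesssim 2^{(\nu-j)(c_0+\alpha t)}\bigl[\eta_{\nu,\alpha t}\ast\Lambda^t\bigr](x)$
for part (ii). You then claim these prefactors are dominated by $2^{h_n\alpha}\max(1,2^{(\nu-j)n/t})^t=2^{h_n\alpha t+(\nu-j)n}$. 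This is false: for (i) you would need $c_0+n\le n$, i.e.\ $c_0\le 0$, which contradicts $c_0>tC_{\log}(s)>0$; for (ii) you would need $(\nu-j)(c_0+\alpha t-n)\le h_n\alpha t+O(1)$, but $\alpha t>n$ is forced (otherwise the convolution $\eta_{\nu,\alpha t}\ast\Lambda^t$ diverges) and so $c_0+\alpha t-n>0$, making the left side grow linearly in $\nu-j$ while the right is bounded with $h_n$ depending only on $n$. The absorption step ``with the $\alpha$-dependence of the overall prefactor absorbed into $2^{h_n\alpha}$'' cannot go through, because the excess is $2^{(\nu-j)\alpha t}$, not $2^{h_n\alpha}$.

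Worth noting: the loss is concentrated exactly where the kernel $(1+2^j|x-y|)^{-Mt}$ offers no help, namely $|x-y|\lesssim 2^{-j}$, so it is not an artifact of a sloppy inequality — the comparison $(1+2^\nu r)^{c_0}\le 2^{(\nu-j)c_0}(1+2^j r)^{c_0}$ is sharp at $r\sim 2^{-j}$. The paper's own proof of (ii) only works the case $\nu\le j$ in detail and then says the case $j\le\nu$ follows ``by the same argument,'' which, when carried out, seems to meet the same extra factor in the dominant annulus; so the difficulty you hit is not unique to your route, and may reflect an imprecision in the lemma's stated constant. But as a matter of logic you cannot finish the proof of the lemma \emph{as stated} along the lines you propose: either you accept a factor $\max(1,2^{(\nu-j)(\alpha t+c_0)/t})$ on the right (which is harmless for the paper's downstream use, where the sum over $j$ is finite), or you need an argument that exploits a further cancellation you have not identified.
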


\begin{proof}
$(i)$ is proved in \cite{Drihem}. Hence we prove only $(ii)$. 

We use the argument similar to \cite{Drihem}. 
Let $k\in\N_0$. 
We define 
\[
\Omega_k=\left\{ m\in\Z^n\,:\, 2^{k-1}\le 2^{\min(\nu,j)}|x-2^{-\nu}m| \le 2^k\right\} 
\]
and 
\[
\Omega_0=\left\{ m\in\Z^n\,:\,  2^{\min(\nu,j)}|x-2^{-\nu}m| \le 1 \right\}. 
\] 

Firstly we consider the case of $\nu \le j$. 

Let $M=R+T$ and $T>\frac{n}{t}$. 
Then we obtain 
\begin{align*}
\sum_{m\in\Z^n}2^{\nu s(x)}\lambda_{\nu,m}(1+2^j|x-2^{-\nu}m|)^{-M} 
&\le 
\sum_{k=0}^{\infty}\sum_{m\in\Omega_k} 2^{\nu s(x)}\lambda_{\nu,m}(1+2^j|x-2^{-\nu}m|)^{-M}  \notag \\ 
&\le 
c \sum_{k=0}^{\infty}\sum_{m\in\Omega_k} 2^{\nu s(x)}\lambda_{\nu,m}2^{-Mk} \notag \\ 
&\le 
c \sum_{k=0}^{\infty} 2^{-(T-\frac{n}{t})k}\sum_{m\in\Omega_k} 2^{\nu s(x)}\lambda_{\nu,m}2^{-(R+\frac{n}{t})k}  \notag \\ 
&\le 
c \left( \sup_{k\in\N_0} 2^{-(Rt+n)k} \sum_{m\in\Omega_k} 2^{\nu t s(x)}\lambda_{\nu,m}^t\right)^{\frac{1}{t}}. 
\end{align*} 
Therefore we have 
\begin{align}
&\sum_{m\in\Z^n}2^{\nu s(x)}\lambda_{\nu,m}(1+2^j|x-2^{-\nu}m|)^{-M}  \notag \\ 
&\le 
c \left( \sup_{k\in\N_0} 2^{-(Rt+n)k} \sum_{m\in\Omega_k} 2^{\nu t s(x)}\lambda_{\nu,m}^t \right)^{\frac{1}{t}}  \notag \\ 
&\le 
c \left( \sup_{k\in\N_0} 2^{-Rtk+(\nu-k)n} \int_{\cup_{m\in\Omega_k}Q_{\nu,m}}\sum_{m\in\Omega_k} 2^{\nu t s(x)}\lambda_{\nu,m}^t \chi_{\nu,m}(y) {\rm d}y \right)^{\frac{1}{t}}, \label{3.18-1}
\end{align} 
where we use $\displaystyle |\cup_{m\in\Omega_k}Q_{\nu,m}| \sim  2^{(k-\nu)n}$. 
Using same argument in the proof of \cite[Theorem 3]{Drihem}, 
There exists a $h_n\in\N_0$ such that 
$\displaystyle |x-y|\le 2^{k-\nu+h_n}$ for $\displaystyle y\in\cup_{m\in\Omega_k}Q_{\nu,m}$. 
This implies that $y$ is located in some ball $B(x,2^{k-\nu+h_n})$ and that 
\[
1\le c \frac{2^{(k+h_n)\alpha t}}{(1+2^{\nu}|x-y|)^{\alpha t}}
\]
holds for any $\alpha>0$. 
Hence we see that 
\begin{align*}
&\sum_{m\in\Z^n}2^{\nu s(x)}\lambda_{\nu,m}(1+2^j|x-2^{-\nu}m|)^{-M} \notag \\ 
&\le c \left( \sup_{k\in\N_0} 2^{-Rtk+(\nu-k)n} \int_{\cup_{m\in\Omega_k}Q_{\nu,m}}\sum_{m\in\Omega_k} 2^{\nu t s(x)}\lambda_{\nu,m}^t \chi_{\nu,m}(y){\rm d}y \right)^{\frac{1}{t}} \notag \\ 
&\le c \left( \sup_{k\in\N_0} 2^{-Rtk+(k+h_n)\alpha t-kn} \int_{ B(x,2^{k-\nu+h_n}) }\sum_{m\in\Omega_k} \frac{2^{\nu n} 2^{\nu t s(x)}\lambda_{\nu,m}^t\chi_{\nu,m}(y)}{ (1+2^{\nu}|x-y|)^{\alpha t}  }{\rm d}y \right)^{\frac{1}{t}}. 
\end{align*}
Since $s(\cdot)\in C_{\log}(\R^n)$, we can prove that 
\[
2^{\nu s(x)} \le c2^{\beta k} 2^{\nu s(y)}, 
\] 
where $\displaystyle \beta = \max(c_{\log}(s), s^+-s^-)$. 
\begin{align*}
&\sum_{m\in\Z^n}2^{\nu s(x)}\lambda_{\nu,m}(1+2^j|x-2^{-\nu}m|)^{-M} \notag \\ 
&\le c \left( \sup_{k\in\N_0} 2^{-Rtk+(k+h_n)\alpha t-kn} \int_{ B(x,2^{k-\nu+h_n}) }\sum_{m\in\Omega_k} \frac{2^{\nu n}2^{\nu t s(x)}\lambda_{\nu,m}^t\chi_{\nu,m}(y)}{ (1+2^{\nu}|x-y|)^{\alpha t}  }{\rm d}y \right)^{\frac{1}{t}} \notag \\ 
&\le c \left( \sup_{k\in\N_0} 2^{-(R+\alpha-\beta)kt}2^{h_n\alpha t-kn} 
\left[
\eta_{\nu,\alpha t}\ast\left( \sum_{m\in\Z^n} 2^{\nu t s(y)}\lambda_{\nu,m}^t\chi_{\nu,m}(y)\right)
\right](x)
 \right)^{\frac{1}{t}}.   
\end{align*}
Since $R$ is sufficiency large such that 
\[
R> -\alpha+\max(c_{\log}(s), s^+-s^-), 
\]
we get 
\begin{align*}
\sum_{m\in\Z^n}2^{\nu s(x)}\lambda_{\nu,m}(1+2^j|x-2^{-\nu}m|)^{-M} 
&\le c 2^{h_n\alpha}
\left[
\eta_{\nu,\alpha t}\ast\left( \sum_{m\in\Z^n} 2^{\nu t s(y)}\lambda_{\nu,m}^t\chi_{\nu,m}(y)\right)
\right]^{\frac{1}{t}}(x).   
\end{align*}

Finally we consider the case of $j\le \nu$. 
By using same argument as above, we have 
\begin{align*}
&\sum_{m\in\Z^n}2^{\nu s(x)}\lambda_{\nu,m}(1+2^j|x-2^{-\nu}m|)^{-M}  \notag \\ 
&\le 
c \left( \sup_{k\in\N_0} 2^{-(Rt+n)k} \sum_{m\in\Omega_k} 2^{\nu t s(x)}\lambda_{\nu,m}^t \right)^{\frac{1}{t}}. 
\end{align*} 
In the case of $j\le \nu$, by $\displaystyle |\cup_{m\in\Omega_k}Q_{\nu,m}| \sim  2^{(k-j)n}$, we obtain 
\begin{align*}
&\sum_{m\in\Z^n}2^{\nu s(x)}\lambda_{\nu,m}(1+2^j|x-2^{-\nu}m|)^{-M}  \notag \\ 
&\le 
c \left( \sup_{k\in\N_0} 2^{-(Rt+n)k} \sum_{m\in\Omega_k} 2^{\nu t s(x)}\lambda_{\nu,m}^t \right)^{\frac{1}{t}}  \notag \\ 
&\le 
c \left( \sup_{k\in\N_0} 2^{-Rtk+(j-k)n} \int_{\cup_{m\in\Omega_k}Q_{\nu,m}}\sum_{m\in\Omega_k} 2^{\nu t s(x)}\lambda_{\nu,m}^t \chi_{\nu,m}(y) {\rm d}y \right)^{\frac{1}{t}}. 
\end{align*} 
This implies that 
\begin{align*}
&\sum_{m\in\Z^n}2^{\nu s(x)}\lambda_{\nu,m}(1+2^j|x-2^{-\nu}m|)^{-M}  \notag \\ 
&\le 
c \left( \sup_{k\in\N_0} 2^{-Rtk+(j-k)n} \int_{\cup_{m\in\Omega_k}Q_{\nu,m}}\sum_{m\in\Omega_k} 2^{\nu t s(x)}\lambda_{\nu,m}^t \chi_{\nu,m}(y) {\rm d}y \right)^{\frac{1}{t}} \notag \\ 
&\le 
c 2^{(j-\nu)n/t} \left( \sup_{k\in\N_0} 2^{-Rtk+(\nu-k)n} \int_{\cup_{m\in\Omega_k}Q_{\nu,m}}\sum_{m\in\Omega_k} 2^{\nu t s(x)}\lambda_{\nu,m}^t \chi_{\nu,m}(y) {\rm d}y \right)^{\frac{1}{t}}. 
\end{align*} 
Therefore, it is easy to see that 
\begin{align*}
\sum_{m\in\Z^n}2^{\nu s(x)}\lambda_{\nu,m}(1+2^j|x-2^{-\nu}m|)^{-M} 
\le c 2^{h_n\alpha}2^{(\nu-j)n/t} \left( \left[ \eta_{\nu,\alpha t} \ast \left( \sum_{m\in\Z^n}2^{\nu t s(\cdot)}\lambda_{\nu,m}\chi_{\nu,m}^t(\cdot) \right) \right](x) \right)^{1/t} 
\end{align*}
by same argument as above.

\end{proof}

\begin{lemma}[{ \cite[Lemma 4.2]{Kempka3}} ]
\label{Lemma4.2-Kempka} 
Let $p(\cdot),q(\cdot)\in\mathcal{P}(\R^n)$ with $0< q^-\le q^+<\infty$ and $0< q^-\le q^+<\infty$. 
For any sequences $\{ g_j\}_{j=0}^{\infty}$ of nonnegative measurable functions on $\R^n$ and $\delta>0$ let 
\[
G_j(x)=\sum_{k=0}^{\infty}2^{-|k-j|\delta}g_k(x)
\]
for all $x\in\R^n$ and $j\in\N_0$. 
Then with constant $c=c(p,q,\delta)$ we have 
\[
\left\| \{ G_j\}_{j\in\N_0} \right\|_{L^{p(\cdot)}(\ell^{q(\cdot)})} 
\le c \left\| \{ g_j\}_{j\in\N_0} \right\|_{L^{p(\cdot)}(\ell^{q(\cdot)})}. 
\]
\end{lemma}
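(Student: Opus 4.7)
My plan is to reduce the statement to a pointwise $\ell^{q(\cdot)}$-estimate and then apply the monotonicity of the $L^{p(\cdot)}$-norm. Concretely, I will show that
\[
\left(\sum_{j=0}^{\infty} G_j(x)^{q(x)}\right)^{1/q(x)}\le C\left(\sum_{k=0}^{\infty} g_k(x)^{q(x)}\right)^{1/q(x)}
\]
holds for every $x\in\R^n$, with a constant $C=C(q^-,q^+,\delta)$ independent of $x$. Once this pointwise bound is in hand, taking the $L^{p(\cdot)}$-norm of both sides and invoking the definition of $L^{p(\cdot)}(\ell^{q(\cdot)})$ from Definition~\ref{Def:T} immediately yields the claim.

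For the pointwise bound I would split into the cases $q(x)\ge 1$ and $q(x)<1$. In the first case, write the factorization $2^{-|k-j|\delta}=2^{-|k-j|\delta/q'(x)}\cdot 2^{-|k-j|\delta/q(x)}$ and apply H\"older's inequality in $k$, using that $\sum_{k\in\N_0}2^{-|k-j|\delta}$ is bounded by a constant $C_\delta$ independent of $j$, to obtain
\[
G_j(x)^{q(x)}\le C_\delta^{\,q^+-1}\sum_{k=0}^{\infty} 2^{-|k-j|\delta}\,g_k(x)^{q(x)}.
\]
In the case $q(x)<1$, the elementary subadditivity $(a+b)^{q(x)}\le a^{q(x)}+b^{q(x)}$ gives the parallel bound
\[
G_j(x)^{q(x)}\le \sum_{k=0}^{\infty} 2^{-|k-j|\delta q(x)}\,g_k(x)^{q(x)}.
\]
Either regime leads to an estimate of the form $G_j(x)^{q(x)}\le C_1\sum_k 2^{-|k-j|\delta\min(q(x),1)}g_k(x)^{q(x)}$ with $C_1$ depending only on $q^\pm$ and $\delta$.

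Next I would sum over $j$, interchange the order of summation, and bound the inner sum $\sum_{j\in\N_0}2^{-|k-j|\delta\min(q(x),1)}$ uniformly in $k$ and $x$ by the convergent geometric series $\sum_{l\in\Z}2^{-|l|\delta q^-}$ (using $q^->0$). Raising to the $1/q(x)$-th power and exploiting $q^-\le q(x)\le q^+$ to absorb the residual $q(x)$-dependence of the constant gives the pointwise estimate above. The only mildly delicate point is the bookkeeping of constants across the two regimes $q(x)\ge 1$ and $q(x)<1$, so that a single $x$-independent constant depending only on $p,q,\delta$ emerges; thanks to $0<q^-\le q^+<\infty$, this merging is straightforward. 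I emphasize that no deep harmonic analysis (maximal functions, log-H\"older continuity, vector-valued inequalities) is required, because the discrete kernel $\{2^{-|l|\delta}\}_{l\in\Z}$ is already in $\ell^1$ with norm uniform in the shift, so the argument is in essence a discrete Young-type convolution inequality applied pointwise at each $x$.
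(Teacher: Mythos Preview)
Your proof is correct. The paper does not supply its own proof of this lemma; it is merely quoted from \cite[Lemma~4.2]{Kempka3}. The argument you outline---a pointwise $\ell^{q(x)}$ estimate via H\"older (when $q(x)\ge 1$) or subadditivity (when $q(x)<1$), followed by interchange of summation and a geometric-series bound uniform in $x$ thanks to $q^->0$---is the standard route and is exactly how such lemmas are proved in the literature; see also the constant-exponent prototype in Triebel's books. One minor cosmetic point: in the H\"older step you obtain $C_\delta^{\,q(x)-1}$, and your bound $C_\delta^{\,q^+-1}$ uses $C_\delta\ge 1$, which is true since the term $k=j$ already contributes $1$; it would not hurt to say this explicitly.
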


Now we prove Theorem $\ref{theorem 3/18-1}$. 

As well as we mentioned in Remark $\ref{remark-13-12-27-1}$,  
Theorem $\ref{theorem 5.1.6-Triebel}$ is obtained by slightly changing a part of \cite[Corollary 5.6]{Kempka} with the $F_{p(\cdot),q(\cdot)}^{s(\cdot)}(\R^n)$ case. 

\begin{proof}[Outline of the proof of Theorem \ref{theorem 5.1.6-Triebel} ]
Let $f= \sum_{\nu=0}^{\infty}\sum_{m\in\Z^n}\lambda_{\nu, m}a_{\nu, m}$. Without loss of generally, we assume that $\| \lambda \|_{f_{p(\cdot),q(\cdot)}^{s(\cdot)}}=1$. 
Then we describe the outline of the proof of 
\[
\left\| \left\{ \left|2^{j s(\cdot)} \inversefourier\varphi_j\ast f \right|^{r} \right\}_{j=0}^{\infty} \right\|_{L^{p(\cdot)/r}(\ell^{q(\cdot)/r}) } ^{1/r} \le C,  
\] 
where $\{ \varphi_j\}_{j\in\N_0}$ is the resolution of unity as in Definition $\ref{Def:T}$. 
Let $0<r<\min(1, p^-)$. 
By using similar arguments of the outline of the proof of Theorem $\ref{theorem 5.1.6-Besov}$,  
we consider the following inequality
\begin{align*}
&\left\| \left\{ \left|2^{j s(\cdot)} \inversefourier\varphi_j\ast f \right|^{r} \right\}_{j=0}^{\infty} \right\|_{L^{p(\cdot)/r}(\ell^{q(\cdot)/r}) } ^{1/r} \notag \\ 
&\le \left\| \left\{ \left|2^{j s(\cdot)} \inversefourier\varphi_j\ast \sum_{\nu=0}^{\nu_0-1} \sum_{m\in\Z^n}\lambda_{\nu, m}a_{\nu, m} \right|^{r} \right\}_{j=0}^{\infty} \right\|_{L^{p(\cdot)/r}(\ell^{q(\cdot)/r}) } ^{1/r} \notag \\ 
&+ \sum_{i=0}^R\left\| \left\{ \left|2^{j s(\cdot)} \inversefourier\varphi_j\ast \sum_{\nu=\nu_0}^{\infty} \sum_{\substack{m\in\Z^n : \\ Q_{\nu, m}\subset \Omega_i}} \lambda_{\nu, m}a_{\nu, m} \right|^{r} \right\}_{j=0}^{\infty} \right\|_{L^{p(\cdot)/r}(\ell^{q(\cdot)/r})(\Omega_i) } ^{1/r} \notag \\ 
 &\le I + \sum_{i=0}^R I_i,  
\end{align*}
where 
\[
I_i = \left\| \left\{ \left|2^{j s(\cdot)} \inversefourier\varphi_j\ast \sum_{\nu=\nu_0}^{\infty} \sum_{\substack{m\in\Z^n : \\ Q_{\nu, m}\subset \Omega_i}}\lambda_{\nu, m}a_{\nu, m} \right|^{r} \right\}_{j=0}^{\infty} \right\|_{L^{p(\cdot)/r}(\ell^{q(\cdot)/r})(\Omega_i) } ^{1/r}. 
\]
Then it suffices to show that $I\lesssim 1$ and $I_i\lesssim  1$ for any $i=0,1,\cdots, R$. 

Firstly, we denote the outline of the proof of $I_i\lesssim  1$ for any $i=0,1,\cdots, R$. 
Let fix non negative integer  $0\le i \le R$. 
We define 
\[
\lambda_{\nu,m}' = \begin{cases} \lambda_{\nu,m}, \quad &\text{if } Q_{\nu, m}\subset \Omega_i \\
                                          0 \quad &\text{otherwise}
                                          \end{cases}
\]
and 
\[
a_{\nu, m}' = \begin{cases} a_{\nu, m}, \quad &\text{if } Q_{\nu, m}\subset \Omega_i \\
                                          0 \quad &\text{otherwise}. 
                                          \end{cases}
\]
Then we have 
\[
I_i = \left\| \left\{ \left|2^{j s(\cdot)} \inversefourier\varphi_j\ast \sum_{\nu=0}^{\infty} \sum_{m\in\Z^n}\lambda_{\nu, m}'a'_{\nu, m} \right|^{r} \right\}_{j=0}^{\infty} \right\|_{L^{p(\cdot)/r}(\ell^{q(\cdot)/r})(\Omega_i) } ^{1/r} 
\]
and $a_{\nu,m}'$ is $[K,  ( \sigma_{p(\cdot)} )_{ \Omega_i }^+ -s_{  \Omega_i}^- +3\epsilon  ]$ atom centered at $Q_{\nu, m}$ by Remark $\ref{remark8-10}$.  
By 
using Theorem $\ref{theorem 3/18-1}$, we can prove $I_i\lesssim 1$ for any $i=0,1,\cdots, R$.

Finally, we denote the outline of the proof of $I\lesssim 1$. 
For any $\nu\in\N_0$ and $m\in\Z^n$, it is easy to see that 
$ \displaystyle 
\lfloor L_{Q_{\nu, m}}^- \rfloor \ge \lfloor L_{\R^n}^- \rfloor. 
$
This implies that we can choose $L$ as in Lemma $\ref{lemma-8-7-1}$ such that $ \lfloor L_{\R^n}^- \rfloor $. 
By Lemma $\ref{lemma-8-7-1}$, 
we obtain 
\begin{equation}
|2^{js(x)}\inversefourier\varphi_j\ast a_{\nu,m}(x)| \le \begin{cases} c2^{-|j - \nu| (\lfloor L_{\R^n}^- \rfloor +1+n)} 2^{js(x)} \left(1+2^{j}\left| x-2^{-\nu}m\right|\right)^{-M}  \quad \text{if } j\le \nu \label{13-8-15-3} \\ 
                                                  c2^{-|j - \nu| (K-s^+)} 2^{\nu s(x)} \left(1+2^{\nu}\left| x-2^{-\nu}m\right|\right)^{-M} \quad \text{if } j\ge \nu,  
\end{cases} 
\end{equation}
where $M$ is sufficiently large. 
Therefore, we have 
\begin{align}
&\left\| \left\{ \left| 2^{j s(\cdot)} \inversefourier\varphi_j\ast \sum_{\nu=0}^{\nu_0-1} \sum_{m\in\Z^n}\lambda_{\nu, m}a_{\nu, m} \right|^{r} \right\}_{j=0}^{\infty} \right\|_{L^{p(\cdot)/r}(\ell^{q(\cdot)/r}) } ^{1/r} \notag \\ 
& \le \left\| \left\{    \sum_{j=0}^{\infty} \left(      \left|\sum_{\nu=0}^{\nu_0-1} \sum_{m\in\Z^n}   2^{j s(\cdot)} \inversefourier\varphi_j\ast \lambda_{\nu, m}a_{\nu, m} \right|^{r}           \right)^{q(\cdot)/r}   \right\}^{r/{q(\cdot)}}  \right\|_{L^{p(\cdot)/r}}^{1/r}.  \label{8-15-1}
\end{align}
Let $\nu= w$. Then 
\begin{align}
&\left\| \left\{    \sum_{j=0}^{\infty} \left(    \left| \sum_{m\in\Z^n}   2^{j s(\cdot)} \inversefourier\varphi_j\ast \lambda_{w, m}a_{w, m} \right|^{r}           \right)^{q(\cdot)/r}   \right\}^{r/{q(\cdot)}}  \right\|_{L^{p(\cdot)/r}}^{1/r} \notag \\ 
&\lesssim \left( \sum_{j=0}^{w-1} \left\| \left\{ \left(   \left| \sum_{m\in\Z^n}   2^{j s(\cdot)} \inversefourier\varphi_j\ast \lambda_{w, m}a_{w, m} \right|^{r}           \right)^{q(\cdot)/r}   \right\}^{r/{q(\cdot)}}  \right\|_{L^{p(\cdot)/r}}\right)^{1/r}  \notag \\ 
&+ \left( \sum_{j=w}^{\infty} \left\| \left\{ \left(    \left| \sum_{m\in\Z^n}   2^{j s(\cdot)} \inversefourier\varphi_j\ast \lambda_{w, m}a_{w, m} \right|^{r}           \right)^{q(\cdot)/r}   \right\}^{r/{q(\cdot)}}  \right\|_{L^{p(\cdot)/r}} \right)^{1/r}  \notag \\ 
&\lesssim \left( \sum_{j=0}^{w-1} \left\|  \left| \sum_{m\in\Z^n}  2^{j s(\cdot)} \inversefourier\varphi_j\ast \lambda_{w, m}a_{w, m} \right|^{r}    \right\|_{L^{p(\cdot)/r}}\right)^{1/r}  \notag \\ 
&+ \left( \sum_{j=w}^{\infty} \left\|  \left| \sum_{m\in\Z^n}  2^{j s(\cdot)} \inversefourier\varphi_j\ast \lambda_{w, m}a_{w, m} \right|^{r}          \right\|_{L^{p(\cdot)/r}} \right)^{1/r} \notag \\ 
&=: J_1+J_2. \label{8-15-2}
\end{align}
We estimate $J_1$. By $(\ref{13-8-15-3})$, we obtain 
\begin{align}
J_1^r &=  \sum_{j=0}^{w-1} \left\|  \left| \sum_{m\in\Z^n}  2^{j s(\cdot)} \inversefourier\varphi_j\ast \lambda_{w, m}a_{w, m} \right|^{r}    \right\|_{L^{p(\cdot)/r}} \notag \\ 
&\le  \sum_{j=0}^{w-1} 
\left\| 
\sum_{m\in\Z^n}
2^{-|j - w|(\lfloor L_{\R^n}^- \rfloor +1+n) } 2^{js(\cdot)} \left| \lambda_{w, m} \right|\langle 2^{j}\cdot-2^{j-w}m\rangle^{-M}
\right\|_{L^{p(\cdot)}}^r  \notag \\ 
&= 
 \sum_{j=0}^{w-1} 2^{-|j - w|r(\lfloor L_{\R^n}^- \rfloor +1+n) }
\left\| 
\sum_{m\in\Z^n}
 2^{js(\cdot)} \left| \lambda_{w, m} \right|\langle 2^{j}\cdot-2^{j-w}m\rangle^{-M}
\right\|_{L^{p(\cdot)}}^r.   
\end{align} 
Using similar arguments of the proof of \cite[Theorem 3.13]{Kempka}, we see that 
\begin{align*}
J_1^r 
&\le  
 \sum_{j=0}^{w-1} 2^{-|j - w|r(\lfloor L_{\R^n}^- \rfloor +1+n) }
\left\| 
\sum_{m\in\Z^n}
 2^{js(\cdot)} \left| \lambda_{w, m} \right|\langle 2^{j}\cdot-2^{j-w}m\rangle^{-M}
\right\|_{L^{p(\cdot)}}^r  \notag \\ 
&\lesssim 
 \sum_{j=0}^{w-1} 2^{-|j - w|r(\lfloor L_{\R^n}^- \rfloor +1+n+s^--n/r) }
\left\| 
 \mathcal{M}_r \left(  2^{ws(y)} \sum_{m\in\Z^n} |\lambda_{w, m}| \chi_{w, m}(y)  \right) (\cdot)
\right\|_{L^{p(\cdot)}}^r   \notag 
\end{align*}
since we can take $M$ sufficiency large and Lemma $\ref{New-lemma}$. 
It is well known that $\mathcal{M}$ is bounded on $L^{p(\cdot)/r}$ by \cite[Theorem 4.3.8]{Diening-book}, we have 
\begin{align*}
J_1^r 
&\lesssim 
 \sum_{j=0}^{w-1} 2^{-|j - w|r(\lfloor L_{\R^n}^- \rfloor +1+n+s^--n/r) }
\left\| 
\mathcal{M}_r \left( 2^{ws(y)}\sum_{m\in\Z^n} |\lambda_{w, m}| \chi_{w, m}(y)  \right)(\cdot)
\right\|_{L^{p(\cdot)}}^r  \notag \\ 
&\lesssim 
 \sum_{j=0}^{w-1} 2^{-|j - w|r(\lfloor L_{\R^n}^- \rfloor +1+n+s^--n/r) }
\left\| 
2^{ws(\cdot)}\sum_{m\in\Z^n} |\lambda_{w, m}| \chi_{w, m}(\cdot) 
\right\|_{L^{p(\cdot)}}^r  \notag \\ 
&\lesssim 
w \max\left( 1, 2^{wr(\lfloor L_{\R^n}^- \rfloor +1+n+s^--n/r) } \right)
\left\| 
2^{ws(\cdot)}\sum_{m\in\Z^n} |\lambda_{w, m}| \chi_{w, m}(\cdot) 
\right\|_{L^{p(\cdot)}}^r. 
\end{align*}
By the assumption that $\| \lambda \|_{f_{p(\cdot),q(\cdot)}^{s(\cdot)}}=1$, we obtain 
\[
\left\| 
2^{ws(\cdot)}\sum_{m\in\Z^n} |\lambda_{w, m}| \chi_{w, m}(\cdot) 
\right\|_{L^{p(\cdot)}} \le 1
\]
because 
\[
\left\| 
2^{ws(\cdot)}\sum_{m\in\Z^n} |\lambda_{w, m}| \chi_{w, m}(\cdot) 
\right\|_{L^{p(\cdot)}} \le 
\left\| \left\{ \sum_{w=0}^{\infty} \left( 
2^{ws(\cdot)}\sum_{m\in\Z^n} |\lambda_{w, m}| \chi_{w, m}(\cdot) 
\right)^{q(\cdot)} \right\}^{1/{q(\cdot)}}
\right\|_{L^{p(\cdot)}}.  
\]
Therefore, we see that 
\begin{equation}
J_1^r 
\lesssim 
w \max\left( 1, 2^{wr(\lfloor L_{\R^n}^- \rfloor +1+n+s^--n/r) } \right).  \label{J_1}
\end{equation}
By using similar calculation as above, we obtain
\begin{align}
J_2^r
&=
 \sum_{j=w}^{\infty} \left\|  \left| \sum_{m\in\Z^n}  2^{j s(\cdot)} \inversefourier\varphi_j\ast \lambda_{w, m}a_{w, m} \right|^{r}          \right\|_{L^{p(\cdot)/r}}  \notag \\ 
 &\lesssim 
  \sum_{j=w}^{\infty} \left\|  \sum_{m\in\Z^n}  2^{(w-j) (K-s^+)} 2^{w s(\cdot)} \lambda_{w, m}\langle 2^{w}\cdot-m\rangle^{-M}           \right\|_{L^{p(\cdot)}}^r \notag \\ 
 &\lesssim 
  \sum_{j=w}^{\infty} \left\|  2^{(w-j) (K-s^+)} \mathcal{M}_r\left( 2^{w s(y)}\sum_{m\in\Z^n}   \lambda_{w, m}\chi_{w, m}(y) \right)(\cdot)          \right\|_{L^{p(\cdot)}}^r   \notag \\ 
   &\lesssim 
  \sum_{j=w}^{\infty} 2^{(w-j)r(K-s^+)} \left\| \mathcal{M}_r\left(2^{ws(y)}\sum_{m\in\Z^n}   \lambda_{w, m}\chi_{w, m}(y) \right)(\cdot)          \right\|_{L^{p(\cdot)}}^r.   
\end{align}
By the fact that $\mathcal{M}_r$ is bounded on $L^{p(\cdot)}$, we have 
\begin{align}
J_2^r
     &\lesssim 
  \sum_{j=w}^{\infty} 2^{(w-j)r(K-s^+)} \left\| 2^{ws(\cdot)}\sum_{m\in\Z^n}   \lambda_{w, m}\chi_{w, m}       \right\|_{L^{p(\cdot)}}^r    \notag \\ 
       &\lesssim 
  \sum_{j=w}^{\infty} 2^{(w-j)r(K-s^+)}. 
\end{align}
Since $K>s^+$, we have $J_2^r <\infty $ for any $0<w<\nu_0-1$. 
By $(\ref{8-15-1})$, $(\ref{J_1})$ and $J_2^r<\infty$, we see that 
\begin{align}
&\left\| \left\{ \left| 2^{j s(\cdot)} \inversefourier\varphi_j\ast \sum_{\nu=0}^{\nu_0-1} \sum_{m\in\Z^n}\lambda_{\nu, m}a_{\nu, m} \right|^{r} \right\}_{j=0}^{\infty} \right\|_{L^{p(\cdot)/r}(\ell^{q(\cdot)/r}) }  \notag \\ 
& \le \left\| \left\{    \sum_{j=0}^{\infty} \left(   \sum_{\nu=0}^{\nu_0-1}   \left| \sum_{m\in\Z^n}   2^{j s(\cdot)} \inversefourier\varphi_j\ast \lambda_{\nu, m}a_{\nu, m} \right|^{r}           \right)^{q(\cdot)/r}   \right\}^{r/{q(\cdot)}}  \right\|_{L^{p(\cdot)/r}} \notag \\ 
& \lesssim 
\sum_{w=0}^{\nu_0-1} \left( 
 \left\| \left\{    \sum_{j=0}^{w-1} \left(   \left| \sum_{m\in\Z^n}   2^{j s(\cdot)} \inversefourier\varphi_j\ast \lambda_{\nu, m}a_{\nu, m} \right|^{r}           \right)^{q(\cdot)/r}   \right\}^{r/{q(\cdot)}}  \right\|_{L^{p(\cdot)/r}} \right. \notag \\ 
&\qquad + \left. \left\| \left\{    \sum_{j=w}^{\infty} \left(   \left| \sum_{m\in\Z^n}   2^{j s(\cdot)} \inversefourier\varphi_j\ast \lambda_{\nu, m}a_{\nu, m} \right|^{r}           \right)^{q(\cdot)/r}   \right\}^{r/{q(\cdot)}}  \right\|_{L^{p(\cdot)/r}} 
 \right)  \notag \\ 
 &\lesssim \sum_{w=0}^{\nu_0-1}\left( J_1^r+J_2^r\right)<\infty. 
\end{align}
Therefore, we have $I\lesssim  \left(\sum_{w=0}^{\nu_0-1}\left( J_1^r+J_2^r\right) \right)^{1/r}<\infty$. 

\end{proof}

\subsection{Quarkonial decomposition for  Besov and Triebel--Lizorkin spaces with variable exponents : Regular case}
\label{sec quark}

%\begin{definition}
%\label{generate quark}
In this Section $\ref{sec quark}$, 
we fix a function $\psi\in\mathcal{S}(\R^n)$ uniquely such that 
\[
\sum_{m\in\Z^n}\psi(x-m)=1
\] 
holds for any $x\in\R^n$. 
We also fix a number $r>0$ such that 
\begin{equation}
\spt(\psi) \subset B(2^r). \label{quark r}
\end{equation}
Here $B(r):= \{y\in\R^n\,:\, |y|<r\}$. 
%\end{definition}

\begin{definition}
\label{quark coeficient seq}
For a triple-index sequence $\lambda=\{\lambda_{\nu,m}^{\beta}\}_{\beta\in\N_0^n, \nu\in\N_0, m\in\Z^n}$, we define 
\begin{equation}
\lambda^{\beta} := \{\lambda_{\nu,m}^{\beta}\}_{\nu\in\N_0, m\in\Z^n}, \ \ 
\left\|
\lambda
\right\|_{a_{p(\cdot),q(\cdot),\rho}^{s(\cdot)}} := 
\sup_{\beta\in\N_0^n}2^{\rho|\beta|}
\left\|
\lambda^{\beta}
\right\|_{a_{p(\cdot),q(\cdot)}^{s(\cdot)}}. \label{definition norm quark}
\end{equation}
\end{definition}

\begin{definition}
\label{def quark}
Let $p(\cdot),q(\cdot)\in\mathcal{P}(\R^n)\cap C^{\log}(\R^n)$ 
and $s(\cdot)\in C^{\log}(\R^n)$ satisfy 
\begin{eqnarray}
               &\mathop{{\rm{ess}}\,\,\rm{inf}}_{x\in\R^n}\left\{  s(\cdot) - \sigma_{p(\cdot),q(\cdot)}\right\}>0  &\text{(Triebel--Lizorkin case)} \label{Triebel case} 
\end{eqnarray}
and 
\begin{eqnarray}
  &\mathop{{\rm{ess}}\,\,\rm{inf}}_{x\in\R^n}\left\{ s(\cdot) - \sigma_{p(\cdot)}\right\}>0 &\text{(Besov case)} .  \label{Besov case} 
\end{eqnarray}
Let $\beta\in\N_0^n$, $\nu\in\N_0$ and $m\in\Z^n$. 
Then we define 
\begin{equation}
\psi^{\beta}(x) := x^{\beta}\psi(x), \ \ 
(\beta \qu)_{\nu,m}(x) := \psi^{\beta}(2^{\nu} x -m). \label{definition quark} 
\end{equation}
Furthermore we assume 
\begin{equation} 
\rho>r, \label{rho quark}
\end{equation} 
where $r$ is as in $(\ref{quark r})$. 
\end{definition}

Then we have a quarkonial decompositions for $A_{p(\cdot),q(\cdot)}^{s(\cdot)}(\R^n)$
\begin{theorem}[Quarkonial decomposition of regular cases]
\label{quark decomposition}
Let $\rho$ be as in Definition $\ref{def quark}$ and let  
$p(\cdot)$, $q(\cdot)$ and $s(\cdot)$ satisfy the condition of  Definition $\ref{def quark}$. 
Let $f\in\mathcal{S}'(\R^n)$. Then 
$f\in A_{p(\cdot),q(\cdot)}^{s(\cdot)}(\R^n)$ if and only if 
there exists a triple-index sequence $\lambda=\{\lambda_{\nu,m}^{\beta}\}_{\beta\in\N_0^n, \nu\in\N_0, m\in\Z^n}$ such that 
\begin{equation}
f=\sum_{\beta\in\N_0^n}\sum_{\nu\in\N_0}\sum_{m\in\Z^n}\lambda_{\nu,m}^{\beta}(\beta\qu)_{\nu,m} \label{expansion quark}
\end{equation}
and 
\begin{equation}
\left\|
\lambda
\right\|_{a_{p(\cdot),q(\cdot),\rho}^{s(\cdot)}} <\infty. \label{norm quark}
\end{equation}
Furthermore we can choose a coefficient $\lambda$ such that 
\begin{equation}
\left\|
\lambda
\right\|_{a_{p(\cdot),q(\cdot),\rho}^{s(\cdot)}} \sim 
\left\|
f
\right\|_{A_{p(\cdot),q(\cdot)}^{s(\cdot)}. \label{douchi quark}}
\end{equation}
\end{theorem}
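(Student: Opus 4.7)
My plan is to split the argument into sufficiency (a triple-indexed sequence with finite $\|\cdot\|_{a^{s(\cdot)}_{p(\cdot),q(\cdot),\rho}}$-quasinorm assembles into an element of $A^{s(\cdot)}_{p(\cdot),q(\cdot)}$ via (\ref{expansion quark}) with the stated bound) and necessity (every $f \in A^{s(\cdot)}_{p(\cdot),q(\cdot)}$ admits such a representation with an equivalent norm). The sufficiency direction should be fairly direct; I expect the necessity to carry the technical weight.

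For sufficiency, the plan is to view each quark as a normalized smooth atom. Because $\spt \psi \subset B(2^r)$, the function $\psi^{\beta}(x) = x^{\beta}\psi(x)$ is supported in $B(2^r)$ and satisfies $\|\partial^{\alpha}\psi^{\beta}\|_{L^{\infty}} \le c_K\, 2^{r|\beta|}$ for every $|\alpha|\le K$. Hence for any fixed $K>s^+$, the normalized quark $(c_K 2^{r|\beta|})^{-1}(\beta\qu)_{\nu,m}$ is a $[K,-1]$-atom centered at $Q_{\nu,m}$ in the sense of Definition \ref{definition 5.1.2}; the hypotheses (\ref{Triebel case})--(\ref{Besov case}) let us pick $\epsilon>0$ so small that $L(\cdot)+\epsilon<0$ pointwise, so no moment condition is required in Definition \ref{smooth atom}. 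Applying Theorem \ref{theorem 5.1.6-Besov} (resp.\ Theorem \ref{theorem 5.1.6-Triebel}) $\beta$-slice by $\beta$-slice yields
\begin{equation*}
\Bigl\|\sum_{\nu,m}\lambda_{\nu,m}^{\beta}(\beta\qu)_{\nu,m}\Bigr\|_{A^{s(\cdot)}_{p(\cdot),q(\cdot)}} \lesssim 2^{r|\beta|}\,\|\lambda^{\beta}\|_{a^{s(\cdot)}_{p(\cdot),q(\cdot)}}.
\end{equation*}
Then I sum over $\beta$ using the $\alpha$-type quasi-triangle inequality of Lemma \ref{min triangle} (taking $\alpha = \min(p^-,q^-,1)$ in the Triebel--Lizorkin case and $\alpha = \min(q^-,1)\min(1,(p/q)^-)$ in the Besov case) to obtain
\begin{equation*}
\|f\|_{A^{s(\cdot)}_{p(\cdot),q(\cdot)}}^{\alpha} \lesssim \sum_{\beta\in\N_0^n}2^{r\alpha|\beta|}\,\|\lambda^{\beta}\|_{a^{s(\cdot)}_{p(\cdot),q(\cdot)}}^{\alpha} \le \|\lambda\|_{a^{s(\cdot)}_{p(\cdot),q(\cdot),\rho}}^{\alpha}\sum_{\beta}2^{-(\rho-r)\alpha|\beta|},
\end{equation*}
and convergence of the geometric series is guaranteed by $\rho>r$.

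For necessity, my plan is to invoke the converse of the atomic decomposition (the analogue of Frazier--Jawerth, available in the variable-exponent setting via Drihem and Kempka) to write $f=\sum_{\nu,m}\mu_{\nu,m}a_{\nu,m}$ with $\|\mu\|_{a^{s(\cdot)}_{p(\cdot),q(\cdot)}}\lesssim\|f\|_{A^{s(\cdot)}_{p(\cdot),q(\cdot)}}$ and with the atoms $a_{\nu,m}$ chosen band-limited, of Fourier support in $\{|\xi|\lesssim 2^{\nu}\}$. Since such $a_{\nu,m}$ are entire of exponential type $\sim 2^{\nu}$, the scaled partition of unity $\sum_k \psi(2^{\nu}\cdot-k)=1$ produces the pointwise identity $a_{\nu,m}(x)=\sum_k \psi(2^{\nu}x-k)\,a_{\nu,m}(x)$, and each summand admits a convergent Taylor expansion about $2^{-\nu}k$ supported in $B(2^r)$ after rescaling. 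Collecting these Taylor coefficients and suitably rescaling produces the quark coefficients $\lambda_{\nu,m}^{\beta}$, and a Paley--Wiener-type bound tying the band-limitedness at scale $2^{\nu}$ to the localization radius $2^r$ of $\psi$ is expected to give the uniform decay $\|\lambda^{\beta}\|_{a^{s(\cdot)}_{p(\cdot),q(\cdot)}}\lesssim 2^{-\rho|\beta|}\|f\|_{A^{s(\cdot)}_{p(\cdot),q(\cdot)}}$ for any $\rho$ between $r$ and the effective radius of analyticity.

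The main obstacle, in my view, lies in this necessity step, specifically in arranging the uniform $\beta$-exponential decay of the constructed coefficients. Taylor's theorem alone provides only a finite polynomial expansion with a remainder term, so one must either employ a dual-frame/biorthogonal construction — defining $\lambda_{\nu,m}^{\beta}$ as a pairing of $f$ with an appropriately dilated and translated Schwartz function — or an iterative scheme that at each scale extracts finitely many quarks and passes the remainder to the next dyadic scale. Guaranteeing that this iteration converges in the $a^{s(\cdot)}_{p(\cdot),q(\cdot),\rho}$-quasinorm and that the resulting coefficients inherit the exponential $\beta$-decay dictated by the strict inequality $\rho>r$ is where the central technical difficulty sits. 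Once those estimates are in place, the equivalence of norms (\ref{douchi quark}) follows by combining the two directions.
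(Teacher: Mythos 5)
Your sufficiency argument is essentially the paper's: view $2^{-(r+\epsilon)|\beta|}(\beta\qu)_{\nu,m}$ as a moment-free $[K,-1]$-atom (note you should keep track of the mild polynomial growth $|\beta|^K$ in $\|\partial^{\alpha}\psi^{\beta}\|_{\infty}$, which is absorbed by $2^{-\epsilon|\beta|}$ for $0<\epsilon<\rho-r$), apply Theorem \ref{theorem 5.1.6-Besov} or \ref{theorem 5.1.6-Triebel} slice by slice, and sum over $\beta$ with Lemma \ref{min triangle}; the geometric series converges precisely because $\rho>r$. This matches.

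Your necessity plan contains the right ingredients but misidentifies the central mechanism and then talks yourself out of the correct approach. Three points. First, the paper does not invoke the converse atomic decomposition; the Drihem/Kempka atoms are compactly supported in physical space, not band-limited. The correct starting point is the Frazier--Jawerth $\varphi$-transform (Theorem \ref{Fraizer-Jewerth} and Corollary \ref{Fraizer-Jewerth corollary}), which represents $f$ as $\sum_{\nu,m}\Lambda_{\nu,m}\inversefourier\kappa(2^{\nu}\cdot-m)$ with $\Lambda_{\nu,m}$ the point evaluations of $\varphi_{\nu}(D)f$; the "atoms" $\inversefourier\kappa(2^{\nu}\cdot-m)$ are entire, not compactly supported, which is what makes the next step possible. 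Second, your claim that "Taylor's theorem alone provides only a finite polynomial expansion with a remainder term" contradicts your own earlier (correct) observation and is false for $\inversefourier\kappa$: being the inverse Fourier transform of a compactly supported smooth function, $\inversefourier\kappa$ is entire, so its Taylor series converges everywhere. There is no remainder problem and no need for a dual-frame or iterative scheme. Third, and most importantly, you never name the mechanism that produces the $\beta$-exponential decay. It comes from two things working together: (i) the partition of unity is inserted at the refined scale $\nu+\rho$ (not $\nu$, as you write), so after rescaling the Taylor variable lives in a ball of radius $\sim 2^{-\rho}$ and each coefficient carries an explicit factor $2^{-\rho|\beta|}$; and (ii) Lemma \ref{kappa} gives only polynomial growth $\langle\beta\rangle^{2N}$ for $\partial^{\beta}\inversefourier\kappa$, which is dominated by the factorial $\beta!$ in the Taylor coefficients. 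That is the whole story; your worry about "the effective radius of analyticity" is unfounded since the radius is infinite. After this, the paper finishes with Lemma \ref{heikouidou} to control the translated coefficients $\Lambda_{\nu,m+l}$, the decomposition $\Z^n=2^{\rho}\Z^n+[0,2^{\rho})^n$, and Lemma \ref{lemma:r} (a Peetre-type maximal inequality for band-limited functions) to prove $\|\Lambda\|_{a^{s(\cdot)}_{p(\cdot),q(\cdot)}}\lesssim\|f\|_{A^{s(\cdot)}_{p(\cdot),q(\cdot)}}$. These bookkeeping lemmas are missing from your sketch but are not where the difficulty lies; the gap is the misdiagnosed obstacle in the Taylor step.
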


\begin{remark}
In the classical setting, quarkonial decompositions of not only regular cases but also general cases for $A_{p,q}^{s}(\R^n)$ are found in \cite{Triebel2}. 
\end{remark}

To prove Theorem $\ref{quark decomposition}$, we need following Theorem $\ref{Fraizer-Jewerth}$, 
Corollary $\ref{Fraizer-Jewerth corollary}$, Lemma $\ref{heikouidou}$ and Lemma $\ref{kappa}$. 

\begin{theorem}[Fraizer--Jawerth $\varphi$ transform]
\label{Fraizer-Jewerth}
Let $\kappa\in\mathcal{S}(\R^n)$ satisfy 
$\chi_{Q(3)}\le \kappa \le \chi_{Q(3+1/{100})}$, where $Q(r):=\{ y\in\R^n\,:\,\max(|y_1|,|y_2|,\cdots,|y_n|) \le r)\}$.  
Then 
\begin{equation}
f=(2\pi)^{-\frac{n}{2}}\sum_{m\in\Z^n}f\left(\frac{m}{R}\right)\inversefourier\kappa (R\cdot-m)
\label{Fraizer}
\end{equation}
holds for any $f\in\mathcal{S}'(\R^n)$ such that $\spt(\fourier f)\subset Q(3R)$ $(R>0)$. 
\end{theorem}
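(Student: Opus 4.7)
The plan is to derive the sampling formula by combining a Fourier--series expansion of $\fourier f$ on a sufficiently large torus with the smooth cut--off $\kappa(\cdot/R)$ to localize the periodization back to $\fourier f$ exactly. Since $\spt(\fourier f)\subset Q(3R)$ and $f\in\mathcal{S}'(\R^n)$, the Paley--Wiener--Schwartz theorem gives that $f$ is a $C^\infty$ function of polynomial growth, so the point values $f(m/R)$ are well-defined and grow at most polynomially in $|m|$; this is what makes the right-hand side of \eqref{Fraizer} a meaningful element of $\mathcal{S}'(\R^n)$, because $\inversefourier\kappa$ is Schwartz and hence produces absolutely convergent sums against Schwartz test functions.

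First, I would periodize $\fourier f$ with period $2\pi R$ in each coordinate, setting
\[
P(\xi):=\sum_{k\in\Z^n}\fourier f(\xi+2\pi R k).
\]
The key geometric observation is that $R(3+\tfrac{1}{100})<2\pi R-3R$ (since $2\pi-3>3.14>3.01$), so on the slightly enlarged cube $Q(R(3+\tfrac{1}{100}))$ only the $k=0$ term contributes, i.e.\ $P(\xi)=\fourier f(\xi)$ on $Q(R(3+\tfrac{1}{100}))$. Expanding $P$ in the Fourier basis $\{e^{im\xi/R}\}_{m\in\Z^n}$ of the torus $\R^n/(2\pi R\Z^n)$, and computing the Fourier coefficients by unfolding to $\R^n$ and invoking the inversion formula $f(-m/R)=(2\pi)^{-n/2}\int\fourier f(\xi)\,e^{-im\xi/R}\,d\xi$, one obtains
\[
P(\xi)=\frac{1}{(2\pi)^{n/2}R^n}\sum_{m\in\Z^n}f(-m/R)\,e^{im\xi/R}.
\]

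Second, I would multiply by the cut-off $\kappa(\xi/R)$. Because $\kappa(\xi/R)=1$ on $Q(3R)\supset\spt\fourier f$ and vanishes outside $Q(R(3+\tfrac{1}{100}))$, and because $P$ coincides with $\fourier f$ on the latter cube, one has the distributional identity
\[
\fourier f(\xi)=\kappa(\xi/R)\,P(\xi)=\frac{\kappa(\xi/R)}{(2\pi)^{n/2}R^n}\sum_{m\in\Z^n}f(-m/R)\,e^{im\xi/R},
\]
with the series converging in $\mathcal{S}'(\R^n)$ (the polynomial growth of $f(-m/R)$ is killed by the Schwartz bound on $\kappa(\cdot/R)$ after pairing with a test function).

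Finally, apply $\inversefourier$ termwise, using $\inversefourier[\kappa(\cdot/R)](y)=R^n\inversefourier\kappa(Ry)$ and the modulation identity $\inversefourier[g\,e^{im\cdot/R}](x)=\inversefourier g(x+m/R)$, to get $\inversefourier[\kappa(\cdot/R)e^{im\cdot/R}](x)=R^n\inversefourier\kappa(Rx+m)$. The factors $R^n$ cancel, and the re-indexing $m\mapsto-m$ yields exactly \eqref{Fraizer}. The main technical obstacle is justifying that the Fourier series converges in $\mathcal{S}'(\R^n)$ and that termwise inverse Fourier transformation is legitimate; this is handled by noting that after multiplication by $\kappa(\cdot/R)$ the support is fixed, the coefficients grow only polynomially, and $\inversefourier\kappa\in\mathcal{S}(\R^n)$, so pairing with any $\varphi\in\mathcal{S}(\R^n)$ yields an absolutely convergent sum uniformly in the truncation of the Fourier series.
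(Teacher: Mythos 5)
Your proof is correct, and it follows the canonical route (Poisson-summation/periodization of the compactly supported Fourier transform, Fourier-series expansion of the periodization, and multiplication by a smooth cut-off $\kappa(\cdot/R)$ to undo the periodization exactly). The paper itself states this theorem without proof, as a classical Frazier--Jawerth $\varphi$-transform fact, so there is no paper proof to compare against; your argument is a faithful and complete proof of the result the paper takes as given. The one numerical point worth emphasizing, which you handle correctly, is that the hypothesis $\kappa\le\chi_{Q(3+1/100)}$ is exactly what makes the geometry work: the translates $Q(3R)+2\pi R k$ for $k\ne 0$ avoid $Q(R(3+\tfrac1{100}))$ because $2\pi-3>3+\tfrac1{100}$, so the periodization $P$ agrees with $\fourier f$ on the support of $\kappa(\cdot/R)$. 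The convergence and termwise-transform justifications (polynomial growth of $f(m/R)$ by Paley--Wiener--Schwartz against the rapid decay of $\inversefourier\kappa$) are also stated at an adequate level of rigor for a proof of this type.
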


Let $\tau, \varphi\in\mathcal{S}(\R^n)$ such that 
\[
\chi_{B(2)} \le \tau \le \chi_{B(3)}, \qquad \varphi_j(x) = \tau(2^{-j}x) -\tau(2^{-j+1}x),\,j\in\N_0.  
\]
Since  $f=\tau(D)f+\sum_{\nu=1}^{\infty}\varphi_{\nu}(D)f$ for any $f\in\mathcal{S}'(\R^n)$, 
we have following Corollary. 
\begin{corollary}
\label{Fraizer-Jewerth corollary} 
For any $f\in\mathcal{S}'(\R^n)$, we can write 
\begin{align*}
f= &(2\pi)^{-\frac{n}{2}}\sum_{m\in\Z^n}\tau(D)f(m)\inversefourier\kappa(\cdot-m) \\ 
&+(2\pi)^{-\frac{n}{2}}\sum_{\nu\in\N}
\left(
\sum_{m\in\Z^n}\varphi_{\nu}(D)f\left(\frac{m}{2^{\nu}}\right)\inversefourier\kappa(2^{\nu}\cdot-m)
\right). 
\end{align*}
\end{corollary}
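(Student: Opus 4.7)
The plan is to decompose $f$ via the Littlewood--Paley-type identity $f=\tau(D)f+\sum_{\nu=1}^{\infty}\varphi_{\nu}(D)f$ and then apply Theorem~\ref{Fraizer-Jewerth} separately to each dyadic piece. First I would check that this identity holds in $\mathcal{S}'(\R^n)$: by construction $\tau(2^{-j}\cdot)\to 1$ pointwise as $j\to\infty$, and the telescoping sum $\tau(\cdot)+\sum_{\nu=1}^{N}\bigl[\tau(2^{-\nu}\cdot)-\tau(2^{-\nu+1}\cdot)\bigr]=\tau(2^{-N}\cdot)$ tends to the constant function $1$ in the sense needed for $\mathcal{S}'$-convergence after taking $\inversefourier[\,\cdot \widehat{f}\,]$.

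Next I would verify the support hypothesis required by Theorem~\ref{Fraizer-Jewerth} for each piece. For $\tau(D)f$ we have $\spt(\fourier(\tau(D)f))\subset\spt\tau\subset B(3)\subset Q(3)$, so Theorem~\ref{Fraizer-Jewerth} applies with $R=1$, giving
\[
\tau(D)f=(2\pi)^{-n/2}\sum_{m\in\Z^n}\tau(D)f(m)\,\inversefourier\kappa(\cdot-m).
\]
For $\nu\ge 1$, $\varphi_{\nu}=\tau(2^{-\nu}\cdot)-\tau(2^{-\nu+1}\cdot)$ is supported in $\{|x|\le 3\cdot 2^{\nu}\}\subset Q(3\cdot 2^{\nu})$, so $\spt(\fourier(\varphi_{\nu}(D)f))\subset Q(3\cdot 2^{\nu})$ and Theorem~\ref{Fraizer-Jewerth} applies with $R=2^{\nu}$, yielding
\[
\varphi_{\nu}(D)f=(2\pi)^{-n/2}\sum_{m\in\Z^n}\varphi_{\nu}(D)f\!\left(\tfrac{m}{2^{\nu}}\right)\inversefourier\kappa(2^{\nu}\cdot-m).
\]
Summing these identities over $\nu\in\{0\}\cup\N$ and combining with the Littlewood--Paley decomposition from the first step gives the stated formula.

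The only conceptual point to be careful about is the convergence of the double sum in $\mathcal{S}'(\R^n)$: I would argue that, since each $\tau(D)f$ and $\varphi_{\nu}(D)f$ is a tempered distribution with compactly supported Fourier transform and hence a $C^{\infty}$ function of polynomial growth, the interior $m$-sum of Theorem~\ref{Fraizer-Jewerth} converges in $\mathcal{S}'$; the outer $\nu$-sum then converges by the Littlewood--Paley identity verified in the first step. I expect no real obstacle here---this is a routine corollary whose only content is bookkeeping of the dilation factor $R=2^{\nu}$ and the support bounds for $\tau$ and $\varphi_{\nu}$.
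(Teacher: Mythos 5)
Your proposal is correct and follows exactly the route the paper intends: the paper's own ``proof'' is the single remark that $f=\tau(D)f+\sum_{\nu\ge 1}\varphi_{\nu}(D)f$ in $\mathcal{S}'(\R^n)$, after which Theorem~\ref{Fraizer-Jewerth} is applied to each dyadic piece with $R=1$ for the $\tau$-piece and $R=2^{\nu}$ for the $\varphi_{\nu}$-piece. You have simply spelled out the telescoping check and the inclusions $B(3\cdot 2^{\nu})\subset Q(3\cdot 2^{\nu})$ that make the support hypothesis of the theorem hold, which is all that is needed.
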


\begin{lemma} 
\label{heikouidou} 
Let $p(\cdot), q(\cdot)\in C^{\log}(\R^n)\cap \mathcal{P}_0(\R^n)$ and $s(\cdot)\in C^{\log}(\R^n)$. 
Furthermore, let  $0<\alpha<\min(1, p^-,q^-)$ and $\lambda^{l}:=\{\lambda_{\nu,m+l}\}_{\nu\in\N_0,m\in\Z^n}$ 
for any $l\in\Z^n$. 
\begin{enumerate}
\item If $M>2\max(C_{\log}(s), 2n)$, then 
\begin{equation}
\left\| 
\lambda^{l}
\right\|_{b_{p(\cdot),q(\cdot)}^{s(\cdot)}} 
\lesssim \langle l \rangle^{\frac{M}{\alpha}} 
\left\| 
\lambda
\right\|_{b_{p(\cdot),q(\cdot)}^{s(\cdot)}} \label{5.61-1}
\end{equation}
holds, where $\langle  l \rangle := \sqrt{1+l_1^2+l_2^2+\cdots+l_n^2}$. 
\item If $M>2\max(C_{\log}(s), n)$, then 
\begin{equation}
\left\| 
\lambda^{l}
\right\|_{f_{p(\cdot),q(\cdot)}^{s(\cdot)}} 
\lesssim \langle l \rangle^{\frac{M}{\alpha}} 
\left\| 
\lambda
\right\|_{f_{p(\cdot),q(\cdot)}^{s(\cdot)}} \label{5.61-2}
\end{equation}
holds. 
\end{enumerate}
\end{lemma}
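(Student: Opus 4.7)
The plan is to reduce the problem to a pointwise convolution estimate and then apply the vector-valued maximal inequalities of Theorems \ref{thm:max2} and \ref{thm:max3}. First I would observe the translation identity: if we set $h_{\nu}(x) := \sum_{m\in\Z^n}\lambda_{\nu,m}\chi_{\nu,m}(x)$, then
\[
\sum_{m\in\Z^n}\lambda_{\nu,m+l}\chi_{\nu,m}(x) = \sum_{m'\in\Z^n}\lambda_{\nu,m'}\chi_{\nu,m'-l}(x) = h_{\nu}(x+2^{-\nu}l),
\]
since $\chi_{\nu,m'-l}(x)=\chi_{\nu,m'}(x+2^{-\nu}l)$. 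Hence both $\|\lambda^{l}\|_{b_{p(\cdot),q(\cdot)}^{s(\cdot)}}$ and $\|\lambda^{l}\|_{f_{p(\cdot),q(\cdot)}^{s(\cdot)}}$ are, respectively, the $\ell^{q(\cdot)}(L^{p(\cdot)})$- and $L^{p(\cdot)}(\ell^{q(\cdot)})$-norm of $\{2^{\nu s(\cdot)}h_{\nu}(\cdot+2^{-\nu}l)\}_{\nu}$.

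Next I would prove a pointwise inequality of Peetre type. Because the cubes $Q_{\nu,m}$ are disjoint, $|h_{\nu}(x)|^{\alpha}=\sum_{m}|\lambda_{\nu,m}|^{\alpha}\chi_{\nu,m}(x)$. For fixed $x$ lying in the unique cube $Q_{\nu,m_{0}-l}$ we have $h_{\nu}(x+2^{-\nu}l)=\lambda_{\nu,m_{0}}$, and for every $y\in Q_{\nu,m_{0}}$ one has $|2^{\nu}(x-y)|\le |l|+\sqrt{n}\lesssim \langle l\rangle$, so
\[
(\eta_{\nu,M}\ast|h_{\nu}|^{\alpha})(x)\ge |\lambda_{\nu,m_{0}}|^{\alpha}\int_{Q_{\nu,m_{0}}}2^{\nu n}(1+|2^{\nu}(x-y)|)^{-M}\,dy\gtrsim \langle l\rangle^{-M}|\lambda_{\nu,m_{0}}|^{\alpha}.
\]
This yields the key estimate $|h_{\nu}(x+2^{-\nu}l)|^{\alpha}\lesssim \langle l\rangle^{M}(\eta_{\nu,M}\ast|h_{\nu}|^{\alpha})(x)$ for every $x\in\R^n$.

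I would then multiply by $2^{\nu\alpha s(x)}$ and use Lemma \ref{lemma:2} (applied to $\alpha s$, for which $C_{\log}(\alpha s)\le C_{\log}(s)$) to move the weight inside the convolution: provided $M/2>C_{\log}(s)$,
\[
2^{\nu\alpha s(x)}(\eta_{\nu,M}\ast|h_{\nu}|^{\alpha})(x)\lesssim \bigl(\eta_{\nu,M/2}\ast[2^{\nu\alpha s(\cdot)}|h_{\nu}|^{\alpha}]\bigr)(x).
\]
Taking the $\alpha$-th root gives
\[
2^{\nu s(x)}|h_{\nu}(x+2^{-\nu}l)|\lesssim \langle l\rangle^{M/\alpha}\bigl(\eta_{\nu,M/2}\ast[2^{\nu\alpha s(\cdot)}|h_{\nu}|^{\alpha}]\bigr)(x)^{1/\alpha}.
\]
The hypothesis $0<\alpha<\min(1,p^{-},q^{-})$ ensures $(p(\cdot)/\alpha)^{-}>1$ and $(q(\cdot)/\alpha)^{-}>1$, so I can apply Theorem \ref{thm:max2} in the Triebel--Lizorkin case (valid when $M/2>n$, i.e., $M>2\max(C_{\log}(s),n)$) and Theorem \ref{thm:max3} in the Besov case (valid when $M/2>2n$, i.e., $M>2\max(C_{\log}(s),2n)$) to the sequence $\{2^{\nu\alpha s(\cdot)}|h_{\nu}|^{\alpha}\}$ with exponents $p(\cdot)/\alpha$, $q(\cdot)/\alpha$. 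Un-rescaling by taking the $\alpha$-th root (i.e., using $\|g^{1/\alpha}\|_{L^{r(\cdot)}(\ell^{t(\cdot)})}=\|g\|_{L^{r(\cdot)/\alpha}(\ell^{t(\cdot)/\alpha})}^{1/\alpha}$) then produces the desired bound with the factor $\langle l\rangle^{M/\alpha}$.

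The main obstacle, and the reason the bound involves $\langle l\rangle^{M/\alpha}$ rather than a constant, is that $L^{p(\cdot)}$ is not translation-invariant: one cannot simply change variables $x\mapsto x-2^{-\nu}l$ inside $\|\cdot\|_{L^{p(\cdot)}}$. The convolution-with-$\eta_{\nu,M}$ detour is precisely what replaces translation invariance, but it forces us to pass through a sub-linear power $\alpha<\min(1,p^{-},q^{-})$ so that the resulting Peetre-type pointwise inequality can be combined with the maximal theorems under the admissible exponents $p(\cdot)/\alpha,q(\cdot)/\alpha>1$. Balancing the decay parameter $M$ between the log-Hölder step (which costs half the decay) and the maximal-function step (which requires the remaining $M/2$ to exceed $n$ or $2n$ according to whether we are in the Triebel--Lizorkin or Besov setting) accounts exactly for the two differing thresholds in (1) and (2).
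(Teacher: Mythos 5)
Your argument is correct and follows essentially the same route as the paper's proof: both reduce the claim to the same pointwise Peetre-type inequality $\left|2^{\nu s(x)}\sum_{m}\lambda_{\nu,m+l}\chi_{\nu,m}(x)\right|\lesssim \langle l\rangle^{M/\alpha}\bigl(\eta_{\nu,M/2}\ast(2^{\nu s(\cdot)}\sum_m|\lambda_{\nu,m}|\chi_{\nu,m})^{\alpha}\bigr)^{1/\alpha}(x)$, invoke Lemma~\ref{lemma:2} to absorb the weight $2^{\nu s(x)}$ (which uses up half of the decay parameter $M$), and then apply Theorems~\ref{thm:max2} and~\ref{thm:max3} with exponents $p(\cdot)/\alpha$, $q(\cdot)/\alpha$; this accounts for the thresholds $M/2>n$ and $M/2>2n$ exactly as in the paper. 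The only difference is presentational: you derive the key inequality by bounding the convolution from below at a fixed point, whereas the paper bounds $|\lambda_{\nu,m+l}|^{\alpha}\chi_{\nu,m}(x)$ from above by averaging over $Q_{\nu,m+l}\subset B(x,\sqrt{2n}2^{-\nu}\langle l\rangle)$, but these are the same estimate.
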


\begin{proof}
Let $0<\alpha<\min(p^-,q^-)$. 
We fix $\nu\in\N_0$ and $x\in Q_{\nu,m}$. Then $Q_{\nu,m+l}\subset B(x, \sqrt{2n}2^{-\nu}\langle l \rangle)$ holds. 
Furthermore, we have 
\begin{align*}
1\lesssim \frac{((\sqrt{2n}+1)\langle l \rangle)^M}{(1+2^{\nu}|x-y|)^M},  
\end{align*}
where $y\in  B(x,\sqrt{2n}2^{-\nu}\langle l \rangle)$. 
Hence we see that 
\begin{align*}
&|\lambda_{\nu,m+l}|^{\alpha}\chi_{\nu,m}(x) \notag \\ 
&\lesssim \frac{\chi_{\nu,m}(x) }{|Q_{\nu,m+l}|}\int_{B(x,\sqrt{2n}2^{\nu}\langle l\rangle)} \sum_{m'\in\Z^n}|\lambda_{\nu,m'}^{\alpha}|\chi_{\nu,m'}(y){\rm d}y \\ 
&\lesssim ((\sqrt{2n}+1)\langle l \rangle)^M \chi_{\nu,m}(x)\int_{B(x,\sqrt{2n}2^{\nu}\langle l\rangle)}\frac{2^{\nu n}}{(1+2^{\nu}|x-y|)^M}
\left(\sum_{m'\in\Z^n}|\lambda_{\nu,m'}|^{\alpha}\chi_{\nu,m'}(y)\right){\rm d}y \\ 
&\lesssim ((\sqrt{2n}+1)\langle l \rangle)^M \chi_{\nu,m}(x)\left( \eta_{\nu,M}\ast \left(\sum_{m'\in\Z^n}|\lambda_{\nu,m'}|\chi_{\nu,m'}\right)^{\alpha}\right)(x). 
\end{align*} 
By Lemma $\ref{lemma:2}$ and $M\ge 2C_{\log}(s)$, we have 
\begin{equation*}
\left|2^{\nu s(x)}\sum_{m\in\Z^n}\lambda_{\nu,m+l}\chi_{\nu,m}(x)\right| 
\lesssim \langle l \rangle^{\frac{M}{\alpha}} 
\left( \eta_{\nu,\frac{M}{2}}\ast \left(2^{\nu s(\cdot)}\sum_{m\in\Z^n}|\lambda_{\nu,m}|\chi_{\nu,m}(\cdot)\right)^{\alpha}\right)^{\frac{1}{\alpha}}(x). 
\end{equation*} 
Thus $(\ref{5.61-1})$ and $(\ref{5.61-2})$ hold by Theorem $\ref{thm:max2}$ and $\ref{thm:max3}$. 
\end{proof}

\begin{lemma}
\label{kappa}
Let $\kappa\in\mathcal{S}(\R^n)$ satisfy 
$\chi_{Q(3)}\le \kappa \le \chi_{Q(3+1/{100})}$. Then, 
for any $N\gg 1$, we have 
\begin{equation}
\left|
\partial^{\alpha}\inversefourier\kappa(y)\right| \lesssim_N\langle\alpha\rangle^{2N}\langle y\rangle^{-2N}. \label{kappa estimate}
\end{equation}
\end{lemma}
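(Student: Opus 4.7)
The strategy is the classical integration-by-parts trick for Fourier transforms of smooth, compactly supported functions. First I would write
\[
\partial^{\alpha}\inversefourier\kappa(y) = (2\pi)^{-n/2}\int_{Q(3+1/100)} e^{iy\cdot\xi}(i\xi)^{\alpha}\kappa(\xi)\,d\xi,
\]
and use the identity $(1-\Delta_\xi)^N e^{iy\cdot\xi} = (1+|y|^2)^N e^{iy\cdot\xi}$. Integrating by parts $2N$ times in $\xi$ (no boundary terms arise because $\kappa$ is compactly supported) yields
\[
(1+|y|^2)^N\partial^{\alpha}\inversefourier\kappa(y) = (2\pi)^{-n/2}\int_{Q(3+1/100)} e^{iy\cdot\xi}\,(1-\Delta_\xi)^N\!\bigl[(i\xi)^{\alpha}\kappa(\xi)\bigr]\,d\xi.
\]

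The second step is to expand $(1-\Delta_\xi)^N\bigl[(i\xi)^{\alpha}\kappa(\xi)\bigr]$ by the multinomial and Leibniz rules. All resulting terms have the form $C_{\gamma,\delta}\,\partial_\xi^\delta(\xi^{\alpha})\,\partial_\xi^{\gamma-\delta}\kappa(\xi)$ with $|\gamma|\le 2N$ and $\delta\le\gamma$. The key identity $\partial_\xi^\delta(\xi^{\alpha}) = \frac{\alpha!}{(\alpha-\delta)!}\xi^{\alpha-\delta}$ (for $\delta\le\alpha$, otherwise $0$) gives the factorial ratio bound
\[
\frac{\alpha!}{(\alpha-\delta)!} \;\le\; |\alpha|^{|\delta|} \;\le\; \langle\alpha\rangle^{2N}\qquad (|\delta|\le 2N),
\]
which produces exactly the claimed polynomial factor $\langle\alpha\rangle^{2N}$. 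The remaining pieces $\xi^{\alpha-\delta}$ and $\partial_\xi^{\gamma-\delta}\kappa(\xi)$ are bounded uniformly on the compact set $Q(3+1/100)$, and the combinatorial coefficients together with $\|\kappa\|_{C^{2N}}$ and $|Q(3+1/100)|$ are absorbed into a constant $C_N$ depending only on $N$, $n$, and $\kappa$.

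Combining these estimates gives
\[
(1+|y|^2)^N\,|\partial^{\alpha}\inversefourier\kappa(y)| \;\lesssim_N\; \langle\alpha\rangle^{2N},
\]
and dividing by $(1+|y|^2)^N\sim\langle y\rangle^{2N}$ yields the desired bound. The only real work is careful bookkeeping of Leibniz-rule terms and collection of combinatorial constants into $C_N$; this is the main (but routine) obstacle. Nothing deeper is involved since $\kappa$ is Schwartz with compact support and $N$ can be chosen as large as needed.
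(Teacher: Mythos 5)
Your proposal is correct and follows exactly the same route as the paper: apply $(1-\Delta_\xi)^N$ to $e^{iy\cdot\xi}$, integrate by parts, and read off the decay from the smooth compactly supported integrand. The paper gives only the integration-by-parts step and omits the bookkeeping; your second paragraph correctly supplies it, noting only that the chain $\frac{\alpha!}{(\alpha-\delta)!}\le|\alpha|^{|\delta|}\le\langle\alpha\rangle^{2N}$ needs a harmless dimensional constant (since $|\alpha|\le\sqrt{n}\,\langle\alpha\rangle$), which is absorbed into $\lesssim_N$.
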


\begin{proof}
By using integration by parts, we can see that 
\begin{align*}
\partial^{\alpha} \inversefourier\kappa(y) &\simeq_n \int_{\R^n} (iz)^{\alpha}\kappa(z)\exp(iz\cdot y)\,{\rm d}z \notag \\ 
&\simeq_n \langle y\rangle^{-2N} \int_{\R^n} (iz)^{\alpha}\kappa(z) \left( (1-\Delta_z)^N\exp(iz\cdot y)\right)\,{\rm d}z \notag \\ 
&\simeq_n \langle y\rangle^{-2N} \int_{\R^n} \left( (1-\Delta_z)^N (iz)^{\alpha}\kappa(z) \right) \exp(iz\cdot y) \,{\rm d}z, 
\end{align*}
where $\Delta_z = \sum_{i=1}^n\partial^2/{\partial z_i^2} $. 
Hence we have $(\ref{kappa estimate})$. 
\end{proof}

From now, we prove Theorem $\ref{quark decomposition}$. 
\begin{proof}[Proof of Theorem $\ref{quark decomposition}$] We divide the proof into the parts of sufficiency and necessity.  

{\bf Sufficiency. } 
Since we assume $(\ref{rho quark})$, 
we can take $\epsilon>0$ such that 
$0<\epsilon<\rho-r$. 
For any $\nu\in\N_0$ and $m\in\Z^n$, there exists $d>0$ such that 
$\spt (\beta\qu)_{\nu,m}\subset dQ_{\nu,m}$. 
The conditions  $(\ref{Triebel case})$ and $(\ref{Besov case})$ imply that smooth atoms in $A_{p(\cdot),q(\cdot)}^{s(\cdot)}(\R^n)$ are not required to satisfy any moment conditions. 
Hence we can regard $2^{-(r+\epsilon)|\beta|}(\beta\qu)_{\nu,m}$ as smooth atoms in $A_{p(\cdot),q(\cdot)}^{s(\cdot)}(\R^n)$.  
We define 
\[
f^{\beta}:= \sum_{\nu\in\N_0}\sum_{m\in\Z^n}\lambda_{\nu,m}^{\beta}(\beta\qu)_{\nu,,m}. 
\]
By Theorem $\ref{theorem 5.1.6-Besov} $ and $\ref{theorem 5.1.6-Triebel}$, 
we have 
\[
\left\| 
f^{\beta} 
\right\|_{A_{p(\cdot),q(\cdot)}^{s(\cdot)}}
\lesssim 
2^{-(\rho-r-\epsilon)|\beta|}
\left\|
 \lambda 
\right\|_{a_{p(\cdot),q(\cdot),\rho}^{s(\cdot)}}. 
\] 
Let 
\[
\sigma = 
\min
\left\{ 
\min
\left(
q^-,\,1
\right)
\min
\left(1,\,
\left(
\frac{p}{q}
\right)^-
\right), \, 
\min(p^-,q^-,1)
\right\}. 
\]
Since 
\[
\left\| 
f_1+f_2
\right\|_{A_{p(\cdot),q(\cdot)}^{s(\cdot)}}^{\sigma}  
\le \left\| 
f_1
\right\|_{A_{p(\cdot),q(\cdot)}^{s(\cdot)}}^{\sigma} 
+ 
\left\| 
f_2
\right\|_{A_{p(\cdot),q(\cdot)}^{s(\cdot)}}^{\sigma} 
\]
holds for any $f_1,f_2\in A_{p(\cdot),q(\cdot)}^{s(\cdot)}(\R^n)$ by Lemma $\ref{min triangle}$, 
we have $f\in A_{p(\cdot),q(\cdot)}^{s(\cdot)}(\R^n)$ and $(\ref{norm quark})$. 

{\bf Necessity. } 
Let $f\in A_{p(\cdot),q(\cdot)}^{s(\cdot)}(\R^n)$. Then we can write 
\begin{align}
f= &(2\pi)^{-\frac{n}{2}}\sum_{m\in\Z^n}\tau(D)f(m)\inversefourier\kappa(\cdot-m) \notag \\ 
&+(2\pi)^{-\frac{n}{2}}\sum_{\nu\in\N}
\left(
\sum_{m\in\Z^n}\varphi_{\nu}(D)f\left(\frac{m}{2^{\nu}}\right)\inversefourier\kappa(2^{\nu}\cdot-m) 
\right) \label{tenkai}
\end{align}
by Corollary $\ref{Fraizer-Jewerth corollary}$. 
For any $(\nu,m)\in\N_0\times\Z^n$, we define 
\begin{equation}
\Lambda_{\nu,m} = \begin{cases} 
\tau(D)f(m)  \ \ &(\nu=0) \notag \\ 
 \varphi_{\nu}(D)f\left(\frac{m}{2^{\nu}}\right) \ \ &(\nu\in\N). 
\end{cases}
 \label{5.64}
\end{equation}
Then we rewrite $(\ref{tenkai})$ to 
\begin{equation}
f\simeq_n\sum_{\nu\in\N_0}\sum_{m\in\Z^n}\Lambda_{\nu,m}\inversefourier\kappa(2^{\nu}\cdot-m). \label{5.65}
\end{equation}
We can assume that $\rho$ is a integer. 
By the Taylor expansion, we obtain  
\begin{align*}
\psi(2^{\nu+\rho}x-l)&\inversefourier\kappa(2^{\nu}x-m) \\ 
&= \sum_{\beta\in\N_0}
\frac{\partial^{\beta}\inversefourier\kappa(2^{-\rho}l-m)(2^{\nu}x-2^{-\rho}l)^{\beta}\psi(2^{\nu+\rho}x-l)}
{\beta!} \\ 
&= \sum_{\beta\in\N_0^n}\frac{2^{-\rho|\beta|}\partial^{\beta}\inversefourier\kappa(2^{-\rho}l-m)\psi^{\beta}(2^{\nu+\rho}x-l)}
{\beta!}. 
\end{align*} 
Since $\sum_{m\in\Z^n}\psi(x-m)=1$, 
we see that 
\begin{align}
\varphi_{\nu}&(D)f \simeq_n \sum_{m\in\Z^n}\sum_{l\in\Z^n}\sum_{\beta\in\N_0^n}
\frac{2^{-\rho|\beta|}}{\beta!}
\Lambda_{\nu,m}\partial^{\beta}\inversefourier\kappa(2^{-\rho}l-m)\psi^{\beta}(2^{\nu+\rho}x-l). \label{tenkai2}
\end{align}
Since we can regard  $(\ref{tenkai2})$ as converging in the topology of $L^{\infty}$, 
we can change the order of summation. Hence we can rewrite $(\ref{tenkai2})$ as 
\begin{align*}
\varphi_{\nu}(D)f 
\simeq_n 
\sum_{l\in\Z^n}\sum_{\beta\in\N_0^{n}}\sum_{m\in\Z^n}
\frac{2^{-\rho|\beta|}}{\beta!}
\Lambda_{\nu,m}\partial^{\beta}\inversefourier\kappa(2^{-\rho}l-m)(\beta\qu)_{\nu+\rho,l}(x). 
\end{align*}
Let 
\[
\lambda_{\nu+\rho,l}^{\beta}:= \frac{2^{-\rho|\beta|}}{\beta!}\sum_{m\in\Z^n}\Lambda_{\nu,m}\partial^{\beta}\inversefourier\kappa(2^{-\rho}l-m). 
\]
Then we have 
\begin{equation}
f=\sum_{\nu\in\N_0}\varphi_{\nu}(D)f\simeq_n 
\sum_{\nu\in\N_0}\sum_{l\in\Z^n}\sum_{\beta\in\N_0^n}\lambda_{\nu+\rho,l}^{\beta}(\beta\qu)_{\nu+\rho,l}. \label{5.66}
\end{equation}

Next we consider the $a_{p(\cdot),q(\cdot),\rho}^{s(\cdot)}$ quasi norm of coefficients. 
Let $l\in\Z^n$, $l_0$ be a lattice point of $[0,2^{\rho})^n$  
and $x\in Q_{\nu+\rho, 2^{\rho}l+l_0 }$. 
By $(\ref{kappa estimate})$, we obtain 
\begin{equation}
\left|
\lambda_{\nu+\rho,2^{\rho}l+l_0}^{\beta}
\right|
\lesssim 
2^{-\rho|\beta|}\sum_{m\in\Z^n}\langle l-m\rangle^{-N}\left| \Lambda_{\nu,m}\right| 
= c2^{-\rho|\beta|}\sum_{m\in\Z^n}\langle m\rangle^{-N}
\left| \Lambda_{\nu,m+l}\right|. \label{8-13-1} 
\end{equation}
For each $m\in\Z^n$, we define 
\[
\eta_0:=\min\left( \min(1, p^-,q^-),  \min(1, q^-)\min\left(1, \left( \frac{p}{q} \right)^- \right)  \right), \ \ 
\Lambda^m:=\left\{ \left| \Lambda_{\nu,m+l}\right| \right\}_{\nu\in\N_0,l\in\Z^n}. 
\]
By $\Z^n= 2^{\rho}\Z^n +[0,2^{\rho})^n$, $(\ref{8-13-1})$ and Lemma $\ref{min triangle}$, we see that 
\begin{align*}
\left\| \lambda^{\beta} \right\|_{a_{p(\cdot),q(\cdot)}^{s(\cdot)}} 
&\lesssim 2^{-\rho|\beta|}
\left\|
\sum_{m\in\Z^n}\langle m \rangle^{-N}\Lambda^m
\right\|_{a_{p(\cdot),q(\cdot)}^{s(\cdot)}} \lesssim 
2^{-\rho|\beta|}
\left(
\sum_{m\in\Z^n}\langle m\rangle^{-N\eta_0} 
\left\|
\Lambda^m
\right\|_{a_{p(\cdot),q(\cdot)}^{s(\cdot)}}^{\eta_0}
\right)^{\frac{1}{\eta_0}}. 
\end{align*}
Since we can take $N$ sufficiency large, 
we obtain  
\[
\left\| \lambda^{\beta} \right\|_{a_{p(\cdot),q(\cdot)}^{s(\cdot)}} 
\lesssim 
2^{-\rho|\beta|}
\left(
\sum_{m\in\Z^n}\langle m\rangle^{(\frac{M}{\alpha}-N)\eta_0} 
\left\|
\Lambda
\right\|_{a_{p(\cdot),q(\cdot)}^{s(\cdot)}}^{\eta_0}
\right)^{\frac{1}{\eta_0}} 
\lesssim 
2^{-\rho|\beta|}
\left\|
\Lambda
\right\|_{a_{p(\cdot),q(\cdot)}^{s(\cdot)}}
\]
by Lemma $\ref{heikouidou}$, 
where $\Lambda=\{\Lambda_{\nu,m}\}_{\nu\in\N_0,m\in\Z^n}$.  
Hence we have 
\[
\left\| 
\lambda
\right\|_{a_{p(\cdot),q(\cdot),\rho}^{s(\cdot)}}\lesssim 
\left\| 
\Lambda
\right\|_{a_{p(\cdot),q(\cdot)}^{s(\cdot)}}
\]
by $\rho>r$. 

Finally, we prove  $||\Lambda||_{a_{p(\cdot),q(\cdot)}^{s(\cdot)}}\lesssim ||f||_{A_{p(\cdot),q(\cdot)}^{s(\cdot)}}$. 
Let $\eta_1=\min(1, p^-, q^-)$ and $M$ be a sufficiency large. 
For any $y\in Q_{\nu, m}$, we have 
\[
\frac{1}{\left( 1+ 2^{\nu}|y-2^{-\nu}m| \right)^{\frac{2M}{\eta_1}}  } 
\left| 
\varphi_{\nu}(D)f\left( \frac{m}{2^{\nu}} \right) 
\right|
\lesssim \left(\eta_{\nu,M}\ast (\varphi_{\nu}(D)f)^{\frac{\eta_1}{2}}(y)\right)^{\frac{2}{\eta_1}}
\]
by Lemma $\ref{lemma:r}$. 
Hence we see that 
\begin{align*}
|\Lambda_{\nu, m}| 
&= \left| 
\varphi_{\nu}(D)f\left( \frac{m}{2^{\nu}} \right) 
\right| \notag \\ 
&= \left( 1+ 2^{\nu}|y-2^{-\nu}m| \right)^{\frac{2M}{\eta_1}} \cdot 
\frac{1}{\left( 1+ 2^{\nu}|y-2^{-\nu}m| \right)^{\frac{2M}{\eta_1}}  } 
\left| 
\varphi_{\nu}(D)f\left( \frac{m}{2^{\nu}} \right) 
\right|  \notag \\ 
&\lesssim  
(1+n)^{\frac{2M}{\eta_1} } \frac{1}{\left( 1+ 2^{\nu}|y-2^{-\nu}m| \right)^{\frac{2M}{\eta_1}}  } 
\left| 
\varphi_{\nu}(D)f\left( \frac{m}{2^{\nu}} \right) 
\right|  \notag \\ 
&\lesssim \left(\eta_{\nu,M}\ast (\varphi_{\nu}(D)f)^{\frac{\eta_1}{2}}(y)\right)^{\frac{2}{\eta_1}}. 
\end{align*}
Since we have 
\[
\left| \Lambda_{\nu,m} 
\right| 
\lesssim 
\inf_{y\in Q_{\nu,m}}\left(\eta_{\nu,M}\ast (\varphi_{\nu}(D)f)^{\frac{\eta_1}{2}}(y)
\right)^{\frac{2}{\eta_1}}, 
\]
we obtain $||\Lambda||_{a_{p(\cdot),q(\cdot)}^{s(\cdot)}}\lesssim ||f||_{A_{p(\cdot),q(\cdot)}^{s(\cdot)}}$. 
This proves the necessity of quarkonial decomposition. 
\end{proof}

\section{Application to Trace theory}
\label{sec trace}
Let $n\ge 2$. In this Section, we consider the Trace operator 
\begin{equation}
\tr : f(x',x_n) \longmapsto  f(x',0), \ \ x'\in\R^{n-1}, \ \ f\in\mathcal{S}(\R^n). \label{5.152}
\end{equation} 
We write $x=(x',x_n)\in\R^n$, where $x'\in\R^{n-1}$ and $x_n\in\R$. 
Furthermore,  we write 
$\tilde{p}(x')=p(x', 0)$, $\tilde{q}(x')=q(x',0)$ and $\tilde{s}(x')=s(x',0)$. 

\begin{theorem}
\label{theorem 5.4.4}
Assume that $p(\cdot),q(\cdot)\in C^{\log}(\R^n)\cap\mathcal{P}_0(\R^n)$. 

\begin{enumerate}
\item[{\rm (1)}] Let $s(\cdot)\in C^{\log}(\R^n)$ satisfy 
\begin{equation}
\mathop{{\rm{ess}}\,\,\rm{inf}}_{x\in\R^n} 
\left\{ s(\cdot)-\left[\frac{1}{p(\cdot)}+(n-1)\left( \frac{1}{\min(1, p(\cdot) )}-1 \right) \right] \right\}>0. \label{eq:10-6-1}
\end{equation} 
\begin{enumerate}
\item The operator $\tr$ can be extended as a surjective and continuous mapping from $B_{p(\cdot),q(\cdot)}^{s(\cdot)}(\R^n)$ 
to  $B_{ \tilde{p}(\cdot),\tilde{q}(\cdot)}^{ \tilde{s}(\cdot)-\frac{1}{\tilde{p}(\cdot)}}(\R^{n-1})$. 
\item The operator $\tr$ can be extended as a surjective and continuous mapping from $F_{p(\cdot),q(\cdot)}^{s(\cdot)}(\R^n)$ 
to  $F_{\tilde{p}(\cdot),\tilde{p}(\cdot)}^{\tilde{s}(\cdot)-\frac{1}{\tilde{p}(\cdot)}}(\R^{n-1})$.  
\end{enumerate} 
\item[{\rm (2)}] Let $s(\cdot)\in C^{\log}(\R^n)$ and $k\in\N_0$ satisfy 
\begin{equation}
\mathop{{\rm{ess}}\,\,\rm{inf}}_{x\in\R^n}\left\{ s(\cdot)-\left[k+\frac{1}{p(\cdot)}+(n-1)\left( \frac{1}{ \min(1, p(\cdot) )}-1\right)\right]\right\}>0.
\label{eq:10-6-2}
\end{equation}
\begin{enumerate}
\item  If $g_0\in B_{\tilde{p}(\cdot),\tilde{q}(\cdot)}^{\tilde{s}(\cdot)-\frac{1}{\tilde{p}(\cdot)}}(\R^{n-1})$, 
$g_1\in B_{\tilde{p}(\cdot),\tilde{q}(\cdot)}^{\tilde{s}(\cdot)-\frac{1}{\tilde{p}(\cdot)}-1}(\R^{n-1})$, $\cdots$, 
$g_k\in B_{\tilde{p}(\cdot),\tilde{q}(\cdot)}^{\tilde{s}(\cdot)-\frac{1}{\tilde{p}(\cdot)}-k}(\R^{n-1})$, then there exists a $f\in B_{p(\cdot),q(\cdot)}^{s(\cdot)}(\R^n)$ such that 
$\tr(f)=g_0$, $\tr(\partial_{x_n}f)=g_1$, $\cdots$, $\tr(\partial_{x_n}^kf)=g_k$. 
\item  If $g_0\in F_{\tilde{p}(\cdot),\tilde{p}(\cdot)}^{\tilde{s}(\cdot)-\frac{1}{\tilde{p}(\cdot)}}(\R^{n-1})$, 
$g_1\in F_{\tilde{p}(\cdot),\tilde{p}(\cdot)}^{\tilde{s}(\cdot)-\frac{1}{\tilde{p}(\cdot)}-1}(\R^{n-1})$, $\cdots$, 
$g_k\in F_{\tilde{p}(\cdot),\tilde{p}(\cdot)}^{\tilde{s}(\cdot)-\frac{1}{\tilde{p}(\cdot)}-k}(\R^{n-1})$, then there exists a $f\in F_{p(\cdot),q(\cdot)}^{s(\cdot)}(\R^n)$ such that 
$\tr(f)=g_0$, $\tr(\partial_{x_n}f)=g_1$, $\cdots$, $\tr(\partial_{x_n}^kf)=g_k$. 
\end{enumerate} 
\end{enumerate} 
\end{theorem}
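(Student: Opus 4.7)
My plan is to prove the theorem by using the quarkonial decomposition of Theorem \ref{quark decomposition} as the main tool, reducing both the trace inequality and the extension to sequence-norm comparisons between $\R^{n-1}$ and $\R^n$. Condition \eqref{eq:10-6-1} was tailored precisely so that the target smoothness $\tilde s(\cdot) - 1/\tilde p(\cdot)$ exceeds $(n-1)(1/\min(1,\tilde p(\cdot))-1)$, i.e.\ the $(n-1)$-dimensional version of \eqref{Besov case}, so that Theorem \ref{quark decomposition} is available on the target side as well. The same condition gives the embedding $A_{p(\cdot),q(\cdot)}^{s(\cdot)}(\R^n)\hookrightarrow \mathrm{BUC}$ via Proposition \ref{proposition 3.1.33}, so $\tr f$ makes sense pointwise. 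Throughout, I would treat the $F$-case via its natural embedding into the corresponding $B$-space (Proposition \ref{dual-thm} (iii)) combined with the observation that the target exponent $\tilde q = \tilde p$ reduces $F_{\tilde p,\tilde p}^{\tilde s-1/\tilde p}(\R^{n-1})$ to $B_{\tilde p,\tilde p}^{\tilde s-1/\tilde p}(\R^{n-1})$.

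For the continuity in (1), I would take $f\in A_{p(\cdot),q(\cdot)}^{s(\cdot)}(\R^n)$, expand $f=\sum_{\beta,\nu,m}\lambda_{\nu,m}^{\beta}(\beta\qu)_{\nu,m}$, and restrict term-by-term to $x_n=0$. By \eqref{quark r}, the quark $(\beta\qu)_{\nu,(m',m_n)}$ vanishes at $x_n=0$ unless $|m_n|\le 2^r$, so the restriction collapses to
\[
(\tr f)(x')=\sum_{\beta',\nu,m'}\tilde\lambda_{\nu,m'}^{\beta'}\,\tilde\psi^{\beta'}(2^\nu x'-m'),
\]
where $\tilde\lambda_{\nu,m'}^{\beta'}=\sum_{\beta_n}\sum_{|m_n|\le 2^r}(-m_n)^{\beta_n}\lambda_{\nu,(m',m_n)}^{(\beta',\beta_n)}$ and $\tilde\psi^{\beta'}$ is (after a finite relabelling) a quark in $\R^{n-1}$. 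The core estimate is then
\[
\|\tilde\lambda\|_{a^{\tilde s(\cdot)-1/\tilde p(\cdot)}_{\tilde p(\cdot),\tilde q(\cdot),\rho}(\R^{n-1})}\lesssim \|\lambda\|_{a^{s(\cdot)}_{p(\cdot),q(\cdot),\rho}(\R^n)},
\]
which reduces to the dyadic modular identity
\[
\Bigl\|\sum_{m'}\mu_{\nu,m'}\chi_{Q_{\nu,m'}^{n-1}}\Bigr\|_{L^{\tilde p(\cdot)}(\R^{n-1})}\sim 2^{\nu/\tilde p(\cdot)}\Bigl\|\sum_{m'}\mu_{\nu,m'}\chi_{Q_{\nu,(m',0)}^n}\Bigr\|_{L^{p(\cdot)}(\R^n)},
\]
the factor $2^{\nu/\tilde p(\cdot)}$ being exactly what is absorbed into the smoothness shift. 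In the $F$-case, the fact that the inner $\ell^{q(\cdot)}$-sum over $m_n$ runs over $O(1)$ indices causes it to fuse with the outer $L^{p(\cdot)}$-integration in $x_n$, producing a single $L^{\tilde p(\cdot)}$ norm on $\R^{n-1}$ and explaining the collapse $\tilde q\mapsto \tilde p$.

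For surjectivity in (1) and the extension in (2), I would reverse this construction. Given $g\in A_{\tilde p(\cdot),\tilde q(\cdot)}^{\tilde s(\cdot)-1/\tilde p(\cdot)}(\R^{n-1})$ with quarkonial expansion $g=\sum_{\beta',\nu,m'}\mu_{\nu,m'}^{\beta'}(\beta'\qu)_{\nu,m'}$, fix $\phi\in\mathcal{S}(\R)$ with $\phi(0)=1$ and set
\[
(Eg)(x',x_n):=\sum_{\beta',\nu,m'}\mu_{\nu,m'}^{\beta'}\,\psi^{\beta'}(2^\nu x'-m')\,\phi(2^\nu x_n).
\]
Each summand is a quark in $\R^n$ at $(m',0)$ with multi-index $(\beta',0)$, so the reverse direction of the sequence-norm comparison and Theorem \ref{quark decomposition} give $\|Eg\|_{A^{s(\cdot)}_{p(\cdot),q(\cdot)}(\R^n)}\lesssim \|g\|_{A^{\tilde s(\cdot)-1/\tilde p(\cdot)}_{\tilde p(\cdot),\tilde q(\cdot)}(\R^{n-1})}$, while $\tr(Eg)=g$ because $\phi(0)=1$. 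For (2), I select $\phi_0,\dots,\phi_k\in\mathcal{S}(\R)$ with $\phi_j^{(i)}(0)=\delta_{ij}$ and define
\[
(E_j g_j)(x',x_n):=\sum_{\beta',\nu,m'}\mu_{\nu,m'}^{\beta',(j)}\,\psi^{\beta'}(2^\nu x'-m')\,2^{-\nu j}\phi_j(2^\nu x_n),
\]
where $\{\mu_{\nu,m'}^{\beta',(j)}\}$ are the quarkonial coefficients of $g_j$. The rescaling $2^{-\nu j}$ compensates for $\partial_{x_n}^j$ so that $\partial_{x_n}^i(E_j g_j)(x',0)=\delta_{ij}g_j(x')$; then $f:=\sum_{j=0}^k E_j g_j$ has all the prescribed traces. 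Condition \eqref{eq:10-6-2} is exactly \eqref{eq:10-6-1} for smoothness $s(\cdot)-j$ with $j\le k$, so each $E_j g_j$ lies in a scale reachable from $A^{s(\cdot)}_{p(\cdot),q(\cdot)}(\R^n)$, and Corollary \ref{cor:lifting} keeps track of the normalisation.

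The main obstacle is the variable-exponent dimension-reduction lemma. In the constant-exponent setting this is the trivial identity $\|\chi_{Q^n_{\nu,(m',0)}}\|_{L^p}^p=2^{-\nu}\|\chi_{Q^{n-1}_{\nu,m'}}\|_{L^{\tilde p}}^{\tilde p}$, but here one must compare $p(x',x_n)$ with $\tilde p(x')=p(x',0)$ for $|x_n|\le 2^{-\nu}$ uniformly in $\nu$; this is exactly the regime in which the log-H\"older hypothesis on $p(\cdot)$ and Lemma \ref{lemma:2} provide the required modular transfer. A secondary, but non-trivial, point is the uniformity of the constants in the quarkonial index $\beta$: the trace estimates must carry bounds independent of $\beta$ (aside from the polynomial factor $(2^\nu x'-m')^{\beta'}$, which is cancelled by the built-in $2^{-\rho|\beta|}$ weight in \eqref{definition norm quark} once $\rho$ is chosen $>r$). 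Once these uniform comparison statements are in hand, both parts of the theorem follow by combining Theorem \ref{quark decomposition} with the scaling bookkeeping described above.
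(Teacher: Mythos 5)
Your high-level architecture matches the paper: use the quarkonial decomposition of Theorem~\ref{quark decomposition}, restrict term by term, reduce the boundedness to a comparison of sequence norms between $\R^n$ and $\R^{n-1}$, and produce the extension/coretraction by lifting an $(n-1)$-dimensional quarkonial expansion to $\R^n$. But the central technical step is not correctly formulated, and two of your mechanisms are misattributed.

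First, the displayed ``dyadic modular identity'' does not parse in the variable-exponent setting: $2^{\nu/\tilde p(\cdot)}$ is a non-constant function, and you cannot pull a functional factor outside an $L^{\tilde p(\cdot)}$-norm and write ``$\sim$''. What the paper actually proves (Lemma~\ref{lemma 5.4.5} for $f$, Lemma~\ref{lemma-saikin} for $b$) is the \emph{sequence-space} equivalence
$\|\{\lambda_{\nu,(m',0)}\}\|_{a^{\tilde s(\cdot)-1/\tilde p(\cdot)}_{\tilde p(\cdot),\tilde q(\cdot)}(\R^{n-1})}\sim\|\{\delta_{m_n,0}\lambda_{\nu,m}\}\|_{a^{s(\cdot)}_{p(\cdot),q(\cdot)}(\R^n)}$,
in which the shift $-1/\tilde p(\cdot)$ is absorbed into the exponent of the target space, not tacked on as a multiplicative constant. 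The proof works by first invoking Corollary~\ref{cor proposition 7.3} (built on Lemma~\ref{lemma 7.1}) to replace $p,q,s$ by versions independent of $x_n$ near the hyperplane, then replacing $\chi_{Q_{\nu,(m',0)}}$ by $\chi_{\tilde Q_{\nu,m'}}$ with $\tilde Q_{\nu,m'}=Q_{\nu,m'}\times[2^{-\nu},2^{-\nu+1})$, and finally a direct modular (Fubini) computation using $2^{-\nu}=\int_{2^{-\nu}}^{2^{-\nu+1}}\mathrm{d}x_n$. Your appeal to Lemma~\ref{lemma:2} and to ``log-H\"older modular transfer'' is directionally right but does not supply this argument, which is the main content of the theorem.

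Second, the reason $F_{p(\cdot),q(\cdot)}^{s(\cdot)}(\R^n)$ traces to $F_{\tilde p(\cdot),\tilde p(\cdot)}^{\cdot}(\R^{n-1})$ (so $\tilde q\mapsto\tilde p$) is not a fusion of an ``$\ell^{q(\cdot)}$-sum over $m_n$'' with the $x_n$-integration. After moving to the thin slabs $\tilde Q_{\nu,m'}$, for each fixed $x=(x',x_n)$ only $O(1)$ values of $\nu$ have $x_n\in[2^{-\nu},2^{-\nu+1})$, so the inner $\ell^{q(\cdot)}$-sum \emph{over $\nu$} degenerates to finitely many terms and can be replaced by $\ell^{p(\cdot)}$; that is where $q(\cdot)$ disappears.

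Third, the paper makes the crucial (and easy to miss) choice $\psi(x)=\mu(x_1)\cdots\mu(x_n)$ with $\operatorname{supp}\mu\subset(-1,1)$, so that restriction to $x_n=0$ annihilates every term with $m_n\neq 0$ or $\beta_n\neq 0$. You instead allow contributions from $|m_n|\le 2^r$ and all $\beta_n$, asserting that the restrictions are ``after a finite relabelling'' quarks on $\R^{n-1}$. That is not true for a general $\psi$: they are only smooth atoms, and you would then have to re-sum over $\beta_n$ and shift in $m_n$ using Lemma~\ref{lemma 7.1}, which is doable but considerably heavier. The tensor-product choice makes this step trivial and is worth keeping.

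Your extension construction with a fixed $\phi\in\mathcal{S}(\R)$ (for $k=0$) and with $\phi_j$ satisfying $\phi_j^{(i)}(0)=\delta_{ij}$ (for $k\ge 1$) is a legitimate variant; again, unless you take $\phi=\mu$ the summands are smooth atoms rather than quarks, and one must invoke Theorems~\ref{theorem 5.1.6-Besov}/\ref{theorem 5.1.6-Triebel} rather than Theorem~\ref{quark decomposition} directly. The paper instead stays inside the quarkonial framework using $x_n^{2L+j}$-weighted quarks and $h_j=\partial_{x_n}^{2L}v_j$, exploiting $\mu^{(k)}(0)=0$ for $k\ge 1$. Both routes work; the paper's keeps the atom type uniform across $j$ and avoids choosing a family $\{\phi_j\}$. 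The reduction ``surjectivity in (1) follows from (2) with $k=0$'' is exactly the paper's route.

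In short: the scaffolding is right, but the core lemma comparing the $\R^{n-1}$ and $\R^n$ sequence norms is not stated in a form that can be proved, the $\tilde q\mapsto\tilde p$ collapse is misattributed, and the tensor-product normalisation of $\psi$ --- which is what makes the trace formula clean --- is missing. Correcting these would make the proof essentially the same as the paper's.
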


\begin{remark}
If $p(\cdot)=p$, $q(\cdot)=q$ and $s(\cdot)=s$ are constant functions, 
then it is known that the assumption $s>\frac{1}{p}+(n-1)\left( \frac{1}{\min(1, p) }-1 \right)$ is optimal, see \cite{Triebel1}. 

As we mentioned in Introduction,  Diening, H\"ast\"o and Roudenko \cite{Diening} proved Theorem $\ref{theorem 5.4.4}$-$(1)$ for Triebel--Lizrokin spaces with variable exponents. 
In the case of Besov spaces with variable exponents, Moura, Neves and Schneider \cite{Moura} proved Theorem $\ref{theorem 5.4.4}$-$(1)$ 
for $2$-microlocal Besov spaces wtih variable exponents, but summability index $q$ was constant. 
\end{remark}

To prove Theorem $\ref{theorem 5.4.4}$, we need Lemma $\ref{lemma 7.1}$ and Lemma $\ref{lemma 5.4.5}$.  
\begin{lemma}[ {\cite[Lemma 7.1]{Diening} }]
\label{lemma 7.1} 
Let $p(\cdot),q(\cdot)\in C^{\log}(\R^n)\cap\mathcal{P}_0(\R^n)$, $s(\cdot)\in C^{\log}(\R^n)$, $\epsilon>0$ and let $\{ E_{\nu,m}\}_{(\nu,m)\in\N_0\times\Z^n}$ be 
a collection of sets such that $E_{\nu,m}\subset 3Q_{\nu,m}$ and $|E_{\nu,m}|\ge \epsilon |Q_{\nu,m}|$.  
Then 
\begin{equation}
\left\| \left\{ \lambda_{\nu,m}\right\}_{(\nu,m)\in\N_0\times\Z^n} \right\|_{f_{p(\cdot),q(\cdot)}^{s(\cdot)}} 
\sim\left\| \left\{ 2^{\nu s(\cdot)}\sum_{m\in\Z^n}\left| \lambda_{\nu,m}\right| \chi_{E_{\nu,m}} \right\}_{\nu=0}^{\infty} \right\|_{L^{p(\cdot)}(\ell^{q(\cdot)})} \label{trie}
\end{equation}
for any $\{ \lambda_{\nu,m}\}_{(\nu,m)\in\N_0\times\Z^n}\in f_{p(\cdot),q(\cdot)}^{s(\cdot)}$ 
and 
\begin{equation}
\left\| \left\{ \lambda_{\nu,m}\right\}_{(\nu,m)\in\N_0\times\Z^n} \right\|_{b_{p(\cdot),q(\cdot)}^{s(\cdot)}} 
\sim\left\| \left\{ 2^{\nu s(\cdot)}\sum_{m\in\Z^n}\left| \lambda_{\nu,m}\right| \chi_{E_{\nu,m}} \right\}_{\nu=0}^{\infty} \right\|_{\ell^{q(\cdot)}(L^{p(\cdot)})} \label{besov}
\end{equation}
for any $\{ \lambda_{\nu,m}\}_{(\nu,m)\in\N_0\times\Z^n}\in b_{p(\cdot),q(\cdot)}^{s(\cdot)}$.  
\end{lemma}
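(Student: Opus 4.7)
The plan is to prove both equivalences by reducing each to a pointwise convolution--domination bound and then invoking the vector-valued maximal inequalities proved earlier in the paper. For (\ref{trie}) I will apply Theorem~\ref{thm:max2} on $L^{p(\cdot)}(\ell^{q(\cdot)})$ and for (\ref{besov}) I will apply Theorem~\ref{thm:max3} on $\ell^{q(\cdot)}(L^{p(\cdot)})$; the two arguments have identical structure, so I will describe the Triebel--Lizorkin case in detail and indicate the minor modifications needed for the Besov case at the end.

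For the direction $\|\lambda\|_{f_{p(\cdot),q(\cdot)}^{s(\cdot)}}\lesssim \mathrm{RHS}$, I would fix $0<r<\min(1,p^{-},q^{-})$ and set $g_{\nu}(y):=\sum_{m\in\Z^n}|\lambda_{\nu,m}|\chi_{E_{\nu,m}}(y)$. For $x\in Q_{\nu,m}$, the cube $3Q_{\nu,m}$ contains both $x$ and $E_{\nu,m}$, whence
\[
\mathcal{M}_{r}(g_\nu)(x)^{r}\ge \frac{1}{|3Q_{\nu,m}|}\int_{E_{\nu,m}}|\lambda_{\nu,m}|^{r}\,{\rm d}y\ge \frac{\epsilon}{3^n}|\lambda_{\nu,m}|^{r},
\]
yielding the pointwise bound $\sum_{m}|\lambda_{\nu,m}|\chi_{Q_{\nu,m}}(x)\lesssim \mathcal{M}_{r}(g_\nu)(x)$. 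To commute the weight $2^{\nu s(\cdot)}$ with the maximal operator I invoke Lemma~\ref{lemma:2}: choosing $M>2\max(n,C_{\log}(s))$ sufficiently large, one obtains
\[
2^{\nu s(x)}\mathcal{M}_{r}(g_\nu)(x)\lesssim \bigl(\eta_{\nu,M}\ast\bigl|2^{\nu s(\cdot)}g_\nu(\cdot)\bigr|^{r}\bigr)^{1/r}(x).
\]
Taking $L^{p(\cdot)}(\ell^{q(\cdot)})$-norms of both sides and applying Theorem~\ref{thm:max2} to the exponents $p(\cdot)/r$, $q(\cdot)/r$ (both strictly greater than $1$) gives the desired bound.

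For the reverse direction $\mathrm{RHS}\lesssim \|\lambda\|_{f_{p(\cdot),q(\cdot)}^{s(\cdot)}}$, I would exploit that $\{3Q_{\nu,m}\}_m$ has overlap at most $3^n$, so the sum $\sum_{m}|\lambda_{\nu,m}|\chi_{E_{\nu,m}}(x)$ has at most $3^n$ nonzero terms at each $x$, each coming from a cube $Q_{\nu,m}$ at distance $\lesssim 2^{-\nu}$ from $x$. A symmetric maximal-function argument, with the roles of $Q_{\nu,m}$ and $E_{\nu,m}$ interchanged (using the trivial lower bound $|Q_{\nu,m}|=|Q_{\nu,m}|$ in place of $|E_{\nu,m}|\ge\epsilon|Q_{\nu,m}|$), produces $\sum_{m}|\lambda_{\nu,m}|\chi_{E_{\nu,m}}(x)\lesssim \mathcal{M}_{r}\bigl(\sum_{m}|\lambda_{\nu,m}|\chi_{Q_{\nu,m}}\bigr)(x)$; the same weight-commutation step and Theorem~\ref{thm:max2} close the estimate. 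The Besov case (\ref{besov}) proceeds via exactly the same pointwise dominations, with Theorem~\ref{thm:max3} replacing Theorem~\ref{thm:max2}, and using Lemma~\ref{min triangle}(iii) for the triangle-type inequality in $\ell^{q(\cdot)}(L^{p(\cdot)})$ where needed.

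The main obstacle is the weight-commutation step: justifying pointwise that $2^{\nu s(x)}\mathcal{M}_{r}(g_\nu)(x)$ is dominated by $\bigl(\eta_{\nu,M}\ast|2^{\nu s(\cdot)}g_\nu(\cdot)|^{r}\bigr)^{1/r}(x)$. This rests essentially on the $C^{\log}$ regularity of $s(\cdot)$ through Lemma~\ref{lemma:2}, and $M$ must be chosen simultaneously large enough to absorb the factor $2^{\nu|s(x)-s(y)|}$ coming from log-Hölder continuity and to satisfy the hypotheses of Theorem~\ref{thm:max2} (respectively Theorem~\ref{thm:max3}). Once this pointwise convolution-domination is in hand, the remaining manipulations are routine.
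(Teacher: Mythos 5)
The paper itself does not give a self-contained proof of Lemma~\ref{lemma 7.1}: the Triebel--Lizorkin inequality~\eqref{trie} is cited from Diening--H\"ast\"o--Roudenko, and the Besov inequality~\eqref{besov} is stated to follow by the same argument. Your proposal attempts to reconstruct that argument, and the overall strategy (reduce to a pointwise domination by an $\eta_{\nu,M}$-convolution, then invoke Theorem~\ref{thm:max2}, resp.\ Theorem~\ref{thm:max3}, and Lemma~\ref{lemma:2} for the weight transfer) is indeed the standard and correct route. However, there is a genuine error in the central ``weight-commutation'' step, which as written is false as a pointwise inequality.

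You claim $2^{\nu s(x)}\mathcal{M}_{r}(g_\nu)(x)\lesssim\bigl(\eta_{\nu,M}\ast|2^{\nu s(\cdot)}g_\nu(\cdot)|^{r}\bigr)^{1/r}(x)$. The right side behaves like an $L^r$-average of $g_\nu$ over balls of radius comparable to $2^{-\nu}$, while the Hardy--Littlewood maximal function $\mathcal{M}_r$ takes a supremum over balls of \emph{all} radii. If $g_\nu$ is supported at distance $R\gg 2^{-\nu}$ from $x$, then $\mathcal{M}_r(g_\nu)(x)^r\sim R^{-n}\int g_\nu^r$ whereas $\eta_{\nu,M}\ast g_\nu^r(x)\sim 2^{\nu(n-M)}R^{-M}\int g_\nu^r$, and the ratio $(2^\nu R)^{n-M}\to 0$ for $M>n$. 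So the convolution does \emph{not} dominate the maximal function, and the displayed inequality cannot hold.

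The fix is simple and leaves the rest of your argument intact: you never need the full maximal function. The first pointwise estimate you derive only uses the average over the \emph{single} ball $B(x,c2^{-\nu})$ containing $3Q_{\nu,m}$; that specific ball-average (rather than the sup over all radii) \emph{is} dominated by $\eta_{\nu,M}\ast g_\nu^r(x)$ up to a constant, since $2^{\nu n}(1+2^{\nu}|x-y|)^{-M}\gtrsim |B(x,c2^{-\nu})|^{-1}\chi_{B(x,c2^{-\nu})}(y)$. Replacing $\mathcal{M}_r$ everywhere by this specific ball-average gives the chain
\[
|\lambda_{\nu,m}|^r\chi_{Q_{\nu,m}}(x)\lesssim \frac{1}{\epsilon}\,\eta_{\nu,2M}\ast g_\nu^r(x),\qquad
2^{\nu r s(x)}\eta_{\nu,2M}\ast g_\nu^r(x)\lesssim \eta_{\nu,M}\ast\bigl(2^{\nu r s(\cdot)}g_\nu^r\bigr)(x),
\]
the second inequality coming from Lemma~\ref{lemma:2} applied to $rs(\cdot)$ with $M$ taken larger than $\max(2n,\,C_{\log}(rs))$ (and larger than the threshold $2n$ in Theorem~\ref{thm:max3} for the Besov case). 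After this correction, Theorem~\ref{thm:max2}/\ref{thm:max3} applied to exponents $p(\cdot)/r,\,q(\cdot)/r>1$ give one direction, the symmetric estimate gives the other, and the proof is complete. So: right scaffolding, but the appeal to $\mathcal{M}_r$ in the middle must be replaced by the direct convolution bound on a fixed ball.
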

Triebel--Lizorkin case $(\ref{trie})$ is proved in \cite[Lemma 7.1]{Diening}. By using same argument of the proof of \cite[Lemma 7.1]{Diening}, 
we can prove Besov case $(\ref{besov})$. 

\begin{lemma}[ {\cite[Lemma 7.2]{Diening}} ]  
\label{lemma 7.2} 
Let $p_1(\cdot), p_2(\cdot), q_1(\cdot), q_2(\cdot)\in C^{\log}(\R^n)\cap\mathcal{P}_0(\R^n)$ and $s_1(\cdot), s_2(\cdot)\in C^{\log}(\R^n)$. 
Assume that $p_1(\cdot)=p_2(\cdot)$, $q_1(\cdot)=q_2(\cdot)$ and $s_1(\cdot)=s_2(\cdot)$ in the upper or lower half space. 
For double-index complex-valued sequence $\lambda=\{\lambda_{\nu,m}\}_{(\nu,m)\in\N_0\times\Z^n}$, 
\begin{equation}
\left\| 
\left\{
\delta_{m_n,0}\lambda_{\nu,m}
\right\}_{(\nu,m)\in\N_0\times\Z^n}
\right\|_{a_{p_1(\cdot),q_1(\cdot)}^{s_1(\cdot)}(\R^{n})}
\sim 
\left\| 
\left\{
\delta_{m_n,0}\lambda_{\nu,m}
\right\}_{(\nu,m)\in\N_0\times\Z^n}
\right\|_{a_{p_2(\cdot),q_2(\cdot)}^{s_2(\cdot)}(\R^{n})}, \label{7.2}
\end{equation}
where 
\[
\delta_{m_n,0} =\begin{cases} 1 \ \ &m_n=0, \\ 
                                     0 \ \ &m_n\neq 0. 
                                     \end{cases}
\]
\end{lemma}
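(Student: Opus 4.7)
The plan is to exploit two elementary observations: (i) the modular-based characterisations of the quasi-norms appearing in $a_{p(\cdot),q(\cdot)}^{s(\cdot)}$ depend only on the values of $p(\cdot)$, $q(\cdot)$, $s(\cdot)$ on the support of the function $\sum_{m\in\Z^n}\lambda_{\nu,m}\chi_{\nu,m}$; and (ii) when $m_n=0$ the cube $Q_{\nu,m}=\prod_{i<n}[2^{-\nu}m_i,2^{-\nu}(m_i+1))\times[0,2^{-\nu})$ is contained in the closed upper half space. Observation (ii) localises the sequence $\{\delta_{m_n,0}\lambda_{\nu,m}\}$ to one side of the hyperplane $\{x_n=0\}$, and observation (i) means that only the exponents on that side enter the computation.

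I would set $f_\nu(x):=\sum_{m\in\Z^n}\delta_{m_n,0}\lambda_{\nu,m}\chi_{\nu,m}(x)$ so that $\mathrm{supp}\,f_\nu\subset\{x_n\ge 0\}$, and first handle the case where $p_1=p_2$, $q_1=q_2$, $s_1=s_2$ on the upper half space. The integrand $(|f_\nu(x)|/\lambda)^{p_i(x)}$ in the Luxemburg modular vanishes off the support, where $p_1=p_2$, so $\|f_\nu\|_{L^{p_1(\cdot)}}=\|f_\nu\|_{L^{p_2(\cdot)}}$. The same remark applies verbatim to the Besov modular $\sum_\nu\bigl\|\bigl|2^{\nu s(\cdot)}f_\nu\bigr|^{q(\cdot)}\bigr\|_{L^{p(\cdot)/q(\cdot)}}$ appearing in \eqref{modular} and to the pointwise quantity $\bigl(\sum_\nu|2^{\nu s(\cdot)}f_\nu|^{q(\cdot)}\bigr)^{1/q(\cdot)}$ defining the Triebel--Lizorkin norm. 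Hence both the $b$-- and $f$--norms of $\{\delta_{m_n,0}\lambda_{\nu,m}\}$ depend only on the restrictions of the exponents to the support, and the desired equivalence actually holds as equality with constant $1$.

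For the opposite case, where the exponents agree on the lower half space, I would set $\mu_{\nu,m}:=\delta_{m_n,0}\lambda_{\nu,m}$ and consider the translate $\mu^{e_n}_{\nu,m}:=\mu_{\nu,m+e_n}$ in the sense of Lemma~\ref{heikouidou}; concretely $\mu^{e_n}_{\nu,m}=\delta_{m_n,-1}\lambda_{\nu,m+e_n}$, which is nonzero only for $m_n=-1$ and whose supporting cubes lie in $\{x_n\in[-2^{-\nu},0)\}\subset\{x_n<0\}$. By the argument of the previous paragraph, adapted to the lower half space, $\|\mu^{e_n}\|_{a_{p_1(\cdot),q_1(\cdot)}^{s_1(\cdot)}}=\|\mu^{e_n}\|_{a_{p_2(\cdot),q_2(\cdot)}^{s_2(\cdot)}}$. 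Next I would apply Lemma~\ref{heikouidou} twice, with $l=e_n$ and $l=-e_n$, to obtain $\|\mu\|_{a_{p_i(\cdot),q_i(\cdot)}^{s_i(\cdot)}}\sim\|\mu^{e_n}\|_{a_{p_i(\cdot),q_i(\cdot)}^{s_i(\cdot)}}$ for $i=1,2$ with an absolute constant (since $\langle e_n\rangle=\sqrt{2}$), and chain the four estimates to conclude.

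The only point requiring attention is the first observation, namely that each of the three modulars defining $\|\cdot\|_{L^{p(\cdot)}}$, $\|\cdot\|_{\ell^{q(\cdot)}(L^{p(\cdot)})}$ and $\|\cdot\|_{L^{p(\cdot)}(\ell^{q(\cdot)})}$ is an integral whose integrand vanishes off the support of the underlying function, so that the values of the exponents off the support are irrelevant; this is immediate from the formulas recorded in Section~2. Beyond this bookkeeping the proof is entirely mechanical, with Lemma~\ref{heikouidou} doing the only quantitative work, and only in the lower-half-space case.
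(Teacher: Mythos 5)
Your proposal is correct, and it proceeds by a genuinely different route from the paper. The paper's proof of the lower-half-space case invokes Lemma~\ref{lemma 7.1}: it replaces, for $m_n=0$, the characteristic function $\chi_{\nu,m}$ by $\chi_{E_{\nu,m}}$ where $E_{\nu,m}=Q_{\nu,m'}\times[-\tfrac34 2^{-\nu},-\tfrac12 2^{-\nu}]\subset 3Q_{\nu,m}$ is a reflected slab of comparable measure lying entirely in $\{x_n<0\}$, uses that lemma to switch the modular to one supported in the lower half space, and then changes exponents. Your proof instead keeps the characteristic functions and translates the \emph{index}: you replace $\mu$ by the lattice shift $\mu^{e_n}$ via Lemma~\ref{heikouidou}, land the supporting cubes in $\{x_n<0\}$, observe that the modular gives equality of norms there, and go back. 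Both Lemma~\ref{lemma 7.1} and Lemma~\ref{heikouidou} are proved from the same $\eta_{\nu,M}\ast(\cdot)$ maximal estimates (Theorems~\ref{thm:max2}, \ref{thm:max3}), so the two arguments have comparable depth; the main payoff of your version is the clean observation that, on the side where the exponents coincide, the quasi-norms are literally equal (the modular only sees the support), whereas the paper only records a comparability. Your choice $l=\pm e_n$ with $\langle e_n\rangle=\sqrt2$ gives the explicit shift constant and is fully justified by Lemma~\ref{heikouidou} applied symmetrically, so no step is missing.
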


\begin{proof}
Triebel--Lizorkin case is proved in \cite{Diening}. Hence we prove the Besov case by using similar argument in the proof of \cite[Lemma 7.2]{Diening}. 
We prove the case of  $p_1(\cdot)=p_2(\cdot)$, $q_1(\cdot)=q_2(\cdot)$ and $s_1(\cdot)=s_2(\cdot)$ in the lower half space because 
it is obvious that $(\ref{7.2})$ holds if  $p_1(\cdot)=p_2(\cdot)$, $q_1(\cdot)=q_2(\cdot)$ and $s_1(\cdot)=s_2(\cdot)$ in the upper half space. 

For $m=(m',0)\in\Z^n$, we put 
\[
E_{\nu,m} = \left\{(x',x_n)\in\R^n\,:\, (x',-x_n)\in Q_{\nu,m}, \, -\frac{3}{4}2^{-\nu}\le x_n \le -\frac{1}{2}2^{-\nu} \right\}; 
\]
for all other $m\in\Z^n$, we put $E_{\nu,m}=Q_{\nu,m}$. 
Since $E_{\nu,m}$ is supported in the lower space when $m_n=0$, by Lemma $\ref{lemma 7.1}$, we see that 
\begin{align*}
\left\| 
\left\{
\delta_{m_n,0}\lambda_{\nu,m}
\right\}_{(\nu,m)\in\N_0\times\Z^n}
\right\|_{b_{p_2(\cdot),q_2(\cdot)}^{s_2(\cdot)}(\R^{n})} 
&\sim \left\| 
\left\{
2^{\nu s_2(\cdot)}\sum_{m\in\Z^n} \delta_{m_n,0}\lambda_{\nu,m}E_{\nu,m}
\right\}_{\nu=0}^{\infty}
\right\|_{\ell^{q_2(\cdot)}(L^{p_2(\cdot)})} \\ 
&\sim 
\left\|
\left\{
2^{\nu s_1(\cdot)}\sum_{m\in\Z^n} \delta_{m_n,0}\lambda_{\nu,m}E_{\nu,m}
\right\}_{\nu=0}^{\infty}
\right\|_{\ell^{q_1(\cdot)}(L^{p_1(\cdot)})} \\ 
&\sim 
\left\| 
\left\{
\delta_{m_n,0}\lambda_{\nu,m}
\right\}_{(\nu,m)\in\N_0\times\Z^n}
\right\|_{b_{p_1(\cdot),q_1(\cdot)}^{s_1(\cdot)}(\R^{n})}. 
\end{align*}
This complete the proof. 
\end{proof}

\begin{corollary}[cf {\cite[Proposition 7.3]{Diening}}]
\label{cor proposition 7.3} 
Let $p_1(\cdot), p_2(\cdot), q_1(\cdot), q_2(\cdot)\in C^{\log}(\R^n)\cap\mathcal{P}_0(\R^n)$ and $s_1(\cdot), s_2(\cdot)\in C^{\log}(\R^n)$. 
Assume that $p_1(x)=p_2(x)$, $q_1(x)=q_2(x)$ and $s_1(x)=s_2(x)$ for all $x\in\R^{n-1}\times\{0\}$. 
For double-index complex-valued sequence $\lambda=\{\lambda_{\nu,m}\}_{(\nu,m)\in\N_0\times\Z^n}$, 
\begin{equation}
\left\| 
\left\{
\delta_{m_n,0}\lambda_{\nu,m}
\right\}_{(\nu,m)\in\N_0\times\Z^n}
\right\|_{a_{p_1(\cdot),q_1(\cdot)}^{s_1(\cdot)}(\R^{n})}
\sim 
\left\| 
\left\{
\delta_{m_n,0}\lambda_{\nu,m}
\right\}_{(\nu,m)\in\N_0\times\Z^n}
\right\|_{a_{p_2(\cdot),q_2(\cdot)}^{s_2(\cdot)}(\R^{n})}.  \label{7.3}
\end{equation}
\end{corollary}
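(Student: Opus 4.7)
My plan is to reduce Corollary \ref{cor proposition 7.3} to Lemma \ref{lemma 7.2} by constructing an intermediate triple of exponents $(p_3,q_3,s_3)$ that agrees with $(p_1,q_1,s_1)$ on the closed upper half space and with $(p_2,q_2,s_2)$ on the closed lower half space. Concretely I would set
\[
p_3(x',x_n) := \begin{cases} p_1(x',x_n), & x_n\ge 0, \\ p_2(x',x_n), & x_n < 0, \end{cases}
\]
and define $q_3$ and $s_3$ by the same piecewise formula. The hypothesis that $p_1=p_2$, $q_1=q_2$ and $s_1=s_2$ on $\R^{n-1}\times\{0\}$ makes each of these functions continuous across the hyperplane.

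The main verification is that $p_3,q_3\in\mathcal{P}_0(\R^n)\cap C^{\log}(\R^n)$ and $s_3\in C^{\log}(\R^n)$. The bounds $p_3^-,q_3^->0$ and $p_3^+,q_3^+<\infty$ are inherited immediately. For the log-H\"older condition \eqref{eq:log-Holder-continuity}, when $x,y$ lie in the same closed half space the estimate is that of $p_1$ or $p_2$; when $x_n>0>y_n$ I would introduce the point $z$ at which the segment $[x,y]$ crosses $\{x_n=0\}$ and decompose
\[
|p_3(x)-p_3(y)| \le |p_1(x)-p_1(z)| + |p_2(z)-p_2(y)|.
\]
Since $|x-z|\le|x-y|$ and $|y-z|\le|x-y|$, both terms inherit the log-H\"older bound in terms of $|x-y|$, with constant at most $2\max(C_{\log}(p_1),C_{\log}(p_2))$. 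For the log-decay condition \eqref{eq:log-decay}, the critical point is that $p_{1,\infty}=p_{2,\infty}$: applying log-decay to $p_1$ and $p_2$ at points $(x',0)$ with $|x'|\to\infty$ and using $p_1(\cdot,0)=p_2(\cdot,0)$ forces the two limiting values to agree, so setting $p_{3,\infty}$ equal to this common value yields the required decay on each half space separately. The same reasoning handles $q_3$ and $s_3$.

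Once $(p_3,q_3,s_3)$ is shown to satisfy the hypotheses of Lemma \ref{lemma 7.2}, I would apply that lemma twice. First with the pair $(p_1,q_1,s_1)$ and $(p_3,q_3,s_3)$, which agree on the upper half space, to get
\[
\bigl\| \{\delta_{m_n,0}\lambda_{\nu,m}\}\bigr\|_{a_{p_1(\cdot),q_1(\cdot)}^{s_1(\cdot)}} \sim \bigl\| \{\delta_{m_n,0}\lambda_{\nu,m}\}\bigr\|_{a_{p_3(\cdot),q_3(\cdot)}^{s_3(\cdot)}},
\]
and then with $(p_3,q_3,s_3)$ and $(p_2,q_2,s_2)$, which agree on the lower half space, to obtain the corresponding equivalence with the $(p_2,q_2,s_2)$-norm. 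Chaining these two equivalences gives \eqref{7.3}. The main obstacle I anticipate is the log-H\"older estimate for $p_3$ (and for $q_3,s_3$) across the hyperplane; the log-decay reduction via matching the $p_\infty$'s is routine once one notices it, and the remainder of the argument is purely a chaining of Lemma \ref{lemma 7.2}.
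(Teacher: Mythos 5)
Your proposal is correct, and it is essentially the same argument the paper is implicitly invoking: the paper defers to \cite[Proposition 7.3]{Diening}, whose proof is precisely the chaining of Lemma \ref{lemma 7.2} through the piecewise--defined intermediate triple $(p_3,q_3,s_3)$, together with the observation that agreement on $\R^{n-1}\times\{0\}$ and log-H\"older regularity make $(p_3,q_3,s_3)$ admissible. Your verification that $p_3\in C^{\log}(\R^n)$ (splitting at the crossing point $z$ of the segment $[x,y]$ with the hyperplane, and matching $p_{1,\infty}=p_{2,\infty}$ via the agreement at $x_n=0$) is the right, and complete, argument.
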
 

Triebel--Lizorkin case is proved in \cite[Proposition 7.3]{Diening}. By using same argument in the proof of \cite[Proposition 7.3]{Diening}, we can prove the Besov case. 

\begin{lemma}
\label{lemma 5.4.5}
Let $p(\cdot),q(\cdot)\in C^{\log}(\R^n)\cap\mathcal{P}_0(\R^n)$ and $s(\cdot)\in C^{\log}(\R^n)$. 
For double-index complex-valued sequence $\lambda=\{\lambda_{\nu,m}\}_{(\nu,m)\in\N_0\times\Z^n}$, 
we have 
\[
\left\| 
\left\{
\lambda_{\nu,(m',0)}
\right\}_{(\nu,m')\in\N_0\times\Z^{n-1}}
\right\|_{f_{\tilde{p}(\cdot),\tilde{p}(\cdot)}^{\tilde{s}(\cdot)-\frac{1}{\tilde{p}(\cdot)}}(\R^{n-1})}  
\sim
\left\| 
\left\{
\delta_{m_n,0}\lambda_{\nu,m}
\right\}_{(\nu,m)\in\N_0\times\Z^n}
\right\|_{f_{p(\cdot),q(\cdot)}^{s(\cdot)}(\R^{n})}. 
\]
\end{lemma}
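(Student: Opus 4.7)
The plan is to apply Lemma \ref{lemma 7.1} with a carefully chosen sequence $\{E_{\nu,m}\}$ whose members are pairwise disjoint in $\nu$ whenever $m_n=0$. This thinning trick forces the $\ell^{q(\cdot)}$-sum on the $\R^n$ side to collapse to a single term pointwise, which is precisely what turns the index $\tilde{q}(\cdot)$ into $\tilde{p}(\cdot)$ on the $\R^{n-1}$ side and creates the smoothness drop $\tilde{s}(\cdot)\mapsto\tilde{s}(\cdot)-1/\tilde{p}(\cdot)$. As a first step, I will use Corollary \ref{cor proposition 7.3} to reduce to the case where $p,q,s$ are independent of $x_n$, i.e., $p(x',x_n)=\tilde{p}(x')$, $q(x',x_n)=\tilde{q}(x')$ and $s(x',x_n)=\tilde{s}(x')$. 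Then, writing $Q'_{\nu,m'}=\prod_{i=1}^{n-1}[2^{-\nu}m'_i,2^{-\nu}(m'_i+1))$ and $\chi'_{\nu,m'}$ for its indicator, I set
\[
E_{\nu,(m',0)}:=Q'_{\nu,m'}\times[2^{-\nu-1},2^{-\nu}),\qquad E_{\nu,m}:=Q_{\nu,m}\ \text{if } m_n\neq 0.
\]
These sets satisfy $E_{\nu,m}\subset 3Q_{\nu,m}$ and $|E_{\nu,m}|\geq\tfrac{1}{2}|Q_{\nu,m}|$, so Lemma \ref{lemma 7.1} is applicable.

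Setting $h_\nu(x'):=\sum_{m'\in\Z^{n-1}}|\lambda_{\nu,(m',0)}|\chi'_{\nu,m'}(x')$, Lemma \ref{lemma 7.1} gives
\[
\bigl\|\{\delta_{m_n,0}\lambda_{\nu,m}\}\bigr\|_{f^{\tilde{s}(\cdot)}_{\tilde{p}(\cdot),\tilde{q}(\cdot)}(\R^n)}\sim\Bigl\|\bigl\{2^{\nu\tilde{s}(x')}h_\nu(x')\chi_{[2^{-\nu-1},2^{-\nu})}(x_n)\bigr\}_{\nu=0}^{\infty}\Bigr\|_{L^{\tilde{p}(\cdot)}(\ell^{\tilde{q}(\cdot)})}.
\]
The crucial observation is that the intervals $[2^{-\nu-1},2^{-\nu})$ are pairwise disjoint in $\nu$, so at each $x\in\R^n$ at most one term in the $\ell^{\tilde{q}(\cdot)}$-sum is nonzero; hence that sum equals the unique nonzero summand, independently of $\tilde{q}(\cdot)$. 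A Fubini computation of the resulting $L^{\tilde{p}(\cdot)}$-modular then gives, for $\mu>0$,
\[
\sum_\nu\int_{\R^{n-1}}2^{-\nu-1}\bigl(\mu^{-1}2^{\nu\tilde{s}(x')}h_\nu(x')\bigr)^{\tilde{p}(x')}dx'=\tfrac{1}{2}\int_{\R^{n-1}}\sum_\nu\bigl(\mu^{-1}2^{\nu(\tilde{s}(x')-1/\tilde{p}(x'))}h_\nu(x')\bigr)^{\tilde{p}(x')}dx',
\]
where I used the identity $2^{-\nu}(2^{\nu\tilde{s}}h_\nu)^{\tilde{p}}=(2^{\nu(\tilde{s}-1/\tilde{p})}h_\nu)^{\tilde{p}}$. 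The right-hand side is exactly (up to the factor $\tfrac12$) the $L^{\tilde{p}(\cdot)}(\ell^{\tilde{p}(\cdot)})$-modular defining $\|\{\lambda_{\nu,(m',0)}\}\|_{f^{\tilde{s}(\cdot)-1/\tilde{p}(\cdot)}_{\tilde{p}(\cdot),\tilde{p}(\cdot)}(\R^{n-1})}$, and equivalence of norms follows from standard modular-norm comparison (with constants depending only on $\tilde{p}^-,\tilde{p}^+$).

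The main obstacle is identifying the right $E_{\nu,(m',0)}$. With the naive choice $E=Q_{\nu,m}$ the $x_n$-slices $[0,2^{-\nu})$ overlap across $\nu$, and one is forced into a delicate discrete Hardy-type comparison of the two modulars, namely $\sum_{k\geq 0}2^{-k}(\sum_{\nu\leq k}a_\nu^{\tilde{q}})^{\tilde{p}/\tilde{q}}\sim\sum_\nu 2^{-\nu}a_\nu^{\tilde{p}}$ pointwise in $x'$, whose upper bound requires a weighted H\"older argument with exponentially decaying weights, uniform in the pointwise ratio $\tilde{p}(x')/\tilde{q}(x')$. Thinning the $x_n$-slice to $[2^{-\nu-1},2^{-\nu})$ bypasses this difficulty entirely, since the collapse of the $\ell^{\tilde{q}(\cdot)}$-sum on disjoint supports makes the index $\tilde{q}(\cdot)$ irrelevant from the start.
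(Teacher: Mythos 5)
Your proof is correct and essentially matches the paper's own argument: after reducing via Corollary~\ref{cor proposition 7.3}, both replace the cubes $Q_{\nu,(m',0)}$ by boxes $Q_{\nu,m'}\times I_\nu$ whose dyadic intervals $I_\nu$ are pairwise disjoint in $\nu$, observe that the $\ell^{q(\cdot)}$-sum then collapses pointwise to a single term, and compute the modular by Fubini to produce the shift $\tilde s(\cdot)\mapsto\tilde s(\cdot)-1/\tilde p(\cdot)$ and the replacement of $\tilde q(\cdot)$ by $\tilde p(\cdot)$. The only divergence is that you invoke Lemma~\ref{lemma 7.1} to pass to the thinned supports $E_{\nu,m}$, whereas the paper re-derives that particular comparison from the convolution and maximal estimates of Theorem~\ref{thm:max2} and Lemma~\ref{lemma:2} --- a harmless streamlining on your part.
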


\begin{proof} 
By Corollary $\ref{cor proposition 7.3}$, it suffices to consider the case $p(\cdot)$, $q(\cdot)$ and $s(\cdot)$ independent of the $n$-th coordinate for $|x_n|\le 2$. 
Let $\tilde{Q}_{\nu,m'}=Q_{\nu,m'}\times[2^{-\nu}, 2^{-\nu+1})$ for $\nu\in\N_0$ and $m'\in\Z^{n-1}$. Firstly, we prove 
\begin{align}
&\left\| \left\{ 2^{\nu(\tilde{s}(\cdot)-\frac{1}{\tilde{p}(\cdot)})} \sum_{m'\in\Z^{n-1}}\lambda_{\nu,(m',0)}\chi_{\nu, m'}\right\}_{\nu=0}^{\infty} \right\|_{L^{\tilde{p}(\cdot)}(\ell^{\tilde{p}(\cdot)})}  \notag \\ 
&\qquad \sim 
\left\| \left\{ 2^{\nu s(\cdot)} 
\sum_{m'\in\Z^{n-1}}
\lambda_{\nu,(m',0)} \chi_{\tilde{Q}_{\nu, m'}}  
\right\}_{\nu=0}^{\infty}
\right\|_{L^{p(\cdot)}(\ell^{q(\cdot)})}.  
\label{13-6-15-1}
\end{align}
Let $\lambda>0$. By  $\tilde{Q}_{\nu,m'}=Q_{\nu,m'}\times[2^{-\nu}, 2^{-\nu+1})$,  we obtain 
\begin{align}
&\int_{\R^{n-1}}  \left|  \frac{ \left\{\sum_{\nu\in\N_0}\left( 2^{\nu(s(x',0)-\frac{1}{p(x',0)}) } \sum_{m'\in\Z^{n-1}}\lambda_{\nu,(m',0)}\chi_{\nu, m'} \right)^{p(x',0)} \right\}^{\frac{1}{p(x',0)} } } {\lambda}    \right|^{p(x',0)} {\rm d}x'  \notag \\ 
&= \int_{\R^{n-1}} \sum_{\nu\in\N_0} 2^{\nu p(x',0)s(x',0)-\nu}\left| \frac{ \sum_{m'\in\Z^{n-1}} \lambda_{\nu,(m',0)}\chi_{\nu,m'}(x')}{\lambda} \right|^{p(x',0)} {\rm d}x' \notag \\ 
&= \int_{\R^{n-1}} \sum_{\nu\in\N_0} 2^{\nu p(x',0)s(x',0)} 
 \left(  \int_{2^{-\nu}}^{2^{-\nu+1}} \left| \frac{\sum_{m'\in\Z^{n-1}} \lambda_{\nu,(m',0)}\chi_{\nu,m'}(x')}{\lambda} \right|^{p(x',0)}  \chi_{[2^{-\nu}, 2^{-\nu+1}  ) } (x_n) {\rm d}x_n  \right) {\rm d}x' \notag \\ 
&= \int_{\R^{n-1}} \sum_{\nu\in\N_0} 2^{\nu p(x',0)s(x',0)}  \left(  \int_{2^{-\nu}}^{2^{-\nu+1}} \left| \frac{\sum_{m'\in\Z^{n-1}} \lambda_{\nu,(m',0)}\chi_{\nu, m'\times [2^{-\nu}, 2^{-\nu+1}  ) }(x',x_n)}{\lambda} \right|^{p(x',0)}  {\rm d}x_n  \right) {\rm d}x'  \notag \\ 
&= \int_{\R^{n-1}} \sum_{\nu\in\N_0} 2^{\nu p(x',0)s(x',0)} 
 \left(  \int_{2^{-\nu}}^{2^{-\nu+1}} \left| \frac{\sum_{m'\in\Z^{n-1}} \lambda_{\nu,(m',0)}\chi_{\tilde{Q}_{\nu,m'}}(x',x_n)}{\lambda} \right|^{p(x',0)}  {\rm d}x_n  \right) {\rm d}x'. \label{12-11-25-1}
\end{align}
By assumptions for $p(\cdot)$, $q(\cdot)$ and $s(\cdot)$, we see that  
\begin{align}
&\sum_{\nu\in\N_0} 2^{\nu p(x',0)s(x',0)}
\left(  \int_{2^{-\nu}}^{2^{-\nu+1}} \left| \frac{ \sum_{m'\in\Z^{n-1}} \lambda_{\nu,(m',0)}\chi_{\tilde{Q}_{\nu,m'}}(x',x_n)}{\lambda} \right|^{p(x',0)}  {\rm d}x_n  \right) \notag \\
&=\sum_{\nu\in\N_0} 
\left(  \int_{2^{-\nu}}^{2^{-\nu+1}} 2^{\nu p(x',0)s(x',0)}\left|\frac{\sum_{m'\in\Z^{n-1}} \lambda_{\nu,(m',0)}\chi_{\tilde{Q}_{\nu,m'}}(x',x_n)}{\lambda} \right|^{p(x',0)}  {\rm d}x_n  \right)   \notag \\
&=\sum_{\nu\in\N_0} 
\left(  \int_{2^{-\nu}}^{2^{-\nu+1}} 2^{\nu p(x',x_n)s(x',x_n)}\left| \frac{\sum_{m'\in\Z^{n-1}} \lambda_{\nu,(m',0)}\chi_{\tilde{Q}_{\nu,m'}}(x',x_n)}{\lambda} \right|^{p(x',x_n)}  {\rm d}x_n  \right) \notag   \\
&=\sum_{\nu\in\N_0} 
\left(  \int_{-\infty}^{\infty} 2^{\nu p(x',x_n)s(x',x_n)}\left| \frac{\sum_{m'\in\Z^{n-1}} \lambda_{\nu,(m',0)}\chi_{\tilde{Q}_{\nu,m'}}(x',x_n)}{\lambda} \right|^{p(x',x_n)}  {\rm d}x_n  \right)  \notag \\
&=
 \int_{-\infty}^{\infty} \sum_{\nu\in\N_0}  2^{\nu p(x',x_n)s(x',x_n)}\left|
\frac{\sum_{m'\in\Z^{n-1}} \lambda_{\nu,(m',0)}\chi_{\tilde{Q}_{\nu,m'}}(x',x_n)}{\lambda} \right|^{p(x',x_n)}  {\rm d}x_n.  \label{12-11-25-2}
\end{align}
By $(\ref{12-11-25-1})$ and $(\ref{12-11-25-2})$, we obtain 
\begin{align}
&\int_{\R^{n-1}}  \left|  \frac{ \left\{\sum_{\nu\in\N_0}\left( 2^{\nu(s(x',0)-\frac{1}{p(x',0)}) } \sum_{m'\in\Z^{n-1}}\lambda_{\nu,(m',0)}\chi_{\nu, m'} \right)^{p(x',0)} \right\}^{\frac{1}{p(x',0)} } } {\lambda}    \right|^{p(x',0)} {\rm d}x' \notag \\ 
&=\int_{\R^{n-1}} 
\left( \int_{-\infty}^{\infty} \sum_{\nu\in\N_0}  2^{\nu p(x',x_n)s(x',x_n)}  
\left| \frac{\sum_{m'\in\Z^{n-1}} \lambda_{\nu,(m',0)}\chi_{\tilde{Q}_{\nu,m'}}(x',x_n)}{\lambda} \right|^{p(x',x_n)}  {\rm d}x_n\right)  {\rm d}x'  \notag \\ 
&=\int_{\R^n} 
\left(  \sum_{\nu\in\N_0}  2^{\nu p(x)s(x)}  
\left| \frac{\sum_{m'\in\Z^{n-1}} \lambda_{\nu,(m',0)}\chi_{\tilde{Q}_{\nu,m'}}(x)}{\lambda} \right|^{p(x)}  \right)  {\rm d}x \notag \\ 
&=\int_{\R^n} 
\left\{ \left(  \sum_{\nu\in\N_0}  
\left| \frac{2^{\nu s(x)}  \sum_{m'\in\Z^{n-1}} \lambda_{\nu,(m',0)}\chi_{\tilde{Q}_{\nu,m'}}(x)}{\lambda} \right|^{p(x)}  \right)^{\frac{1}{p(x)} } \right\}^{p(x)}  {\rm d}x.  \label{12-11-25-3.1} 
\end{align}
For any $x_n\in\R$, positive integers $\nu$ satisfying $x_n\in [2^{-\nu}, 2^{-\nu+1})$ are at most three. 
This implies that, by $\tilde{Q}_{\nu,m'}=Q_{\nu,m'}\times[2^{-\nu}, 2^{-\nu+1})$,  
\[
 \left(  \sum_{\nu\in\N_0}  
\left| \frac{2^{\nu s(x)}  \sum_{m'\in\Z^{n-1}} \lambda_{\nu,(m',0)}\chi_{\tilde{Q}_{\nu,m'}}(x)}{\lambda} \right|^{p(x)}  \right)^{\frac{1}{p(x)} }
\]
consists of at most three non-zero members for any $x\in\R^n$. Hence we have 
\begin{align*}
&\left(  \sum_{\nu\in\N_0}   
\left| \frac{2^{\nu s(x)}  \sum_{m'\in\Z^{n-1}} \lambda_{\nu,(m',0)}\chi_{\tilde{Q}_{\nu,m'}}(x)}{\lambda} \right|^{p(x)}  \right)^{\frac{1}{p(x)} } \\ 
&\sim 
\left(  \sum_{\nu\in\N_0}  
\left| \frac{2^{\nu s(x)}  \sum_{m'\in\Z^{n-1}} \lambda_{\nu,(m',0)}\chi_{\tilde{Q}_{\nu,m'}}(x)}{\lambda} \right|^{q(x)}  \right)^{\frac{1}{q(x)} }. 
\end{align*}
By $(\ref{12-11-25-3.1} )$ and the equation as above, we see that 
\begin{align}
&\int_{\R^{n-1}}  \left|  \frac{ \left\{\sum_{\nu\in\N_0}\left( 2^{\nu(s(x',0)-\frac{1}{p(x',0)}) } \sum_{m'\in\Z^{n-1}}\lambda_{\nu,(m',0)}\chi_{\nu, m'} \right)^{p(x',0)} \right\}^{\frac{1}{p(x',0)} } } {\lambda}    \right|^{p(x',0)} {\rm d}x' \notag \\ 
&=\int_{\R^n} 
\left\{ \left(  \sum_{\nu\in\N_0}  
\left| \frac{2^{\nu s(x)}  \sum_{m'\in\Z^{n-1}} \lambda_{\nu,(m',0)}\chi_{\tilde{Q}_{\nu,m'}}(x)}{\lambda} \right|^{p(x)}  \right)^{\frac{1}{p(x)} } \right\}^{p(x)}  {\rm d}x \notag \\ 
&\sim 
  \int_{\R^n} 
\left\{  
\left(  \sum_{\nu\in\N_0}  
\left| \frac{2^{\nu s(x)}  \sum_{m'\in\Z^{n-1}} \lambda_{\nu,(m',0)}\chi_{\tilde{Q}_{\nu,m'}}(x)}{\lambda} \right|^{q(x)}  \right)^{\frac{1}{q(x)} }
 \right\}^{p(x)}  {\rm d}x \label{12-11-25-3}
\end{align}
holds for any $\lambda>0$. 
This implies that $(\ref{13-6-15-1})$ holds. 

Finally, we prove 
\begin{align}
&\left\| \left\{ 2^{\nu s(\cdot)} 
\sum_{m'\in\Z^{n-1}}
\lambda_{\nu,(m',0)} \chi_{\tilde{Q}_{\nu, m'}}  
\right\}_{\nu=0}^{\infty}
\right\|_{L^{p(\cdot)}(\ell^{q(\cdot)})} \notag \\ 
&\sim 
\left\| \left\{ 2^{\nu s(\cdot)} 
\sum_{m'\in\Z^{n-1}}
\lambda_{\nu,(m',0)} \chi_{\nu, m'}  
\right\}_{\nu=0}^{\infty}
\right\|_{L^{p(\cdot)}(\ell^{q(\cdot)})}.  
\label{13-6-15-2}
\end{align} 
Without loss of generality, we can assume
$\left\| 
\left\{
\delta_{m_n,0}\lambda_{\nu,m}
\right\}_{(\nu,m)\in\N_0\times\Z^n}
\right\|_{f_{p(\cdot),q(\cdot)}^{s(\cdot)}(\R^{n})}=1$ and assume $\lambda_{\nu,m}=0$ when $m_n\neq 0$. 
Let $\alpha:=\frac{\min(p^-,q^-,1)}{2}$ and let 
\begin{equation}
\lambda':=\{\lambda_{\nu,(m',0)}\}_{(\nu,m')\in\N_0\times\Z^{n-1}}.  \label{5.154}
\end{equation} 
Let $\tilde{Q}_{\nu,m'}=Q_{\nu,m'}\times[2^{-\nu}, 2^{-\nu+1})$ for $\nu\in\N_0$ and $m'\in\Z^{n-1}$. 
If $x\in\tilde{Q}_{\nu,m'}$ and $y\in Q_{\nu,(m',0)}$, then 
we that $|x-y|\le 2\sqrt{n}2^{-\nu}$. 
Therefore, we obtain 
\[
1\le\left(\frac{2\sqrt{n}+1}{1+2^{\nu}|x-y|}\right)^M
\]
for any $M>0$. Let $M>2\max(C_{\log}(s),n)$. 
Hence, we see that 
\begin{align*}
|\lambda_{\nu,m'}|^{\alpha}\chi_{\tilde{Q}_{\nu,m'}}(x)
&\le \frac{\chi_{\tilde{Q}_{\nu,m'} }(x) }{|Q_{\nu,(m',0)}|}\int_{Q_{\nu, (m',0)}}  | \lambda_{\nu,m'}|^{\alpha}\chi_{\nu,(m',0)}(y){\rm d}y \\ 
&\lesssim \int_{\R^n} \frac{2^{\nu n}}{(1+2^{\nu}|x-y|)^M}\left( |\lambda_{\nu,m'}|^{\alpha}\chi_{\nu,(m',0)}(y)\right){\rm d}y \\ 
&\lesssim \int_{\R^n} \frac{2^{\nu n}}{(1+2^{\nu}|x-y|)^M}\left(\sum_{m'\in\Z^{n-1}}|\lambda_{\nu,m'}|^{\alpha}\chi_{\nu,(m',0)}(y)\right){\rm d}y \\ 
&= \left(\eta_{\nu,M}\ast \left(\sum_{m'\in\Z^{n-1}}|\lambda_{\nu,m'}|\chi_{\nu,(m',0)}(\cdot)\right)^{\alpha}  \right)(x). 
\end{align*}
By Lemma $\ref{lemma:2}$, we obtain  
\begin{equation}
\left| 2^{\nu s(x)}\sum_{m'\in\Z^{n-1}}\lambda_{\nu,m'}\chi_{\tilde{Q}_{\nu,m'}(x)} \right| 
\lesssim \left(\eta_{\nu,\frac{M}{2}}\ast \left(2^{\nu s(\cdot)}\sum_{m'\in\Z^{n-1}}|\lambda_{\nu,m'}|\chi_{\nu,(m',0)}(\cdot)\right)^{\alpha}  \right)^{\frac{1}{\alpha}}(x). 
\label{5.156}
\end{equation}

By $(\ref{5.156})$ and Theorem $\ref{thm:max2}$, we see that 
\begin{align*}
&\int_{\R^n} \left(\sum_{\nu\in\N_0} \left|2^{\nu s(x)}\sum_{m'\in\Z^{n-1}} \lambda_{\nu,(m',0)}\chi_{\tilde{Q}_{\nu,m'}(x)}\right|^{q(x)}\right)^{\frac{p(x)}{q(x)}} {\rm d}x \\ 
&\lesssim_{p,q} 
\int_{\R^n} \left(\sum_{\nu\in\N_0} 
\left|\left(\eta_{\nu,\frac{M}{2}}\ast \left(2^{\nu s(\cdot)}\sum_{m'\in\Z^{n-1}}|\lambda_{\nu,m'}|\chi_{\nu,(m',0)}(\cdot)\right)^{\alpha}  
\right)^{\frac{1}{\alpha}}(x) \right|^{q(x)}\right)^{\frac{p(x)}{q(x)}} {\rm d}x \\ 
&\lesssim 1. 
\end{align*}
This implies that 
\begin{align}
&\left\| \left\{ 2^{\nu s(\cdot)} 
\sum_{m'\in\Z^{n-1}}
\lambda_{\nu,(m',0)} \chi_{\tilde{Q}_{\nu, m'}}  
\right\}_{\nu=0}^{\infty}
\right\|_{L^{p(\cdot)}(\ell^{q(\cdot)})} \notag \\ 
&\lesssim  
\left\| \left\{ 2^{\nu s(\cdot)} 
\sum_{m'\in\Z^{n-1}}
\lambda_{\nu,(m',0)} \chi_{\nu, m'}  
\right\}_{\nu=0}^{\infty}
\right\|_{L^{p(\cdot)}(\ell^{q(\cdot)})}. \label{2013-6-16-1}  
\end{align} 

On the other hand, without loss of generality, we can assume that 
\[
\left\| \left\{ 2^{\nu s(\cdot)} 
\sum_{m'\in\Z^{n-1}}
\lambda_{\nu,(m',0)} \chi_{\nu, m'}  
\right\}_{\nu=0}^{\infty}
\right\|_{L^{p(\cdot)}(\ell^{q(\cdot)})} =1. 
\]
By using same argument, we have 
\begin{align*}
|\lambda_{\nu,m'}|^{\alpha} \chi_{\nu,(m',0)}(x)
&\le \frac{\chi_{\nu,(m',0)}(x) }{ |\tilde{Q}_{\nu,m'}|}  \int_{ \tilde{Q}_{\nu,m'} }  | \lambda_{\nu,m'}|^{\alpha} \chi_{\tilde{Q}_{\nu,m'}}(y){\rm d}y \\ 
&\lesssim \int_{\R^n} \frac{2^{\nu n}}{(1+2^{\nu}|x-y|)^M}\left( |\lambda_{\nu,m'}|^{\alpha} \chi_{\tilde{Q}_{\nu,m'}}(y)\right){\rm d}y \\ 
&\lesssim \int_{\R^n} \frac{2^{\nu n}}{(1+2^{\nu}|x-y|)^M}\left(\sum_{m'\in\Z^{n-1}}|\lambda_{\nu,m'}|^{\alpha} \chi_{\tilde{Q}_{\nu,m'}}(y)\right){\rm d}y \\ 
&= \left(\eta_{\nu,M}\ast \left(\sum_{m'\in\Z^{n-1}}|\lambda_{\nu,m'}| \chi_{\tilde{Q}_{\nu,m'}}(\cdot)\right)^{\alpha}  \right)(x). 
\end{align*}
Hence we see that 
\begin{equation}
\left| 2^{\nu s(x)}\sum_{m'\in\Z^{n-1}}\lambda_{\nu,m'}  \chi_{\nu,(m',0)}(x) \right| 
\lesssim \left(\eta_{\nu,\frac{M}{2}}\ast \left(2^{\nu s(\cdot)}\sum_{m'\in\Z^{n-1}}|\lambda_{\nu,m'}|  \chi_{\tilde{Q}_{\nu,m'}}(\cdot)\right)^{\alpha}  \right)^{\frac{1}{\alpha}}(x) 
\label{5.157}
\end{equation}
By $(\ref{5.157})$ and Theorem $\ref{thm:max2}$, we see that 
\begin{align*}
&\int_{\R^n} \left(\sum_{\nu\in\N_0} \left|2^{\nu s(x)}\sum_{m'\in\Z^{n-1}} \lambda_{\nu,(m',0)}   \chi_{\nu,(m',0)}(x) \right|^{q(x)}\right)^{\frac{p(x)}{q(x)}} {\rm d}x \\ 
&\lesssim_{p,q} 
\int_{\R^n} \left(\sum_{\nu\in\N_0} 
\left|\left(\eta_{\nu,\frac{M}{2}}\ast \left(2^{\nu s(\cdot)}\sum_{m'\in\Z^{n-1}}|\lambda_{\nu,m'}|  \chi_{\tilde{Q}_{\nu,m'}}(\cdot)\right)^{\alpha}  
\right)^{\frac{1}{\alpha}}(x) \right|^{q(x)}\right)^{\frac{p(x)}{q(x)}} {\rm d}x \\ 
&\lesssim 1. 
\end{align*}
This implies that 
\begin{align}
&\left\| \left\{ 2^{\nu s(\cdot)} 
\sum_{m'\in\Z^{n-1}}
\lambda_{\nu,(m',0)} \chi_{\tilde{Q}_{\nu, m'}}  
\right\}_{\nu=0}^{\infty}
\right\|_{L^{p(\cdot)}(\ell^{q(\cdot)})}  \notag \\ 
&\gtrsim  
\left\| \left\{ 2^{\nu s(\cdot)} 
\sum_{m'\in\Z^{n-1}}
\lambda_{\nu,(m',0)} \chi_{\nu, m'}  
\right\}_{\nu=0}^{\infty}
\right\|_{L^{p(\cdot)}(\ell^{q(\cdot)})}. \label{2013-6-16-2}  
\end{align} 

By $(\ref{2013-6-16-1}  )$ and $(\ref{2013-6-16-2}  )$, we have $(\ref{13-6-15-2})$. 
Therefore, Lemma $\ref{lemma 5.4.5}$ holds by $(\ref{13-6-15-1})$ and $(\ref{13-6-15-2})$. 
\end{proof}

\begin{lemma}
\label{lemma-saikin}
Let $p(\cdot),q(\cdot)\in C^{\log}(\R^n)\cap\mathcal{P}_0(\R^n)$. 
For double-index complex-valued sequence $\lambda=\{\lambda_{\nu,m}\}_{(\nu,m)\in\N_0\times\Z^n}$, 
we have 
\[
\left\| 
\left\{
\lambda_{\nu,(m',0)}
\right\}_{(\nu,m')\in\N_0\times\Z^{n-1}}
\right\|_{b_{\tilde{p}(\cdot),\tilde{q}(\cdot)}^{\tilde{s}(\cdot)-\frac{1}{\tilde{p}(\cdot)}}(\R^{n-1})}  
\sim 
\left\| 
\left\{
\delta_{m_n,0}\lambda_{\nu,m}
\right\}_{(\nu,m)\in\N_0\times\Z^n}
\right\|_{b_{p(\cdot),q(\cdot)}^{s(\cdot)}(\R^{n})}. 
\]
\end{lemma}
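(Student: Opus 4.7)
The plan is to follow the scheme of Lemma~\ref{lemma 5.4.5}, with the simplification that the Besov sequence modular $\sum_\nu \||f_\nu|^{q(\cdot)}\|_{L^{p(\cdot)/q(\cdot)}}$ is already separated across dyadic levels, so no vector-valued maximal inequality is needed. First, I would invoke Corollary~\ref{cor proposition 7.3} to replace $p(\cdot),q(\cdot),s(\cdot)$ by exponents that coincide with them on $\R^{n-1}\times\{0\}$ but are independent of $x_n$ on the slab $\{|x_n|\le 2\}$; in that slab, $p(x',x_n)=\tilde p(x')$ and similarly for $q,s$. I then introduce the thin boxes $\tilde Q_{\nu,m'}:=Q_{\nu,m'}\times[2^{-\nu},2^{-\nu+1})$, which satisfy $\tilde Q_{\nu,m'}\subset 3Q_{\nu,(m',0)}$ and $|\tilde Q_{\nu,m'}|=|Q_{\nu,(m',0)}|$. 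Applying the Besov part of Lemma~\ref{lemma 7.1} with $E_{\nu,(m',0)}:=\tilde Q_{\nu,m'}$ (and any admissible choice for $m_n\neq 0$, which does not contribute thanks to $\delta_{m_n,0}$) yields
\[
\| \{\delta_{m_n,0}\lambda_{\nu,m}\} \|_{b^{s(\cdot)}_{p(\cdot),q(\cdot)}(\R^n)}
\sim
\Big\| \Big\{ 2^{\nu s(\cdot)} \sum_{m' \in \Z^{n-1}} |\lambda_{\nu,(m',0)}|\,\chi_{\tilde Q_{\nu,m'}} \Big\}_{\nu=0}^{\infty} \Big\|_{\ell^{q(\cdot)}(L^{p(\cdot)})(\R^n)}.
\]

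The remaining task is to identify this right-hand side with $\|\{\lambda_{\nu,(m',0)}\}\|_{b^{\tilde s(\cdot)-1/\tilde p(\cdot)}_{\tilde p(\cdot),\tilde q(\cdot)}(\R^{n-1})}$. Since $\chi_{\tilde Q_{\nu,m'}}(x',x_n)=\chi_{\nu,m'}(x')\chi_{[2^{-\nu},2^{-\nu+1})}(x_n)$ and the exponents do not depend on $x_n$ on the slab, at each level $\nu$ the $dx_n$-integration over an interval of length $2^{-\nu}$ contributes exactly a factor $2^{-\nu}$, which I would reabsorb as $2^{-\nu\tilde q(x')/\tilde q(x')}$ and fold into the innermost scaling to produce the shift $\tilde s(x')\mapsto \tilde s(x')-1/\tilde p(x')$. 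A direct computation then gives, for every $\nu \in \N_0$ and every $\mu,\lambda > 0$,
\begin{align*}
&\int_{\R^n} \bigg( \frac{|\mu^{-1}2^{\nu s(x)}\sum_{m'}|\lambda_{\nu,(m',0)}|\chi_{\tilde Q_{\nu,m'}}(x)|^{q(x)}}{\lambda} \bigg)^{p(x)/q(x)} dx \\
&\qquad = \int_{\R^{n-1}} \bigg( \frac{|\mu^{-1}2^{\nu(\tilde s(x')-1/\tilde p(x'))}\sum_{m'}|\lambda_{\nu,(m',0)}|\chi_{\nu,m'}(x')|^{\tilde q(x')}}{\lambda}\bigg)^{\tilde p(x')/\tilde q(x')} dx'.
\end{align*}
Taking the infimum over $\lambda$ matches the $\nu$-th summand of $\varrho_{\ell^{q(\cdot)}(L^{p(\cdot)})}(\{f_\nu/\mu\})$ with its $(n-1)$-dimensional counterpart; summing over $\nu$ and taking the infimum over $\mu$ then concludes.

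I expect the main obstacle to be the $2^{-\nu}$ bookkeeping in the above display: one has to check that the geometric factor from integrating the short $x_n$-interval is exactly what pays for the trace loss $1/\tilde p(\cdot)$ in the smoothness index, and that this rearrangement commutes both with the inner $L^{p(\cdot)/q(\cdot)}$ infimum defining $\|\,|\cdot|^{q(\cdot)}\|_{L^{p(\cdot)/q(\cdot)}}$ and with the outer $\ell^{q(\cdot)}$ infimum over $\mu$. In contrast with Lemma~\ref{lemma 5.4.5}, where a maximal inequality had to be invoked to pass from $\chi_{\tilde Q_{\nu,m'}}$ to $\chi_{Q_{\nu,(m',0)}}$, here that passage comes for free from Lemma~\ref{lemma 7.1}, so this algebraic accounting is essentially the only work required.
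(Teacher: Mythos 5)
Your proof is correct and follows the same overall skeleton as the paper's: reduce via Corollary~\ref{cor proposition 7.3} to exponents independent of $x_n$ on the slab $\{|x_n|\le 2\}$, introduce the shifted thin boxes $\tilde Q_{\nu,m'}=Q_{\nu,m'}\times[2^{-\nu},2^{-\nu+1})$, and carry out the modular bookkeeping showing that the $2^{-\nu}$ gained from the $dx_n$-integral pays for the trace loss $\tilde s(\cdot)\mapsto \tilde s(\cdot)-1/\tilde p(\cdot)$; that modular identity is exactly the paper's derivation of $(\ref{13-6-4-1})$.

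The one genuine difference is the box-shift step. You pass from $\chi_{Q_{\nu,(m',0)}}$ to $\chi_{\tilde Q_{\nu,m'}}$ by citing the Besov part $(\ref{besov})$ of Lemma~\ref{lemma 7.1}, noting that $\tilde Q_{\nu,m'}\subset 3Q_{\nu,(m',0)}$ and $|\tilde Q_{\nu,m'}|=|Q_{\nu,(m',0)}|$; the paper instead reruns the explicit $\eta_{\nu,M}$-convolution estimates $(\ref{5.156})$ and $(\ref{5.157})$ from the proof of Lemma~\ref{lemma 5.4.5} and then appeals to the vector-valued convolution bound in $\ell^{q(\cdot)}(L^{p(\cdot)})$ (Theorem~\ref{thm:max3}). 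Both routes are sound and ultimately rest on the same maximal-function machinery; your appeal to Lemma~\ref{lemma 7.1} is merely a tidier packaging. This also means your opening remark that ``no vector-valued maximal inequality is needed'' is slightly misleading: the passage from $\chi_{\tilde Q_{\nu,m'}}$ to $\chi_{Q_{\nu,(m',0)}}$ in $\ell^{q(\cdot)}(L^{p(\cdot)})$ does require such a tool, and it is simply hidden inside the proof of Lemma~\ref{lemma 7.1} rather than eliminated.
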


\begin{proof}
By Corollary $\ref{cor proposition 7.3}$, it suffices to consider the case $p(\cdot)$, $q(\cdot)$ and $s(\cdot)$ independent of the $n$-th coordinate for $|x_n|\le 2$. 
Let $\tilde{Q}_{\nu,m'}=Q_{\nu,m'}\times[2^{-\nu}, 2^{-\nu+1})$ for $\nu\in\N_0$ and $m'\in\Z^{n-1}$. Firstly, we prove 
\begin{align}
&\left\| \left\{ 2^{\nu(\tilde{s}(\cdot)-\frac{1}{\tilde{p}(\cdot)})} \sum_{m'\in\Z^{n-1}}\lambda_{\nu,(m',0)}\chi_{\nu, m'}\right\}_{\nu=0}^{\infty} \right\|_{\ell^{\tilde{q}(\cdot)}(L^{\tilde{p}(\cdot)})}  \notag \\ 
&\sim 
\left\| \left\{ 2^{\nu s(\cdot)} 
\sum_{m'\in\Z^{n-1}}
\lambda_{\nu,(m',0)} \chi_{\tilde{Q}_{\nu, m'}}  
\right\}_{\nu=0}^{\infty}
\right\|_{\ell^{q(\cdot)}(L^{p(\cdot)})}.  
\label{13-6-4-1}
\end{align}
Let $\lambda>0$ and $\mu>0$. 
Then we recall that 
\begin{align*}
&\left\| \left\{ 2^{\nu(\tilde{s}(\cdot)-\frac{1}{\tilde{p}(\cdot)})} \sum_{m'\in\Z^{n-1}}\lambda_{\nu,(m',0)}\chi_{\nu, m'}\right\}_{\nu=0}^{\infty} \right\|_{\ell^{\tilde{q}(\cdot)}(L^{\tilde{p}(\cdot)})} \\ 
&=
\inf\left\{ 
\lambda>0\,:\, \sum_{\nu=0}^{\infty}
\left\| 
\left(
\frac{
 2^{\nu(\tilde{s}(\cdot)-\frac{1}{\tilde{p}(\cdot)})} \sum_{m'\in\Z^{n-1}}\lambda_{\nu,(m',0)}\chi_{\nu, m'}
}
{\lambda}
\right)^{\tilde{q}(\cdot)}
\right\|_{L^{\frac{\tilde{p}(\cdot)}{\tilde{q}(\cdot)}}}
 \right\}
\end{align*}
and 
\begin{align*}
&\left\| 
\left(
\frac{
 2^{\nu(\tilde{s}(\cdot)-\frac{1}{\tilde{p}(\cdot)})} \sum_{m'\in\Z^{n-1}}\lambda_{\nu,(m',0)}\chi_{\nu, m'}
}
{\lambda}
\right)^{\tilde{q}(\cdot)}
\right\|_{L^{\frac{\tilde{p}(\cdot)}{\tilde{q}(\cdot)}}} \\ 
&= 
\inf 
\left\{
\mu>0\,:\, 
\int_{\R^{n-1}}
\left\{
\frac{
\left(
 2^{\nu(s(x',0)-\frac{1}{p(x',0)})} \sum_{m'\in\Z^{n-1}}
\frac{\lambda_{\nu,(m',0)}}{\lambda}\chi_{\nu, m'}
\right)^{q(x',0)}
}
{\mu}
\right\}^{\frac{p(x',0)}{q(x',0)} } 
{\rm d}x'
\right\}
\end{align*}
for each $\nu\in \N_0$. 
By $\displaystyle 2^{-\nu}=\int_{2^{-\nu}}^{2^{-\nu+1}}\chi_{[2^{-\nu}, 2^{-\nu+1})}\,{\rm d}x $, 
we see that
\begin{align*}
&\int_{\R^{n-1}}
\left\{
\frac{
\left(
 2^{\nu(s(x',0)-\frac{1}{p(x',0)})} \sum_{m'\in\Z^{n-1}}
\frac{\lambda_{\nu,(m',0)}}{\lambda}\chi_{\nu, m'}
\right)^{q(x',0)}
}
{\mu}
\right\}^{\frac{p(x',0)}{q(x',0)} } 
{\rm d}x' \\ 
&= 
\int_{\R^{n-1}}
\frac{
\left(
 2^{\nu(s(x',0)-\frac{1}{p(x',0)})} \sum_{m'\in\Z^{n-1}}
\frac{\lambda_{\nu,(m',0)}}{\lambda}\chi_{\nu, m'}
\right)^{p(x',0)}
}
{\mu^{\frac{p(x',0)}{q(x',0)}}}
{\rm d}x' \\ 
&= 
\int_{\R^{n-1}}
\frac{
 2^{\nu(s(x',0)q(x',0)-1)}
\left( \sum_{m'\in\Z^{n-1}}
\frac{\lambda_{\nu,(m',0)}}{\lambda}\chi_{\nu, m'}
\right)^{p(x',0)}
}
{\mu^{\frac{p(x',0)}{q(x',0)}}}
{\rm d}x' \\ 
&= 
\int_{\R^{n-1}}
\left\{ 
\int_{2^{-\nu}}^{2^{-\nu+1}}
\frac{
 2^{\nu s(x',0)q(x',0)}
\left( \sum_{m'\in\Z^{n-1}}
\frac{\lambda_{\nu,(m',0)}}{\lambda}\chi_{\nu, m'}
\right)^{p(x',0)} \chi_{[2^{-\nu}, 2^{-\nu+1})}(x_n)
}
{\mu^{\frac{p(x',0)}{q(x',0)}}}\,{\rm d}x_n
\right\}
{\rm d}x' \\ 
&= 
\int_{\R^{n-1}}
\left\{ 
\int_{2^{-\nu}}^{2^{-\nu+1}}
\frac{
 2^{\nu s(x',x_n)q(x',x_n)}
\left( \sum_{m'\in\Z^{n-1}}
\frac{\lambda_{\nu,(m',0)}}{\lambda}  \chi_{\tilde{Q}_{\nu, m'}} 
\right)^{p(x',x_n)} 
}
{\mu^{\frac{p(x',x_n)}{q(x',x_n)}}} \,{\rm d}x_n
\right\}
{\rm d}x' \\ 
&= 
\int_{\R^{n}}
\frac{
 2^{\nu s(x)q(x)}
\left( \sum_{m'\in\Z^{n-1}}
\frac{\lambda_{\nu,(m',0)}}{\lambda}  \chi_{\tilde{Q}_{\nu, m'}} 
\right)^{p(x)} 
}
{\mu^{\frac{p(x)}{q(x)}}} 
{\rm d}x\\ 
&= 
\int_{\R^{n}}
\left(
\frac{
 2^{\nu s(x)q(x)}
\left( \sum_{m'\in\Z^{n-1}}
\frac{\lambda_{\nu,(m',0)}}{\lambda}  \chi_{\tilde{Q}_{\nu, m'}} 
\right)^{q(x)} 
}
{\mu }
\right)^{\frac{p(x)}{q(x)}} 
{\rm d}x
\end{align*}
holds for any $\nu\in\N_0$. 
This implies that 
\[
\left\| 
\left(
\frac{
 2^{\nu(\tilde{s}(\cdot)-\frac{1}{\tilde{p}(\cdot)})} \sum_{m'\in\Z^{n-1}}\lambda_{\nu,(m',0)}\chi_{\nu, m'}
}
{\lambda}
\right)^{\tilde{q}(\cdot)}
\right\|_{L^{\frac{\tilde{p}(\cdot)}{\tilde{q}(\cdot)}}}
= 
\left\| 
\left(
\frac{
 2^{\nu s(\cdot)} \sum_{m'\in\Z^{n-1}}
\lambda_{\nu,(m',0) }\chi_{\tilde{Q}_{\nu, m'}} 
}
{\lambda}
\right)^{q(\cdot)} 
\right\|_{L^{\frac{p(\cdot)}{q(\cdot)}}}
\]
holds for any $\nu\in \N_0$. 
Therefore, we obtain $(\ref{13-6-4-1})$. 

Secondly, we prove 
\begin{align}
&\left\| \left\{ 2^{\nu s(\cdot)} 
\sum_{m'\in\Z^{n-1}}
\lambda_{\nu,(m',0)} \chi_{\tilde{Q}_{\nu, m'}}  
\right\}_{\nu=0}^{\infty}
\right\|_{\ell^{q(\cdot)}(L^{p(\cdot)})}  \notag \\ 
&\sim
\left\| \left\{ 2^{\nu s(\cdot)} 
\sum_{m'\in\Z^{n-1}}
\lambda_{\nu,(m',0)} \chi_{\nu, m'}  
\right\}_{\nu=0}^{\infty}
\right\|_{\ell^{q(\cdot)}(L^{p(\cdot)})}. \label{13-6-4-2}
\end{align}
By using the same argument of the proof of Lemma $\ref{lemma 5.4.5}$, 
we have $(\ref{5.156})$ and $(\ref{5.157})$. 
Hence we obtain $(\ref{13-6-4-2})$.  

Therefore, Lemma $\ref{lemma-saikin}$ holds by $(\ref{13-6-4-1})$ and $(\ref{13-6-4-2})$.

\end{proof}

Now, we can prove Theorem $\ref{theorem 5.4.4}$. 

\begin{proof}[Proof of Theorem $\ref{theorem 5.4.4}$]
Firstly, we prove that $\tr$ is well-defined. 
We apply Theorem $\ref{quark decomposition}$ as $f\in A_{p(\cdot),q(\cdot)}^{s(\cdot)}(\R^n)$, 
we have 
\[
f=\sum_{\beta\in\N_0^n}\sum_{\nu\in\N_0}\sum_{m\in\Z^n}\lambda_{\nu,m}^{\beta}(\beta\qu)_{\nu,m}. 
\]
Here we can take $\psi$ in Definition $\ref{def quark}$ such that $\psi(x)=\mu(x_1)\mu(x_2)\cdots\mu(x_n)$, where $\mu$ is a 1-dimensional smooth function 
such that $\spt(\mu)\subset (-1,1)$.   
By using the quarkonial decomposition, we extend $\tr$ so that 
\[
\tr f=\sum_{\beta\in\N_0^n}\sum_{\nu\in\N_0}\sum_{m\in\Z^n}\lambda_{\nu,m}^{\beta}(\beta\qu)_{\nu,m}(\cdot,0). 
\]
By the support of $\mu$ and the definition of quarks, 
$\beta_n$ and $m_n$ are $0$. 
Theorefore, we have 
\begin{equation}
\tr f=\sum_{\beta\in\N_0^n,\beta_n=0}\sum_{\nu\in\N_0}\sum_{m\in\Z^n, m_n=0}\lambda_{\nu,m}^{\beta}(\beta\qu)_{\nu,m}(\cdot,0). \label{5.158}
\end{equation} 
Let $\lambda^{\beta'}=\{\lambda_{\nu,(m',0)}^{(\beta',0)}\}$. 
Then we see that extended $\tr$ 
is well-defined because that the convergence of $(\ref{5.158})$ is uniformly if $f\in\mathcal{S}(\R^n)$ 
and that 
\[
||\lambda^{\beta'}||_{f_{\tilde{p}(\cdot),\tilde{p}(\cdot)}^{\tilde{s}(\cdot)-\frac{1}{\tilde{p}(\cdot)}}(\R^{n-1})} 
\lesssim ||\lambda^{\beta}||_{f_{p(\cdot),q(\cdot)}^{s(\cdot)}(\R^n)}
\] and 
\[
||\lambda^{\beta'}||_{b_{\tilde{p}(\cdot),\tilde{q}(\cdot)}^{\tilde{s}(\cdot)-\frac{1}{\tilde{p}(\cdot)}}(\R^{n-1})} 
\lesssim ||\lambda^{\beta}||_{b_{p(\cdot),q(\cdot)}^{s(\cdot)}(\R^n)}
\] hold by Lemma $\ref{lemma 5.4.5}$. 

Since we assume $(\ref{rho quark})$, 
we can take $\epsilon>0$ such that 
$0<\epsilon<\rho-r$. 
For any $\nu\in\N_0$ and $m\in\Z^n$, there exists $d>0$ such that 
$\spt (\beta\qu)_{\nu,m}\subset dQ_{\nu,m}$. 
The condition 
$(\ref{eq:10-6-1})$ 
implies that smooth atoms in $F_{\tilde{p}(\cdot),\tilde{p}(\cdot)}^{\tilde{s}(\cdot)-\frac{1}{\tilde{p}(\cdot)} }(\R^{n-1})$ 
are not required to satisfy any moment conditions 
and that smooth atoms in $B_{\tilde{p}(\cdot),\tilde{q}(\cdot)}^{\tilde{s}(\cdot)-\frac{1}{\tilde{p}(\cdot)} }(\R^{n-1})$ are not required to satisfy any moment conditions . 
Hence we can regard $2^{-(r+\epsilon)|\beta|}(\beta\qu)_{\nu,m}$ as smooth atoms in  $F_{\tilde{p}(\cdot),\tilde{p}(\cdot)}^{\tilde{s}(\cdot)-\frac{1}{\tilde{p}(\cdot)} }(\R^{n-1})$ 
and $B_{\tilde{p}(\cdot),\tilde{q}(\cdot)}^{\tilde{s}(\cdot)-\frac{1}{\tilde{p}(\cdot)} }(\R^{n-1})$.  

Let $\alpha=\min(p^-,q^-,1)$. Then we see that 
\begin{align*}
\left\| \tr f \right\|_{F_{\tilde{p}(\cdot),\tilde{p}(\cdot)}^{\tilde{s}(\cdot)-\frac{1}{\tilde{p}(\cdot)}}}^{\alpha}  
&\le \sum_{\beta\in\N_0^n, \beta_n=0} \left\| \sum_{\nu\in\N_0}\sum_{m\in\Z^n, m_n=0}\lambda_{\nu,m}^{\beta}(\beta\qu)_{\nu,m}(\cdot,0) \right\|_{F_{\tilde{p}(\cdot),\tilde{p}(\cdot)}^{\tilde{s}(\cdot)-\frac{1}{\tilde{p}(\cdot)}}}^{\alpha} \\ 
&\le  \sum_{\beta'}\left\| 
\lambda^{\beta'} 
\right\|_{f_{\tilde{p}(\cdot),\tilde{p}(\cdot)}^{\tilde{s}(\cdot)-\frac{1}{\tilde{p}(\cdot)}}}^{\alpha}  \\ 
&\lesssim  \sum_{\beta\in\N_0} 
\left\| 
\lambda^{\beta}
\right\|_{f_{p(\cdot),q(\cdot)}^{s(\cdot)}}^{\alpha}  \\ 
&\lesssim  \sum_{\beta\in\N_0} 2^{-\rho\alpha |\beta|}
\left\| 
\lambda
\right\|_{f_{p(\cdot),q(\cdot),\rho}^{s(\cdot)}}^{\alpha} \\ 
&\lesssim  \sum_{\beta\in\N_0} 2^{-\rho\alpha |\beta|}
\left\| 
\lambda
\right\|_{f_{p(\cdot),q(\cdot),\rho}^{s(\cdot)}}^{\alpha} \\ 
&\lesssim \left\| f \right\|_{F_{p(\cdot),q(\cdot)}^{s(\cdot)}}^{\alpha}
\end{align*}
by Lemma $\ref{min triangle}$. 
Therefore, we have $\displaystyle \left\| \tr f \right\|_{F_{\tilde{p}(\cdot),\tilde{p}(\cdot)}^{\tilde{s}(\cdot)-\frac{1}{\tilde{p}(\cdot)}}} \lesssim \left\| f \right\|_{F_{p(\cdot),q(\cdot)}^{s(\cdot)}} $. 
By using same calculation, we also have 
 $\displaystyle \left\| \tr f \right\|_{B_{\tilde{p}(\cdot),\tilde{q}(\cdot)}^{\tilde{s}(\cdot)-\frac{1}{\tilde{p}(\cdot)}}} \lesssim \left\| f \right\|_{B_{p(\cdot),q(\cdot)}^{s(\cdot)}} $. 
Surjective follows from second assertion with $k=0$.

Finally, we prove the second assertion for Triebel--Lizorkin spaces. 
We fix $j=0,1,2,\cdots,k$. Let $g_j\in F_{\tilde{p}(\cdot),\tilde{p}(\cdot)}^{\tilde{s}(\cdot)-j-\frac{1}{\tilde{p}(\cdot)}}(\R^{n-1})$. 
We can write 
\begin{equation}
g_j= \sum_{\beta'\in\N_0^{n-1}}\sum_{\nu\in\N_0}\sum_{m'\in\Z^{n-1}}\lambda^{\beta'}_{\nu,m'}(\beta'\qu)_{\nu,m'} \label{5.159}
\end{equation} 
by Theorem $\ref{quark decomposition}$. 
Let $L\gg 1$ and  
\[
v_j=\sum_{\beta'\in\N_0^{n-1}}\sum_{\nu\in\N_0}\sum_{m'\in\Z^{n-1}} 
\frac{\lambda^{\beta'}_{\nu,m'}}{(2L+j)!2^{\nu(2L+j)}} \left((\beta',2L+j) \qu\right)_{\nu,(m',0)} 
\]
Then we see that $v_j\in F_{p(\cdot),q(\cdot)}^{s(\cdot)+2L}(\R^n)$ by Theorem $\ref{quark decomposition}$ and Lemma $\ref{lemma 5.4.5}$. 
We define $h_j=\partial_{x_n}^{2L} v_j\in F_{p(\cdot),q(\cdot)}^{s(\cdot)}(\R^n)$ such that 
\begin{equation}
h_j :=\sum_{\beta'\in\N_0^{n-1}}\sum_{\nu\in\N_0}\sum_{m'\in\Z^{n-1}} 
\frac{\lambda^{\beta'}_{\nu,m'}}{(2L+j)!} \partial_{x_n}^{2L} [ x_n^{2L+j} \left((\beta',0) \qu\right)_{\nu,(m',0)} ]. \label{5.160}
\end{equation}
Then we see that the right hand side of 
\begin{equation}
\partial_{x_n}^l h_j =\frac{1}{(2L+j)!} \sum_{\beta'\in\N_0^{n-1}}\sum_{\nu\in\N_0}\sum_{m'\in\Z^{n-1}} 
\lambda^{\beta'}_{\nu,m'} \partial_{x_n}^{2L+l} [ x_n^{2L+j} \left((\beta',0) \qu\right)_{\nu,(m',0)} ] \label{5.161}
\end{equation}
converges in the sense of $ F_{p(\cdot),q(\cdot)}^{s(\cdot)-l}(\R^n)$ for any $l=0,1,\cdots, k$. 
Now we consider the 
\begin{equation}
\tr \left[
\partial_{x_n}^l h_j \right]=\frac{1}{(2L+j)!} \tr\left[ \sum_{\beta'\in\N_0^{n-1}}\sum_{\nu\in\N_0}\sum_{m'\in\Z^{n-1}} 
\lambda^{\beta'}_{\nu,m'} \partial_{x_n}^{2L+l} [ x_n^{2L+j} \left((\beta',0) \qu\right)_{\nu,(m',0)} ] \right]. \label{2014-2-9-1}
\end{equation}
We can change the order $\tr$ and summation in $(\ref{2014-2-9-1})$ because 
$\tr$ is continuous on $F_{p(\cdot),q(\cdot)}^{s(\cdot)-l}(\R^n)$. 
Hence we have 
\begin{equation}
\tr \left[
\partial_{x_n}^l h_j \right]=\frac{1}{(2L+j)!} \sum_{\beta'\in\N_0^{n-1}}\sum_{\nu\in\N_0}\sum_{m'\in\Z^{n-1}} 
\lambda^{\beta'}_{\nu,m'} \tr \left[ \partial_{x_n}^{2L+l} [ x_n^{2L+j} \left((\beta',0) \qu\right)_{\nu,(m',0)} ]\right]. \label{2014-2-9-2}
\end{equation}
Let 
\[
\delta_{j,l} =\begin{cases} 1 \ \ &j=l, \\ 
                                     0 \ \ &j\neq l. 
                                     \end{cases}
\]
Recall that the definition of $\psi(x)$ and quarks, we have 
$\displaystyle \sum_{m\in\Z^n}\mu(x_n-m)=1$. 
Furthermore, by the support of $\mu$, it is easy to see that $\displaystyle \mu^{(k)}(0)=0$ for any $k\in\N$. 
Hence we have 
\begin{equation}
\tr\left[ 
\partial_{x_n}^{2L+l}[ x_n^{2L+j} \left( (\beta',0)\qu \right)_{\nu,m'}(x') ] 
 \right]
 =\delta_{j,l}(2L+j)!\left( (\beta',0)\qu \right)_{\nu,m'}(x'). \label{5.162}
\end{equation}
Therefore, we obtain 
\begin{align*}
\tr[\partial_{x_n}^l h_j] &= 
\frac{1}{(2L+j)!} \sum_{\beta'\in\N_0^{n-1}}\sum_{\nu\in\N_0}\sum_{m'\in\Z^{n-1}} 
\lambda^{\beta'}_{\nu,m'} \tr \partial_{x_n}^{2L+l} [ x_n^{2L+j} \left((\beta',0) \qu\right)_{\nu,(m',0)} ]  \\ 
&= \delta_{j,l}g_j 
\end{align*} 
by $(\ref{2014-2-9-2})$ and $(\ref{5.162})$. 
This implies  that $f=\sum_{j=0}^{k}h_j$ satisfies the second assertion of Triebel--Lizorkin case. 
We can also prove the Besov case by using similar argument as above. 
\end{proof}

\section{Trace theorem for upper half space $\R^n_+$}

We will extend Theorem $\ref{theorem 5.4.4}$ to Besov spaces and Triebel--Lizorkn spaces with variable exponents on 
 $\R^n_+=\{ (x',x_n)\in\R^n\,:\,x_n>0 \}$. 
To do this, we define spaces $B_{p(\cdot),q(\cdot)}^{s(\cdot)}(\R^n_+)$ and $F_{p(\cdot),q(\cdot)}^{s(\cdot)}(\R^n_+)$. 
\begin{definition}
\label{definition 6.1.1}
Let $p(\cdot),q(\cdot)\in C^{\log}(\R^n)\cap\mathcal{P}_0(\R^n)$ and $s(\cdot)\in C^{\log}(\R^n)$. 

Besov spaces with variable exponents on upper half plane $B_{p(\cdot),q(\cdot)}^{s(\cdot)}(\jissu^n_+)$ is the collection of $f\in\mathcal{D}'(\jissu^n_+)$ such that  
there exists a $g\in  B_{p(\cdot),q(\cdot)}^{s(\cdot)}(\R^n)$ satisfying $f=g|_{\R_+}$, where 
$g|_{\R^n_+}$ denotes the restriction of $g\in \mathcal{S}'(\R^n)$ to $\R^n_+$ in the sense of $\mathcal{D}'(\R^n_+)$. 
The space $B_{p(\cdot),q(\cdot)}^{s(\cdot)}(\jissu^n_+)$ becomes a normed space equipped with the norm  
\[
||f||_{B_{p(\cdot),q(\cdot)}^{s(\cdot)}(\R^n_+)} :=  
\inf \left\{ ||g||_{B_{p(\cdot),q(\cdot)}^{s(\cdot)}}\,:\, g\in B_{p(\cdot),q(\cdot)}^{s(\cdot)}(\R^n),\,f=g|_{\R^n_+}\right\}. 
\]

Triebel--Lizorkin spaces with variable exponents on upper half plane $F_{p(\cdot),q(\cdot)}^{s(\cdot)}(\jissu^n_+)$ is the collection of $f\in\mathcal{D}'(\jissu^n_+)$ such that  
there exists a $g\in  F_{p(\cdot),q(\cdot)}^{s(\cdot)}(\R^n)$ satisfying $f=g|_{\R_+}$. The space $F_{p(\cdot),q(\cdot)}^{s(\cdot)}(\jissu^n_+)$ becomes a normed space equipped with 
the norm 
\[
||f||_{F_{p(\cdot),q(\cdot)}^{s(\cdot)}(\R^n_+)} :=  
\inf \left\{ ||g||_{F_{p(\cdot),q(\cdot)}^{s(\cdot)}}\,:\, g\in F_{p(\cdot),q(\cdot)}^{s(\cdot)}(\R^n),\,f=g|_{\R^n_+}\right\}. 
\]

\end{definition}
Then we have following theorem as well as $A_{p(\cdot),q(\cdot)}^{s(\cdot)}(\R^n)$ cases. 

\begin{theorem}
\label{upper half space completeness}
Let $p(\cdot), q(\cdot)\in C^{\log}(\R^n)\cap \mathcal{P}_0(\R^n)$ and $s(\cdot)\in C^{\log}(\R^n)$. 
Then 

{\rm (1)} $A_{p(\cdot),q(\cdot)}^{s(\cdot)}(\R^n_+)$ is a quasi-Banach space.  

{\rm (2)} For any $f,g\in F_{p(\cdot),q(\cdot)}^{s(\cdot)}(\R^n_+)$, we have 
\[
\left\| f+g \right\|_{F_{p(\cdot),q(\cdot)}^{s(\cdot)}(\R^n_+)}^{\min(p^-,q^-,1)} 
\le \left\| f\right\|_{F_{p(\cdot),q(\cdot)}^{s(\cdot)}(\R^n_+)}^{\min(p^-,q^-,1)} 
+ \left\| g \right\|_{F_{p(\cdot),q(\cdot)}^{s(\cdot)}(\R^n_+)}^{\min(p^-,q^-,1)}. 
\]  

{\rm (3)} Let 
\[
\alpha=
\min
\left(
q^-,\,1
\right)
\min
\left(1,\,
\left(
\frac{p}{q}
\right)^-
\right). 
\]
Then, for any $f,g\in B_{p(\cdot),q(\cdot)}^{s(\cdot)}(\R^n_+)$, we have 
\[
\left\| f+g \right\|_{B_{p(\cdot),q(\cdot)}^{s(\cdot)}(\R^n_+)}^{\alpha} 
\le \left\| f\right\|_{B_{p(\cdot),q(\cdot)}^{s(\cdot)}(\R^n_+)}^{\alpha} 
+ \left\| g \right\|_{B_{p(\cdot),q(\cdot)}^{s(\cdot)}(\R^n_+)}^{\alpha}. 
\]  
\end{theorem}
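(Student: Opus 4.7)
The plan is to deduce (2) and (3) directly from Lemma~\ref{min triangle} by a standard infimum argument over extensions, and then bootstrap these $\alpha$-power inequalities together with the known quasi-Banach structure of $A_{p(\cdot),q(\cdot)}^{s(\cdot)}(\mathbb{R}^n)$ to obtain completeness in (1).

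For (2) and (3), fix $f,g \in A_{p(\cdot),q(\cdot)}^{s(\cdot)}(\mathbb{R}^n_+)$ and $\epsilon>0$. By Definition~\ref{definition 6.1.1}, choose extensions $\tilde f,\tilde g \in A_{p(\cdot),q(\cdot)}^{s(\cdot)}(\mathbb{R}^n)$ with $\tilde f|_{\mathbb{R}^n_+}=f$, $\tilde g|_{\mathbb{R}^n_+}=g$ and $\|\tilde f\|_A \le \|f\|_{A(\mathbb{R}^n_+)}+\epsilon$, $\|\tilde g\|_A \le \|g\|_{A(\mathbb{R}^n_+)}+\epsilon$. Since $\tilde f+\tilde g$ is an admissible extension of $f+g$, the infimum defining $\|\cdot\|_{A(\mathbb{R}^n_+)}$ gives
\[
\|f+g\|_{A(\mathbb{R}^n_+)}^{\alpha} \le \|\tilde f+\tilde g\|_{A}^{\alpha},
\]
with the exponent $\alpha=\min(p^-,q^-,1)$ in the Triebel--Lizorkin case and $\alpha=\min(q^-,1)\min(1,(p/q)^-)$ in the Besov case. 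Applying Lemma~\ref{min triangle}(ii) or (iii) to the right-hand side and letting $\epsilon\downarrow 0$ yields the claimed inequalities.

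For part (1), the inequalities just proved show that $\|\cdot\|_{A(\mathbb{R}^n_+)}$ satisfies an $\alpha$-power triangle inequality, hence is a quasi-norm; positive definiteness follows because $\|f\|_{A(\mathbb{R}^n_+)}=0$ produces extensions $g_n\to 0$ in $A_{p(\cdot),q(\cdot)}^{s(\cdot)}(\mathbb{R}^n)$, and since $A_{p(\cdot),q(\cdot)}^{s(\cdot)}(\mathbb{R}^n)\hookrightarrow \mathcal{S}'(\mathbb{R}^n)$ the restrictions $g_n|_{\mathbb{R}^n_+}=f$ converge to $0$ in $\mathcal{D}'(\mathbb{R}^n_+)$, forcing $f=0$. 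For completeness let $\{f_j\}$ be a Cauchy sequence in $A(\mathbb{R}^n_+)$; pass to a subsequence $\{f_{j_k}\}$ with $\|f_{j_{k+1}}-f_{j_k}\|_{A(\mathbb{R}^n_+)}^{\alpha}<2^{-k}$ and pick extensions $G_1$ of $f_{j_1}$ and $g_k$ of $f_{j_{k+1}}-f_{j_k}$ with $\|g_k\|_A^{\alpha}<2^{-k+1}$. Iterating the $\alpha$-inequality in $A(\mathbb{R}^n)$ shows that the partial sums $G_N:=G_1+\sum_{k=1}^{N-1}g_k$ form a Cauchy sequence there, hence converge to some $G\in A(\mathbb{R}^n)$ by the (already established) quasi-Banach property of the ambient space. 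Setting $f:=G|_{\mathbb{R}^n_+}$ and observing that $G_N|_{\mathbb{R}^n_+}=f_{j_N}$ gives $f_{j_k}\to f$ in $A(\mathbb{R}^n_+)$, and the standard Cauchy-plus-convergent-subsequence argument upgrades this to convergence of the full sequence.

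The only potentially delicate point is verifying that the restriction map $g\mapsto g|_{\mathbb{R}^n_+}$ from $A_{p(\cdot),q(\cdot)}^{s(\cdot)}(\mathbb{R}^n)$ to $\mathcal{D}'(\mathbb{R}^n_+)$ is continuous, so that convergence of the series in the ambient space really does produce convergence in the quotient; this however reduces immediately to the continuous embedding $A_{p(\cdot),q(\cdot)}^{s(\cdot)}(\mathbb{R}^n)\hookrightarrow\mathcal{S}'(\mathbb{R}^n)$ built into Definition~\ref{Def:T}. Beyond this observation the argument is a routine transcription of the classical completeness proof for restriction spaces, with the ordinary triangle inequality systematically replaced by the $\alpha$-power inequality supplied by Lemma~\ref{min triangle}.
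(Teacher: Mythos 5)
The paper states Theorem~\ref{upper half space completeness} without a proof (the sentence preceding it merely says the result holds ``as well as $A_{p(\cdot),q(\cdot)}^{s(\cdot)}(\R^n)$ cases''), so there is no paper argument to compare against. Your blind proof is correct and supplies exactly the routine argument one would expect: for parts (2) and (3), choosing near-optimal extensions $\tilde f,\tilde g$ and noting $\tilde f+\tilde g$ is admissible for $f+g$ transfers the $\alpha$-power inequality of Lemma~\ref{min triangle} to the restriction-space quasi-norms; for positive definiteness you correctly use the continuous embedding $A_{p(\cdot),q(\cdot)}^{s(\cdot)}(\R^n)\hookrightarrow\mathcal{S}'(\R^n)$ to see that a zero-norm element restricts to zero; and for completeness the telescoping construction with extensions of the successive differences of a fast Cauchy subsequence, summed in the ambient space, is the standard device. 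The only implicit ingredient you lean on is the completeness of $A_{p(\cdot),q(\cdot)}^{s(\cdot)}(\R^n)$ itself, which the paper never formally records but is folklore in this setting and would be worth a one-line citation (e.g.\ to the Almeida--H\"ast\"o and Diening--H\"ast\"o--Roudenko papers); beyond that, the argument is sound and self-contained, and the order-of-quantifiers in the telescoping step (fix the subsequence, choose extensions, sum, restrict) is handled correctly.
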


\subsection{ Extension of Franke and Runst's lift operator}

In this subsection, we extend the lifting operator introduced by Franke and Runst \cite{Franke-Runst}. 
We construct a collection of operator $\{J_{\sigma}\}_{\sigma\in\R}$ such that 
following three conditions. 
\begin{enumerate}
\item $J_{\sigma}$ is an isomorphism between $A_{p(\cdot),q(\cdot)}^{s(\cdot)}(\R^n)$ and $A_{p(\cdot),q(\cdot)}^{s(\cdot)-\sigma}(\R^n)$. 
\item $J_{\sigma}$ is an inverse map of $J_{-\sigma}$. 
\item Let $f\in\mathcal{S}'(\R^n)$. If 
\[
\spt f \subset \R^{n-1}\times (-\infty,0], 
\]
then 
\[
\spt J_{\sigma}f \subset \R^{n-1}\times (-\infty,0]. 
\]
\end{enumerate}

Let $\eta\in\mathcal{S}(\R^n)$ be a positive function which satisfy $\spt\eta\subseteq (-2,-1)$ and $\int_{\R^n}\eta(x){\rm d}x=2$. 
For any $0<\epsilon\ll 1$, we define a holomorphic function $\psi_{\epsilon}$ on $\C$ such that 
\[
\psi_{\epsilon}(x):= \int_{-\infty}^0\eta(t)e^{-i\epsilon tz}{\rm d}t-iz. 
\]

Let $\uH=\{z\in\C\,:\,{\rm Im}(z)>0\}$, $\overline{\uH}=\{z\in\C\,:\,{\rm Im}(z)\ge0\}$ and 
$\Omega=\{z\in\C\,:\,|z|>4, {\rm Re}(z)>0\}$. 
If $z\in\C$ satisfy $|z|>4$ and ${\rm Im}(z)>0$, then $-iz\in\Omega$. Hence we see 
\begin{equation}
{\rm dist}(\psi_{\epsilon}(z),\Omega)\le |\psi_{\epsilon}(z)+iz| 
= \left| \int_{-\infty}^0\eta(t)e^{-i\epsilon tz}{\rm d}t  \right|<2. 
\label{6.7}
\end{equation} 
If $z\in\C$ satisfy $|z|\le 4$ and ${\rm Im}(z)\ge 0$, then we have ${\rm Re}(\psi_{0}(z))=2+{\rm Im}(z)$. 
Therefore, for any $0<\epsilon\ll 1 $, we obtain 
\[
{\rm Re}(\psi_{\epsilon}(z))=\int_{-\infty}^0\eta(t)e^{\epsilon{\rm Im}(z)}\cos(\epsilon t{\rm Re}(z)){\rm d}t+{\rm Im}(z)\ge \frac{3}{2}. 
\]
If $\epsilon>0$ is a sufficiency small real number, we see that $\psi_{\epsilon}$ is a mapping from $\overline{\uH}$ to 
\begin{equation}
\Omega_0:=\{z\in\C\,:\, {\rm Re}(z)>1\} \cup \{ z\in\C\,:\,|{\rm Im}(z)|>1\}. \label{6.8}
\end{equation}
We fix such a sufficiency small real number $\epsilon>0$. 
We select a branch-cut of $\log$ on simply-connected region $\C\setminus (-\infty,0]$ such that 
$\log 1=0$. 
Then we define $a^z=\exp(a\log z)$ for $z\in\C\setminus(-\infty,0]$. 
Furthermore, for any $\sigma\in\R$, we define $\varphi^{(\sigma)}$ $:$ $\R^{n-1}\times\overline{\uH}\rightarrow\C$ such that 
\begin{equation}
\varphi^{(\sigma)}(x',z_n):= \left(\langle x'\rangle \psi_{\epsilon}\left( \frac{z_n}{\langle x' \rangle} \right)  \right)^{\sigma}, \ \ z\in\overline{\uH}. 
\label{6.9}
\end{equation} 
We put $\varphi:=\varphi^{(1)}$. Then we have following lemma. 

\begin{lemma}
\label{lemma 6.1.4} 
For any $\alpha\in\N_0^{n}$, 
\begin{equation}
\left| \partial^{\alpha}\varphi(x',z_n) \right| \lesssim_{\alpha}\left( \langle x'\rangle+|z_n|\right)^{1-|\alpha|}, 
\ \ (x',z_n)\in\R^{n-1}\times\overline{\uH} \label{6.10}
\end{equation}
holds. Furthermore, for any $(x',z_n)\in\R^{n-1}\times\overline{\uH}$, we have 
\begin{equation}
\langle x' \rangle+|z_n|\sim |\varphi(x',z_n)|. \label{6.11}
\end{equation}
\end{lemma}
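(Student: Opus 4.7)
The plan is to decompose $\varphi(x', z_n) = -iz_n + g(x', z_n)$, where $g(x', z_n) := r\,h(z_n/r)$ with $r := \langle x'\rangle$ and
\[
h(u) := \int_{-2}^{-1}\eta(t)\,e^{-i\epsilon tu}\,{\rm d}t,
\]
so that $\psi_\epsilon(u) = h(u) - iu$ (using $\spt\eta \subset (-2,-1)$). The linear piece $-iz_n$ plainly satisfies the bounds in both parts of the lemma, so all the substance lies with $g$. The decisive analytic input will be a Schwartz-type decay for $h$ on $\overline{\uH}$: since $\eta\in C_c^\infty((-2,-1))$ and $|e^{-i\epsilon tu}| = e^{\epsilon t\,{\rm Im}(u)} \le 1$ on $\overline{\uH}$ (because $t<0$ and ${\rm Im}(u) \ge 0$), repeated integration by parts in $t$ will yield
\[
|h^{(k)}(u)| \lesssim_{k,N}(1+|u|)^{-N}, \qquad u \in \overline{\uH},\ \ k,N\in\N_0.
\]

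For Part 2, the identity $\psi_\epsilon(w) = h(w) - iw$ together with $|h(w)|\le \|\eta\|_{L^1}$ forces $|\psi_\epsilon(w)|\sim|w|$ for $|w|$ large (triangle and reverse-triangle inequalities), while the inclusion $\psi_\epsilon(\overline{\uH})\subset\Omega_0$ from $(\ref{6.8})$ guarantees $|\psi_\epsilon(w)|>1$ on all of $\overline{\uH}$. Combining the two, $|\psi_\epsilon(w)|\sim 1+|w|$; substituting $w = z_n/r$ and multiplying through by $r$ gives $(\ref{6.11})$.

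For Part 1, I would first establish a bound on the auxiliary function $F(r, z_n) := r\,h(z_n/r)$ on $r \ge 1$, $z_n\in\overline{\uH}$. Using $\partial_r(z_n/r) = -(z_n/r)/r$, a direct induction will show that
\[
\partial_r^{\,j}\partial_{z_n}^{k}F(r, z_n) = r^{1-j-k}\,Q_{j,k}(z_n/r),
\]
where $Q_{j,k}$ is a fixed finite linear combination of terms $u^m h^{(\ell)}(u)$ with $m,\ell$ controlled in terms of $j+k$. Absorbing the polynomial factor $u^m$ into the Schwartz decay of $h^{(\ell)}$ then yields, for any $L\in\N_0$,
\[
|\partial_r^{\,j}\partial_{z_n}^{k}F(r, z_n)| \lesssim_{j,k,L} \frac{r^{L+1-j-k}}{(r+|z_n|)^L}.
\]
To transfer this to $\varphi = F(r(x'), z_n) - iz_n$, I would apply Faà di Bruno together with the standard estimate $|\partial_{x'}^\beta r|\lesssim r^{1-|\beta|}$ for $|\beta|\ge 1$: a partition of $\alpha'$ into $j$ blocks contributes a term dominated by
\[
\frac{r^{L+1-j-\alpha_n}}{(r+|z_n|)^L}\cdot r^{j-|\alpha'|} = \frac{r^{L+1-|\alpha|}}{(r+|z_n|)^L},
\]
and choosing $L := |\alpha|$ together with $r \le r+|z_n|$ bounds each such term by $(r+|z_n|)^{1-|\alpha|}$; summing over the finitely many partitions (and adding the trivial contribution from $-iz_n$) yields $(\ref{6.10})$.

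The main obstacle will be the inductive verification of the structural identity $\partial_r^{\,j}\partial_{z_n}^{k}F(r,z_n) = r^{1-j-k}Q_{j,k}(z_n/r)$ with a uniform polynomial-degree bound on $Q_{j,k}$; once that homogeneity is in hand, the Faà di Bruno bookkeeping that matches the factor $r^{j-|\alpha'|}$ coming from iterated derivatives of $r = \langle x'\rangle$ against the homogeneity-generated $r^{L+1-j-\alpha_n}$ from $F$ is just arithmetic.
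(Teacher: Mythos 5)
Your proof is correct and follows essentially the same route as the paper: the identical decomposition $\varphi=-iz_n+\langle x'\rangle\,h(z_n/\langle x'\rangle)$, the same Schwartz-type decay of $h$ on $\overline{\uH}$ (the paper reads this off from a Fourier-transform representation rather than iterated integration by parts, but it is the same estimate), and the same near/far dichotomy in $|z_n|/\langle x'\rangle$ for $(\ref{6.11})$. For $(\ref{6.10})$, where the paper applies Leibniz to the product $\langle x'\rangle\cdot\int\eta(t)e^{-i\epsilon t z_n/\langle x'\rangle}\,dt$ and absorbs the chain-rule factors into the rapid decay, you organize the identical bookkeeping via the homogeneity identity $\partial_r^{\,j}\partial_{z_n}^{k}\bigl[r\,h(z_n/r)\bigr]=r^{1-j-k}Q_{j,k}(z_n/r)$ followed by Faà di Bruno through $r=\langle x'\rangle$ — a more systematic presentation of the same computation, and you are also more explicit about the lower bound in the regime $|z_n|\lesssim\langle x'\rangle$ by citing the inclusion $\psi_\epsilon(\overline{\uH})\subset\Omega_0$, which the paper leaves implicit.
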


\begin{proof}
Firstly, we will prove $(\ref{6.11})$. 
If $|z_n|>4\langle x' \rangle$, then we obtain 
\[
\left| \psi_{\epsilon}\left( \frac{|z_n|}{\langle x' \rangle} \right)  +i\frac{z_n}{\langle x' \rangle} \right| <2 
<\frac{|z_n|}{2\langle x' \rangle}
\]
by $(\ref{6.7})$. 
This implies 
\[
\frac{|z_n|}{2\langle x' \rangle}<
\left| \psi_{\epsilon}\left( \frac{|z_n|}{\langle x' \rangle} \right)  \right|  
<\frac{3|z_n|}{2\langle x' \rangle}. 
\]

If $|z_n|\le4\langle x' \rangle$, 
then we see that 
\[
\left| \psi_{\epsilon}\left( \frac{|z_n|}{\langle x' \rangle} \right)  \right| 
\le \int_{-\infty}^0 \eta(t){\rm d}t+\frac{|z_n|}{\langle x' \rangle} \le 2+4=6. 
\]
Hence $(\ref{6.11})$ holds. 

Finally, we will prove $(\ref{6.10})$. 
By the definition of $\varphi(x',z_n)$, we have 
\begin{equation}
\varphi(x',z_n)=\langle x' \rangle\int_{-\infty}^0 \eta(t)\exp\left(-it\epsilon\frac{z_n}{\langle x'\rangle} \right){\rm d}t-iz_n. 
\label{6.11'}
\end{equation}
Here, it is easy to see that the second term of the right hand side of $(\ref{6.11'})$ satisfies $(\ref{6.10})$.  
Hence, we estimate the first term of the right hand side of $(\ref{6.11'})$. 
By Libniz's formula, we see that 
\begin{align*}
\left| \partial^{\alpha} \left( \langle x' \rangle \int_{-\infty}^0\eta(t)\exp\left( -it\epsilon\frac{z_n}{\langle x' \rangle}\right) 
 {\rm d}t \right)   \right| 
&\lesssim \sum_{\gamma\le \alpha}\left| \partial^{\alpha-\gamma}\langle x' \rangle \partial^{\gamma} 
\left( \int_{-\infty}^0\eta(t)\exp\left( -it\epsilon\frac{z_n}{\langle x' \rangle} \right) {\rm d}t\right) \right| \\ 
&\lesssim \sum_{\gamma\le \alpha} \langle x'\rangle^{1-|\alpha|+|\gamma|} \left| \partial^{\gamma} 
\left( \int_{-\infty}^0\eta(t)\exp\left( -it\epsilon\frac{z_n}{\langle x' \rangle} \right) {\rm d}t\right) \right|. 
\end{align*}
Since 
\[
\int_{-\infty}^0\eta(t)\exp\left(-it\epsilon\frac{z_n}{\langle x' \rangle}\right){\rm d}t 
=\sqrt{2\pi} \fourier\left[ \eta\exp\left(t\epsilon\frac{{\rm Im}(z_n)}{\langle x'\rangle} \right) \right]
\left( t\epsilon\frac{{\rm Re}(z_n)}{\langle x'\rangle}\right), 
\]
we obtain 
\begin{align*}
\left| \partial^{\alpha} \left( \langle x' \rangle \int_{-\infty}^0\eta(t)\exp\left( -it\epsilon\frac{z_n}{\langle x' \rangle}\right) 
 {\rm d}t \right)   \right| 
\lesssim \langle x'\rangle^{1-|\alpha|}\left(1+\frac{|z_n|}{\langle x'\rangle}\right)^{1-|\alpha|} = \left(\langle x'\rangle+|z_n|\right)^{1-|\alpha|}. 
\end{align*}
\end{proof}

We also use same symbol $\varphi^{(\sigma)}$ for $\varphi^{(\sigma)}|_{\R^{n-1}\times\R}$. 
Then by Theorem $\ref{thm:5}$ and Lemma $\ref{lemma 6.1.4} $, we have following Proposition. 
\begin{proposition}
\label{proposition 6.1.5} 
Let $p(\cdot),q(\cdot)\in C^{\log}(\R^n)\cap \mathcal{P}_0(\R^n)$ and $s(\cdot)\in C^{\log}(\R^n)$. 
Then, for any $\sigma\in\R$, we have following properties. 

{\rm (1) } $J_{\sigma}:=\varphi^{(\sigma)}(D)$ is a linear isomorphism between $A_{p(\cdot),q(\cdot)}^{s(\cdot)}(\R^n)$ and  $A_{p(\cdot),q(\cdot)}^{s(\cdot)-\sigma}(\R^n)$. 

{\rm (2) } $J_{-\sigma}$ is the inverse operator of $J_{\sigma}$. 

{\rm (3) } For any $f\in A_{p(\cdot),q(\cdot)}^{s(\cdot)}(\R^n)$, we have 
\[
\left\| J_{\sigma}f\right\|_{ A_{p(\cdot),q(\cdot)}^{s(\cdot)-\sigma}(\R^n) } \sim \left\| f \right\|_{A_{p(\cdot),q(\cdot)}^{s(\cdot)}(\R^n)}. 
\] 
\end{proposition}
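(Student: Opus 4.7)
The plan is to deduce all three assertions from the Fourier multiplier theorem (Theorem~\ref{thm:5}) applied dyadically to the symbol $\varphi^{(\sigma)}$, using Lemma~\ref{lemma 6.1.4} to verify the needed derivative estimates, and then to invoke the pointwise identity $\varphi^{(\sigma)}(\xi)\,\varphi^{(-\sigma)}(\xi) = 1$ to reduce (2) and (3) to (1).

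First I would upgrade Lemma~\ref{lemma 6.1.4} to the full scale of powers by showing that, for every $\sigma \in \R$ and every multi-index $\alpha \in \N_0^n$,
\[
|\partial^{\alpha} \varphi^{(\sigma)}(x',\xi_n)| \lesssim_{\alpha,\sigma} (\langle x'\rangle + |\xi_n|)^{\sigma - |\alpha|}
\quad \text{on } \R^{n-1} \times \R.
\]
This follows from the Fa\`a di Bruno formula applied to the composition $w \mapsto w^{\sigma}$ with $\varphi$, combined with the fact that $\varphi(x',\xi_n) \in \Omega_0 \subset \C \setminus (-\infty,0]$, so that the principal branch of $w^{\sigma}$ is holomorphic in a neighbourhood of the range of $\varphi$, together with the size estimate \eqref{6.11} and the derivative bound \eqref{6.10}.

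Next, fix a resolution of unity $\{\theta_j\}_{j=0}^{\infty} \in \Phi(\R^n)$, set $\tilde{\theta}_j := \theta_{j-1} + \theta_j + \theta_{j+1}$ with $\theta_{-1} \equiv 0$, and define the rescaled symbols
\[
M_j(\xi) := 2^{-j\sigma}\,\varphi^{(\sigma)}(\xi)\,\tilde{\theta}_j(\xi).
\]
On $\spt \tilde{\theta}_j$ one has $|\xi| \sim 2^j$ and hence $\langle \xi'\rangle + |\xi_n| \sim 2^j$, so the symbol estimate above yields $\sup_{j} \|M_j(2^j \cdot)\|_{H_2^v} \lesssim_v 1$ for every $v > 0$. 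Since $\theta_j(D)\,J_{\sigma}f = 2^{j\sigma}\, M_j(D)\, \tilde{\theta}_j(D)f$ and $\{\tilde{\theta}_j(D)f\}_{j \ge 0} \in L_{p(\cdot)}^{\Omega}$ for the natural frequency supports, Theorem~\ref{thm:5} (applied with the weight shift $s(\cdot) \mapsto s(\cdot) - \sigma$ to absorb the $2^{j\sigma}$) gives
\[
\|J_{\sigma} f\|_{A_{p(\cdot),q(\cdot)}^{s(\cdot)-\sigma}} \lesssim \|f\|_{A_{p(\cdot),q(\cdot)}^{s(\cdot)}}
\]
for both Besov and Triebel--Lizorkin scales. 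Since $\sigma \in \R$ is arbitrary, the same argument applied to $-\sigma$ gives boundedness of $J_{-\sigma}$. Because $\varphi$ takes values in $\Omega_0$ on which the principal branch satisfies $w^{\sigma} \cdot w^{-\sigma} = 1$, we have $\varphi^{(\sigma)} \varphi^{(-\sigma)} \equiv 1$ as symbols on $\R^n$; hence $J_{-\sigma} J_{\sigma} = J_{\sigma} J_{-\sigma} = \mathrm{id}$ on $\mathcal{S}'(\R^n)$. Combined with the two boundedness statements this establishes (1), (2) and the two-sided inequality (3) simultaneously.

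The main technical obstacle will be the uniform $H_2^v$ bound on the rescaled symbols $M_j(2^j \cdot)$: one must check that the constants in the derivative estimates for $\varphi^{(\sigma)}$ do not degenerate as $j \to \infty$, and one must keep the symbol uniformly away from the branch cut so that the chain rule for $w^{\sigma}$ gives genuinely bounded constants. This is precisely the reason the auxiliary parameter $\epsilon > 0$ in the definition of $\psi_{\epsilon}$ was taken so small that $\varphi$ lands in the region $\Omega_0$ defined in \eqref{6.8}; verifying that this choice is compatible with the counting of derivatives required by Theorem~\ref{thm:5} is the heart of the argument.
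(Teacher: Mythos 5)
Your proposal is correct and follows the same route the paper intends: the paper states Proposition~\ref{proposition 6.1.5} as an immediate consequence of the multiplier theorem (Theorem~\ref{thm:5}) and Lemma~\ref{lemma 6.1.4}, and you supply exactly the missing details --- extending the derivative bound from $\varphi$ to $\varphi^{(\sigma)}$ via Fa\`a di Bruno using the two-sided estimate $|\varphi|\sim \langle x'\rangle+|\xi_n|$, checking the uniform $H^v_2$ bound for the dyadically rescaled symbols, and deducing (2) and (3) from the pointwise identity $\varphi^{(\sigma)}\varphi^{(-\sigma)}\equiv 1$ on $\Omega_0\subset\C\setminus(-\infty,0]$.
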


To consider the support of $J_{\sigma}f$, we use following the Paley--Wiener theorem. 

\begin{lemma}[Paley--Wiener theorem]
\label{lemma 6.1.6} 
Let $\varphi\in\mathcal{S}(\R^n)$ and $\epsilon>0$. Then, $\spt \varphi\subset \R^{n-1}\times [\epsilon,\infty)$ if and only if $\inversefourier\varphi$ can be extended 
to a continuous function $\Psi$ $:$ $\R^{n-1}\times\overline{\uH} \to \C$ satisfying 
\begin{enumerate}
\item $\Psi(\xi',\cdot)$ is a holomorphic function on $\uH$, 
\item For each $N\in\N$, 
\begin{equation}
\left| \Psi(\xi', \xi_n+i\zeta_n )\right|\lesssim_N \langle \xi \rangle^{-N}(1+\zeta_n)^{-N}\exp(-\epsilon\zeta_n) \label{6.12}
\end{equation}
holds for any $(\xi',\xi_n+i\zeta_n)\in\R^{n-1}\times \overline{\uH}$. 
\end{enumerate}
\end{lemma}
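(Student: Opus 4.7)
The strategy is the standard Paley--Wiener two-sided argument: an explicit integral representation for the forward direction, and a contour-deformation/vanishing-integral argument for the reverse direction.

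\emph{Forward direction (support $\Rightarrow$ extension).} Starting from the absolutely convergent representation
\[
\inversefourier\varphi(\xi) = (2\pi)^{-n/2}\int_{\R^{n-1}}\int_\epsilon^\infty \varphi(\eta)\,e^{i\xi\cdot\eta}\,d\eta_n\,d\eta',
\]
I would define for $\zeta_n\ge 0$
\[
\Psi(\xi',\xi_n+i\zeta_n) := (2\pi)^{-n/2}\int_{\R^{n-1}}\int_\epsilon^\infty \varphi(\eta)\,e^{i\xi'\cdot\eta'}\,e^{i(\xi_n+i\zeta_n)\eta_n}\,d\eta_n\,d\eta'.
\]
Since $|e^{i(\xi_n+i\zeta_n)\eta_n}|=e^{-\zeta_n\eta_n}\le e^{-\epsilon\zeta_n}$ on $\spt\varphi$, the integrand is dominated by $|\varphi(\eta)|e^{-\epsilon\zeta_n}$, so $\Psi$ is well-defined, continuous on $\R^{n-1}\times\overline{\uH}$ by dominated convergence, and holomorphic in $\xi_n+i\zeta_n$ on $\uH$ by differentiation under the integral sign (the factor $i\eta_n$ produced by $\partial_{z_n}$ is absorbed into the Schwartz decay of $\varphi$). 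The $\langle\xi\rangle^{-N}$ factor in bound (2) is produced by integration by parts in $\eta$: the factor $\xi^\alpha$ arises as $(-i)^{|\alpha|}\partial_\eta^\alpha$ acting on $e^{i\xi\cdot\eta}$, and no boundary term appears at $\eta_n=\epsilon$ because $\varphi\in C^\infty$ with $\varphi\equiv 0$ on $\{\eta_n<\epsilon\}$ forces all $\eta$-derivatives of $\varphi$ to vanish at $\eta_n=\epsilon$. To produce the extra $(1+\zeta_n)^{-N}$ decay, I would exploit $\zeta_n e^{-\zeta_n\eta_n}=-\partial_{\eta_n}e^{-\zeta_n\eta_n}$ and integrate by parts $N$ further times; iterating yields $\zeta_n^N\Psi$ as an integral involving derivatives of $\varphi$ together with at most polynomial growth in $|\xi_n|$, which is then reabsorbed by performing sufficiently many additional integrations by parts in $\eta$ to restore the desired $\langle\xi\rangle^{-N}$ decay.

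\emph{Reverse direction (extension $\Rightarrow$ support).} By Fourier inversion,
\[
\varphi(\eta) = (2\pi)^{-n/2}\int_{\R^n}\Psi(\xi)\,e^{-i\xi\cdot\eta}\,d\xi.
\]
Fix $\eta$ with $\eta_n<\epsilon$ and $T>0$. The map $z\mapsto \Psi(\xi',z)e^{-iz\eta_n}$ is holomorphic on $\uH$ and, by bound (2), decays polynomially as $|\mathrm{Re}(z)|\to\infty$ uniformly on $[0,T]$. Applying Cauchy's theorem on the rectangle $[-R,R]\times[0,T]$ and letting $R\to\infty$ (the vertical sides vanishing by the $\langle\xi\rangle^{-N}$ decay) yields the contour-shift identity
\[
\int_\R \Psi(\xi',\xi_n)\,e^{-i\xi_n\eta_n}\,d\xi_n = e^{T\eta_n}\int_\R \Psi(\xi',\xi_n+iT)\,e^{-i\xi_n\eta_n}\,d\xi_n.
\]
Inserting (2) and integrating in $\xi'$ gives
\[
|\varphi(\eta)|\lesssim_N (1+T)^{-N}\,e^{(\eta_n-\epsilon)T}\int_{\R^n}\langle\xi\rangle^{-N}\,d\xi,
\]
which tends to $0$ as $T\to\infty$ since $\eta_n-\epsilon<0$. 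Hence $\varphi(\eta)=0$ for $\eta_n<\epsilon$, i.e., $\spt\varphi\subset\R^{n-1}\times[\epsilon,\infty)$.

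\emph{Main obstacle.} The delicate technical point is obtaining the simultaneous $\langle\xi\rangle^{-N}(1+\zeta_n)^{-N}e^{-\epsilon\zeta_n}$ bound in the forward direction; a naive estimate captures only the exponential factor from the support hypothesis, and the $(1+\zeta_n)^{-N}$ factor must be extracted by integration by parts that temporarily trades $\xi$-decay for $\zeta_n$-decay. Because any $N$ is allowed, the bookkeeping can be closed by using the Schwartz regularity of $\varphi$ to perform a sufficiently large number of integrations by parts so that both targeted decay rates appear simultaneously. Once this estimate is in hand, the holomorphy, continuity, and the contour-shift half of the argument are standard complex analysis.
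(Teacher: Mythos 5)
The paper states Lemma \ref{lemma 6.1.6} without proof, treating it as the classical Paley--Wiener theorem, so there is no in-paper argument to compare against. Your two-sided argument is the standard one, and both directions are correct: the forward direction by direct estimation and integration by parts on the explicit integral, the reverse direction by Cauchy's theorem on a rectangle, letting $T\to\infty$ and using $\eta_n-\epsilon<0$.

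One remark on the bookkeeping in the forward direction: rather than first manufacturing $\zeta_n^N$ via $\zeta_n e^{-\zeta_n\eta_n}=-\partial_{\eta_n}e^{-\zeta_n\eta_n}$ and then reabsorbing the resulting $\xi_n$-growth with a second round of integrations by parts, it is cleaner to write $z_n=\xi_n+i\zeta_n$ and note that $\partial_{\eta_n}e^{iz_n\eta_n}=iz_ne^{iz_n\eta_n}$. Integrating by parts $N$ times in $\eta_n$ (boundary terms vanish because all derivatives of $\varphi$ vanish at $\eta_n=\epsilon$) gives $|z_n|^N|\Psi(\xi',z_n)|\lesssim_N e^{-\epsilon\zeta_n}$ in one step; since $|z_n|^2=\xi_n^2+\zeta_n^2$, the single factor $(1+|z_n|)^{-N}\lesssim\langle\xi_n\rangle^{-N/2}(1+\zeta_n)^{-N/2}$ already yields both decays simultaneously, and the $\langle\xi'\rangle^{-N}$ decay comes from separate integrations by parts in $\eta'$ that do not touch the $e^{-\zeta_n\eta_n}$ factor. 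Your version does close, but this reformulation avoids the interleaving you flag as the ``main obstacle.'' Everything else, in particular the uniformity in $\zeta_n\in[0,T]$ needed to kill the vertical sides and the final passage $T\to\infty$, is handled correctly.
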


By using the Paley--Wiener theorem, we obtain a result about the support of $J_{\sigma}f$. 
\begin{proposition}
\label{proposition 6.1.7} 
Let $f\in\mathcal{S}'(\R^n)$. If $\spt f\subset \R^{n-1}\times(-\infty,0]$, then  $\spt J_{\sigma}f\subset \R^{n-1}\times(-\infty,0]$. 
\end{proposition}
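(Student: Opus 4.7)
The strategy is to combine duality on the Fourier side with the Paley--Wiener theorem (Lemma 6.1.6). It suffices to show that $\langle J_\sigma f, \psi\rangle = 0$ for every $\psi\in C_c^\infty(\R^{n-1}\times(0,\infty))$, since such test functions detect the restriction of a distribution to the open upper half. Because $\varphi^{(\sigma)}$ is smooth with polynomially bounded derivatives by Lemma 6.1.4, multiplication by $\varphi^{(\sigma)}$ is a continuous endomorphism of $\mathcal{S}(\R^n)$, and the definition $J_\sigma = \varphi^{(\sigma)}(D)$ gives the duality identity
\[
\langle J_\sigma f, \psi \rangle = \langle f,\, \fourier[\,\varphi^{(\sigma)}\,\inversefourier\psi\,]\rangle.
\]
The plan is to locate $\spt\fourier[\,\varphi^{(\sigma)}\,\inversefourier\psi\,]$ strictly in the upper half-space; disjointness from $\spt f$ will then finish the proof.

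First I would choose $\delta>0$ so that $\spt\psi \subset \R^{n-1}\times[\delta,\infty)$, which is possible because $\spt\psi$ is a compact subset of the open half. The forward direction of Lemma 6.1.6 produces a continuous extension $\Psi:\R^{n-1}\times\overline{\uH}\to\C$ of $\inversefourier\psi$, holomorphic in $z_n\in\uH$ and satisfying the decay estimate (6.12) with $\epsilon=\delta$. Next I would verify that $\varphi^{(\sigma)}$ extends holomorphically in $z_n$ to $\R^{n-1}\times\overline{\uH}$: the integral defining $\psi_\epsilon$ is entire on $\C$ (the integrand is entire in $z$ and $\spt\eta$ is compact), the image $\psi_\epsilon(\overline{\uH})\subset\Omega_0$ avoids $(-\infty,0]$, and so $w\mapsto w^\sigma$ composes holomorphically with $\langle x'\rangle\,\psi_\epsilon(z_n/\langle x'\rangle)$. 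The equivalence (6.11) then yields the uniform bound
\[
|\varphi^{(\sigma)}(\xi',\xi_n+i\zeta_n)| \lesssim (\langle\xi'\rangle + |\xi_n+i\zeta_n|)^{|\sigma|} \lesssim \langle\xi\rangle^{|\sigma|}(1+\zeta_n)^{|\sigma|}.
\]

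With these pieces in place I would set $g(x',z_n):=\varphi^{(\sigma)}(x',z_n)\,\Psi(x',z_n)$; this continuously extends the Schwartz function $\varphi^{(\sigma)}\,\inversefourier\psi$ to $\R^{n-1}\times\overline{\uH}$ and is holomorphic in $z_n\in\uH$. Combining the polynomial growth above with the super-polynomial decay of $\Psi$ gives, for every $N\in\N$,
\[
|g(\xi',\xi_n+i\zeta_n)| \lesssim_N \langle\xi\rangle^{-N}(1+\zeta_n)^{-N}e^{-\delta\zeta_n},
\]
which is exactly condition (6.12) with $\epsilon=\delta$. Applying the converse direction of Lemma 6.1.6 to the Schwartz function $\fourier g$, using $g=\inversefourier\fourier g$, yields $\spt\fourier g\subset\R^{n-1}\times[\delta,\infty)$. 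This set is disjoint from $\spt f\subset\R^{n-1}\times(-\infty,0]$, so $\langle f,\fourier g\rangle = 0$, which completes the argument.

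The only point requiring attention is that the Paley--Wiener decay (6.12) survives multiplication by $\varphi^{(\sigma)}$ for negative $\sigma$. This is exactly what equivalence (6.11) guarantees: $\varphi(x',z_n)$ stays bounded below by a positive constant on $\R^{n-1}\times\overline{\uH}$, so $\varphi^{(\sigma)}=\varphi^{\sigma}$ remains under polynomial control for every real $\sigma$. Beyond this observation, the argument reduces to bookkeeping of Paley--Wiener estimates, and I do not expect any further technical obstacles.
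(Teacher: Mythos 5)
Your argument is correct and follows the same route as the paper: you test $J_\sigma f$ against $\psi\in C_c^\infty(\R^{n-1}\times(0,\infty))$, move $\varphi^{(\sigma)}(D)$ to the test function via duality, and invoke Lemma~\ref{lemma 6.1.6} together with Lemma~\ref{lemma 6.1.4} to show that $\fourier[\varphi^{(\sigma)}\inversefourier\psi]$ is supported away from $\spt f$. Your write-up is in fact more careful than the paper's at the one genuinely delicate point, namely that the Paley--Wiener decay survives multiplication by $\varphi^{(\sigma)}$ even when $\sigma<0$: you correctly observe that the two-sided bound $(\ref{6.11})$ keeps $|\varphi|$ bounded away from zero on $\R^{n-1}\times\overline{\uH}$, while the paper simply cites Lemma~\ref{lemma 6.1.4} without spelling this out.
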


\begin{proof}
We take a test function $\psi\in\mathcal{D}(\R^{n-1}\times (0,\infty))$. Since $\psi$ has a compact support, we see that there exists an $\epsilon>0$ such that 
$\spt \psi\subset \R^{n-1}\times [\epsilon,\infty)$. Then we have 
\begin{equation}
\langle J_{\sigma}f,\psi\rangle = \langle f,\fourier[\varphi^{(\sigma)}\inversefourier \psi]\rangle. \label{6.13}
\end{equation}
We can see that $\inversefourier\psi$ satisfies $(\ref{6.12})$ and that $\varphi^{(\sigma)}\inversefourier \psi$ also satisfies $(\ref{6.12})$ by Lemma $\ref{lemma 6.1.4}$. 
Hence, we obtain $\spt \fourier[\varphi^{(\sigma)}\inversefourier\varphi] \subset \R^{n-1}\times [\epsilon,\infty)$. 
Therefore, we have $\langle J_{\sigma}f,\psi\rangle=0$ because $\spt f\subset \R^{n-1}\times(-\infty, 0]$. 
\end{proof}

\begin{theorem}
\label{theorem 6.1.8} 
Let $p(\cdot),q(\cdot)\in C^{\log}(\R^n)\cap\mathcal{P}_0(\R^n)$, $s(\cdot)\in C^{\log}(\R^n)$ and $\sigma\in\R$. 

{\rm (1) } Let $f\in A_{p(\cdot),q(\cdot)}^{s(\cdot)}(\R^n_+)$. Then $J_{\sigma}f:= (J_{\sigma}g)|_{\R^n_+}$ does not depend on $g\in A_{p(\cdot),q(\cdot)}^{s(\cdot)}(\R^n)$ satisfying 
$f=g|_{\R^n_+}$.  

{\rm (2) } $J_{\sigma}$ is an isomorphism between $ A_{p(\cdot),q(\cdot)}^{s(\cdot)}(\R^n_+)$ and $ A_{p(\cdot),q(\cdot)}^{s(\cdot)-\sigma}(\R^n_+)$. 
Furthermore,  $J_{-\sigma}$ is the inverse of $J_{\sigma}$.  
\end{theorem}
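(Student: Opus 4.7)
The plan is to reduce both assertions to the already-established analogues on the whole space (Proposition \ref{proposition 6.1.5}) together with the support-preservation property (Proposition \ref{proposition 6.1.7}). The only genuinely new content is the well-definedness in (1); once this is in hand, the isomorphism statement in (2) and the fact that $J_{-\sigma}$ inverts $J_{\sigma}$ follow formally by restricting the corresponding identities from $\R^n$ to $\R^n_+$.

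For part (1), I would take two representatives $g_1,g_2\in A_{p(\cdot),q(\cdot)}^{s(\cdot)}(\R^n)$ with $g_1|_{\R^n_+}=g_2|_{\R^n_+}=f$. Setting $h:=g_1-g_2\in A_{p(\cdot),q(\cdot)}^{s(\cdot)}(\R^n)$, one has $h|_{\R^n_+}=0$ in $\mathcal{D}'(\R^n_+)$, so $\spt h\subset \R^{n-1}\times(-\infty,0]$. Proposition \ref{proposition 6.1.7} then gives $\spt J_{\sigma}h\subset \R^{n-1}\times(-\infty,0]$, hence $(J_{\sigma}g_1-J_{\sigma}g_2)|_{\R^n_+}=0$. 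This shows that the rule $J_{\sigma}f:=(J_{\sigma}g)|_{\R^n_+}$ does not depend on the chosen extension $g$.

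For part (2), given $f\in A_{p(\cdot),q(\cdot)}^{s(\cdot)}(\R^n_+)$ and any extension $g\in A_{p(\cdot),q(\cdot)}^{s(\cdot)}(\R^n)$ with $g|_{\R^n_+}=f$, Proposition \ref{proposition 6.1.5} yields $J_{\sigma}g\in A_{p(\cdot),q(\cdot)}^{s(\cdot)-\sigma}(\R^n)$ with
\[
\|J_{\sigma}g\|_{A_{p(\cdot),q(\cdot)}^{s(\cdot)-\sigma}(\R^n)}\lesssim \|g\|_{A_{p(\cdot),q(\cdot)}^{s(\cdot)}(\R^n)}.
\]
Since $J_{\sigma}g$ is an extension of $J_{\sigma}f$, the definition of the restriction norm together with taking the infimum over all admissible $g$ gives
\[
\|J_{\sigma}f\|_{A_{p(\cdot),q(\cdot)}^{s(\cdot)-\sigma}(\R^n_+)}\lesssim \|f\|_{A_{p(\cdot),q(\cdot)}^{s(\cdot)}(\R^n_+)},
\]
so $J_{\sigma}$ is bounded from $A_{p(\cdot),q(\cdot)}^{s(\cdot)}(\R^n_+)$ into $A_{p(\cdot),q(\cdot)}^{s(\cdot)-\sigma}(\R^n_+)$. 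To identify $J_{-\sigma}$ as the inverse on the half-space, observe that for any extension $g$ of $f$ the distribution $J_{\sigma}g$ is an extension of $J_{\sigma}f$, so by part (1)
\[
J_{-\sigma}J_{\sigma}f=(J_{-\sigma}J_{\sigma}g)|_{\R^n_+}=g|_{\R^n_+}=f,
\]
using Proposition \ref{proposition 6.1.5}(2) on $\R^n$ in the middle step. The identity $J_{\sigma}J_{-\sigma}f=f$ is proved symmetrically, and combined with the two-sided boundedness this yields the isomorphism.

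The potential obstacle is part (1): it depends crucially on the statement \textquotedblleft $h|_{\R^n_+}=0$ in $\mathcal{D}'(\R^n_+)$ implies $\spt h\subset \R^{n-1}\times(-\infty,0]$ in $\mathcal{S}'(\R^n)$\textquotedblright{}. This is standard but uses the duality pairing with $\mathcal{D}(\R^n_+)$-test functions, together with the fact that $h\in A_{p(\cdot),q(\cdot)}^{s(\cdot)}(\R^n)\subset \mathcal{S}'(\R^n)$ so that $\spt h$ makes sense as a subset of $\R^n$. Once this support statement is granted, Proposition \ref{proposition 6.1.7} applies verbatim and the rest of the argument is formal.
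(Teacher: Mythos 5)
Your proof is correct and follows essentially the same route as the paper: part (1) is exactly the paper's argument (difference $g_1-g_2$ vanishes on $\R^n_+$, so its support lies in the lower half-space, and Proposition \ref{proposition 6.1.7} transfers this to $J_\sigma(g_1-g_2)$), and part (2) is obtained from Proposition \ref{proposition 6.1.5} by restriction exactly as the paper indicates, though you spell out the infimum-over-extensions norm estimate and the inverse identity more explicitly than the paper's one-line remark.
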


\begin{proof}
We will prove ${\rm (1)}$. Let $g_1, g_2\in   A_{p(\cdot),q(\cdot)}^{s(\cdot)}(\R^n)$ satisfy $f=g_1|_{\R^n_+}=g_2|_{\R^n_+}$. 
Then we have 
\[
(J_{\sigma}(g_1-g_2))|_{\R^n_+}=0
\]
by $(g_1-g_2)|_{\R^n_+}=0$ and Proposition $\ref{proposition 6.1.7}$. 
Therefore, we obtain 
\begin{equation}
(J_{\sigma}g_1)|_{\R^n_+} = (J_{\sigma}g_2)|_{\R^n_+} \label{6.14}
\end{equation}
because 
\[
(J_{\sigma}g_1)|_{\R^n_+} - (J_{\sigma}g_2)|_{\R^n_+} = (J_{\sigma}(g_1-g_2))|_{\R^n_+}=0. 
\]
$(\ref{6.14})$ means $J_{\sigma}f$ does not depend on $g\in A_{p(\cdot),q(\cdot)}^{s(\cdot)}(\R^n)$ satisfying $f=g|_{\R^n_+}$. 
${\rm (2)}$ follows from the properties of $J_{\sigma}$ as an operator on $ A_{p(\cdot),q(\cdot)}^{s(\cdot)}(\R^n)$. 
\end{proof}

\subsection{An extension operator for $A_{p(\cdot),q(\cdot)}^{s(\cdot)}(\R^n_+)$} 

\begin{theorem}
\label{theorem 6.1.9} 
Let $N\in\N$, $p(\cdot),q(\cdot)\in C^{\log}(\R^n)\cap \mathcal{P}_0(\R^n)$ and $s(\cdot)\in C^{\log}(\R^n)$. 
Then there exists an operator ${\rm Ext}_N$ which is so called extension operator: 
\begin{equation}
{\rm Ext}_N\,:\,\bigcup_{\substack{p(\cdot),q(\cdot)\,:\, N^{-1}\le p^-,q^- \\ s(\cdot)\,:\, s^+\le N}}  A_{p(\cdot),q(\cdot)}^{s(\cdot)}(\R^n_+)  
\longrightarrow 
\bigcup_{\substack{p(\cdot),q(\cdot)\,:\, N^{-1}\le p^-,q^- \\ s(\cdot)\,:\, s^+\le N}}   A_{p(\cdot),q(\cdot)}^{s(\cdot)}(\R^n), 
\label{6.15}
\end{equation} 
satisfying the following conditions. 

{\rm (1) } ${\rm Ext}_N|_{ A_{p(\cdot),q(\cdot)}^{s(\cdot)}(\R^n_+)  }$ is continuous. 

{\rm (2) } For any $f\in  A_{p(\cdot),q(\cdot)}^{s(\cdot)}(\R^n_+)  $, $({\rm Ext}_N f)|_{ \R^n_+ }=f$. 
\end{theorem}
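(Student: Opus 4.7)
The plan is to construct $\mathrm{Ext}_N$ as the composition $J_M\circ\mathcal{R}\circ J_{-M}$, where $M=M(N,n)$ is a single integer chosen large enough in terms of $N$ and the dimension, and $\mathcal{R}$ is a Hestenes/Whitney-type reflection operator matching derivatives up to order $M$ at $\{x_n=0\}$. The lifting operators $J_{\pm M}$ from Theorem~\ref{theorem 6.1.8} and the reflection $\mathcal{R}$ are defined independently of the indices $(p(\cdot),q(\cdot),s(\cdot))$, so the resulting $\mathrm{Ext}_N$ is a single operator acting on the whole union in (\ref{6.15}).

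First, by Theorem~\ref{theorem 6.1.8}, the operator $J_{-M}$ is an isomorphism from $A_{p(\cdot),q(\cdot)}^{s(\cdot)}(\R_+^n)$ onto $A_{p(\cdot),q(\cdot)}^{s(\cdot)+M}(\R_+^n)$. Since $p^-,q^-\ge N^{-1}$ and $s^+\le N$, we can fix $M$ so large that $s(\cdot)+M$ exceeds both $\sigma_{p(\cdot)}$ and $\sigma_{p(\cdot),q(\cdot)}$ uniformly over the allowed range; by Definition~\ref{smooth atom} and Remark~\ref{remark8-10}, the atomic decompositions of Theorems~\ref{theorem 5.1.6-Besov} and~\ref{theorem 5.1.6-Triebel} then apply with \emph{no moment conditions} on the atoms. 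Second, for a representative $F\in A_{p(\cdot),q(\cdot)}^{s(\cdot)+M}(\R^n)$ of $J_{-M}f$, define
\[
\mathcal{R}F(x',x_n)=
\begin{cases}
F(x',x_n), & x_n\ge 0,\\[2pt]
\displaystyle\phi(x_n)\sum_{k=1}^{M+1}\lambda_k\,F(x',-k x_n), & x_n<0,
\end{cases}
\]
where $\phi\in C_c^\infty(\R)$ equals $1$ near $0$ and the coefficients $\{\lambda_k\}_{k=1}^{M+1}$ solve $\sum_{k=1}^{M+1}\lambda_k(-k)^j=1$ for $j=0,1,\dots,M$. This matches all normal derivatives of order $\le M$ across $\{x_n=0\}$, so $\mathcal{R}F$ is globally well-behaved.

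The key technical step is boundedness of $\mathcal{R}\colon A_{p(\cdot),q(\cdot)}^{t(\cdot)}(\R_+^n)\to A_{p(\cdot),q(\cdot)}^{t(\cdot)}(\R^n)$ for $t(\cdot)=s(\cdot)+M$. We argue through the atomic decomposition: expand $F$ as $\sum_{\nu,m}\mu_{\nu,m}a_{\nu,m}$ with smooth atoms in the sense of Definition~\ref{smooth atom}. Each reflected atom $\mathcal{R}a_{\nu,m}$ is either equal to $a_{\nu,m}$ (when $Q_{\nu,m}\subset\R_+^n$ is far from the boundary), or is a finite linear combination $\sum_{k=1}^{M+1}\lambda_k\phi(x_n)a_{\nu,m}(x',-kx_n)$ of rescaled and reflected smooth atoms concentrated in a dilate of $Q_{\nu,m}$. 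In either case, the resulting family is, up to a universal multiplicative constant, a family of $[K,L]$ smooth atoms in $\R^n$ (note that higher smoothness $K$ of each reflected atom follows from $\mathcal{R}$ matching $M$ derivatives; moment conditions are not needed by the choice of $M$). The associated coefficient sequence has $a_{p(\cdot),q(\cdot)}^{t(\cdot)}$ quasi-norm comparable to $\|\mu\|_{a_{p(\cdot),q(\cdot)}^{t(\cdot)}}$ by a modular-comparison argument analogous to the proofs of Lemma~\ref{lemma 7.1}, Corollary~\ref{cor proposition 7.3} and Lemma~\ref{heikouidou}: the reflection $x_n\mapsto -kx_n$ relocates cubes to a symmetric region and introduces only bounded dilations $k\in\{1,\dots,M+1\}$, which are absorbed via the log-Hölder continuity of $p(\cdot),q(\cdot),s(\cdot)$ and the shift estimate of Lemma~\ref{heikouidou}. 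Theorems~\ref{theorem 5.1.6-Besov} and~\ref{theorem 5.1.6-Triebel} then give the sought boundedness.

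Finally, set $\mathrm{Ext}_N f:=J_M(\mathcal{R}(J_{-M}f))$. Continuity follows from continuity of $J_{\pm M}$ (Theorem~\ref{theorem 6.1.8}) and of $\mathcal{R}$. By construction of $\mathcal{R}$, we have $(\mathcal{R}\,J_{-M}f)|_{\R_+^n}=J_{-M}f$ as elements of $A_{p(\cdot),q(\cdot)}^{s(\cdot)+M}(\R_+^n)$; invoking Theorem~\ref{theorem 6.1.8}(1), which rests on the support-preserving property of $J_\sigma$ from Proposition~\ref{proposition 6.1.7}, yields
\[
(\mathrm{Ext}_N f)|_{\R_+^n}=J_M\bigl((\mathcal{R}\,J_{-M}f)|_{\R_+^n}\bigr)=J_M(J_{-M}f)=f,
\]
as required. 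The main obstacle is precisely the third step above: verifying that the dilations $x_n\mapsto -kx_n$ appearing in $\mathcal{R}$ do not damage the variable-exponent quasi-norms of the atomic sequence. Unlike the constant-exponent case, the modular $\int(\cdot)^{p(x)}\,dx$ is sensitive to the location of supports, so one must combine the log-Hölder hypothesis on $p(\cdot),q(\cdot),s(\cdot)$ with the translation and maximal-function estimates (Theorems~\ref{thm:max2}, \ref{thm:max3} and Lemma~\ref{heikouidou}) to redistribute the reflected supports without loss of control.
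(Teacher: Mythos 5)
Your proposal shares the paper's overall strategy — conjugate a Whitney/Hestenes-type reflection by the lifting operators $J_{\pm\sigma}$ of Theorem \ref{theorem 6.1.8} — but it collapses two distinct parameters into a single integer $M$, and this creates a circular smoothness requirement that the paper's proof deliberately avoids. You lift by $M$ (so $J_{-M}f$ lands in $A_{p(\cdot),q(\cdot)}^{s(\cdot)+M}(\R^n_+)$) and simultaneously use $M$ as the order of derivative-matching in the reflection $\mathcal{R}$. To apply Theorems \ref{theorem 5.1.6-Besov} and \ref{theorem 5.1.6-Triebel} in the lifted scale, the atoms of $\mathcal{R}F$ must be $[K,L]$ smooth atoms for $A_{p(\cdot),q(\cdot)}^{s(\cdot)+M}(\R^n)$, which by Definition \ref{smooth atom} forces $K>(s(\cdot)+M)^+=s^++M$. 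But a Hestenes reflection that matches derivatives only up to order $M$ produces functions that are at best $C^{M}$ across $\{x_n=0\}$ (and in fact only $C^{M-1}$ with your equations $\sum_{k}\lambda_k(-k)^j=1$, $j=0,\dots,M$). So the reflected atoms have smoothness $\le M<s^++M<K$ whenever $s^+>0$, which is allowed in the hypothesis $s^+\le N$. This is not repairable by "choosing $M$ larger": increasing $M$ raises the required atomic smoothness by the same amount. The paper's Step 3 decouples the two roles — it lifts by a fixed $\sigma$ depending only on $N$ and $n$, and then applies the Step-2 extension $E^*={\rm Ext}_L$ for the lifted space, whose reflection order is a separate parameter chosen $\gg(n+1)L$ so that it strictly exceeds the required atomic smoothness in the lifted scale. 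You need two parameters too.

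A secondary gap concerns well-definedness of $\mathcal{R}$. You only require $s(\cdot)+M$ to exceed $\sigma_{p(\cdot)}$ and $\sigma_{p(\cdot),q(\cdot)}$ (so that no moment conditions are needed), but this is strictly weaker than $s(\cdot)+M>n/p(\cdot)$, which is what gives the embedding $A_{p(\cdot),q(\cdot)}^{s(\cdot)+M}(\R^n)\hookrightarrow{\rm BUC}(\R^n)$ via Proposition \ref{proposition 3.1.33}. Without that embedding, the pointwise formula defining $\mathcal{R}F$ does not make sense for a general representative $F$, and, more importantly, you have no way to show that $\mathcal{R}F$ depends only on $F|_{\R^n_+}$ rather than on the particular representative $F$ or on the chosen atomic decomposition of $F$. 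The paper handles exactly this point in Step 2 by restricting to the regular range $0>\left(\frac{n}{p(\cdot)}-s(\cdot)\right)^+$, using the resulting uniform continuity of $g$ and $h$ to show $g^*=h^*$ pointwise. Finally, a stylistic difference worth noting: you reflect smooth atoms, while the paper reflects quarks and only \emph{afterwards} treats $2^{-(r+\epsilon)|\beta|}(\beta\qu)_{\nu,m}^*$ as atoms. The quarkonial route is more convenient here because quarks have a canonical explicit form $(\beta\qu)_{\nu,m}(x)=\psi^\beta(2^\nu x-m)$, so the reflected object is again concretely of the same type; with a general atomic decomposition you would still need an argument that the reflected coefficients form a decomposition of $\mathcal{R}F$ that is independent of the choice made for $F$.
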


\begin{proof} 
Let $p(\cdot),q(\cdot)\in C^{\log}(\R^n)\cap \mathcal{P}_0(\R^n)$  satisfy $N^{-1} \le p^-, q^- $ and $s(\cdot)\in C^{\log}(\R^n)$ satisfy $||s||_{\infty}<N$. 

{\bf Step 1. }
Let $M\in\N$ be a sufficiency large. Let $\delta_{0,l}$ be the Kronecker delta function. We define $\lambda_1,\lambda_2,\cdots,\lambda_M\in\R$ uniquely such that 
the following simultaneous equations  
\begin{equation}
\sum_{j=0}^M (-j)^l\lambda_j =\delta_{0,l}, \ \ l=0,1,\cdots, M-1 \label{6.16}
\end{equation}
hold. 
Since a discriminant $D$ of this simultaneous equation is some positive constant times Vandermonde determinant ${\rm det}\{j^i\}_{i,j=1,\cdots, M}$, 
we have $D\neq 0$. Hence $\lambda_1,\lambda_2,\cdots, \lambda_M$ can be determined uniquely. 

For a function $f$ $:$ $\R^{n-1}\times[0,\infty) \longrightarrow \C$, we define 
\begin{equation}
f^*(x) := \begin{cases} f(x) \ \ &\text{(if $x_n\ge 0$)}, \\ 
                               \sum_{j=1}^M\lambda_j f(x', -jx_n) \ \ &\text{(if $x_n \le 0$)}. 
                               \end{cases} 
                               \label{6.17}
\end{equation}
Let $f\in\mathcal{B}^M(\R^n)$  be defined a neighborhood of $\R^{n-1}\times[0,\infty)$. 
Then $f^*\in\mathcal{B}(\R^n)$ because we defined $f^*$ $:$ $\R^{n} \longrightarrow \C$ such that the differential coefficient of $f^*$ at boundary coincides with $f$.  

{\bf Step 2. } 
We consider the case that $s(\cdot)$ satisfy $0> \left( \frac{n}{p(\cdot)} -s(\cdot) \right)^+$. 
In this case, we have $A_{p(\cdot),q(\cdot)}^{s(\cdot)}(\R^n) \hookrightarrow {\rm BUC}(\R^n)$ by Proposition $\ref{proposition 3.1.33}$. 

Let $M\in\N_0$ as in Step 1 be sufficiently large enough to $M \gg (n+1)N$. 
For $f\in A_{p(\cdot),q(\cdot)}^{s(\cdot)}(\R^n_+)$, there exist $g\in A_{p(\cdot),q(\cdot)}^{s(\cdot)}(\R^n)$ such that 
$f=g|_{\R^n_+}$ and 
\[
\| g\|_{A_{p(\cdot),q(\cdot)}^{s(\cdot)}(\R^n)} \le 2 \| f\|_{A_{p(\cdot),q(\cdot)}^{s(\cdot)}(\R^n_+)}. 
\]
Let $\rho,r$ as in Theorem $\ref{quark decomposition}$. Then, by Theorem $\ref{quark decomposition}$, we can express $g$ as  
\begin{equation}
g=\sum_{\beta\in\N_0^n}\sum_{\nu\in\N_0}\sum_{m\in\Z^n}\lambda_{\nu,m}^{\beta}(\beta\qu)_{\nu,m}. \label{6.18}
\end{equation}
We have $\| \lambda \|_{a_{p(\cdot)q(\cdot),\rho}^{s(\cdot)}} \lesssim \| g\|_{A_{p(\cdot),q(\cdot)}^{s(\cdot)}(\R^n)} \lesssim \| f\|_{A_{p(\cdot),q(\cdot)}^{s(\cdot)}(\R^n_+)}$, 
where $\lambda=\{\lambda_{\nu,m}^{\beta}\}_{\nu\in\N_0,m\in\Z^n, \beta\in\N_0^n}$. 
Then we define $g^*$ so that 
\begin{equation}
g^*:=\sum_{\beta\in\N_0^n}\sum_{\nu\in\N_0}\sum_{m\in\Z^n}\lambda_{\nu,m}^{\beta}(\beta\qu)_{\nu,m}^*. \label{6.19}
\end{equation} 
Let $h\in  A_{p(\cdot),q(\cdot)}^{s(\cdot)}(\R^n)$ satisfy $f=h|_{\R^n_+}$. Then, by Theorem $\ref{quark decomposition}$, we can also express $h$ as 
\begin{equation}
h=\sum_{\beta\in\N_0^n}\sum_{\nu\in\N_0}\sum_{m\in\Z^n}\rho_{\nu,m}^{\beta}(\beta\qu)_{\nu,m}. \label{6.18-6/28}
\end{equation}
We also define $h^*$ so that 
\begin{equation}
h^*=\sum_{\beta\in\N_0^n}\sum_{\nu\in\N_0}\sum_{m\in\Z^n}\rho_{\nu,m}^{\beta}(\beta\qu)_{\nu,m}^*. \label{6.18-6/28-2}
\end{equation}
Then, we prove $g^*=h^*$ in the sense of $\mathcal{S}'(\R^n)$. That is, we prove that $g^*$ depend only on $f$.  
Since  $A_{p(\cdot),q(\cdot)}^{s(\cdot)}(\R^n) \hookrightarrow {\rm BUC}(\R^n)$  and  $0> \left( \frac{n}{p(\cdot)} -s(\cdot) \right)^+$, 
$g^*$ and $h^*$ are uniformly continuous functions. 
Hence, it is sufficient to prove that $g^*(x)=h^*(x)$ for any $x\in\R^n$.  
Since $g$ and $h$ are continuous functions and 
$g|_{\R^n_+}=h|_{\R^n_+}$ in the sense of $\mathcal{D}'(\R^n)$, 
$g(x)=h(x)$ for any $x=(x',x_n)\in\R^{n-1}\times\R$, where $x_n>0$. 
By the continuity of $g(x)$ and $h(x)$, 
we have $g(x)=h(x)$ for any $x=(x',x_n)\in\R^{n-1}\times\R$, where $x_n\ge 0$. 
This implies that $g^*(x)=h^*(x)$ for any $x\in\R^n_+$.  
By the definition of $g^*(x)$ and $h^*(x)$, we have $g^*(x)=h^*(x)$ for any $x\in\R^n$. 

Since $g^*$ depend only on $f$, 
we can consider 
\begin{equation}
{\rm Ext}_Nf:=\sum_{\beta\in\N_0^n}\sum_{\nu\in\N_0}\sum_{m\in\Z^n}\lambda_{\nu,m}^{\beta}(\beta\qu)_{\nu,m}^* \label{6.20}
\end{equation}
and 
\begin{equation}
{\rm Ext}_N^{\beta}f:=\sum_{\nu\in\N_0}\sum_{m\in\Z^n}\lambda_{\nu,m}^{\beta}(\beta\qu)_{\nu,m}^*. \label{6.21}
\end{equation}
Let $\lambda^{\beta}=\{\lambda_{\nu,m}^{\beta}\}_{\nu\in\N_0,m\in\Z^n}$ and $\epsilon>0$ be sufficiently small such that $0<\epsilon<\rho-r$. 
The right hand side of $(\ref{6.21})$ is not a quarkonial decomposition. However we can regard  $2^{-(r+\epsilon)|\beta|}{\rm Ext}_N^{\beta}f$ as an atomic decomposition by the family of smooth atoms with no moment condition. 
Hence we have 
\[
\left\| {\rm Ext}_N^{\beta}f \right\|_{A_{p(\cdot),q(\cdot)}^{s(\cdot)}(\R^n)} \lesssim 2^{(\rho+\epsilon)|\beta|}  \left\| \lambda^{\beta} \right\|_{a_{p(\cdot),q(\cdot)}^{s(\cdot)}} 
\lesssim \frac{   \left\| \lambda \right\|_{a_{p(\cdot),q(\cdot),\rho}^{s(\cdot)}}  }{2^{\delta|\beta|}} \lesssim \frac{ \| f\|_{A_{p(\cdot),q(\cdot)}^{s(\cdot)}(\R^n_+)}  }{2^{\delta|\beta|}}. 
\]
Therefore, we obtain 
\begin{align*}
\left\| {\rm Ext}_Nf \right\|_{A_{p(\cdot),q(\cdot)}^{s(\cdot)}(\R^n)} 
\lesssim  \| f\|_{A_{p(\cdot),q(\cdot)}^{s(\cdot)}(\R^n_+)} 
\end{align*}
by Lemma $\ref{min triangle}$. 
This implies that ${\rm Ext}_N$ is a continuous mapping and has desired properties. 

{\bf Step 3. } 
In this step, we reduce the condition  $0> \left( \frac{n}{p(\cdot)} -s(\cdot) \right)^+$. 
We take a $\sigma\in\R$ such that  $0> \left( \frac{n}{p(\cdot)} -(s(\cdot)+\sigma) \right)^+$ and 
$nN<-N+\sigma<N+\sigma$. 
Let $L\in\N$ be sufficiently large enough to satisfy $L\gg 1$ and  $N+\sigma\le L$. 
We can construct ${\rm Ext}_L$ $:$ $A_{p(\cdot),q(\cdot)}^{s(\cdot)+\sigma}(\R^n_+) \longrightarrow A_{p(\cdot),q(\cdot)}^{s(\cdot)+\sigma}(\R^n)$ by using the same argument in Step 2. 
We rewrite ${\rm Ext}_L$ to $E^*$. Then, by Step 2, we see that $E^*|_{A_{p(\cdot),q(\cdot)}^{s(\cdot)+\sigma}(\R^n_+)}$ is a continuous mapping from 
$A_{p(\cdot),q(\cdot)}^{s(\cdot)+\sigma}(\R^n_+)$ to $A_{p(\cdot),q(\cdot)}^{s(\cdot)+\sigma}(\R^n)$ and that $E^* f|_{\R^n_+} = f$ holds for any 
$f\in A_{p(\cdot),q(\cdot)}^{s(\cdot)+\sigma}(\R^n_+)$. 
Then, $J_{-\sigma}$ is the continuous mapping from $A_{p(\cdot),q(\cdot)}^{s(\cdot)}(\R^n_+)$ to $A_{p(\cdot),q(\cdot)}^{s(\cdot)+\sigma}(\R^n_+)$ and 
$J_{\sigma}$ is also the continuous mapping from $A_{p(\cdot),q(\cdot)}^{s(\cdot)+\sigma}(\R^n)$ to $A_{p(\cdot),q(\cdot)}^{s(\cdot)}(\R^n)$. 
Therefore, following composite mapping 
\begin{equation}
{\rm Ext}_N := J_{\sigma}\circ E^* \circ J_{-\sigma}\,:\, A_{p(\cdot),q(\cdot)}^{s(\cdot)}(\R^n_+) \rightarrow A_{p(\cdot),q(\cdot)}^{s(\cdot)}(\R^n) \label{6.22}
\end{equation}
make a sense. 

We will prove ${\rm Ext}_Nf|_{\R^n_+}=f$. 
Let $\phi\in\mathcal{D}(\R^n_+)$ and $g\in A_{p(\cdot),q(\cdot)}^{s(\cdot)}(\R^n) $ such that $f=g|_{\R^n_+}$. 
Let $E$ $:$ $C_c^{\infty}(\R^n_+) \rightarrow C^{\infty}(\R^n)$ be a zero extension operator. 
Then we see that 
\begin{align*}
\langle {\rm Ext}_Nf|_{\R^n_+}, \phi\rangle &= \langle {\rm Ext}_Nf, E\phi\rangle \\ 
&= \langle E^*J_{\sigma}f, \fourier\left[ \varphi^{(\sigma)}\inversefourier E\phi\right]\rangle \\ 
&= \langle J_{-\sigma}f, \fourier\left[ \varphi^{(\sigma)}\inversefourier E\phi\right] |_{\R^n_+} \rangle 
\end{align*}
by the properties of $E^*$. 
Hence we obtain
\[
\langle {\rm Ext}_Nf|_{\R^n_+}, \phi\rangle =  \langle J_{-\sigma}g, \fourier\left[ \varphi^{(\sigma)}\inversefourier E\phi\right] \rangle  
= \langle g, E\phi\rangle =\langle f, \phi\rangle
\]
by the definition of the extension operator ${\rm Ext}_N$. Therefore, we have ${\rm Ext}_Nf|_{\R^n_+}=f$ for any $f\in  A_{p(\cdot),q(\cdot)}^{s(\cdot)}(\R^n_+) $. 
\end{proof} 

\subsection{Trace operator for $ A_{p(\cdot),q(\cdot)}^{s(\cdot)}(\R^n_+)$}

We extend Theorem $\ref{theorem 5.4.4}$ to  $ A_{p(\cdot),q(\cdot)}^{s(\cdot)}(\R^n_+)$. 
 We consider the Trace operator 
\[
\utr : f(x',x_n)\in\mathcal{S}(\R^n_+)\longmapsto f(x',0)\in\mathcal{S}(\R^{n-1}). 
\]

\begin{theorem}
\label{theorem 6.1.10}
Assume that $p(\cdot),q(\cdot)\in C^{\log}(\R^n)\cap\mathcal{P}_0(\R^n)$. 

\begin{enumerate}
\item[{\rm (1)}] Let $s(\cdot)\in C^{\log}(\R^n)$ satisfy 
\[
\mathop{{\rm{ess}}\,\,\rm{inf}}_{x\in\R^n} \left\{ s(\cdot)-\left[\frac{1}{p(\cdot)}+(n-1)\left( \frac{1}{\min(1, p(\cdot) )}-1 \right)\right]\right\}>0. 
\]
\begin{enumerate}
\item The operator $\utr$ can be extended as a surjective and continuous mapping from $B_{p(\cdot),q(\cdot)}^{s(\cdot)}(\R^n_+)$ 
to  $B_{\tilde{p}(\cdot),\tilde{q}(\cdot)}^{\tilde{s}(\cdot)-\frac{1}{\tilde{p}(\cdot)}}(\R^{n-1})$. 
\item The operator $\utr$ can be extended as a surjective and continuous mapping from $F_{p(\cdot),q(\cdot)}^{s(\cdot)}(\R^n_+)$ 
to  $F_{\tilde{p}(\cdot),\tilde{p}(\cdot)}^{\tilde{s}(\cdot)-\frac{1}{\tilde{p}(\cdot)}}(\R^{n-1})$.  
\end{enumerate} 
\item[{\rm (2)}]  Let $s(\cdot)\in C^{\log}(\R^n)$ and $k\in\N_0$ satisfy 
\[
\mathop{{\rm{ess}}\,\,\rm{inf}}_{x\in\R^n} \left\{ s(\cdot)-\left[k+\frac{1}{p(\cdot)}+(n-1)\left( \frac{1}{\min(1, p(\cdot) )}-1 \right)\right]\right\}>0. 
\]
\begin{enumerate}
\item  If $g_0\in B_{\tilde{p}(\cdot),\tilde{q}(\cdot)}^{\tilde{s}(\cdot)-\frac{1}{\tilde{p}(\cdot)}}(\R^{n-1})$, 
$g_1\in B_{\tilde{p}(\cdot),\tilde{q}(\cdot)}^{\tilde{s}(\cdot)-\frac{1}{\tilde{p}(\cdot)}-1}(\R^{n-1})$, $\cdots$, 
$g_k\in B_{\tilde{p}(\cdot),\tilde{q}(\cdot)}^{\tilde{s}(\cdot)-\frac{1}{\tilde{p}(\cdot)}-k}(\R^{n-1})$, then there exists a $f\in B_{p(\cdot),q(\cdot)}^{s(\cdot)}(\R^n_+)$ such that 
$\utr(f)=g_0$, $\utr(\partial_{x_n}f)=g_1$, $\cdots$, $\utr(\partial_{x_n}^kf)=g_k$. 
\item  If $g_0\in F_{\tilde{p}(\cdot),\tilde{p}(\cdot)}^{\tilde{s}(\cdot)-\frac{1}{\tilde{p}(\cdot)}}(\R^{n-1})$, 
$g_1\in F_{\tilde{p}(\cdot),\tilde{p}(\cdot)}^{\tilde{s}(\cdot)-\frac{1}{\tilde{p}(\cdot)}-1}(\R^{n-1})$, $\cdots$, 
$g_k\in F_{\tilde{p}(\cdot),\tilde{p}(\cdot)}^{\tilde{s}(\cdot)-\frac{1}{\tilde{p}(\cdot)}-k}(\R^{n-1})$, then there exists a $f\in F_{p(\cdot),q(\cdot)}^{s(\cdot)}(\R^n_+)$ such that 
$\utr(f)=g_0$, $\utr(\partial_{x_n}f)=g_1$, $\cdots$, $\utr(\partial_{x_n}^kf)=g_k$. 
\end{enumerate} 
\end{enumerate} 
\end{theorem}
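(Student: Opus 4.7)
The plan is to reduce the half-space statement to the whole-space Trace theorem (Theorem $\ref{theorem 5.4.4}$) by going through the extension operator ${\rm Ext}_N$ constructed in Theorem $\ref{theorem 6.1.9}$. Concretely, for $N$ chosen large enough depending on $p^-, q^-$ and $s^+$, I define
\[
\utr(f) := \tr({\rm Ext}_N f),\qquad f \in A^{s(\cdot)}_{p(\cdot),q(\cdot)}(\R^n_+).
\]
Continuity of $\utr$ as a map into $B^{\tilde s(\cdot)-1/\tilde p(\cdot)}_{\tilde p(\cdot),\tilde q(\cdot)}(\R^{n-1})$ (respectively into $F^{\tilde s(\cdot)-1/\tilde p(\cdot)}_{\tilde p(\cdot),\tilde p(\cdot)}(\R^{n-1})$) is then immediate, being the composition of two continuous maps from Theorems $\ref{theorem 5.4.4}$ and $\ref{theorem 6.1.9}$. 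That this definition truly extends the classical pointwise trace follows from the fact that ${\rm Ext}_N f|_{\R^n_+} = f$ by construction, so for sufficiently smooth $f$ both sides are continuous up to $\{x_n=0\}$ and the boundary values are recovered from either side.

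For surjectivity in part (1), given a boundary datum $g$ in the target trace space I invoke Theorem $\ref{theorem 5.4.4}$(1) with $k=0$ to obtain $h \in A^{s(\cdot)}_{p(\cdot),q(\cdot)}(\R^n)$ with $\tr(h) = g$ and $\|h\| \lesssim \|g\|$, and set $f := h|_{\R^n_+} \in A^{s(\cdot)}_{p(\cdot),q(\cdot)}(\R^n_+)$, giving $\|f\|_{A^s(\R^n_+)} \le \|h\|_{A^s(\R^n)}$. The point to check is that $\utr(f)=g$, that is, $\tr({\rm Ext}_N f)=\tr(h)$, which is equivalent to $\tr(u)=0$ for $u := h - {\rm Ext}_N f$; by construction $u|_{\R^n_+} = 0$ in $\mathcal{D}'(\R^n_+)$, so $\spt u \subseteq \overline{\R^n_-}$. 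Part (2) is obtained by the same reduction: from $g_0,\dots,g_k$, Theorem $\ref{theorem 5.4.4}$(2) produces $F \in A^{s(\cdot)}_{p(\cdot),q(\cdot)}(\R^n)$ with $\tr(\partial_{x_n}^j F) = g_j$ for $0\le j\le k$; then $f := F|_{\R^n_+}$ satisfies $\utr(\partial_{x_n}^j f) = g_j$ by the surjectivity argument applied in each lifted space $A^{s(\cdot)-j}_{p(\cdot),q(\cdot)}$, whose hypotheses are covered by the assumption on $s - k$.

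The main obstacle is precisely the vanishing claim $\tr(u)=0$ for every $u \in A^{s(\cdot)}_{p(\cdot),q(\cdot)}(\R^n)$ with $\spt u \subseteq \overline{\R^n_-}$. My approach is by approximation: set $u_\varepsilon(x) := u(x+\varepsilon e_n)$, which is supported in $\{x_n \le -\varepsilon\}$; mollifying within a strip of the lower half-space produces Schwartz approximants whose support stays uniformly bounded away from $\{x_n=0\}$ and which therefore have identically zero classical trace. Continuity of $\tr$ (Theorem $\ref{theorem 5.4.4}$(1)) forces $\tr(u_\varepsilon)=0$, after which one passes to $\varepsilon\to 0^+$ using continuity of translation on $A^{s(\cdot)}_{p(\cdot),q(\cdot)}(\R^n)$. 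The delicate point is exactly this translation continuity in the variable-exponent framework, where the usual arguments must be reworked using the log-Hölder continuity of $p,q,s$ together with the atomic/quarkonial machinery of Section $\ref{sub-sec}$. An attractive alternative is to exploit the explicit trace formula coming from the quarkonial decomposition (Theorem $\ref{quark decomposition}$), in which with the choice $\psi(x)=\mu(x_1)\cdots\mu(x_n)$ only quarks with $m_n=0$ and $\beta_n=0$ contribute to the trace; one then argues directly that for $u$ with $\spt u \subseteq \overline{\R^n_-}$, a quarkonial representative can be selected whose coefficients $\lambda^{\beta'}_{\nu,(m',0)}$ vanish, so the trace collapses to zero.
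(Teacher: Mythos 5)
Your overall strategy, defining $\utr f := \tr({\rm Ext}_N f)$ and deducing continuity by composing Theorem $\ref{theorem 5.4.4}$ with Theorem $\ref{theorem 6.1.9}$, is the same as the paper's, and the reduction of surjectivity to showing $\tr(u) = 0$ for every $u \in A^{s(\cdot)}_{p(\cdot),q(\cdot)}(\R^n)$ with $\spt u \subseteq \overline{\R^n_-}$ is sound in principle. The trouble is that neither route you propose to this vanishing statement actually closes the gap.

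Your primary route relies on ``continuity of translation on $A^{s(\cdot)}_{p(\cdot),q(\cdot)}(\R^n)$''; this fails. Translation operators $f \mapsto f(\cdot + h)$ are, in general, not even bounded on $L^{p(\cdot)}(\R^n)$ when $p(\cdot)$ is non-constant (the classical Kov\'a\v{c}ik--R\'akosn\'{\i}k obstruction), and log-H\"older continuity of $p$, $q$, $s$ does not rescue boundedness, let alone continuity at $h=0$. The machinery of Section $\ref{sub-sec}$ (maximal inequalities, quarkonial estimates) does not yield this either, so you cannot pass $\tr(u_\varepsilon) \to \tr(u)$. The alternative route you sketch --- selecting a quarkonial representative of $u$ whose coefficients $\lambda^{\beta'}_{\nu,(m',0)}$ vanish --- is also not established. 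In the necessity part of Theorem $\ref{quark decomposition}$, the coefficients are built from Frazier--Jawerth samples $\varphi_\nu(D)u(m/2^\nu)$, and $\varphi_\nu(D)u$ is a convolution of $u$ against a Schwartz kernel; even when $\spt u \subseteq \overline{\R^n_-}$, these samples at $m_n=0$ do not vanish, and there is no mechanism in your outline to replace the canonical coefficients by vanishing ones.

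The paper avoids this difficulty entirely: rather than proving the vanishing lemma $\tr(u)=0$, it exploits the explicit reflection structure of ${\rm Ext}_N$ built from quarkonial decompositions, obtains uniform decay estimates such as $(\ref{6-28-1})$ for the contributions at scale $(\beta,\nu)$, and shows that $\tr[{\rm Ext}_N f] = \lim_{\epsilon\downarrow 0}\tr {\rm Ext}_N[f(\cdot', \cdot_n+\epsilon)|_{\R^n_+}]$ exists as a distributional limit directly from these estimates, without invoking any norm-continuity of translation. The surjectivity identity $(\ref{6.27})$ then follows from the observation that, for $f$ defined on all of $\R^n$, the function $f(\cdot',\cdot_n+\epsilon)$ agrees with its own reflected extension near $\{x_n=0\}$, so the two limits coincide. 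To repair your proposal you would need to either (i) supply a proof of the vanishing lemma by a different route that does not use translation continuity, or (ii) switch to the paper's strategy and establish the limiting identity $(\ref{6.25})$--$(\ref{6.27})$ via the quarkonial decay bounds.
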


\begin{proof}
Let $N$ be a sufficiency large. For $f\in A_{p(\cdot),q(\cdot)}^{s(\cdot)}(\R^n_+)$, we define $\utr f:= \tr[{\rm Ext}_N f]$, where 
${\rm Ext}_N$ is the extension operator as in Theorem $\ref{theorem 6.1.9}$. 
Firstly, we will prove that 
\begin{equation}
\utr [f|_{\R^n_+}] = \lim_{\epsilon\downarrow 0}\tr {\rm Ext}_N[f(\cdot', \cdot_n+\epsilon)|_{\R^n_+}]. \label{6.23} 
\end{equation}
By Theorem $\ref{quark decomposition}$, we can expansion ${\rm Ext}_Nf\in A_{p(\cdot),q(\cdot)}^{s(\cdot)}(\R^n)$ so that 
\[
{\rm Ext}_N f =\sum_{\beta\in\N_0^n}\sum_{\nu\in\N_0}\sum_{m\in\Z^n}\lambda_{\nu,m}^{\beta}(\beta\qu)_{\nu,m}. 
\]
By using the same argument of the proof of Theorem $\ref{theorem 5.4.4}$, we have 
\begin{equation}
\tr[{\rm Ext}_N f] =\sum_{\beta\in\N_0^n}\sum_{\nu\in\N_0}\sum_{m\in\Z^n}\lambda_{\nu,m}^{\beta}\tr[(\beta\qu)_{\nu,m}]. \label{6.24} 
\end{equation} 
The partial summation $\displaystyle \sum_{m\in\Z^n}\lambda_{\nu,m}^{\beta}\tr[(\beta\qu)_{\nu,m}]$ in $(\ref{6.24})$ is a bounded set of $A_{p(\cdot),q(\cdot)}^{s(\cdot)}(\R^n)$. 
Hence there exists $\kappa>0$ and $C>0$ such that 
\begin{equation*}
\left\| 
\sum_{m\in\Z^n}\lambda_{\nu,m}^{\beta}\tr[(\beta\qu)_{\nu,m}]
\right\|_{A_{\tilde{p}(\cdot),\tilde{q}(\cdot)}^{\tilde{s}(\cdot)-\frac{1}{\tilde{p}(\cdot)}}(\R^{n-1})} \le  C2^{-\kappa|\beta|}, 
\end{equation*}
where $\kappa$ and $C$ do not depend on $\nu$ and $\beta$. 
Let $\delta>0$. 
If we change $s(\cdot)$ into $s(\cdot)-\delta$, then 
\begin{equation*}
\left\| 
\sum_{m\in\Z^n}\lambda_{\nu,m}^{\beta}\tr[(\beta\qu)_{\nu,m}]
\right\|_{A_{\tilde{p}(\cdot),\tilde{q}(\cdot)}^{\tilde{s}(\cdot)-\frac{1}{\tilde{p}(\cdot)}-\delta}(\R^{n-1})} \le  C2^{-\kappa|\beta|-\delta\nu} 
\end{equation*}
holds by the definition of $(\beta\qu)_{\nu,m}$. 
Since $\displaystyle A_{\tilde{p}(\cdot),\tilde{q}(\cdot)}^{\tilde{s}(\cdot)-\frac{1}{\tilde{p}(\cdot)} -\delta}(\R^{n-1}) \hookrightarrow B_{\infty,\infty}^{\tilde{s}(\cdot)-\frac{n}{\tilde{p}(\cdot)}-\delta}(\R^{n-1})$ by Proposition \ref{Sobolev}, 
we have 
\begin{equation}
\left\| 
\sum_{m\in\Z^n}\lambda_{\nu,m}^{\beta}\tr[(\beta\qu)_{\nu,m}]
\right\|_{B_{\infty,\infty}^{\tilde{s}(\cdot)-\frac{n}{\tilde{p}(\cdot)}-\delta}(\R^{n-1})} \lesssim 2^{-\kappa|\beta|-\delta\nu}. \label{6-28-1}
\end{equation} 
Then we prove the limit  
\begin{equation}
\tr[{\rm Ext}_N f] =\lim_{\epsilon\downarrow 0}\left( \sum_{\beta\in\N_0^n}\sum_{\nu\in\N_0}\sum_{m\in\Z^n}\lambda_{\nu,m}^{\beta}\tr[(\beta\qu)_{\nu,m}(\cdot',\cdot_n+\epsilon)]\right) 
\label{6-28}
\end{equation}
exists in $\mathcal{S}'(\R^{n-1})$. 
Let take $\varphi\in\mathcal{S}(\R^{n-1})$ arbitrary. Since 
\[
\left| 
\left\langle
\sum_{m\in\Z^n}\lambda_{\nu,m}^{\beta}\tr[(\beta\qu)_{\nu,m}], \varphi
\right\rangle
\right|
\lesssim 2^{-\kappa|\beta|-\delta\nu}, 
\]
there exists a finitely set $\mathcal{U}\subset\N_0^n\times\N$ for any $\epsilon'>0$ such that 
\begin{equation}
\sum_{(\beta,\nu)\in\N_0^n\times\N\setminus\mathcal{U}}
\left| 
\left\langle
\sum_{m\in\Z^n}\lambda_{\nu,m}^{\beta}\tr[(\beta\qu)_{\nu,m}(\cdot',\cdot_n+\epsilon)], \varphi
\right\rangle
\right|
\le C\frac{1}{2}\epsilon', \label{6-28-2}
\end{equation}
where constant number $C>0$ does not depend on $\epsilon\in (0,1)$. 
Let 
\[
A_j= \left\langle
 \sum_{m\in\Z^n}\lambda_{\nu,m}^{\beta}\tr[(\beta\qu)_{\nu,m}(\cdot',\cdot_n+\epsilon_j)], \varphi
\right\rangle
\]
for $j=1, 2$, where $0<\epsilon_1, \epsilon_2 \ll 1$. 
Since 
\[
\lim_{\epsilon\downarrow 0}\sum_{(\beta,\nu)\in\mathcal{U} }\sum_{m\in\Z^n}\lambda_{\nu,m}^{\beta}\tr[(\beta\qu)_{\nu,m}(\cdot',\cdot_n+\epsilon)] 
\]
exists, we see that 
\begin{equation}
\sum_{(\beta,\nu)\in\mathcal{U}}\left| A_1-A_2\right| <\frac{1}{2}\epsilon'. \label{6-28-3}
\end{equation}
By $(\ref{6-28-2})$ and $(\ref{6-28-3})$, the limit $(\ref{6-28})$ exists in the sense of $\mathcal{S}(\R^{n-1})$.  
Hence we obtain 
\begin{equation}
\tr[{\rm Ext}_Nf] =\lim_{\epsilon\downarrow 0}\tr[{\rm Ext}_N f(\cdot',\cdot_n+\epsilon)]. \label{6.25}
\end{equation}
Therefore, we see that $\utr$ does not depend on $N$ because $(\ref{6.23})$ and $\tr[{\rm Ext}_N f(\cdot',\cdot_n+\epsilon)]$ does not depend on $N$ for any $\epsilon>0$. 

Next, we prove that $\utr$ is a surjection. 
By the surjective of the operator $\tr$ on $\R^n$, we prove 
\begin{equation}
\tr f = \begin{cases} \utr [f|_{\R^n_+}]\in B_{\tilde{p}(\cdot),\tilde{q}(\cdot)}^{\tilde{s}(\cdot)-\frac{1}{\tilde{p}(\cdot)}}(\R^{n-1}) 
&\ \ \text{if } f\in B_{\tilde{p}(\cdot),\tilde{q}(\cdot)}^{\tilde{s}(\cdot)}(\R^n), \\ 
\utr [f|_{\R^n_+}]\in F_{\tilde{p}(\cdot),\tilde{p}(\cdot)}^{\tilde{s}(\cdot)-\frac{1}{\tilde{p}(\cdot)}}(\R^{n-1}) 
&\ \ \text{if } f\in F_{\tilde{p}(\cdot),\tilde{q}(\cdot)}^{\tilde{s}(\cdot)}(\R^n). 
\end{cases}
\label{6.26}
\end{equation}
By using same argument of $(\ref{6.25})$, we see that 
\begin{equation}
\begin{cases}
\tr f=\lim_{\epsilon\downarrow 0}\tr f(\cdot',\cdot_n+\epsilon), \\ 
\utr [f|_{\R^n_+}] = \lim_{\epsilon\downarrow 0}\tr{\rm Ext}_N[f(\cdot',\cdot_n+\epsilon)|_{\R^n_+}]
\end{cases}
\label{6.27}
\end{equation}
in $\mathcal{S}'(\R^{n-1})$. Here we have $f(\cdot',\cdot_n+\epsilon)={\rm Ext}_N[f(\cdot',\cdot_n+\epsilon)|_{\R_+^n}]$ on $\{x\in\R^n\,:\,x_n\ge -\epsilon/2\}$. 
Hence we obtain 
\[
\tr f=\lim_{\epsilon\downarrow 0}\tr f(\cdot',\cdot_n+\epsilon)=\lim_{\epsilon\downarrow 0}\tr{\rm Ext}_N[f(\cdot',\cdot_n+\epsilon)|_{\R^n_+}]=\utr[f|_{\R^n_+}]. 
\]
This complete the proof of $(1)$. 

The assertion $(2)$ follows from $(1)$ and Theorem $\ref{theorem 5.4.4}$. 
\end{proof}

\section*{Acknowledgement}
I would like to express my gratitude to Professor Yoshikazu Kobayashi for great support in any manner and for his valuable suggestions in many 
discussions.  
 I also would like to express my gratitude to Professor Yoshihiro Sawano for sending his book \cite{Sawano}. 
 I have obtained many ideas from the book \cite{Sawano}.

% Use this code if you wish to generate your bibliography with BibTeX;
% please replace first the string "demo" below with the name(s) of
% the BibTeX data base(s) you want to use.
% The resulting bibliography-output (the contents of the .bbl file)
% must be pasted into this file before submission.
% 
% \bibliographystyle{mn}
% \bibliography{demo}

\begin{flushleft}
Takahiro Noi \\ 
Department of Mathematics and Information Science, Tokyo Metropolitan University \\ 
1-1 Minami osawa, hachioji-city, Tokyo. \\  
E-mail : taka.noi.hiro@gmail.com
\end{flushleft}

\end{document}